\pdfoutput=1
%The other issue is that some packages, such as microtype, produce different output under pdflatex. By default the arXiv goes from dvi to ps to pdf, so if you need pdflatex you have to set the \pdfoutput flag in the TeX file.
\newif\ifpersonal
\RequirePackage[l2tabu,orthodox]{nag} %detect whether obsolete packages are used
\documentclass[12pt,a4paper,reqno]{amsart} %reqno places equation numbers on the right
\linespread{1.1}
\usepackage{amsmath,amsthm,amssymb,mathrsfs,mathtools,bm,eucal,tensor} % math related
\usepackage{microtype,fixltx2e,lmodern} % latex technical issues
\usepackage[utf8]{inputenc} % input encoding
\usepackage[T1]{fontenc} % font encoding
\usepackage{enumerate,comment,braket,xspace,tikz-cd,csquotes} % utilities
\usepackage[all,cmtip]{xy} % because the tikzcd options [shift left], [shift right] do not work on arXiv, we switched some diagrams to xymatrix
\usepackage[centering,vscale=0.7,hscale=0.7]{geometry}
\usepackage[hidelinks]{hyperref}
\usepackage[capitalize]{cleveref}

\theoremstyle{plain}
\newtheorem{thm-intro}{Theorem}
\newtheorem{thm}{Theorem}[section]
\newtheorem*{thm*}{Theorem}
\newtheorem{lem}[thm]{Lemma}
\newtheorem{prop}[thm]{Proposition}

\newtheorem{cor}[thm]{Corollary}

\theoremstyle{definition}
\newtheorem{defin}[thm]{Definition}

\theoremstyle{remark}

\newtheorem{rem}[thm]{Remark}
\numberwithin{equation}{section}
\newtheorem{construction}[thm]{Construction}

% personal remarks

\ifpersonal
\newcommand*{\personal}[1]{\textcolor[rgb]{0.6,0.6,1}{(Personal: #1)}}
\newcommand*{\todo}[1]{\textcolor{red}{(Todo: #1)}}
\else
\newcommand*{\personal}[1]{\ignorespaces}
\newcommand*{\todo}[1]{\ignorespaces}
\fi

% Fonts
\newcommand{\C}{\mathbb C}

\newcommand{\rL}{\mathrm L}
\newcommand{\rR}{\mathrm R}

\newcommand{\cA}{\mathcal A}

\newcommand{\cC}{\mathcal C}
\newcommand{\cD}{\mathcal D}

\newcommand{\cF}{\mathcal F}

\newcommand{\cG}{\mathcal G}

\newcommand{\cO}{\mathcal O}

\newcommand{\cS}{\mathcal S}
\newcommand{\cT}{\mathcal T}

\newcommand{\cX}{\mathcal X}
\newcommand{\cY}{\mathcal Y}
\newcommand{\cZ}{\mathcal Z}
\DeclareFontFamily{U}{BOONDOX-calo}{\skewchar\font=45 }
\DeclareFontShape{U}{BOONDOX-calo}{m}{n}{<-> s*[1.05] BOONDOX-r-calo}{}
\DeclareFontShape{U}{BOONDOX-calo}{b}{n}{<-> s*[1.05] BOONDOX-b-calo}{}
\DeclareMathAlphabet{\mathcalboondox}{U}{BOONDOX-calo}{m}{n}
%\DeclareMathAlphabet{\mathcalligra}{T1}{calligra}{m}{n}

\newcommand{\bbA}{\mathbb A}

\newcommand{\bA}{\mathbf A}
\newcommand{\bD}{\mathbf D}
\newcommand{\bP}{\mathbf P}

\newcommand{\bDelta}{\bm{\Delta}}

% Decorations

% Definition of \widebar from http://tex.stackexchange.com/questions/16337/can-i-get-a-widebar-without-using-the-mathabx-package/60253#60253
\makeatletter
\let\save@mathaccent\mathaccent
\newcommand*\if@single[3]{%
	\setbox0\hbox{${\mathaccent"0362{#1}}^H$}%
	\setbox2\hbox{${\mathaccent"0362{\kern0pt#1}}^H$}%
	\ifdim\ht0=\ht2 #3\else #2\fi
}
%The bar will be moved to the right by a half of \macc@kerna, which is computed by amsmath:
\newcommand*\rel@kern[1]{\kern#1\dimexpr\macc@kerna}
%If there's a superscript following the bar, then no negative kern may follow the bar;
%an additional {} makes sure that the superscript is high enough in this case:
\newcommand*\widebar[1]{\@ifnextchar^{{\wide@bar{#1}{0}}}{\wide@bar{#1}{1}}}
%Use a separate algorithm for single symbols:
\newcommand*\wide@bar[2]{\if@single{#1}{\wide@bar@{#1}{#2}{1}}{\wide@bar@{#1}{#2}{2}}}
\newcommand*\wide@bar@[3]{%
	\begingroup
	\def\mathaccent##1##2{%
		%Enable nesting of accents:
		\let\mathaccent\save@mathaccent
		%If there's more than a single symbol, use the first character instead (see below):
		\if#32 \let\macc@nucleus\first@char \fi
		%Determine the italic correction:
		\setbox\z@\hbox{$\macc@style{\macc@nucleus}_{}$}%
		\setbox\tw@\hbox{$\macc@style{\macc@nucleus}{}_{}$}%
		\dimen@\wd\tw@
		\advance\dimen@-\wd\z@
		%Now \dimen@ is the italic correction of the symbol.
		\divide\dimen@ 3
		\@tempdima\wd\tw@
		\advance\@tempdima-\scriptspace
		%Now \@tempdima is the width of the symbol.
		\divide\@tempdima 10
		\advance\dimen@-\@tempdima
		%Now \dimen@ = (italic correction / 3) - (Breite / 10)
		\ifdim\dimen@>\z@ \dimen@0pt\fi
		%The bar will be shortened in the case \dimen@<0 !
		\rel@kern{0.6}\kern-\dimen@
		\if#31
		\overline{\rel@kern{-0.6}\kern\dimen@\macc@nucleus\rel@kern{0.4}\kern\dimen@}%
		\advance\dimen@0.4\dimexpr\macc@kerna
		%Place the combined final kern (-\dimen@) if it is >0 or if a superscript follows:
		\let\final@kern#2%
		\ifdim\dimen@<\z@ \let\final@kern1\fi
		\if\final@kern1 \kern-\dimen@\fi
		\else
		\overline{\rel@kern{-0.6}\kern\dimen@#1}%
		\fi
	}%
	\macc@depth\@ne
	\let\math@bgroup\@empty \let\math@egroup\macc@set@skewchar
	\mathsurround\z@ \frozen@everymath{\mathgroup\macc@group\relax}%
	\macc@set@skewchar\relax
	\let\mathaccentV\macc@nested@a
	%The following initialises \macc@kerna and calls \mathaccent:
	\if#31
	\macc@nested@a\relax111{#1}%
	\else
	%If the argument consists of more than one symbol, and if the first token is
	%a letter, use that letter for the computations:
	\def\gobble@till@marker##1\endmarker{}%
	\futurelet\first@char\gobble@till@marker#1\endmarker
	\ifcat\noexpand\first@char A\else
	\def\first@char{}%
	\fi
	\macc@nested@a\relax111{\first@char}%
	\fi
	\endgroup
}
\makeatother

\newcommand{\tphi}{\widetilde{\phi}}

% Global tropicalization

% Vanishing cycles

% stacks

\newcommand{\PSh}{\mathrm{PSh}}
\newcommand{\Sh}{\mathrm{Sh}}

\newcommand{\Geom}{\mathrm{Geom}}

\newcommand{\infcat}{$\infty$-category\xspace}
\newcommand{\infcats}{$\infty$-categories\xspace}
\newcommand{\infsite}{$\infty$-site\xspace}

\newcommand{\inftopos}{$\infty$-topos\xspace}
\newcommand{\inftopoi}{$\infty$-topoi\xspace}

\newcommand{\rSet}{\mathrm{Set}}

\newcommand{\tauet}{\tau_\mathrm{\acute{e}t}}

\newcommand{\An}{\mathrm{An}}
\newcommand{\Afd}{\mathrm{Afd}}
\newcommand{\Top}{\mathcal T\mathrm{op}}

% DAnG

\newcommand{\dAnk}{\mathrm{dAn}_k}
\newcommand{\Ank}{\mathrm{An}_k}
\newcommand{\cTan}{\cT_{\mathrm{an}}}
\newcommand{\cTank}{\cT_{\mathrm{an}}(k)}

\newcommand{\cTdisck}{\cT_{\mathrm{disc}}(k)}

\newcommand{\cTetk}{\cT_{\mathrm{\acute{e}t}}(k)}

 %wStr stands for weak structures, i.e. structures where we dropped the third conditions.
\newcommand{\Strloc}{\mathrm{Str}^\mathrm{loc}}
\newcommand{\RTop}{\tensor*[^\rR]{\Top}{}}
\newcommand{\LTop}{\tensor*[^\rL]{\Top}{}}
\newcommand{\RHTop}{\tensor*[^\rR]{\mathcal{H}\Top}{}}
\newcommand{\LRT}{\mathrm{LRT}}
\newcommand{\Tor}{\mathrm{Tor}}
\newcommand{\dAfd}{\mathrm{dAfd}}
\newcommand{\dAfdk}{\mathrm{dAfd}_k}

\newcommand{\biget}{\mathrm{big,\acute{e}t}}
\newcommand{\trunc}{\mathrm{t}_0}
\newcommand{\Hyp}{\mathrm{Hyp}}
\newcommand{\HSpec}{\mathrm{HSpec}}
\newcommand{\CAlg}{\mathrm{CAlg}}
\newcommand{\trunctopoi}{\Spec^{\cG_{\mathrm{an}}^{\le 0}(k)}_{\cG_{\mathrm{an}(k)}}}

% Analytic deformation theory

% Non-archimedean Quantum K-theory

% Special symbols

% Categories

% Shorthands

\newcommand{\llp}{(\!(}
\newcommand{\rrp}{)\!)}
\newcommand{\an}{^\mathrm{an}}
\newcommand{\alg}{^\mathrm{alg}}

\newcommand{\et}{_\mathrm{\acute{e}t}}

\newcommand{\inv}{^{-1}}

\newcommand{\nanal}{non-archimedean analytic\xspace}
\newcommand{\kanal}{$k$-analytic\xspace}

\newcommand{\op}{^\mathrm{op}}

\providecommand{\abs}[1]{\lvert#1\rvert}

% Arrows

\usetikzlibrary{decorations.markings} %arrows for open immersions and closed immersions
\tikzset{
  closed/.style = {decoration = {markings, mark = at position 0.5 with { \node[transform shape, xscale = .8, yscale=.4] {/}; } }, postaction = {decorate} },
  open/.style = {decoration = {markings, mark = at position 0.5 with { \node[transform shape, scale = .7] {$\circ$}; } }, postaction = {decorate} }
}

%Operators

\DeclareMathOperator{\Fun}{Fun}

\DeclareMathOperator{\Hom}{Hom}

\DeclareMathOperator{\Map}{Map}

\DeclareMathOperator{\Sp}{Sp}

\DeclareMathOperator{\Spec}{Spec}

\DeclareMathOperator*{\colim}{colim}

\DeclareMathOperator*{\cotimes}{\widehat{\otimes}}

\begin{document}
\title{Derived non-archimedean analytic spaces}

\author{Mauro PORTA}
\address{Mauro PORTA, University of Pennsylvania, David Rittenhouse Laboratory, 209 South 33rd Street, Philadelphia, PA 19104, United States}
\email{maurop@math.upenn.edu}

\author{Tony Yue YU}
\address{Tony Yue YU, Laboratoire de Mathématiques d'Orsay, Université Paris-Sud, CNRS, Université Paris-Saclay, 91405 Orsay, France}
\email{yuyuetony@gmail.com}
\date{January 5, 2016 (Revised on December 30, 2016)}
\subjclass[2010]{Primary 14G22; Secondary 14A20, 18B25, 18F99}
\keywords{derived geometry, rigid analytic geometry, non-archimedean geometry, Berkovich space, analytic stack, higher stack, pregeometry, structured topos}

\begin{abstract}
	We propose a derived version of non-archimedean analytic geometry.
	Intuitively, a derived non-archimedean analytic space consists of an ordinary non-archimedean analytic space equipped with a sheaf of derived rings.
	Such a naive definition turns out to be insufficient.
	In this paper, we resort to the theory of pregeometries and structured topoi introduced by Jacob Lurie.
	We prove the following three fundamental properties of derived non-archimedean analytic spaces:
	
	(1) The category of ordinary non-archimedean analytic spaces embeds fully faithfully into the $\infty$-category of derived non-archimedean analytic spaces.
	
	(2) The $\infty$-category of derived non-archimedean analytic spaces admits fiber products.
	
	(3) The $\infty$-category of higher non-archimedean analytic Deligne-Mumford stacks embeds fully faithfully into the $\infty$-category of derived non-archimedean analytic spaces.
	The essential image of this embedding is spanned by $n$-localic discrete derived non-archimedean analytic spaces.
	
	We will further develop the theory of derived non-archimedean analytic geometry in our subsequent works.
	Our motivations mainly come from intersection theory, enumerative geometry and mirror symmetry.
\end{abstract}

\maketitle

\tableofcontents

\section{Introduction}

\paragraph{\textbf{Motivations}}
Derived algebraic geometry is a far-reaching enhancement of classical algebraic geometry.
We refer to Toën-Vezzosi \cite{HAG-I,HAG-II} and Lurie \cite{Lurie_Thesis,DAG-V} for foundational works.
The prototypical idea of derived algebraic geometry originated from intersection theory:
Let $X$ be a smooth complex projective variety.
Let $Y, Z$ be two smooth closed subvarieties of complementary dimension.
We want to compute their intersection number.
When $Y$ and $Z$ intersect transversally, it suffices to count the number of points in the set-theoretic intersection $Y\cap Z$.
When $Y$ and $Z$ intersect non-transversally, we have two solutions:
The first solution is to perturb $Y$ and $Z$ into transverse intersection;
the second solution is to compute the Euler characteristic of the derived tensor product $\cO_Y\otimes^\rL_{\cO_X}\cO_Z$ of the structure sheaves.
The second solution can be seen as doing perturbation in a more conceptual and algebraic way.
It suggests us to consider spaces with a structure sheaf of derived rings instead of ordinary rings.
This is one main idea of derived algebraic geometry.

Besides intersection theory, motivations for derived algebraic geometry also come from deformation theory, cotangent complexes, moduli problems, virtual fundamental classes, homotopy theory, etc.\ (see Toën \cite{Toen_Derived_2014} for an excellent introduction).
All these motivations apply not only to algebraic geometry, but also to analytic geometry.
Therefore, a theory of derived analytic geometry is as desirable as derived algebraic geometry.

The \emph{purpose} of this paper is to define a notion of derived space in non-archimedean analytic geometry and then study their basic properties.
A non-archimedean field is a field with a complete nontrivial non-archimedean absolute value.
By non-archimedean analytic geometry, we mean the theory of analytic geometry over a non-archimedean field $k$, initiated by Tate \cite{Tate_Rigid_1971}, then systematically developed by Raynaud \cite{Raynaud_Geometrie_analytique_rigide_1974}, Berkovich \cite{Berkovich_Spectral_1990,Berkovich_Etale_1993}, Huber \cite{Huber_Generalization_1994,Huber_Etale_1996} and other mathematicians with different levels of generalizations.
The survey \cite{Conrad_Several_approaches_2008} by Conrad gives a friendly overview of the subject.
We will restrict to the category of quasi-paracompact\footnote{A rigid \kanal space is called quasi-paracompact if it has an admissible affinoid covering of finite type.} quasi-separated rigid $k$-analytic spaces, which is the common intersection of the various approaches to non-archimedean analytic geometry mentioned above.
For readers more familiar with Berkovich spaces, we remark that this category is equivalent to the category of paracompact strictly $k$-analytic spaces in the sense of Berkovich (cf.\ \cite[\S 1.6]{Berkovich_Etale_1993}).

A more direct motivation of our study comes from mirror symmetry.
Mirror symmetry is a conjectural duality between Calabi-Yau manifolds (cf.\ \cite{Yau_Essays_1992,Voisin_Symetrie_1996,Cox_Mirror_symmetry_1999,Hori_Mirror_symmetry_2003}).
More precisely, mirror symmetry concerns degenerating families of Calabi-Yau manifolds instead of individual manifolds.
An algebraic family of Calabi-Yau manifolds over a punctured disc gives rise naturally to a non-archimedean analytic space over the field $\C\llp t\rrp$ of formal Laurent series.
In \cite[\S 3.3]{Kontsevich_Homological_2001}, Kontsevich and Soibelman suggested that the theory of Berkovich spaces may shed new light on the study of mirror symmetry.
Progresses along this direction are made by Kontsevich-Soibelman \cite{Kontsevich_Affine_2006} and by Tony Yue Yu \cite{Yu_Balancing_2013,Yu_Gromov_2014,Yu_Tropicalization_2014,Yu_Enumeration_cylinders_2015,Yu_Enumeration_cylinders_II_2016}.
The works by Gross, Hacking, Keel, Siebert \cite{Gross_Real_Affine_2011,Gross_Mirror_Log_published,Gross_Tropical_2011} are in the same spirit.

More specifically, in \cite{Yu_Enumeration_cylinders_2015}, a new geometric invariant is constructed for log Calabi-Yau surfaces, via the enumeration of holomorphic cylinders in non-archimedean geometry.
These invariants are essential to the reconstruction problem in mirror symmetry.
In order to go beyond the case of log Calabi-Yau surfaces, a general theory of virtual fundamental classes in non-archimedean geometry must be developed.
The situation here resembles very much the intersection theory discussed above, because moduli spaces in enumerative geometry can often be represented locally as intersections of smooth subvarieties in smooth ambient spaces.
The virtual fundamental class is then supposed to be the set-theoretic intersection after perturbation into transverse situations.
However, perturbations do not necessarily exist in analytic geometry.
Consequently, we need a more general and more robust way of constructing the virtual fundamental class, whence the need for derived non-archimedean geometry.

\bigskip

\paragraph{\textbf{Basic ideas and main results}}

Our previous discussion on intersection numbers suggests the following definition of a derived scheme:

\begin{defin}[cf.\ \cite{Toen_Derived_2014}]\label{def:derived_scheme}
A \emph{derived scheme} is a pair $(X,\cO_X)$ consisting of a topological space $X$ and a sheaf $\cO_X$ of commutative simplicial rings on $X$, satisfying the following conditions:
\begin{enumerate}[(i)]
\item The ringed space $(X,\pi_0(\cO_X))$ is a scheme.
\item For each $j\ge 0$, the sheaf $\pi_j(\cO_X)$ is a quasi-coherent sheaf of $\pi_0(\cO_X)$-modules.
\end{enumerate}
\end{defin}

In order to adapt \cref{def:derived_scheme} to analytic geometry, we need to impose certain analytic structures on the sheaf $\cO_X$.
For example, we would like to have a notion of norm on the sections of $\cO_X$;
moreover, we would like to be able to compose the sections of $\cO_X$ with convergent power series.
A practical way to organize such analytic structures is to use the notions of pregeometry and structured topos introduced by Lurie \cite{DAG-V}.
We will review these notions in \cref{sec:definitions} (see also the introduction of \cite{Porta_DCAGI} for an expository account of these ideas).

We will define a pregeometry $\cTank$ which will help us encode the theory of non-archimedean geometry responsible for our purposes.

After that, we are able to introduce our main object of study: derived \kanal spaces.
It is a pair $(\cX,\cO_\cX)$ consisting of an \inftopos $\cX$ and a $\cTank$-structure $\cO_\cX$, satisfying analogs of \cref{def:derived_scheme} Conditions (i)-(ii).
We will explain more intuitions in \cref{rem:definition_intuition}.

The goal of this paper is to study the basic properties of derived \kanal spaces and to compare them with ordinary \kanal spaces.
Here are our main results:

\begin{thm}[cf.\ \cref{thm:fully_faithfulness}]
The category of quasi-paracompact quasi-separated rigid \kanal spaces embeds fully faithfully into the \infcat of derived \kanal spaces.
\end{thm}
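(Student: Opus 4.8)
The plan is to make the comparison functor explicit and then prove full faithfulness by reducing to the case of affinoid targets, where Lurie's relative spectrum $\SpecEtAn$ has an explicit universal property. I would define $\Phi$ from the category $\Ank$ of quasi-paracompact quasi-separated rigid \kanal spaces to $\dAnk$ by sending $X$ to $\Phi X := (\Sh(X),\cO_X)$, where $\Sh(X)$ is the $0$-localic $\infty$-topos of sheaves of spaces on the underlying space of $X$ (equivalently, of the associated Berkovich space), and $\cO_X\colon\cTank\to\Sh(X)$ is the discrete $\cTank$-structure carrying an affinoid building block $\Sp B$ of $\cTank$ to the sheaf $U\mapsto\Hom_{\Ank}(U,\Sp B)$. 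Two things must be checked: that $\cO_X$ is a \emph{local} $\cTank$-structure --- it must preserve finite products and the admissible pullbacks of $\cTank$ and send admissible covering sieves to effective epimorphisms, which is exactly the compatibility of the topology on $\cTank$ fixed in \cref{sec:definitions} with the admissible topology on $\Ank$, together with Tate's acyclicity theorem --- and that $(\Sh(X),\cO_X)$ satisfies the analogues of Conditions (i)--(ii) of \cref{def:derived_scheme}; the latter is automatic, since $\cO_X$ is discrete, so that $\pi_0(\cO_X)=\cO_X$ recovers $X$ and $\pi_j(\cO_X)=0$ for $j>0$. Functoriality of $\Phi$ is routine, as a morphism of \kanal spaces induces a morphism of the associated structured topoi.

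The crux is the claim that for $\Sp B$ affinoid and $X\in\Ank$ arbitrary, the canonical map $\Hom_{\Ank}(X,\Sp B)\to\Map_{\dAnk}(\Phi X,\Phi(\Sp B))$ is an equivalence onto a discrete space. When $X=\Sp A$ is affinoid I would identify $\Phi(\Sp A)$ with $\SpecEtAn(A)$, the relative spectrum of $A$ regarded as a discrete structure for the étale sub-pregeometry $\cTetk\subseteq\cTank$, and invoke its universal property: morphisms into $\SpecEtAn(A)$ from a derived \kanal space are computed by local morphisms of $\cTetk$-structures, and between the discrete structures attached to affinoid algebras this is the discrete set of morphisms of $k$-affinoid algebras $B\to A$, i.e.\ $\Hom_{\Ank}(\Sp A,\Sp B)$. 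For general $X$, I would observe that both sides of the comparison are sheaves in the variable $X$ for the admissible topology --- the right-hand side because $\Phi$ sends an admissible covering with (hyper)nerve $X_\bullet$ to a colimit diagram $\Phi X\simeq\colim_{\Delta^{\mathrm{op}}}\Phi X_\bullet$ in $\dAnk$ (descent for the underlying $\infty$-topoi, the discrete structures restricting and gluing), so that $\Map_{\dAnk}(\Phi X,\Phi(\Sp B))\simeq\lim_{[n]\in\Delta}\Map_{\dAnk}(\Phi X_n,\Phi(\Sp B))$ --- and since every such space admits an admissible hypercovering by disjoint unions of affinoids, the affinoid case propagates to all of $\Ank$.

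For arbitrary $X$ and $Y$: the underlying spaces of \kanal spaces are sober (their associated Berkovich spaces are locally compact Hausdorff) and $\Sh(X),\Sh(Y)$ are $0$-localic, so geometric morphisms $\Sh(X)\to\Sh(Y)$ are just continuous maps $|X|\to|Y|$, a discrete datum, while the structure-sheaf part of a morphism $\Phi X\to\Phi Y$ is a local morphism between discrete $\cTank$-structures, again discrete; hence $\Map_{\dAnk}(\Phi X,\Phi Y)$ is discrete. Given $f\colon\Phi X\to\Phi Y$ with underlying continuous map $|f|\colon|X|\to|Y|$, choose an admissible affinoid covering $\{Y_i\}$ of $Y$, set $X_i:=|f|^{-1}(Y_i)$ --- an admissible covering of $X$ by (quasi-paracompact quasi-separated) analytic domains --- and restrict $f$ to morphisms $\Phi X_i\to\Phi Y_i$ with affinoid target; the previous step yields unique morphisms of \kanal spaces $g_i\colon X_i\to Y_i$ with $\Phi g_i=f|_{X_i}$. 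Covering each overlap $Y_i\cap Y_j$ by affinoids and applying the affinoid-target case once more shows the $g_i$ agree on intersections, hence glue to a unique $g\colon X\to Y$ with $\Phi g=f$. Together with discreteness this gives the equivalence $\Hom_{\Ank}(X,Y)\xrightarrow{\ \sim\ }\Map_{\dAnk}(\Phi X,\Phi Y)$, i.e.\ full faithfulness. (Equivalently, this analysis exhibits $\Phi$ as left adjoint to the functor $(\cX,\cO_\cX)\mapsto(\cX,\pi_0\cO_\cX)$ sending a derived \kanal space to the ordinary one underlying it, with the composite the identity --- a cleaner packaging, though verifying the adjunction needs the same affinoid input.)

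The step I expect to be the main obstacle is the affinoid base case: pinning down the space of local morphisms of $\cTank$-structured topoi between two affinoid spaces and recognizing it as the discrete set of $k$-affinoid algebra morphisms. This is where the precise shape of $\cTank$ --- its admissibility class, its topology, and the way it encodes substitution of convergent power series --- and Tate's acyclicity theorem are indispensable: they ensure $\cO_X$ is a genuine sheaf and underpin both the universal property of $\SpecEtAn$ and the topos-theoretic inputs (hyperdescent along affinoid hypercoverings, identification of geometric morphisms with continuous maps via sobriety). Relative to derived algebraic geometry, the new difficulty is that the generators of $\cTank$ are affinoid algebras rather than polynomial algebras, so the rigidity of structure-sheaf morphisms must be extracted from the convergence conditions built into the pregeometry rather than from a freeness property.
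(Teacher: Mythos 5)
Your construction starts from the wrong underlying \inftopos, and this is not a cosmetic choice: \cref{def:derived_space} requires the pair $(\cX_{/U_i},\pi_0(\cO\alg_\cX|U_i))$ to be the ringed \inftopos of the \emph{\'etale site} of a \kanal space, so the paper takes $\Phi(X)=(\cX_X^\wedge,\cO_X)$ with $\cX_X=\Sh(X\et)$, which is $1$-localic, rather than the $0$-localic topos of sheaves on the underlying (Berkovich) space. With your $\Sh(|X|)$ the functor does not land in $\dAnk$ at all; and once the correct ($1$-localic) topos is used, the step ``geometric morphisms are just continuous maps, hence a discrete datum'' fails. Mapping spaces of $1$-localic structured topoi are a priori only $1$-truncated, and the entire difficulty of \cref{prop:discrete_mapping_spaces_I} is to kill $\pi_1$: the paper does this by algebraizing to locally ringed $1$-topoi (\cref{lem:alg_faithful}, \cref{lem:alg_homotopy_monomorphism}) and then invoking a rigidity statement (\cref{lem:rigidity}, proved via the embedding of an affinoid into a relative polydisc, \cref{lem:affine_embedding}). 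None of this is replaced by anything in your argument. A smaller but real problem in the same vein: for a bare continuous map $|f|\colon|X|\to|Y|$ the preimage $|f|^{-1}(Y_i)$ of an affinoid domain is merely a closed subset, not an analytic domain, so the gluing step as written does not make sense before you already know $f$ comes from a morphism of analytic spaces.

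The second gap is that the affinoid base case --- which you correctly identify as the crux --- is asserted rather than proved. The universal property of the spectrum (\cref{rem:spectrum}) expresses $\Map_{\RTop(\cTank)}((\cY,\cO_\cY),\Spec^{\cTank}(X))$ in terms of the geometric envelope $\cG_{\mathrm{an}}(k)$, and extracting from this the discrete set $\Hom_{\Ank}(\Sp A,\Sp B)$ is exactly the content one has to supply; there is no sub-pregeometry $\cTetk\subseteq\cTank$ whose relative spectrum does this for free. The paper's substitute is \cref{lem:first_fully_faithful}: the functor to locally ringed $1$-topoi $X\mapsto(\Sh_\rSet(X\et),\cO_X\alg)$ is fully faithful, proved by a stalkwise analysis at geometric points of the associated adic space together with the \'etale local structure of affinoid algebras (\cref{lem:qet_structure} type presentations) and Tate acyclicity. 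Fullness of $\Phi$ is then deduced by algebraizing a morphism of structured topoi, lifting it to a morphism of \kanal spaces via that lemma, and using faithfulness of algebraization on $0$-truncated structures to see the two morphisms agree. Your outline names the right ingredients (Tate acyclicity, the shape of $\cTank$) but does not assemble them into an argument, so as it stands the proof has a hole precisely at its central step.
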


\begin{thm}[cf.\ \cref{thm:fiber_products}]
The \infcat of derived \kanal spaces admits fiber products.
\end{thm}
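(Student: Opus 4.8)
The plan is to reduce the statement to a known existence result for fiber products in the $\infty$-category of $\cTank$-structured $\infty$-topoi and then check that the subcategory of derived $k$-analytic spaces is closed under such limits. Recall from Lurie's general theory of structured topoi (\cite{DAG-V}) that for any pregeometry $\cT$, the $\infty$-category $\Str(\cT)$ of $\cT$-structured $\infty$-topoi $(\cX,\cO_\cX)$ admits all limits; in particular pushouts of $\cT$-structures exist, hence pullbacks of $\cT$-structured $\infty$-topoi exist. So given a cospan $(\cX,\cO_\cX)\to(\cZ,\cO_\cZ)\leftarrow(\cY,\cO_\cY)$ of derived $k$-analytic spaces, I would first form the fiber product in $\Str(\cTank)$: the underlying $\infty$-topos is $\cX\times_\cZ\cY$ (fiber product of $\infty$-topoi, which exists), and the structure sheaf is obtained by pulling back $\cO_\cX$ and $\cO_\cY$ to $\cX\times_\cZ\cY$ and forming the pushout over the pulled-back $\cO_\cZ$ in the $\infty$-category of $\cTank$-structures on $\cX\times_\cZ\cY$.

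The real work is to verify that this candidate fiber product is again a derived $k$-analytic space, i.e.\ that it satisfies the analogues of conditions (i)–(ii) of \cref{def:derived_scheme}: that the $0$-truncation of the underlying structured topos is (the topos-theoretic incarnation of) an ordinary rigid $k$-analytic space, and that the higher homotopy sheaves $\pi_j(\cO)$ are coherent over $\pi_0(\cO)$. For condition (i), I would argue locally: both conditions are local on $\cX$, $\cY$, $\cZ$, so I may reduce to the case where all three are derived $k$-affinoids, where the structure is controlled by a derived $k$-affinoid algebra. There one should have an explicit model: the fiber product corresponds to a completed tensor product of derived $k$-affinoid algebras, $A\,\widehat\otimes_C^{\rL}\,B$, and its $\pi_0$ is (a suitable completion of) the ordinary tensor product $\pi_0(A)\widehat\otimes_{\pi_0(C)}\pi_0(B)$, which is again $k$-affinoid — this is the classical statement that affinoid algebras are stable under completed tensor product, giving condition (i). For condition (ii), the coherence of the homotopy sheaves, I would use the convergent spectral sequence (Tor or the bar-construction spectral sequence) computing $\pi_*(A\,\widehat\otimes_C^{\rL}\,B)$ from $\Tor^{\pi_0(C)}_*(\pi_0(A),\pi_0(B))$ together with the higher $\pi_j$; since affinoid algebras are Noetherian, these Tor-modules are coherent, and an induction on the Postnikov tower together with the coherence of $\pi_j(A)$, $\pi_j(B)$ over the base propagates coherence to $\pi_j$ of the completed tensor product.

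The key technical point — and the main obstacle — is the interaction between the completion (the analytic, as opposed to merely algebraic, nature of the construction) and the derived tensor product: one must make sense of the \emph{derived} completed tensor product of derived $k$-affinoid algebras and show it represents the fiber product of the associated structured topoi, and that completion is suitably flat / exact on the relevant categories of coherent modules so that it commutes with taking homotopy groups (up to a controlled spectral sequence). This is precisely where the pregeometry formalism is needed: the $\cTank$-structure encodes the admissible operations (composition with convergent power series, norms), and one has to check that the pushout of $\cTank$-structures — a purely formal operation in $\Str(\cTank)$ — is computed, on affinoids, by this completed derived tensor product, i.e.\ that the $\Spec$ functor from derived affinoid algebras to derived $k$-analytic spaces carries finite colimits of algebras to finite limits of spaces. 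Once this local comparison is in place, a standard gluing argument (using that being a derived $k$-analytic space is local and that the fiber-product $\infty$-topos has a cover by such affinoid pieces) finishes the proof.
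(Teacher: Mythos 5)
Your very first step does not hold: the $\infty$-category of $\cT$-structured \inftopoi does \emph{not} admit arbitrary fiber products for a general pregeometry $\cT$, and \cite{DAG-V} proves no such statement. The obstruction is exactly where you wave at it — one would need pushouts in $\Strloc_{\cT}(\cX\times_{\cZ}\cY)$, i.e.\ in the category of $\cT$-structures with \emph{local} morphisms, and these need not exist (the pushout of product-preserving functors need not satisfy the admissibility-pullback and covering conditions, and the structural maps into it need not be local). This is precisely why the paper, following \cite{DAG-IX}, has to work: the existence of the fiber product, not merely its membership in $\dAnk$, is the main content of \cref{thm:fiber_products}. The actual route is (a) fiber products along a \emph{closed immersion} exist, by the unramifiedness of $\cTank$ (\cref{cor:Tkan_unramified}) and of the transformation $\cTdisck\to\cTank$ (\cref{prop:unramified_transformation}) combined with \cite[Theorem 1.6]{DAG-IX}, under the hypothesis that $f\inv\cO_\cX\to\cO_\cY$ is an effective epimorphism (\cref{prop:closed_fiber_products_Top}, \cref{prop:closed_fiber_products_dAn}); (b) binary products exist because every derived \kanal space locally embeds as a closed substructure of $\HSpec^{\cTank}(V)$ for $V$ a \emph{smooth ordinary} \kanal space (\cref{lem:closed_devissage}), reducing to $\HSpec^{\cTank}(V\times W)$ (\cref{lem:products_dAn}); (c) the general case follows by the graph trick, pulling back along the diagonal, which is a closed immersion since one may assume the truncation separated. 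Without some substitute for (a)--(c), your reduction to ``check the subcategory is closed under limits'' never gets off the ground.

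A secondary gap: your local model via a ``derived completed tensor product $A\,\widehat{\otimes}^{\rL}_C B$ of derived $k$-affinoid algebras'' is not available in this framework — the paper has no antiequivalence between derived affinoids and a category of derived affinoid algebras, and constructing and controlling a derived completed tensor product (your acknowledged ``key technical point'') is itself the hard analytic problem. The paper's architecture is designed to avoid it: in the closed-immersion case the unramifiedness of $\cTdisck\to\cTank$ shows the pushout of structure sheaves is computed on the underlying $\CAlg_k$-valued sheaves by an \emph{ordinary} derived tensor product (completion is invisible because the relevant maps of stalks are flat, cf.\ the proof of \cref{prop:unramified_transformation}), and coherence of the $\pi_n$ is then obtained by the Postnikov-style approximation argument of \cref{lem:sheaves_coherent_modules} rather than by a Tor spectral sequence for a completed derived tensor product. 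So the two genuinely missing ideas are the closed-immersion/unramifiedness mechanism and the local closed embedding into smooth non-derived spaces; the rest of your outline (locality of conditions (i)--(ii), coherence via finiteness of Tor over Noetherian rings) is in the right spirit but rests on those.
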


Let $(\Ank,\tauet)$ denote the étale site of rigid \kanal spaces (cf.\ \cite[\S 8.2]{Fresnel_Rigid_2004}) and let $\bP\et$ denote the class of étale morphisms.
The triple $(\Ank,\tauet,\bP\et)$ constitutes a geometric context in the sense of \cite{Porta_Yu_Higher_analytic_stacks_2014}.
The associated geometric stacks are called \emph{higher \kanal Deligne-Mumford stacks}.

\begin{thm}[cf.\ \cref{cor:underived_higher_kanal_stacks}]
	The \infcat of higher \kanal Deligne-Mumford stacks embeds fully faithfully into the \infcat of derived \kanal spaces.
	The essential image of this embedding is spanned by $n$-localic discrete derived \kanal spaces.
\end{thm}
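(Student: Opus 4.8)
The plan is to build an embedding $\Phi$ from the \infcat of higher \kanal \DM stacks into the \infcat $\dAnk$ of derived \kanal spaces, check that it refines the embedding of \cref{thm:fully_faithfulness} on ordinary \kanal spaces, show it is fully faithful, and then identify its essential image.

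\emph{Construction of $\Phi$.} By Porta--Yu \cite{Porta_Yu_Higher_analytic_stacks_2014}, every higher \kanal \DM stack $\cX$ admits a presentation as the geometric realization $\abs{U_\bullet}$ of an étale hypergroupoid $U_\bullet$ whose terms are ordinary \kanal spaces. I would set $\Phi(\cX):=\colim_{[n]\in\bDelta\op}\Phi(U_n)$, the colimit taken in $\dAnk$, where on ordinary spaces $\Phi$ is the embedding of \cref{thm:fully_faithfulness}. Two points must be settled: (a) $\dAnk$ admits the geometric realizations in question — this is an étale-gluing statement for derived \kanal spaces, parallel to the gluing of ordinary analytic spaces along étale maps, using that $\cTank$-structured \inftopoi glue along étale morphisms and that the local models (derived \kanal affinoids) do; and (b) the output is independent of the presentation, which follows from (a) together with the fact that any two presentations admit a common refinement, making the two colimit diagrams cofinally equivalent. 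Equivalently, $\Phi(\cX)$ is the small étale \inftopos of $\cX$ together with the $\cTank$-structure obtained by étale-sheafifying the affinoid structure sheaves; the two descriptions agree by descent.

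\emph{The image is $n$-localic and discrete.} Discreteness is inherited: each $\Phi(U_n)$ is $0$-truncated, étale morphisms are flat, and along a flat étale hypergroupoid the homotopy sheaves $\pi_j$ of the colimit structure sheaf are computed termwise, so they vanish for $j>0$. For the localic bound, an $m$-geometric étale stack is built from $0$-localic affinoid topoi by $m$ successive étale quotients, each of which raises the localic level by a bounded amount; hence an $m$-geometric \kanal \DM stack has $n$-localic étale \inftopos with $n=n(m)$, so $\Phi$ carries the filtration by geometric level into the filtration by localic level.

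\emph{Full faithfulness.} It suffices to prove that $\Map(\cX,\cY)\to\Map_{\dAnk}(\Phi\cX,\Phi\cY)$ is an equivalence for all higher \kanal \DM stacks $\cX,\cY$. First take $\cY$ to be an ordinary \kanal space: writing $\cX=\abs{U_\bullet}$, one has $\Map_{\dAnk}(\Phi\cX,\Phi\cY)=\lim_{[n]\in\bDelta}\Map_{\dAnk}(\Phi U_n,\Phi\cY)=\lim_{[n]\in\bDelta}\Map_{\Ank}(U_n,\cY)=\Map(\cX,\cY)$, where the first equality is the construction of $\Phi$, the second is \cref{thm:fully_faithfulness}, and the third is the presentation of $\cX$ as a colimit on the stack side. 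For general $\cY$, choose an étale atlas $V\to\cY$ with $V$ an ordinary \kanal space and let $V_\bullet=\Cech(V/\cY)$; then $\Map(\cX,\cY)=\lim_{[n]\in\bDelta}\Map(\cX,V_n)$ and $\Map_{\dAnk}(\Phi\cX,\Phi\cY)=\lim_{[n]\in\bDelta}\Map_{\dAnk}(\Phi\cX,\Phi V_n)$, the latter because $\dAnk$ satisfies étale descent and $\Phi$ carries $V_\bullet$ to $\Cech(\Phi V/\Phi\cY)$ — this uses that fiber products of derived \kanal spaces along étale morphisms are computed as in the underived case. Term by term the two cosimplicial spaces agree by the first step, so $\Phi$ is fully faithful. \textbf{The main obstacle} I anticipate lies exactly here: establishing the étale-gluing/descent property of $\dAnk$, the compatibility of $\Phi$ with the relevant geometric realizations and with étale fiber products, and the matching of the Grothendieck topology built into $\cTank$ with the usual étale topology $\tauet$ on ordinary \kanal spaces — so that a discrete $\cTank$-structured \inftopos which is locally a derived \kanal affinoid is literally an étale sheaf of ordinary \kanal spaces.

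\emph{Essential image.} Let $(\cX,\cO_\cX)$ be a discrete $n$-localic derived \kanal space. By the local structure of derived \kanal spaces it is étale-locally a derived \kanal affinoid, and discreteness forces these local models to be ordinary affinoid \kanal spaces; form the associated \Cech{} hypergroupoid $U_\bullet$ of ordinary affinoids. Since $\cX$ is $n$-localic (and the ambient \inftopoi are hypercomplete), this hypergroupoid may be replaced by a bounded one, whose geometric realization $\abs{U_\bullet}$ in the \infcat of \kanal stacks is a higher \kanal \DM stack $\cX_0$. By the construction of $\Phi$ and the full faithfulness just proved, $\Phi(\cX_0)=\colim_{[n]\in\bDelta\op}\Phi(U_n)\simeq(\cX,\cO_\cX)$, so every $n$-localic discrete derived \kanal space is in the essential image. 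Together with full faithfulness this identifies the image with the full subcategory spanned by $n$-localic discrete derived \kanal spaces.
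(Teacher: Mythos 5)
Your argument has a genuine gap at the heart of the full-faithfulness step. For a general target $\cY$ you write $\Map(\cX,\cY)=\lim_{[n]\in\bDelta}\Map(\cX,V_n)$ where $V_\bullet$ is the \v{C}ech nerve of an atlas of $\cY$, and likewise on the derived side. This is false: $\cY$ is the \emph{colimit} of $V_\bullet$, and mapping \emph{into} a geometric realization is not the limit of maps into the terms (descent lets you compute maps \emph{out of} a colimit, not into one; already for $\cY=\rB G$ with atlas a point this fails, since a map to $\rB G$ need not lift to the atlas). Reducing maps from an affinoid into a quotient to data on the cover is exactly where the work lies, and the paper organizes the whole comparison so as never to face it from your direction: it constructs a functor the other way, $\phi\colon\dAnk\to\Sh(\dAfdk,\tauet)$, by restricted Yoneda (so functoriality is free), proves $\phi$ is fully faithful by writing every derived \kanal space as a colimit of derived affinoids (\cref{lem:affine_site_big_site}, \cref{lem:affine_hypercover}, \cref{prop:phi_fully_faithful}), shows $\phi$ restricts to an equivalence $\dAnk^{\le n}\simeq\mathrm{DM}_n$ (\cref{prop:geometricity}, \cref{prop:phi_essentially_surjective}), and only then transports the underived stacks into $\dAnk$ via the fully faithful $j_s\colon\Sh(\Afd_k,\tauet)\to\Sh(\dAfdk,\tauet)$ and the inverse of $\phi_n$. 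Your essential-image argument is recovered in the paper as \cref{prop:phi_essentially_surjective}, but note that the key input there is \cref{lem:decreasing_truncated_level} (the localic level bounds the truncation of atlas objects), not a replacement of the hypergroupoid by a bounded one.

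Two secondary problems. First, defining $\Phi(\cX)\coloneqq\colim\Phi(U_\bullet)$ by choosing a presentation does not by itself produce an $\infty$-functor on the category of stacks; ``common refinements'' do not suffice to supply the higher coherences, which is precisely why the paper inverts a functor going the other way. Second, the \v{C}ech nerve of an affinoid atlas of a discrete derived \kanal space need not have ordinary \kanal spaces as terms in degrees $\ge 1$: the category of \kanal spaces is not closed under étale equivalence relations, so $U_0\times_{\cX}U_0$ is only an étale ``algebraic space'' over $U_0$. This is the reason the paper introduces étale derived algebraic spaces and proves \cref{prop:etale_algebraic_spaces} before running the induction on geometric level. (Minor point: the étale topos of an affinoid is $1$-localic, not $0$-localic, cf.\ \cref{rem:points_of_cX_X}, so the bookkeeping between geometric level and localic level must start there.)
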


\bigskip
\paragraph{\textbf{Outline of the paper}}

In \cref{sec:definitions}, we introduce the pregeometry $\cTank$ and the notion of derived \kanal space.

In \cref{sec:pregeometry}, we study the properties of the pregeometry $\cTank$.
We prove the unramifiedness conditions as well as the compatibility with truncations.

In \cref{sec:fullyfaithfulness}, we construct a functor $\Phi\colon\Ank\to\dAnk$ from the category of \kanal spaces to the \infcat of derived \kanal spaces.
We prove that $\Phi$ is a fully faithful embedding.

In \cref{sec:closed_etale}, we study closed immersions and étale morphisms under the embedding $\Phi$.

In \cref{sec:fiber_products}, we prove the existence of fiber products between derived \kanal spaces.

In \cref{sec:essential_image}, we characterize the essential image of the embedding $\Phi$.
Moreover, we compare derived \kanal spaces with higher \kanal stacks in the sense of \cite{Porta_Yu_Higher_analytic_stacks_2014}.

\ifpersonal
\bigskip
\paragraph{\textbf{Personal note of outline: }}
In \cref{sec:pregeometry}, we prove the unramifiedness of $\cTank$ (\cref{cor:Tkan_unramified}) and the unramifiedness of the morphism $\cTdisck\to\cTank$ (\cref{prop:unramified_transformation}) following \cite[\S 4]{DAG-IX}.

For the first, we first show that a closed immersion of \kanal spaces induces a closed immersion of \inftopoi.
\cref{lem:descent_for_closed_subtopoi} is a gluing lemma which allows us to reason only for affinoid spaces.

\cref{lem:alg_conservative} and \cref{prop:alg_effective_epi} are two auxiliary results concerning the morphism $\cTdisck\to\cTank$.

\cref{prop:closed_fiber_products_Top} is a corollary of unramifiedness, which shows the interest of the definition of unramifiedness. That is the property of unramifiedness which will be used later.

Unramifiedness of transformation of pregeometries implies that pullback of structured topoi along such morphism preserves closed immersions and pullbacks along closed immersions.

The main result of \cref{sec:fullyfaithfulness} is \cref{thm:fully_faithfulness}.

We define the functor $\Phi\colon\Ank\to\dAnk$ as follows.
First we define the functor $\Phi$ on objects by \cref{lem:inclusion}.
In order to define $\Phi$ on morphisms, we need to show that the mapping spaces are discrete (\cref{prop:discrete_mapping_spaces_I}), so we do not need to worry about higher homotopies.
In order to prove \cref{prop:discrete_mapping_spaces_I},
we construct an auxiliary functor $\Upsilon\colon\Ank\to\LRT$ and prove that it is fully faithful (\cref{lem:first_fully_faithful}).
Then \cref{prop:discrete_mapping_spaces_I} follows from \cref{lem:alg_faithful} and \cref{lem:alg_homotopy_monomorphism}.

Now the proof of \cref{thm:fully_faithfulness} is done as follows.
The faithfullness of $\Phi$ is easy, which is the second paragraph of the proof of \cref{thm:fully_faithfulness}.
The fullness is proved in the following way.
By construction, $\Upsilon$ is $\Phi$ composed with truncation and algebraization.
Given a morphism between $\varphi\colon\Phi(X)\to\Phi(Y)$, first we apply algebraization.
Then we use \cref{lem:first_fully_faithful} to obtain a morphism $f\colon X\to Y$, which induces another morphism $\Phi(f)\colon\Phi(X)\to\Phi(Y)$, whose algebraization equals that of $\varphi$.
\cref{lem:alg_faithful} says that for 0-truncated topoi, algebraization is faithful.
Therefore, $f$ is what we want.

\cref{sec:fiber_products} shows the existence of fiber products.
\cref{prop:closed_fiber_products_dAn} shows the existence of fiber products along a closed immersion.
It is analog of \cite[Proposition 12.10]{DAG-IX}.
Lurie deduces (v) from (iv).
We will first prove (v) and then deduce (iv).
In order to deduce (iv) from (v), we need \cref{lem:sheaves_coherent_modules}, which is analog of \cite[12.11]{DAG-IX}.

\cref{lem:closed_devissage} shows that a derived \kanal space can locally be embedded into non-derived smooth \kanal spaces. It is analog of \cite[12.13]{DAG-IX}.
\cref{lem:products_dAn} shows the existence of products over a point.
It is analog of \cite[12.14]{DAG-IX}.
Finally, we are able to deduce \cref{thm:fiber_products} as in \cite[12.12]{DAG-IX}.
Lurie's proof in the complex analytic case is a bit easier because the underlying topological space of the fiber product of complex analytic spaces is just the fiber product of topological spaces.

\cref{sec:essential_image}.
\cite[12.8]{DAG-IX} cannot literally hold in \kanal case.
Because the category of \kanal spaces is not closed under étale equivalence relations (cf.\ \cite{Conrad_Non-archimedean_analytification_2009}).
Moreover, \kanal spaces gives rise to $1$-localic topoi and not to $0$-localic ones.
So we present a different statement and a different proof here.
In fact we didn't understand Lurie's proof of \cite{Conrad_Non-archimedean_analytification_2009}, which involves loop spaces.

\fi

\bigskip
\paragraph{\bfseries Notations and terminology}

We refer to Bosch-Güntzer-Remmert \cite{Bosch_Non-archimedean_1984} and Fresnel-van der Put \cite{Fresnel_Rigid_2004} for the classical theory of non-archimedean analytic geometry, to Lurie \cite{HTT,Lurie_Higher_algebra} for the theory of \infcats, and to Lurie \cite{DAG-V} for the theory of structured spaces.

Throughout the paper, by \kanal spaces, we mean quasi-paracompact quasi-separated rigid \kanal spaces.

We denote by $\rSet$ the category of sets and by $\cS$ the \infcat of spaces.
For any small \infcat $\cC$ equipped with a Grothendieck topology $\tau$ and any presentable \infcat $\cD$, we denote by $\PSh_\cD(\cC)$ the \infcat of $\cD$-valued presheaves on $\cC$ and by $\Sh_\cD(\cC,\tau)$ the \infcat of $\cD$-valued sheaves on the \infsite $(\cC,\tau)$.
We will refer to $\cS$-valued presheaves (resp.\ sheaves) simply as presheaves (resp.\ sheaves), and denote $\PSh(\cC) \coloneqq \PSh_\cS(\cC)$, $\Sh(\cC,\tau) \coloneqq \Sh_\cS(\cC,\tau)$.
We denote the Yoneda embedding by
\[ h\colon\cC\to\PSh(\cC),\qquad X\mapsto h_X.\]

\bigskip
\paragraph{\textbf{Related works and further developments}}

Our approach is very much based on the foundational works of Lurie \cite{DAG-V,DAG-VII,DAG-VIII,DAG-IX} on derived algebraic geometry and derived complex analytic geometry.
%Ideas of derived non-archimedean analytic geometry have also been suggested by Ben-Bassat, Kremnizer and Bambozzi in \cite{Ben-Bassat_Non-archimedean_2013,Bambozzi_Dagger_2015}.

In \cite{Porta_DCAGI,Porta_DCAGII}, Mauro Porta studied the theories of analytification and deformation in derived complex analytic geometry, more specifically, the analytification functor, relative flatness, derived GAGA theorems, square-zero extensions, analytic modules and cotangent complexes.

The papers by Ben-Bassat and Kremnitzer \cite{Ben-Bassat_Non-archimedean_2013}, by Bambozzi and Ben-Bassat \cite{Bambozzi_Dagger_2015}, and by Paugam \cite{Paugam_Overconvergent_2014} suggest other approaches to derived analytic geometry.

In order to apply derived non-archimedean analytic geometry to enumerative geometry, mirror symmetry as well as other domains of mathematics, we must show that derived non-archimedean analytic spaces arise naturally in these contexts.
The key to the construction of derived structures is to prove a representability theorem in derived non-archimedean geometry.
This will be the main goal of our subsequent work \cite{Porta_Yu_Representability}.

\bigskip
\paragraph{\textbf{Acknowledgements}}
We are grateful to Vladimir Berkovich, Antoine Chambert-Loir, Brian Conrad, Antoine Ducros, Bruno Klingler, Maxim Kontsevich, Jacob Lurie, Marco Robalo, Matthieu Romagny, Pierre Schapira, Carlos Simpson, Michael Temkin, Bertrand Toën and Gabriele Vezzosi for valuable discussions.
The authors would also like to thank each other for the joint effort.
This research was partially conducted during the period Mauro Porta was supported by Simons Foundation grant number 347070 and the group GNSAGA, and Tony Yue Yu served as a Clay Research Fellow.

\section{Basic definitions} \label{sec:definitions}

Intuitively, a derived \nanal space is a ``topological space'' $\cX$ equipped with a structure sheaf $\cO_\cX$ of ``derived non-archimedean analytic rings''.
In order to give the precise definition, we introduce the notions of pregeometry and structured topos following \cite{DAG-V}.

\begin{defin}[{\cite[3.1.1]{DAG-V}}]
A \emph{pregeometry} is an \infcat $\cT$ equipped with a class of \emph{admissible} morphisms and a Grothendieck topology generated by admissible morphisms, satisfying the following conditions:
\begin{enumerate}[(i)]
\item The \infcat $\cT$ admits finite products.
\item The pullback of an admissible morphism along any morphism exists, and is again admissible.
\item For morphisms $f,g$, if $g$ and $g\circ f$ are admissible, then $f$ is admissible.
\item Every retract of an admissible morphism is admissible.
\end{enumerate}
\end{defin}

We now define two pregeometries responsible for derived non-archimedean geometry.

\begin{construction}
	We define a pregeometry $\cTank$ as follows:
	\begin{enumerate}[(i)]
		\item the underlying category of $\cTank$ is the category of smooth $k$-analytic spaces;
		\item a morphism in $\cTank$ is admissible if and only if it is étale;
		\item the topology on $\cTank$ is the étale topology (cf.\ \cite[\S 8.2]{Fresnel_Rigid_2004}).
	\end{enumerate}
\end{construction}

\begin{construction}
	We define a pregeometry $\cTdisck$ as follows:
	\begin{enumerate}[(i)]
		\item the underlying category of $\cTdisck$ is the full subcategory of the category of $k$-schemes spanned by affine spaces $\Spec(k[x_1, \ldots, x_n])$;
		\item a morphism in $\cTdisck$ is admissible if and only if it is an isomorphism;
		\item the topology on $\cTdisck$ is the trivial topology, i.e.\ a collection of admissible morphisms is a covering if and only if it is nonempty.
	\end{enumerate}
\end{construction}

\begin{defin}[{\cite[3.1.4]{DAG-V}}] \label{def:structure}
Let $\cT$ be a pregeometry, and let $\cX$ be an \inftopos.
A \emph{$\cT$-structure} on $\cX$ is a functor $\cO\colon\cT\to\cX$ with the following properties:
\begin{enumerate}[(i)]
\item The functor $\cO$ preserves finite products.
\item Suppose given a pullback diagram
\[
\begin{tikzcd}
U' \arrow{r} \arrow{d} & U \arrow{d}{f} \\
X' \arrow{r} & X
\end{tikzcd}
\]
in $\cT$, where $f$ is admissible.
Then the induced diagram
\[
\begin{tikzcd}
\cO(U') \arrow{r} \arrow{d} & \cO(U) \arrow{d} \\
\cO(X') \arrow{r} & \cO(X)
\end{tikzcd}
\]
is a pullback square in $\cX$.
\item Let $\{U_\alpha\to X\}$ be a covering in $\cT$ consisting of admissible morphisms.
Then the induced map
\[\coprod_\alpha\cO(U_\alpha)\to\cO(X)\]
is an effective epimorphism in $\cX$.
\end{enumerate}
A morphism of $\cT$-structures $\cO\to\cO'$ on $\cX$ is \emph{local} if for every admissible morphism $U\to X$ in $\cT$, the resulting diagram
	\[ \begin{tikzcd}
	\cO(U) \arrow{r} \arrow{d} & \cO'(U) \arrow{d} \\
	\cO(X) \arrow{r} & \cO'(X)
	\end{tikzcd} \]
is a pullback square in $\cX$.
We denote by $\Strloc_\cT(\cX)$ the \infcat of $\cT$-structures on $\cX$ with local morphisms.

A \emph{$\cT$-structured \inftopos} is a pair $(\cX,\cO_\cX)$ consisting of an \inftopos $\cX$ and a $\cT$-structure $\cO_\cX$ on $\cX$.
We denote by $\RTop(\cT)$ the \infcat of $\cT$-structured \inftopoi (cf.\ \cite[Definition 1.4.8]{DAG-V}).
Note that a 1-morphism $f\colon (\cX, \cO_\cX) \to (\cY, \cO_\cY)$ in $\RTop(\cT)$ consists of a geometric morphism of \inftopoi $f_*\colon\cX\rightleftarrows\cY\colon f\inv$ and a local morphism of $\cT$-structures $f^\sharp \colon f\inv \cO_\cY \to \cO_\cX$.
\end{defin}

We have a natural functor $\cTdisck \to \cTank$ induced by analytification.
Composing with this functor, we obtain an ``algebraization'' functor
\[
(-)^\mathrm{alg} \colon \Strloc_{\cTank}(\cX) \to \Strloc_{\cTdisck}(\cX).
\]
In virtue of \cite[Example 3.1.6, Remark 4.1.2]{DAG-V}, we have an equivalence induced by the evaluation on the affine line
\[\Strloc_{\cTdisck}(\cX) \xrightarrow{\ \sim\ } \Sh_{\CAlg_k}(\cX), \]
where $\CAlg_k$ denotes the \infcat of simplicial commutative algebras over $k$.

We are now ready to introduce our main object of study: derived \kanal spaces.

\begin{defin}\label{def:derived_space}
A $\cTank$-structured \inftopos $(\cX,\cO_\cX)$ is called a \emph{derived \kanal space} if $\cX$ is hypercomplete and there exists an effective epimorphism from $\coprod_i U_i$ to the final object of $\cX$ satisfying the following conditions, for every index $i$:
\begin{enumerate}[(i)]
\item The pair $(\cX_{/U_i}, \pi_0(\cO\alg_\cX | U_i))$ is equivalent to the ringed \inftopos associated to the étale site on a \kanal space $X_i$.
\item For each $j\ge 0$, $\pi_j(\cO\alg_\cX | U_i)$ is a coherent sheaf of $\pi_0(\cO\alg_\cX | U_i)$-modules on $X_i$.
\end{enumerate}
We denote by $\dAnk$ the full subcategory of $\RTop(\cTank)$ spanned by derived \kanal spaces.
\end{defin}

\begin{rem}\label{rem:definition_intuition}
	Let us explain the heuristic relation between \cref{def:derived_space} and \cref{def:derived_scheme} in the introduction.
	Let $(\cX,\cO_\cX)$ be a derived \kanal space as in \cref{def:derived_space}.
	Let $\bA^1_k$ be the \kanal affine line and let $O\coloneqq \cO_\cX (\bA^1_k)\in\cX$.
	We have the sum operation $+\colon \bA^1_k \times \bA^1_k \to \bA^1_k$ and the multiplication operation $\bullet\colon \bA^1_k\times \bA^1_k \to \bA^1_k$.
	By \cref{def:structure}(i), they induce respectively a sum operation $+\colon O\times O\to O$ and a multiplication operation $\bullet\colon O\times O\to O$ on $O$.
	Therefore, intuitively, we can think of $O$ as a sheaf of commutative simplicial rings as in \cref{def:derived_scheme}.
	Moreover, the sheaf $O$ is also equipped with analytic structures.
	For example, let $\mathbf D^1_k\subset \bA^1_k$ denote the closed unit disc.
	By \cref{def:structure}(ii), we obtain a monomorphism $\cO_\cX(\mathbf D^1_k)\hookrightarrow O$.
	We can think of $\cO_\cX(\mathbf D^1_k)$ as the subsheaf of $O$ consisting of functions of norm less than or equal to one.
	Furthermore, any holomorphic function $f$ on $\mathbf D^1_k$ induces a morphism $f_O\colon \cO_\cX(\mathbf D^1_k)\to O$, which we think of as the composition with $f$.
	(See also the discussion after \cref{def:derived_scheme}.)
\end{rem}

\begin{rem}
	The hypercompleteness assumption in \cref{def:derived_space} will ensure that the underlying \inftopos of a derived \kanal space has enough points (cf.\ \cref{rem:points_of_cX_X}).
\end{rem}

The goal of this paper is to study the basic properties of derived \kanal spaces and to compare them with ordinary \kanal spaces as well as with the higher \kanal stacks introduced in \cite{Porta_Yu_Higher_analytic_stacks_2014}.

Before moving on, we stress that the underlying \inftopos of a derived \kanal space is, by definition, hypercomplete.
Therefore, using the notations of \cite[\S 2.2]{DAG-V}, for $X \in \cTank$, the $\cTank$-structured \inftopos $\Spec^{\cTank}(X)$ is \emph{not} a derived \kanal space.
We remedy this problem by introducing the hypercomplete spectrum as follows:

Let $\RTop$ (resp.\ $\LTop$) denote the \infcat of \inftopoi where morphisms are right (resp.\ left) adjoint geometric morphisms.
Denote by $\RHTop$ the full subcategory of $\RTop$ spanned by hypercomplete \inftopoi.
Denote by $\RHTop(\cTank)$ the full subcategory of $\RTop(\cTank)$ spanned by $\cTank$-structured \inftopoi $(\cX, \cO_\cX)$ such that $\cX$ is a hypercomplete. It follows from \cite[6.5.2.13]{HTT} that the inclusion $\RHTop \to \RTop$ admits a right adjoint, given by hypercompletion.
This induces a right adjoint to the inclusion $\RHTop(\cTank) \hookrightarrow \RTop(\cTank)$, as the next lemma shows:

\begin{lem} \label{lem:hyp_right_adjoint}
	The inclusion $\RHTop(\cTank) \hookrightarrow \RTop(\cTank)$ admits a right adjoint, which we denote by $\Hyp \colon \RTop(\cTank) \to \RHTop(\cTank)$.
\end{lem}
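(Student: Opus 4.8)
The statement is a formal consequence of the fact that the inclusion $\RHTop \hookrightarrow \RTop$ admits a right adjoint (hypercompletion), together with the observation that $\cT$-structures are defined levelwise on the underlying \inftopos. I would argue as follows.

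\textbf{Step 1: Reduce to a lifting problem.} Let $(\cX, \cO_\cX) \in \RTop(\cTank)$. Let $\cX^\wedge$ denote the hypercompletion of $\cX$ and let $\rho_* \colon \cX^\wedge \rightleftarrows \cX \colon \rho^{-1}$ be the corresponding geometric morphism, where $\rho^{-1}$ is fully faithful (it is the inclusion of the hypercomplete objects, which detects and preserves finite limits and arbitrary colimits since $\rho^{-1}$ is a left exact left adjoint). The goal is to produce a $\cTank$-structure $\cO_{\cX^\wedge}$ on $\cX^\wedge$ together with a morphism $(\cX^\wedge, \cO_{\cX^\wedge}) \to (\cX, \cO_\cX)$ in $\RTop(\cTank)$ that is universal among maps from objects of $\RHTop(\cTank)$. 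The natural candidate is $\cO_{\cX^\wedge} \coloneqq \rho^{-1} \circ \cO_\cX \colon \cTank \to \cX^\wedge$.

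\textbf{Step 2: Check $\cO_{\cX^\wedge}$ is a $\cTank$-structure.} This is where one uses that $\rho^{-1}$ is left exact and preserves effective epimorphisms (being a left adjoint, it preserves colimits, and as a geometric morphism it is left exact, hence preserves effective epimorphisms). Condition (i) of \cref{def:structure}: $\rho^{-1}$ preserves finite products, so $\rho^{-1}\circ\cO_\cX$ does too. Condition (ii): $\cO_\cX$ sends admissible-pullback squares to pullback squares in $\cX$, and $\rho^{-1}$ preserves pullbacks, so the composite sends them to pullbacks in $\cX^\wedge$. Condition (iii): $\cO_\cX$ sends admissible coverings to effective epimorphisms, and $\rho^{-1}$ preserves effective epimorphisms. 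Moreover the unit $\cO_\cX \to \rho_* \rho^{-1}\cO_\cX$, equivalently the counit-type map $\rho^{-1}\cO_\cX \to \rho^{-1}\cO_\cX$, assembles into a morphism $(\cX^\wedge, \cO_{\cX^\wedge}) \to (\cX, \cO_\cX)$ in $\RTop(\cTank)$; one must check the structure-sheaf component is \emph{local}, which again follows from the fact that $\rho^{-1}$ preserves the pullback squares appearing in the definition of a local morphism (here the relevant map $\rho^{-1}f^{-1}\cO_\cX \to \cO_{\cX^\wedge}$ is an equivalence, so the squares are trivially pullbacks).

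\textbf{Step 3: Verify the universal property.} Let $(\cY, \cO_\cY) \in \RHTop(\cTank)$ and let $g = (g_*, g^{-1}, g^\sharp) \colon (\cY, \cO_\cY) \to (\cX, \cO_\cX)$ be a morphism in $\RTop(\cTank)$. Since $\cY$ is hypercomplete, the geometric morphism $g_* \colon \cY \to \cX$ factors uniquely through $\rho_* \colon \cX^\wedge \to \cX$ by the universal property of hypercompletion in $\RTop$ (this is \cite[6.5.2.13]{HTT}, already invoked in the excerpt); write $g_* = \rho_* \circ \widetilde g_*$ with $\widetilde g_* \colon \cY \to \cX^\wedge$, so $\widetilde g^{-1} \circ \rho^{-1} \simeq g^{-1}$. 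I then need to upgrade $\widetilde g_*$ to a morphism of $\cTank$-structured topoi compatibly with $g^\sharp$: the structure-sheaf component should be the composite $\widetilde g^{-1}\cO_{\cX^\wedge} = \widetilde g^{-1}\rho^{-1}\cO_\cX \simeq g^{-1}\cO_\cX \xrightarrow{g^\sharp} \cO_\cY$, which is local because $g^\sharp$ is. Uniqueness of this lift — both of $\widetilde g_*$ and of its structure component — follows because $\rho^{-1}$ is fully faithful, so that the space of factorizations through $(\cX^\wedge, \cO_{\cX^\wedge})$ is contractible. Spelling this out is essentially unwinding the definition of $\RTop(\cTank)$ as a (lax) pullback / Grothendieck construction over $\RTop$, and one can also cite \cite[Remark 1.4.10]{DAG-V} or the analogous bookkeeping in \cite[\S 2.3]{DAG-V}.

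\textbf{Main obstacle.} The one genuinely content-carrying input is that $\rho^{-1}$ (the inclusion $\cX^\wedge \hookrightarrow \cX$) is left exact and preserves effective epimorphisms, so that it carries $\cTank$-structures to $\cTank$-structures and local morphisms to local morphisms; everything else is formal adjunction bookkeeping built on top of \cite[6.5.2.13]{HTT}. The mild subtlety to handle carefully is the functoriality/uniqueness at the level of the structured \inftopos (not just the underlying topos), i.e.\ checking that the forgetful functor $\RTop(\cTank) \to \RTop$ is a (co)cartesian fibration in the relevant range so that lifts along $\rho_*$ are computed objectwise and are unique; I expect this to be a short argument invoking the construction of $\RTop(\cT)$ in \cite[\S 1.4]{DAG-V}.
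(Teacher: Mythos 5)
Your proposal is correct and follows essentially the same route as the paper: both take the candidate $(\cX^\wedge, L\circ\cO_\cX)$ with $L$ the hypercompletion localization, check the structure axioms using that $L$ is a left exact left adjoint, and verify the universal property by combining \cite[6.5.2.13]{HTT} with the fiber sequence of mapping spaces over $\Map_{\RTop}(\cY,\cX)$ coming from \cite[2.4.4.2]{HTT} and \cite[Remark 1.4.10]{DAG-V}. One slip to fix: with your labelling $\rho_*\colon\cX^\wedge\rightleftarrows\cX\colon\rho^{-1}$, the inverse image $\rho^{-1}$ is the localization $L$ (left exact and colimit-preserving, but \emph{not} fully faithful), whereas the fully faithful inclusion of hypercomplete objects is the direct image $\rho_*$; accordingly, the uniqueness of the lift in Step 3 should rest on \cite[6.5.2.13]{HTT} (which you do cite) together with the identification $g^{-1}\simeq\widetilde g^{-1}\circ L$, not on any full faithfulness of $\rho^{-1}$.
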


\begin{proof}
	Fix $X\coloneqq(\cX, \cO_\cX) \in \RTop(\cTank)$.
	Since the hypercompletion $L \colon \cX \to \cX^\wedge$ is left exact,  we obtain a well defined functor $\Strloc_{\cTank}(\cX) \to \Strloc_{\cTank}(\cX^\wedge)$ induced by composition with $L$.
	Let $X^\wedge \coloneqq (\cX^\wedge, L(\cO_\cX))$ be the resulting hypercomplete $\cTank$-structured \inftopos.
	In $\RTop(\cTank)$ there is a natural morphism $\varphi \colon X^\wedge \to X$.
	
	Using the dual of \cite[5.2.7.8]{HTT} it suffices to show that $\varphi$ exhibits $X^\wedge$ as a colocalization of $X$ relative to $\RHTop(\cTank)$.  In order to prove this, let $Y \coloneqq (\cY, \cO_\cY)$ be any hypercomplete $\cTank$-structured \inftopos.
	Using \cite[6.5.2.13]{HTT} we obtain an equivalence
	\[ \Map_{\RTop}(\cY, \cX^\wedge) \to \Map_{\RTop}(\cY, \cX). \]
	Fix a geometric morphism $g_*\colon\cY\rightleftarrows\cX^\wedge\colon g\inv$ and let $(f\inv, f_*)$ denote the induced geometric morphism $\cY \rightleftarrows \cX$.
	We remark that $f\inv \simeq g\inv \circ L$.
	Using \cite[2.4.4.2]{HTT} and \cite[Remark 1.4.10]{DAG-V} we obtain a morphism of fiber sequences:
	\[ \begin{tikzcd}[column sep=small]
	\Map_{\Strloc_{\cTank}(\cY)}(g\inv L(\cO_X), \cO_\cY) \arrow{r} \arrow{d} & \Map_{\RTop(\cTank)}(Y, X^\wedge) \arrow{r} \arrow{d} & \Map_{\RTop}(\cY, \cX^\wedge) \arrow{d} \\
	\Map_{\Strloc_{\cTank}(\cY)}(f\inv \cO_X, \cO_\cY) \arrow{r} & \Map_{\RTop(\cTank)}(Y, X) \arrow{r} & \Map_{\RTop}(\cY, \cX).
	\end{tikzcd} \]
	Since $f\inv \simeq g\inv \circ L$, we see that the left vertical morphism is an equivalence. Since this holds for all base points in $\Map_{\RTop}(\cY, \cX)$, we conclude that the middle vertical morphism is an equivalence as well, completing the proof.
\end{proof}

\begin{defin}
	Given $X \in \cTank$, we define its \emph{hypercomplete (absolute) spectrum} $\HSpec^{\cTank}(X)$ to be $\Hyp(\Spec^{\cTank}(X))$.
\end{defin}

\begin{lem} \label{lem:universal_property_HSpec}
	Let $Y \coloneqq (\cY, \cO_\cY)$ be a derived \kanal space and let $X \in \cTank$.
	The natural morphism $\HSpec^{\cTank}(X) \to \Spec^{\cTank}(X)$ induces an equivalence
	\[ \Map_{\RHTop(\cTank)}(Y, \HSpec^{\cTank}(X)) \xrightarrow{\ \sim\ } \Map_{\RTop(\cTank)}(Y, \Spec^{\cTank}(X)). \]
\end{lem}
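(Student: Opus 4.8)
The plan is to combine the universal property of the hypercompletion functor $\Hyp$ from \cref{lem:hyp_right_adjoint} with the fact that a derived \kanal space has, by definition, a hypercomplete underlying \inftopos. First I would unwind the definition: since $Y$ is a derived \kanal space, $\cY$ is hypercomplete, so $Y \in \RHTop(\cTank)$. By \cref{lem:hyp_right_adjoint}, the functor $\Hyp \colon \RTop(\cTank) \to \RHTop(\cTank)$ is right adjoint to the inclusion $\iota \colon \RHTop(\cTank) \hookrightarrow \RTop(\cTank)$. Therefore, for any $Z \in \RTop(\cTank)$, there is a natural equivalence
\[ \Map_{\RHTop(\cTank)}(Y, \Hyp(Z)) \xrightarrow{\ \sim\ } \Map_{\RTop(\cTank)}(\iota Y, Z). \]
Applying this with $Z = \Spec^{\cTank}(X)$ and recalling that $\HSpec^{\cTank}(X) = \Hyp(\Spec^{\cTank}(X))$ by definition, we obtain exactly the claimed equivalence, the map being the one induced by the counit $\HSpec^{\cTank}(X) \to \Spec^{\cTank}(X)$ of the adjunction. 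This is essentially a formal consequence, so there is little to prove beyond spelling out that $Y$ lies in the subcategory on which the adjunction applies.

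The one point that deserves a sentence of care is the identification of the comparison map: the statement speaks of "the natural morphism $\HSpec^{\cTank}(X) \to \Spec^{\cTank}(X)$", and I would check that this is precisely the adjunction counit $\epsilon \colon \iota\,\Hyp(\Spec^{\cTank}(X)) \to \Spec^{\cTank}(X)$, so that the equivalence in the statement is literally the one provided by the adjunction (composition with $\epsilon$). This follows from the construction of $\Hyp$ in the proof of \cref{lem:hyp_right_adjoint}, where the counit at $X' = (\cX, \cO_\cX)$ is the morphism $\varphi \colon X'^\wedge \to X'$ arising from the hypercompletion geometric morphism $L \colon \cX \to \cX^\wedge$ together with the identity-type comparison on structures; specializing to $X' = \Spec^{\cTank}(X)$ gives exactly the natural morphism in question.

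I do not expect any serious obstacle here: the lemma is a direct corollary of \cref{lem:hyp_right_adjoint} once one observes that derived \kanal spaces are hypercomplete by \cref{def:derived_space}. If anything, the only mild subtlety is making sure the word "natural" is justified, i.e.\ that both sides are functorial in $Y$ and $X$ and the equivalence is compatible — but this is automatic from the fact that it is an adjunction equivalence. So the proof will be short: state that $Y \in \RHTop(\cTank)$, invoke the adjunction of \cref{lem:hyp_right_adjoint}, and identify the comparison map with the counit.
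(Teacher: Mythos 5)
Your proposal is correct and is exactly the paper's argument: the paper's proof is the one-liner that $Y$ lies in $\RHTop(\cTank)$ so the statement follows immediately from \cref{lem:hyp_right_adjoint}. Your additional remarks identifying the comparison map with the adjunction counit are a sound (if optional) elaboration of the same reasoning.
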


\begin{proof}
	Since $Y$ belongs to $\RHTop(\cTank)$, the statement is an immediate consequence of \cref{lem:hyp_right_adjoint}.
\end{proof}

\section{Properties of the pregeometry} \label{sec:pregeometry}

In this section, we study the properties of the pregeometry $\cTank$ introduced in \cref{sec:definitions}.
More specifically, we will prove the unramifiedness of $\cTank$, the unramifiedness of the algebraization and the compatibility of $\cTank$ with $n$-truncations.

\subsection{Unramifiedness}

In order that the collection of closed immersions behaves well with respect to fiber products, our pregeometry $\cTank$ has to verify a condition of unramifiedness.

\begin{defin}[{\cite[1.3]{DAG-IX}}]\label{def:unramified_pregeometry}
	A pregeometry $\cT$ is said to be \emph{unramified} if for every morphism $f\colon X\to Y$ in $\cT$ and every object $Z\in\cT$, the diagram
	\[ \begin{tikzcd}
	X\times Z \arrow{r} \arrow{d} & X\times Y\times Z \arrow{d} \\
	X \arrow{r} & X\times Y
	\end{tikzcd} \]
	induces a pullback square
	\[ \begin{tikzcd}
	\cX_{X\times Z} \arrow{r} \arrow{d} & \cX_{X\times Y\times Z} \arrow{d} \\
	\cX_X \arrow{r} & \cX_{X\times Y}
	\end{tikzcd} \]
	in $\RTop$, where the symbol $\cX_{(-)}$ denotes the associated \inftopos.
\end{defin}

Our first goal is to prove that the pregeometry $\cTank$ is unramified (cf.\ \cref{cor:Tkan_unramified}). In order to do this, we need to describe explicitly the \inftopos $\cX_X$ associated to a \kanal space $X$ and prove that the assignment $X \mapsto \cX_X$ is well behaved with respect to closed immersions (cf.\ \cref{prop:preserve_closed_immersion}).

Let $\Ank$ denote the category of \kanal spaces and let $\Afd_k$ denote the category of $k$-affinoid spaces.
For $X\in\Ank$, let $(\An_X)\et$ (resp.\ $(\Afd_X)\et$) denote the category of étale morphisms from \kanal spaces (resp.\ $k$-affinoid spaces) to $X$.
We equip the categories $(\An_X)\et$ and $(\Afd_X)\et$ with the étale topology.
By \cite[Proposition 2.24]{Porta_Yu_Higher_analytic_stacks_2014}, the inclusion $(\Afd_X)\et\hookrightarrow(\An_X)\et$ induces an equivalence of \inftopoi
\begin{equation}\label{eq:afd_in_an}
\Sh((\Afd_X)\et)\xrightarrow{\ \sim\ }\Sh((\An_X)\et).
\end{equation}
We call the two equivalent \inftopoi above the \emph{étale \inftopos associated to $X$}, and denote it by $\cX_X$.
We will denote the site $(\Afd_X)\et$ by $X\et$ for simplicity.

\begin{rem}
	The \inftopos $\cX_X$ is not hypercomplete in general.
	In the subsequent sections we will also consider its hypercompletion $\cX_X^\wedge$.
	\end{rem}

\begin{rem}\label{rem:points_of_cX_X}
Since the site $X\et$ is a 1-category, the \inftopos $\cX_X$ is $1$-localic.
It follows that for any \inftopos $\cY$ one has an equivalence of $\infty$-categories \[
\Fun_*(\cY, \cX_X) \simeq \Fun_*(\tau_{\le 0} \cY, \tau_{\le 0} \cX_X),
\]
where $\Fun_*$ denotes the $\infty$-category of geometric morphisms (taken in $\RTop$).
Put $\cY = \cS$ and observe that $\tau_{\le 0} \cX_X = \Sh_\rSet(X\et)$ and $\tau_{\le 0}(\cS) \simeq \rSet$.
We conclude that the points of $\cX_X$ correspond bijectively to the points of the classical 1-topos associated to the site $X\et$.
The latter is classified by the geometric points of the adic space associated to $X$ in the sense of Huber (cf.\ \cite[Proposition 2.5.17]{Huber_Etale_1996}).

Since the site $X\et$ is finitary, it follows from \cite[Corollary 3.22]{DAG-VII} that the hypercompletion $\cX_X^\wedge$ is locally coherent.
Therefore, by Theorem 4.1 in loc.\ cit., the \inftopos $\cX_X^\wedge$ has enough points.
\end{rem}

\begin{rem}\label{rem:spectrum}
As we already discussed in \cref{sec:definitions}, \cite[\S 2.2]{DAG-V} assigns to every $X \in \cTank$ a $\cTank$-structured \inftopos $\Spec^{\cTank}(X)$, called the \emph{spectrum} of $X$.
It is characterized by the following universal property: for any $\cTank$-structured \inftopos $(\cY, \cO_\cY)$ there is a natural equivalence
\[ \Map_{\RTop(\cTank)}( (\cY, \cO_\cY), \Spec^{\cTank}(X) ) \simeq \Map_{\mathrm{Ind}(\cG_\mathrm{an}(k)^{\mathrm{op}})}( X, \Gamma_{\cG}(\cY, \cO_\cY) ), \]
where $\cG_{\mathrm{an}}(k)$ denotes a geometric envelope of $\cTank$ (cf.\ \cite[Theorem 2.2.12]{DAG-V}).
We note that the underlying \inftopos of $\Spec^{\cTank}(X)$ can be identified with $\cX_X$.
\end{rem}

We refer to \cite[7.3.2]{HTT} for the notion of closed immersion of \inftopoi.

\begin{prop}\label{prop:preserve_closed_immersion}
The functor
\begin{align*}
\Ank&\longrightarrow \mathrm{h}(\RTop) \\
X&\longmapsto\cX_X
\end{align*}
preserves closed immersions, where $\mathrm h(\RTop)$ denotes the homotopy category of $\RTop$.
\end{prop}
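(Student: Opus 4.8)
The plan is to exhibit, for each closed immersion $i\colon Z\hookrightarrow X$ in $\Ank$, the induced geometric morphism $\cX_Z\to\cX_X$ and to identify it with the inclusion of the closed subtopos of $\cX_X$ complementary to the open complement of $|Z|$. Base change along $i$ defines a functor $(\An_X)\et\to(\An_Z)\et$, $W\mapsto W\times_X Z$, which is continuous (étale coverings pull back to étale coverings) and preserves finite limits (it sends the terminal object $X$ to $Z$ and commutes with fiber products); by functoriality of the étale $\infty$-topos it therefore induces a geometric morphism $\cX_Z\to\cX_X$ whose direct image $i_*$ is computed by $(i_*G)(W)=G(W\times_X Z)$ for $W\in(\Afd_X)\et$ — which makes sense, as $W\times_X Z$ is then $k$-affinoid and étale over $Z$ — and whose inverse image $i^{-1}$ is left exact. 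This assignment is strictly functorial in $i$, so it provides the desired lift of $X\mapsto\cX_X$ to closed immersions, and what remains is to prove that $i_*$ realizes $\cX_Z$ as a closed subtopos of $\cX_X$.

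Let $U\coloneqq X\setminus|Z|$ be the admissible open complement, with open immersion $j\colon U\hookrightarrow X$. As $j$ is a monomorphism, the representable sheaf $h_U\in\cX_X$ is $(-1)$-truncated, hence determines an open subtopos of $\cX_X$ whose closed complement is $\{F\in\cX_X:h_U\times F\to h_U\ \text{is an equivalence}\}$ (cf.\ \cite[\S 7.3.2]{HTT}). Closed subtopoi of an \inftopos $\cE$ correspond bijectively to subobjects of its terminal object, and such subobjects only depend on the underlying $1$-topos $\tau_{\le 0}\cE$; since $X\et$ and $Z\et$ are $1$-categories, $\cX_X$ and $\cX_Z$ are $1$-localic (\cref{rem:points_of_cX_X}), so the claim reduces, via the identification of $1$-localic $\infty$-topoi with $1$-topoi, to showing that the geometric morphism $\Sh_\rSet(Z\et)\to\Sh_\rSet(X\et)$ with direct image $G\mapsto\bigl(W\mapsto G(W\times_X Z)\bigr)$ is the closed immersion complementary to $h_U$. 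Both $\Sh_\rSet(X\et)$ and $\Sh_\rSet(Z\et)$ have enough points — the geometric points of $X$, resp.\ of $Z$ (\cref{rem:points_of_cX_X}) — so everything can be checked on stalks.

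The stalk of $h_U$ at a geometric point $\bar x$ of $X$ is a point if $\bar x$ lies over $|U|$ and is empty otherwise, so it suffices to compute the stalks of $i_*G$ and $i^{-1}F$. If $\bar x$ lies over $|U|$, an étale neighbourhood of $\bar x$ in $X$ may be shrunk inside $U$, so that $W\times_X Z=\varnothing$ and $(i_*G)_{\bar x}=G(\varnothing)=\ast$. If $\bar x=i(\bar z)$ for a geometric point $\bar z$ of $Z$, I claim that the base changes $W\times_X Z$ of étale neighbourhoods $W$ of $\bar x$ in $X$ are cofinal among étale neighbourhoods of $\bar z$ in $Z$, so that $(i_*G)_{\bar x}=G_{\bar z}$; this cofinality is the one genuinely geometric ingredient, and it follows from the local structure of étale morphisms of $k$-affinoids — locally a ``standard étale'' chart $V'\to\Sp(A/I)$, given by a monic polynomial together with a localizing element, lifts after suitable shrinking to a standard étale chart $W\to\Sp A$ with $W\times_X Z=V'$ as a neighbourhood of $\bar z$. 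Granting also the standard identity $(i^{-1}F)_{\bar z}=F_{i(\bar z)}$, one checks, exactly as for closed immersions of schemes, that the counit $i^{-1}i_*\to\id$ is an equivalence on all stalks of $Z$ — so $i_*$ is fully faithful — and that the unit $F\to i_*i^{-1}F$ is an equivalence on all stalks of $X$ exactly when $F_{\bar x}=\ast$ for every $\bar x$ over $|U|$, i.e.\ exactly when $F$ lies in the closed complement of the open subtopos attached to $h_U$. Hence $i_*$ is a closed immersion of $1$-topoi, which establishes the proposition.

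I expect the main obstacle to be the cofinality claim of the previous paragraph — that étale morphisms to a closed analytic subspace lift étale-locally to the ambient space — since it is the only step genuinely using non-archimedean analytic geometry rather than formal topos theory; everything else is bookkeeping with stalks, the one subtlety being that one must first pass to the $1$-localic truncations, because $\cX_X$ is in general not hypercomplete and so stalk arguments are unavailable on $\cX_X$ itself. An alternative strategy, closer to Lurie's treatment of the complex-analytic case, would be to first prove a descent lemma describing the closed subtopos attached to a $(-1)$-truncated object assembled from compatible data along an étale cover, thereby reducing at once to the affinoid case and running the same stalk computation there.
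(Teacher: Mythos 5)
Your proof is correct and follows essentially the same route as the paper: you identify $\cX_Z$ with the closed subtopos of $\cX_X$ determined by the $(-1)$-truncated object detecting the complement of $Z$, reduce to a $1$-categorical statement via $1$-localicness, and rest the argument on the étale lifting property along closed immersions of affinoids, which is exactly the paper's \cref{lem:qet_structure} (proved there via Huber's presentation of étale morphisms and a rational-domain trick to invert the Jacobian). The only substantive variation is that you verify both unit and counit on stalks, whereas the paper deduces conservativity of $f_*$ directly from the lifting lemma and reserves the stalk argument for the unit; your cofinality claim, which you rightly flag as the genuinely geometric ingredient, is left as a sketch but is precisely the lemma the paper supplies.
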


\begin{rem}
	It will follow from the results of \cref{sec:fullyfaithfulness} (see in particular \cref{lem:rigidity} and the construction of $\Phi$) that the functor above can be promoted to an $\infty$-functor $\Ank \to \RTop$.
	\end{rem}

\begin{lem} \label{lem:descent_for_closed_subtopoi}
Let $\cX, \cY$ be \inftopoi and let $U \in \cX$.
Let $f\inv \colon \cX /U \rightleftarrows \cY \colon f_*$ be a geometric morphism.
Then $(f\inv, f_*)$ is an equivalence if and only if there exists an effective epimorphism $V \to 1_\cX$ such that $\cX_{/V} / (U \times V) \rightleftarrows \cY_{/ f\inv(V)}$ is an equivalence.
\end{lem}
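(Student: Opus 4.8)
The statement is a descent/locality result for equivalences of geometric morphisms: being an equivalence is a condition that can be checked after pulling back along an effective epimorphism $V \to 1_\cX$. The "only if" direction is trivial — slicing a geometric morphism over an object is functorial, and slicing an equivalence produces an equivalence — so the content is the "if" direction. The plan is to reduce the claim to the statement that a geometric morphism $f^{-1}\colon \cX/U \rightleftarrows \cY \colon f_*$ is an equivalence if and only if $f^{-1}$ is, and then to check that $f^{-1}$ being an equivalence is local on $1_\cX$ in the required sense.

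First I would recall that a geometric morphism between \inftopoi is an equivalence precisely when its left exact left adjoint $f^{-1}$ is an equivalence of \inftopoi (the right adjoint is then automatically the inverse). So it suffices to show: $f^{-1}\colon \cX/U \to \cY$ is an equivalence if and only if, for the given effective epimorphism $V \to 1_\cX$, the induced functor $(\cX/V)/(U\times V) \to \cY_{/f^{-1}(V)}$ is an equivalence. Here I would use the standard identification $(\cX/V)/(U\times V) \simeq (\cX/U)/(U\times V)$ (both are $\cX$ sliced over the product) together with the fact that $U \times V \to U$ is again an effective epimorphism in $\cX/U$, since effective epimorphisms are stable under base change and $V \to 1_\cX$ is an effective epimorphism. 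Thus, replacing $\cX$ by $\cX/U$ and renaming, the lemma reduces to the following: given a geometric morphism of \inftopoi $g^{-1}\colon \cX \rightleftarrows \cY$ and an effective epimorphism $W \to 1_\cX$, the functor $g^{-1}$ is an equivalence if and only if the base-changed functor $\cX/W \to \cY_{/g^{-1}(W)}$ is an equivalence.

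The key input for this reduced statement is that an \inftopos can be recovered from any of its slices over an effective epimorphism cover via descent: if $W_\bullet$ is the Čech nerve of $W \to 1_\cX$, then $\cX \simeq \lim_{[n]\in\Delta} \cX/W_n$ (this is \cite[6.1.3.9]{HTT}, that colimits are universal / the descent description of an \inftopos). The same holds for $\cY$ with respect to the cover $g^{-1}(W) \to 1_\cY$, whose Čech nerve is $g^{-1}(W_\bullet)$ because $g^{-1}$ is left exact and preserves colimits — in particular preserves effective epimorphisms and the Čech nerve construction. The functor $g^{-1}$ is compatible with these two limit diagrams levelwise (it induces $\cX/W_n \to \cY_{/g^{-1}(W_n)}$ for each $n$, compatibly with the simplicial structure maps, since each $W_n$ is a finite product of copies of $W$ and $g^{-1}$ preserves such products and the relevant slice structure). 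Hence if the level-$0$ functor $\cX/W \to \cY_{/g^{-1}(W)}$ is an equivalence, then so is each level-$n$ functor (as $W_n$ is itself a finite product of $W$'s, so $\cX/W_n$ is a slice of $\cX/W$ along an effective epimorphism pulled back from the cover — one iterates, or argues directly that the level-$n$ functor is obtained from the level-$0$ one by base change which is again an equivalence), and passing to the limit over $\Delta$ shows $g^{-1}\colon \cX \to \cY$ is an equivalence. Conversely an equivalence is visibly preserved by slicing.

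The main obstacle I anticipate is the bookkeeping to promote "the level-$0$ functor is an equivalence" to "the level-$n$ functor is an equivalence" for all $n$ in a way that is genuinely compatible with the cosimplicial structure maps, i.e. producing a map of cosimplicial \inftopoi $\cX/W_\bullet \to \cY/g^{-1}(W_\bullet)$ that is a levelwise equivalence. This requires knowing that $g^{-1}$, being left exact and colimit-preserving, sends the Čech nerve of $W$ to the Čech nerve of $g^{-1}(W)$, and that base change of a geometric morphism along an effective epimorphism is again a geometric morphism functorially in the cover — all of which is available from \cite[\S 6.1]{HTT}, but needs to be assembled carefully. Once this diagram is in place, the conclusion is formal from the descent equivalence $\cX \simeq \lim_\Delta \cX/W_\bullet$ and the two-out-of-three property of equivalences.
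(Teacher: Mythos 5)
Your descent skeleton (the \v{C}ech nerve of the cover together with $\cX \simeq \lim_{\Delta} \cX_{/W^{\bullet}}$ from \cite[6.1.3.9]{HTT}) is exactly the engine the paper uses, but you have misread the statement, and the misreading breaks your key reduction. In this lemma $\cX/U$ denotes the \emph{closed} subtopos determined by $U$ in the sense of \cite[7.3.2]{HTT} (the full subcategory of objects $X$ such that $X \times U \to U$ is an equivalence), not the slice $\cX_{/U}$; the paper consistently reserves the subscripted notation $\cX_{/V}$ for slices, and the expression $\cX_{/V}/(U\times V)$ in the statement is the closed subtopos of the slice $\cX_{/V}$ determined by $U\times V$. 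Consequently your identification ``$(\cX/V)/(U\times V)\simeq(\cX/U)/(U\times V)$, both are $\cX$ sliced over the product'' is not the relevant fact, and the justification you give is false for the objects actually in play. What is needed instead is the identification $\cX_{/V}/(U\times V)\simeq(\cX/U)_{/j\inv(V)}$, where $j\inv\colon\cX\to\cX/U$ is the left adjoint of the closed immersion: the paper obtains this by recognizing both sides as the pullback $\cX_{/V}\times_{\cX}(\cX/U)$ in $\RTop$, using \cite[6.3.5.8]{HTT} for the \'etale side and \cite[7.3.2.13]{HTT} for the closed side.

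Once that identification is in place, the second half of your argument does go through essentially verbatim: $j\inv(V)\to 1_{\cX/U}$ is an effective epimorphism because $j\inv$ is left exact and colimit-preserving, so one can run your \v{C}ech-descent argument for the geometric morphism $\cX/U\rightleftarrows\cY$ with respect to the cover $j\inv(V)$, which is precisely what the paper does when it invokes the descent property \cite[6.1.3.9]{HTT}. So the gap is localized but real: as written, your proposal proves a locality statement for geometric morphisms out of \emph{slice} topoi, which is a different (if analogous) assertion from the one the lemma makes about closed subtopoi.
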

\begin{proof}
To see that the condition is necessary it is enough to take $V \to 1_\cX$ to be the identity of $1_\cX$.
We now prove the sufficiency.
Let us denote by $j\inv \colon \cX \leftrightarrows \cX / U \colon j_*$ (resp.\ $i\inv \colon \cX \leftrightarrows \cX_{/V} \colon i_*$) the given closed (resp.\ \'etale) morphism of \inftopoi.
We claim that
\[\cX_{/V} / (U \times V) \simeq (\cX/U)_{/j\inv(V)}.\]
Indeed, the left hand side can be identified with the pullback $\cX_{/V} \times_{\cX} \cX/U$ in virtue of \cite[6.3.5.8]{HTT}.
The right hand side can be identified with the same pullback in virtue of \cite[7.3.2.13]{HTT}.
At this point, we obtain a commutative square of geometric morphisms in $\RTop$
\[
\begin{tikzcd}
\cY_{/f\inv(V)} \arrow{r} \arrow{d} & (\cX/U)_{/j\inv(V)} \arrow{d} \\
\cY \arrow{r}{f_*} & \cX / U.
\end{tikzcd}
\]
So the lemma follows from the descent property of \inftopoi \cite[6.1.3.9(3)]{HTT}.
\end{proof}

\begin{lem} \label{lem:qet_structure}
Let $A \to B$ be a surjective morphism of $k$-affinoid algebras.
Let $B \to B'$ be an \'etale morphism of $k$-affinoid $A$-algebras.
Then there exists an \'etale $A$-algebra $A'$ and a pushout square:
\[
\begin{tikzcd}
A \arrow{r} \arrow{d} & B \arrow{d} \\
A' \arrow{r} & B'.
\end{tikzcd}
\]
\end{lem}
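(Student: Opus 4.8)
The plan is to build $A'$ by lifting a presentation of $B'$ over $B$ along the surjection $A\to B$ and then correcting the result so that it becomes étale over all of $A$. By the local structure theory of étale morphisms of $k$-affinoid algebras (cf.\ \cite[\S8.2]{Fresnel_Rigid_2004}), every point of $\Spec B'$ has an affinoid neighbourhood on which $B\to B'$ is standard étale, hence on which $B'$ admits a presentation $B\langle x_1,\dots,x_n\rangle/(f_1,\dots,f_n)$ whose Jacobian $J:=\det(\partial f_i/\partial x_j)$ is a unit; since $\Spec B'$ is quasi-compact, finitely many such neighbourhoods cover it. I first treat the case where such a presentation is available globally, and then indicate the patching.

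Assume then $B'\cong B\langle x_1,\dots,x_n\rangle/(f_1,\dots,f_n)$ with $J$ a unit in $B'$. Set $I:=\ker(A\to B)$, choose lifts $\widetilde f_i\in A\langle x_1,\dots,x_n\rangle$ of the $f_i$ along the induced surjection $A\langle\underline x\rangle\twoheadrightarrow B\langle\underline x\rangle$, and put $\widetilde A':=A\langle x_1,\dots,x_n\rangle/(\widetilde f_1,\dots,\widetilde f_n)$. Then $\widetilde A'\,\widehat\otimes_A B\cong\widetilde A'/I\widetilde A'\cong B\langle\underline x\rangle/(f_1,\dots,f_n)\cong B'$, so $\Spec B'$ sits in $\Spec\widetilde A'$ as a closed analytic subspace — the preimage of $\Spec B\hookrightarrow\Spec A$ — and the image of $\widetilde J:=\det(\partial\widetilde f_i/\partial x_j)$ in $\cO(\Spec B')=B'$ is $J$, hence a unit. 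The key point is that $\widetilde J$ is then bounded away from zero on the affinoid $\Spec B'$: picking $c\in k^\times$ with $0<\abs c<\norm{J^{-1}}_{\sup}^{-1}$ gives $\abs{\widetilde J(x)}>\abs c$ for every $x\in\Spec B'$, so the rational subdomain
\[ V:=\{\,x\in\Spec\widetilde A'\ :\ \abs{\widetilde J(x)}\ge\abs c\,\}=\Spec\widetilde A'\langle c/\widetilde J\rangle \]
is affinoid, contains $\Spec B'$, and carries $\widetilde J$ as a unit. Put $A':=\cO(V)$. Since $A'\cong A\langle x_1,\dots,x_n,z\rangle/(\widetilde f_1,\dots,\widetilde f_n,\widetilde Jz-c)$ is cut out in $n+1$ variables by $n+1$ equations whose Jacobian has determinant $\widetilde J^2\in(A')^\times$, the Jacobian criterion for affinoid algebras (cf.\ \cite{Bosch_Non-archimedean_1984}) shows $A'$ is étale over $A$; and since $\Spec B'\subseteq V$ we get $A'\,\widehat\otimes_A B\cong\cO\bigl(V\times_{\Spec A}\Spec B\bigr)=\cO(\Spec B')=B'$. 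This yields the required pushout square.

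For the general case one applies this construction to each member $V_i$ of a finite affinoid covering of $\Spec B'$ on which $B\to B'$ is standard étale, obtaining étale $A$-algebras $A'_i$ with $A'_i\,\widehat\otimes_A B\cong\cO(V_i)$, and then glues the étale rigid spaces $\Spec A'_i$ over $\Spec A$ along $\Spec B'$. I expect this gluing to be the main technical point: the $A'_i$ a priori only carry compatible gluing data — the fibre products $\Spec A'_i\times_{\Spec A}\Spec A'_j$ restricted over $\Spec B$, together with the identity identifications over $V_i\cap V_j$ — in unspecified neighbourhoods of $\Spec B'$, so one must repeatedly shrink to such neighbourhoods, the process terminating because $\Spec B'$ is quasi-compact, and the gluing isomorphisms being pinned down (and the cocycle condition forced) by unramifiedness of étale morphisms, which makes two étale maps agreeing over $\Spec B$ agree on a whole neighbourhood of $\Spec B$.
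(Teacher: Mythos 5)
Your construction in the ``global presentation'' case is essentially the paper's proof: you lift the relations $f_i$ of a standard-\'etale presentation of $B'$ over $B$ along the surjection $A\langle\underline x\rangle\twoheadrightarrow B\langle\underline x\rangle$, observe via the maximum modulus principle that the lifted Jacobian $\widetilde J$ is bounded below in absolute value on $\Sp B'$ (because its image $J$ is a unit in $B'$), and then pass to the rational subdomain $\{\abs{\widetilde J}\ge\abs{c}\}$ of $\Sp\widetilde A'$, on which $\widetilde J$ becomes a unit, so that $A\to A'$ is \'etale by the Jacobian criterion while $A'\cotimes_A B\simeq B'$ because the domain condition holds identically on $\Sp B'$. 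The paper does exactly this, with the Laurent domain written as $A_0\langle w\rangle/(w\widebar J^{\,n}-a)$ and the power $n$ chosen only to put $\rho^n$ into $\abs{k^\times}$ --- your choice of $c\in k^\times$ with $\abs{c}<\rho$ sidesteps that value-group point equally well, and your Jacobian computation $\widetilde J^2\in (A')^\times$ is correct.

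The one real defect is the framing of the first and last paragraphs. You assume the standard-\'etale presentation is only available locally on $\Sp B'$ and defer the general case to a gluing of the affinoid \'etale neighbourhoods $\Sp A'_i$ over $\Sp A$; that gluing is only sketched, and as you acknowledge it is genuinely delicate (the identifications are a priori defined only near $\Sp B'$, and one must shrink and verify a cocycle condition). This step is a gap as written --- but it is also unnecessary: for \'etale morphisms of $k$-affinoid algebras the presentation $B'=B\langle y_1,\dots,y_m\rangle/(f_1,\dots,f_m)$ with Jacobian invertible in $B'$ exists \emph{globally} (this is \cite[Proposition 1.7.1]{Huber_Etale_1996}, the reference the paper invokes at this point). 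Replacing your appeal to a local structure theorem by this global statement lets you delete the patching paragraph entirely and reduces your argument to the paper's.
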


\begin{proof}
Since $B \to B'$ is \'etale, by \cite[Proposition 1.7.1]{Huber_Etale_1996},  we can write
\[
B' = B \langle y_1, \ldots, y_m \rangle / (f_1, \ldots, f_m),
\]
such that the Jacobian $J\coloneqq\mathrm{Jac}(f_1, \ldots, f_m)$ is invertible in $B'$.
So
\[\rho\coloneqq\min_{x\in\Sp B'}\abs{J(x)}\]
is positive.
Since $A \to B$ is surjective, the induced morphism
\[
A \langle y_1, \ldots, y_m \rangle \to B \langle y_1, \ldots, y_m \rangle
\]
is surjective as well.
Therefore we can find elements $\overline{f_1}, \ldots, \overline{f_m} \in A \langle y_1, \ldots, y_m \rangle$ lifting $f_1, \ldots, f_m$.
Set
\[
A_0 \coloneqq A \langle y_1, \ldots, y_m \rangle / (\overline{f_1}, \ldots, \overline{f_m}).
\]
Let $\widebar J\coloneqq\mathrm{Jac}(\overline{f_1}, \ldots, \overline{f_m})$.
Let $n$ be a positive integer such that $\rho^n\in\abs{k}$ and let $a$ be an element in $k$ such that $\abs{a}=\rho^n$.
Set $A' \coloneqq A_0 \langle w \rangle / (w \widebar J^n - a)$.
We see that the natural morphism $A_0\to B'$ factors as
\[
A_0 \to A' \to B'.
\]
It follows from the construction that $A \to A'$ is \'etale, and moreover $B' \simeq A' \cotimes_A B$, completing the proof.
\end{proof}

\begin{proof}[Proof of \cref{prop:preserve_closed_immersion}]
Let $f \colon Y \to X$ be a closed immersion in $\An_k$.
Let $U \colon X\et \to \cS$ be the functor defined by the formula
\[
U(Z) = \begin{cases}
\{*\} & \text{if } Z\times_X Y =\emptyset, \\
\emptyset & \text{otherwise.}
\end{cases}
\]
This is a sheaf and therefore determines a closed subtopos $\cX_X / U$.
The morphism $f$ induces a geometric morphism
\[f\inv \colon \cX_X \rightleftarrows \cX_Y \colon f_* .\]
We claim that $f_*$ factors through the closed subtopos $\cX_X / U$.
Indeed, it suffices to check that for every sheaf $G \in \cX_Y$ and every representable sheaf $h_{Z}$ in $\cX_X$ such that $\Map_{\cX_X}(h_{Z}, U) \ne \emptyset$, the space $\Map_{\cX_X}(h_{Z}, f_*(G))$ is contractible.
This is true, because we have
\[
\Map_{\cX_X}(h_{Z}, f_*(G)) \simeq G(Z\times_X Y) = G(\emptyset) \simeq \{*\}.
\]
We denote by $(f\inv,f_*)$ again the induced adjunction
\begin{equation} \label{eq:induced_adjunction}
f\inv \colon \cX_X / U \rightleftarrows \cX_Y \colon f_* .
\end{equation}
We conclude our proof by the following lemma.
\end{proof}

\begin{lem} \label{lem:closed_immersion_closed_subtopos}
The adjunction in \cref{eq:induced_adjunction} is an equivalence.
\end{lem}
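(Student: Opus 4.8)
The plan is to reduce, via \cref{lem:descent_for_closed_subtopoi}, to the case where $X$ and $Y$ are affinoid, and then to recognise both sides as the $\infty$-topos of sheaves on a single site, using \cref{lem:qet_structure} to match topologies. For the reduction, choose an admissible affinoid covering $\{X_\alpha\}$ of $X$ and put $V \coloneqq \coprod_\alpha h_{X_\alpha} \in \cX_X$, an effective epimorphism onto the final object. Under the canonical equivalence $(\cX_X)_{/h_{X_\alpha}} \simeq \cX_{X_\alpha}$, the object $U \times h_{X_\alpha}$ corresponds to the sheaf attached as in the proof of \cref{prop:preserve_closed_immersion} to the closed immersion $Y_\alpha \coloneqq Y \times_X X_\alpha \hookrightarrow X_\alpha$, while $f\inv(h_{X_\alpha}) \simeq h_{Y_\alpha}$ and $(\cX_Y)_{/h_{Y_\alpha}} \simeq \cX_{Y_\alpha}$. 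Hence, after restriction along $V$, the adjunction of \cref{eq:induced_adjunction} becomes the product over $\alpha$ of the analogous adjunctions for the closed immersions $Y_\alpha \hookrightarrow X_\alpha$, so by \cref{lem:descent_for_closed_subtopoi} we may assume $X = \Sp A$ and $Y = \Sp B$ with $A \to B$ a surjection of $k$-affinoid algebras (a closed subspace of an affinoid space being affinoid).

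Assume this. Since étale morphisms pull back along $f$ and $f\inv(Z) \simeq Z \times_X Y$, the functor $f_*$ is precomposition along $\phi \colon X\et \to Y\et$, $Z \mapsto Z \times_X Y$; in particular $f_*(G)(Z) \simeq G(Z \times_X Y)$, which is $G(\emptyset) \simeq \{*\}$ whenever $Z \times_X Y = \emptyset$. Thus the essential image of $f_*$ is contained in the full subcategory $\cC \subseteq \cX_X$ of sheaves which are terminal on every $Z$ disjoint from $Y$; by \cite[7.3.2]{HTT} this $\cC$ is precisely the closed subtopos $\cX_X / U$, and it can be described as $\Sh(X\et, \tau')$, where $\tau'$ is generated by the étale coverings together with the empty covering of each $Z$ with $Z \times_X Y = \emptyset$. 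One checks directly that $\{Z_i \to Z\}$ is a $\tau'$-covering if and only if $\{Z_i \times_X Y \to Z \times_X Y\}$ is an étale covering, because $Z \setminus (Z \times_X Y) \to Z$ is étale and disjoint from $Y$, hence may be inserted into or removed from any family without changing whether it is a $\tau'$-covering. It therefore remains to show that $\phi$ induces an equivalence $\Sh(X\et, \tau') \xrightarrow{\ \sim\ } \Sh(Y\et)$, which unwinds to the assertion that $(f\inv, f_*)$ restricts to an equivalence between $\cX_X / U$ and $\cX_Y$.

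This is where \cref{lem:qet_structure} enters. It first shows that $\phi$ is essentially surjective: any affinoid $W \to Y$ étale has the form $X' \times_X Y$ with $X' \to X$ affinoid étale, and such $W$ generate $Y\et$; together with the comparison of coverings above, this identifies the topology of $Y\et$ with the one carried over from $\tau'$ along $\phi$. Secondly, carrying out the construction in the proof of \cref{lem:qet_structure} relatively over a fixed étale $Z \to X$ shows that any $Y$-morphism $W = X' \times_X Y \to Z \times_X Y$ extends, after replacing $X'$ by an étale morphism $X'' \to X'$ which is an isomorphism over $W$ — hence a $\tau'$-covering of $X'$ — to an $X$-morphism $X'' \to Z$. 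This local extension of morphisms across the closed fibre supplies the cofinality needed to compute $f\inv$ on the affinoid generators of $Y\et$ and to check that the unit and counit of the restricted adjunction are equivalences, which completes the proof. I expect the real work to be exactly this relative strengthening of \cref{lem:qet_structure} and the attendant bookkeeping with sheafification; granting it, the identification of the two $\infty$-topoi is formal, since $\phi$ is then a cover-reflecting, essentially surjective functor of sites which is fully faithful up to $\tau'$-refinement.
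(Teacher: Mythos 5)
Your reduction to the affinoid case via \cref{lem:descent_for_closed_subtopoi} and your identification of $\cX_X / U$ with $\Sh(X\et, \tau')$ are both sound, modulo the small point that $Z \setminus (Z \times_X Y)$ is not an object of the affinoid site $X\et$ and must be replaced by a finite family of affinoid subdomains $\{\abs{g} \ge \epsilon\}$ with $g$ ranging over generators of the ideal of $Y$ (harmless, by compactness). The route you then take is genuinely different from the paper's, but its central step is asserted rather than proved. For the comparison of sites you need not only that every étale affinoid $W \to Y$ is of the form $X' \times_X Y$ (which is \cref{lem:qet_structure}), but also that every $Y$-morphism $X' \times_X Y \to Z \times_X Y$ lifts, after an étale $X'' \to X'$ inducing an isomorphism over $W$, to an $X$-morphism $X'' \to Z$, together with the corresponding local faithfulness. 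The faithfulness part is easy (the equalizer of two $X$-maps into the separated étale $Z \to X$ is clopen and contains $X'' \times_X Y$, so its complement is disjoint from $Y$). But the lifting part is a genuinely new statement: a section of the étale map $Z \times_X X' \to X'$ over the closed subspace $W$ must be extended to a section over a $\tau'$-cover of $X'$. This requires its own Newton/Hensel-type argument (lift the coordinates of the section arbitrarily to $A'$, note that the Jacobian is a unit along $W$ while the defining equations vanish there, and run Newton's iteration on a rational subdomain containing $W$) — essentially a pointed, relative rerun of the proof of \cref{lem:qet_structure}. You explicitly defer this as ``the real work,'' so as written the proof is incomplete. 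You should also say why the resulting cover-preserving, locally surjective, locally fully faithful morphism of sites induces an equivalence of $\infty$-topoi of sheaves of \emph{spaces}: since both sites are $1$-categories the two topoi are $1$-localic, so this does reduce to the classical comparison lemma for $1$-topoi, but that reduction is part of the argument.

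For contrast, the paper's proof sidesteps the morphism-level extension entirely: it uses $1$-localicity to reduce to $1$-truncated objects, proves that $f_*$ is conservative using only the object-level \cref{lem:qet_structure}, and then verifies that the unit $F \to f_* f^{-1} F$ is an equivalence stalkwise, using that truncated objects are hypercomplete and that the relevant topos has enough points (\cref{rem:points_of_cX_X}). Your approach is viable and arguably more structural, but to make it complete you must actually prove the relative extension lemma rather than postulate it.
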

\begin{proof}
By \cref{lem:descent_for_closed_subtopoi}, we can assume that both $X$ and $Y$ are affinoid.
Note that $\cX_X / U$ and $\cX_Y$ are $1$-localic \inftopoi in virtue of \cite[7.5.4.2]{HTT} and \cite[Lemma 1.2.6]{DAG-VIII}.
Therefore it suffices to show that the adjunction $(f\inv, f_*)$ induces an equivalence when restricted to $1$-truncated objects of $\cX_X / U$ and $\cX_Y$.

Let us prove that the functor $f_*$ is conservative.
Let $\alpha \colon F \to F'$ be a morphism in $\cX_Y$ and suppose that $f_*(\alpha)$ is an equivalence.
By the equivalence \eqref{eq:afd_in_an}, it is enough to show that $\alpha$ induces equivalences $F(Y') \to F'(Y')$ for every étale morphism $Y' \to Y$.
Using \cref{lem:qet_structure}, we can form a pullback diagram
\[
\begin{tikzcd}
Y' \arrow{r} \arrow{d} & X' \arrow{d} \\
Y \arrow{r} & X ,
\end{tikzcd}
\]
where $X' \to X$ is étale.
It follows that
\[
F(Y') = (f_* F)(X') \to (f_* F')(X') = F(Y')
\]
is an equivalence.

We are left to check that the unit of the adjunction $(f\inv, f_*)$ is an equivalence over $1$-truncated objects.
For this, it suffices to check that for every $1$-truncated sheaf $F \in \cX_X$,
the unit $u \colon F \to f_* f\inv F$ induces an equivalence on sheaves of homotopy groups.
Since both $F$ and $f_* f\inv F$ are $1$-truncated, they are hypercomplete objects.
Therefore, it suffices to check that $\eta\inv(u)$ is an equivalence for every geometric morphism $\eta\inv \colon \cX_X \to \cS\colon\eta_*$.
Such a geometric morphism corresponds to a geometric point $x$ of the adic space associated to $X$ (cf.\ \cref{rem:points_of_cX_X}).
Let $\{V_\alpha\}$ be a system of étale neighborhoods of $x$.
We have $\eta\inv(G)=\colim G(V_\alpha)$.

If $x$ does not meet $Y$, we see that $\eta\inv(G)$ is contractible whenever $G \in \cX_X / U$.
In particular $\eta\inv(u)$ is an equivalence for every $1$-truncated $F \in \cX_X / U$.

Otherwise, $x$ lifts to a geometric morphism $\eta_1\inv \colon \cX_Y \to \cS$, satisfying $\eta\inv = \eta_1\inv \circ f\inv$. 
So we have
\begin{align*}
\eta\inv(f_* f\inv F) & \simeq \colim (f_* f\inv F)(V_\alpha) \\
& \simeq \colim (f\inv F)(V_\alpha \times_{X} Y) \\
& \simeq \eta_1\inv f\inv F \simeq \eta\inv F ,
\end{align*}
completing the proof.
\end{proof}

\begin{prop} \label{prop:closed_immersion_pullback_of_topoi}
	Let
	\[ \begin{tikzcd}
	W \arrow{r} \arrow{d} & Y \arrow{d}{g} \\
	X \arrow{r}{f} & Z
	\end{tikzcd} \]
	be a pullback square in $\Ank$ and assume that $f$ is a closed immersion. The induced square of \inftopoi
	\[ \begin{tikzcd}
	\cX_W \arrow{r} \arrow{d} & \cX_Y \arrow{d}{g_*} \\
	\cX_X \arrow{r}{f_*} & \cX_Z
	\end{tikzcd} \]
	is a pullback diagram in $\RTop$.
	\end{prop}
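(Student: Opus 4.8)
The plan is to reuse the explicit description of closed immersions obtained in \cref{prop:preserve_closed_immersion}, together with the stability of closed immersions of \inftopoi under base change. Since the square is cartesian in $\Ank$ and $f$ is a closed immersion, so is $W\to Y$. By the proof of \cref{prop:preserve_closed_immersion} (and \cref{lem:closed_immersion_closed_subtopos}) we have identifications $\cX_W\simeq\cX_Y/V$ and $\cX_X\simeq\cX_Z/U$, compatible with the structural maps to $\cX_Y$ and $\cX_Z$ respectively, where $U$ (resp.\ $V$) is the $(-1)$-truncated object of $\cX_Z$ (resp.\ $\cX_Y$) with $U(Z')=\{*\}$ exactly when $Z'\times_ZX=\emptyset$, for $Z'\in Z\et$ (resp.\ $V(Y')=\{*\}$ exactly when $Y'\times_YW=\emptyset$, for $Y'\in Y\et$). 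Closed immersions of \inftopoi are stable under pullback, and their defining $(-1)$-truncated object pulls back along geometric morphisms (cf.\ \cite[\S 7.3.2]{HTT}); hence the pullback $(\cX_Z/U)\times_{\cX_Z}\cX_Y$ in $\RTop$ is the closed subtopos $\cX_Y/g\inv(U)$, with its evident projections. The proposition therefore reduces to showing $g\inv(U)\simeq V$ as subobjects of the terminal object of $\cX_Y$: the resulting equivalence $\cX_W\simeq\cX_Y/g\inv(U)\simeq(\cX_Z/U)\times_{\cX_Z}\cX_Y$ is then automatically compatible with the two projections of the square, since closed immersions of \inftopoi are monomorphisms in $\RTop$ and the two composites to $\cX_Z$ agree by functoriality of $X\mapsto\cX_X$.

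To identify $g\inv(U)$ with $V$, I would write $U$ as the image of $\coprod_{Z'}h_{Z'}\to 1_{\cX_Z}$, the coproduct indexed by those $Z'\in Z\et$ with $Z'\times_ZX=\emptyset$ (this image is precisely the $(-1)$-truncated object whose value at $Z''$ is $\{*\}$ iff $Z''\times_ZX=\emptyset$ étale-locally on $Z''$, hence iff $Z''\times_ZX=\emptyset$). Since $g\inv$ preserves colimits and effective epimorphisms and carries $h_{Z'}$ to $h_{Z'\times_ZY}$, the object $g\inv(U)$ is the image of $\coprod_{Z'}h_{Z'\times_ZY}\to 1_{\cX_Y}$. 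For each such $Z'$ one computes $(Z'\times_ZY)\times_YW\simeq(Z'\times_ZX)\times_ZY=\emptyset$, so $g\inv(U)\le V$. Conversely, let $Y'\in Y\et$ with $Y'\times_YW=\emptyset$; using $Y'\times_YW\simeq Y'\times_ZX$ for the composite $Y'\to Y\xrightarrow{g}Z$, the image of $|Y'|$ in $|Z|$ avoids the closed analytic subset $|X|$, so $Y'\to Z$ factors through the admissible open $Z\setminus|X|$. Picking an admissible affinoid covering $\{Z'_\gamma\}$ of $Z\setminus|X|$ and setting $Y'_\gamma\coloneqq Y'\times_{Z\setminus|X|}Z'_\gamma$, one gets an admissible covering $\{Y'_\gamma\}$ of $Y'$ in which each $Z'_\gamma$ lies in $Z\et$ with $Z'_\gamma\times_ZX=\emptyset$ and each $Y'_\gamma$ maps to $Z'_\gamma\times_ZY$; hence $g\inv(U)(Y'_\gamma)=\{*\}$ for every $\gamma$, and since $g\inv(U)$ is a sheaf, $g\inv(U)(Y')=\{*\}$. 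Thus $g\inv(U)=V$, finishing the argument.

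The one step genuinely using the geometry of \kanal spaces — and the place I expect to be the main obstacle — is the inclusion $V\le g\inv(U)$, where one must pass from the topological statement ``$|Y'|$ avoids $|X|$'' to the site-theoretic statement that $Y'$ admits, within $Y\et$, a covering by objects lying over affinoid subspaces $Z'$ of $Z$ that literally satisfy $Z'\times_ZX=\emptyset$. This relies on the complement of a closed analytic subset being an admissible open that possesses an admissible affinoid covering. Everything else is formal: manipulation of $(-1)$-truncated objects and images in an \inftopos, and the cited behaviour of closed subtopoi under base change.
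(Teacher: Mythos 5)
Your argument is correct, and its skeleton coincides with the paper's: identify $\cX_X\simeq\cX_Z/U$ and $\cX_W\simeq\cX_Y/V$ via \cref{lem:closed_immersion_closed_subtopos}, invoke \cite[7.3.2.13]{HTT} to identify the pullback $(\cX_Z/U)\times_{\cX_Z}\cX_Y$ with $\cX_Y/g\inv(U)$, and reduce to $g\inv(U)\simeq V$. Where you genuinely diverge is in proving this last identification. The paper constructs a map $U\to g_*V$ by a direct check on sections, adjoints it to $g\inv(U)\to V$, and then verifies it is an equivalence on stalks, using that $(-1)$-truncated objects are hypercomplete and that $\cX_Y^\wedge$ has enough points classified by geometric points of the adic space (\cref{rem:points_of_cX_X}). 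You instead avoid points altogether: you present $U$ as the image of a coproduct of representables, use that $g\inv$ preserves colimits, finite limits (hence images and $(-1)$-truncatedness) and sends $h_{Z'}$ to $h_{Z'\times_Z Y}$, and then establish the two inclusions of subobjects of $1_{\cX_Y}$ directly -- the nontrivial one resting on the rigid-geometric fact that $Z\setminus|X|$ is an admissible open with an admissible affinoid covering, through which any $Y'\in Y\et$ with $Y'\times_Y W=\emptyset$ factors. The trade-off is clear: the paper's stalkwise check is shorter but leans on the adic-space point machinery and hypercompleteness, while yours is purely site-theoretic at the cost of importing the statement about complements of Zariski-closed subsets; since subobjects of the terminal object form a poset, your two inclusions do suffice without constructing a preferred comparison map. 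The only points worth polishing are cosmetic: the $Y'_\gamma$ need not be affinoid, so one should either refine by affinoid coverings or invoke the equivalence \eqref{eq:afd_in_an} to evaluate on all of $(\An_Y)\et$; and the compatibility of your identification with the projection to $\cX_X$ is indeed automatic because closed immersions of \inftopoi are monomorphisms in $\RTop$, a point the paper's proof glosses over in the same way.
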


\begin{proof}
	Let $U_X$ be the sheaf on the étale site $Z\et$ of $Z$ defined by
	\[ U_X(T) \coloneqq \begin{cases} \{*\} & \text{if } T \times_Z X = \emptyset \\ \emptyset & \text{otherwise.} \end{cases} \]
	Define $U_W$ to be the sheaf on the étale site $Y\et$ of $Y$ in a similar way.
	Using \cref{lem:closed_immersion_closed_subtopos} twice, we can rewrite the induced square of \inftopoi as
	\[ \begin{tikzcd}
	\cX_Y / U_W \arrow{r} \arrow{d} & \cX_Y \arrow{d}{g_*} \\
	\cX_Z / U_X \arrow{r} & \cX_Z .
	\end{tikzcd} \]
	In virtue of \cite[7.3.2.13]{HTT}, we only need to show that $g\inv U_X \simeq U_W$.
	First of all, let us observe that there exists a map $U_X \to g_* U_W$: indeed, if $T \to X$ is étale with $T \to Z$ a smooth morphism such that $T \times_Z X = \emptyset$, then we also have $(T \times_Z Y) \times_Y W \simeq (T\times_Z W) \times_Z Y = \emptyset$, and therefore $g_*(U_W)(T) = U_W(T \times_Z Y) = \Delta^0$.
	This allows to define the desired map, which induces by adjunction a morphism $g\inv U_X \to U_W$.
	By construction, $U_W$ is $(-1)$-truncated and \cite[5.5.6.16]{HTT} shows that $g\inv U_X$ is $(-1)$-truncated too.
	Therefore they are both hypercomplete.
	So it suffices to check that $g\inv U_X \to U_W$ is an isomorphism on the stalks of $\cX_Y$. This is true because a geometric point $\eta_* \colon \cS \to \cX_Y$ factors through $\cX_W$ if and only if $g_* \circ \eta_*$ factors through $\cX_X$ (cf.\ \cref{rem:points_of_cX_X}).
\end{proof}

\begin{cor} \label{cor:Tkan_unramified}
	The pregeometry $\cTank$ is unramified.
\end{cor}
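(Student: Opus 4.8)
The plan is to recognise the square in \cref{def:unramified_pregeometry} as a cartesian square of $k$-analytic spaces whose horizontal arrows are closed immersions, and then to invoke \cref{prop:closed_immersion_pullback_of_topoi}.

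First I would unwind the maps in that square. Writing $\Gamma_f\colon X\to X\times Y$, $x\mapsto(x,f(x))$, for the graph of $f$, the bottom and top horizontal arrows are $\Gamma_f$ and $\Gamma_f\times\id_Z$ respectively, while the vertical arrows are the projections away from $Z$. The morphism $\Gamma_f$ is obtained by pulling back the diagonal $Y\to Y\times Y$ along $(f,\id_Y)\colon X\times Y\to Y\times Y$, hence is a closed immersion; its base change $\Gamma_f\times\id_Z$ is then a closed immersion as well. A direct computation of fibre products shows that the square is cartesian in $\Ank$: the fibre product of $\Gamma_f\colon X\to X\times Y$ with the projection $X\times Y\times Z\to X\times Y$ is canonically $X\times Z$, compatibly with all the structure maps. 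Finally, all four spaces involved are smooth $k$-analytic, since a finite product of smooth $k$-analytic spaces is smooth, so they are objects of $\cTank$ and the whole diagram lives in $\cTank$.

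Now \cref{prop:closed_immersion_pullback_of_topoi} applies with its closed immersion being $\Gamma_f\colon X\to X\times Y$ and its other leg being the projection $X\times Y\times Z\to X\times Y$, and it yields that the induced square of \'etale $\infty$-topoi
\[ \begin{tikzcd}
\cX_{X\times Z} \arrow{r} \arrow{d} & \cX_{X\times Y\times Z} \arrow{d} \\
\cX_X \arrow{r} & \cX_{X\times Y}
\end{tikzcd} \]
is a pullback diagram in $\RTop$. Since $\cX_{(-)}$ is exactly the associated $\infty$-topos appearing in \cref{def:unramified_pregeometry}, this is precisely the assertion that $\cTank$ is unramified.

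The argument is little more than bookkeeping once \cref{prop:closed_immersion_pullback_of_topoi} is in hand; the only substantive points are the verification that the graph morphism $\Gamma_f$ is a closed immersion (i.e.\ that the objects of $\cTank$ are separated enough for the diagonal to be a closed immersion) and the routine check that the displayed square is cartesian in $\Ank$. Neither of these presents a genuine difficulty.
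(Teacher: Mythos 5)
Your argument is correct and is essentially the paper's own proof: both identify the horizontal arrow $X \to X \times Y$ as the graph of $f$, realize it as a pullback of the diagonal of $Y$ (hence a closed immersion by separatedness), check the square is cartesian in $\Ank$, and then apply \cref{prop:closed_immersion_pullback_of_topoi}. No substantive difference.
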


\begin{proof}
	We check that \cref{def:unramified_pregeometry} is satisfied.
	Let $X, Y, Z \in \cTank$ and let $f \colon Y \to X$ be any morphism.
	The diagram
	\[ \begin{tikzcd}
		X \arrow{r} \arrow{d}{\mathrm{id}_X \times f} & Y \arrow{d}{\Delta} \\
		X \times Y \arrow{r} & Y \times Y
	\end{tikzcd} \]
	is a pullback diagram. Since $Y$ is separated, $Y$ is a closed immersion, and therefore the same goes for $X \to X \times Y$.
	We can therefore use \cref{prop:closed_immersion_pullback_of_topoi} to conclude that the induced square
	\[	\begin{tikzcd}
			\cX_{X \times Z} \arrow{r} \arrow{d} & \cX_X \arrow{d} \\
			\cX_{X \times Y \times Z} \arrow{r} & \cX_{X \times Y}.
		\end{tikzcd} \]
	is a pullback diagram in $\RTop$.
\end{proof}

\subsection{Algebraization}

The functor $\cTdisck \to \cTank$ induced by analytification is a transformation of pregeometries in the following sense:

\begin{defin}[{\cite[3.2.1]{DAG-IX}}]
	A \emph{transformation of pregeometries} from $\cT$ to $\cT'$ is a functor $\theta\colon\cT\to\cT'$ such that
	\begin{enumerate}[(i)]
		\item it preserves finite products;
		\item it sends admissible morphisms in $\cT$ to admissible morphisms in $\cT'$;
		\item it sends coverings in $\cT$ to coverings in $\cT'$;
		\item it sends any pullback in $\cT$ along an admissible morphism to a pullback in $\cT'$.
	\end{enumerate}
\end{defin}

In the following, we study some properties of the transformation of pregeometries $\cTdisck \to \cTank$.

\begin{lem} \label{lem:alg_conservative}
	Let $\cX$ be an \inftopos.
	The algebraization functor
	\[
	(-)^\mathrm{alg} \colon \Strloc_{\cTank}(\cX) \to \Strloc_{\cTdisck}(\cX)
	\]
	induced by composition with the transformation $\cTdisck \to \cTank$ is conservative.
\end{lem}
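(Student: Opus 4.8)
The plan is to show that $(-)^{\mathrm{alg}}$ reflects equivalences (preservation being automatic). So I would fix a morphism $\alpha\colon\cO\to\cO'$ in $\Strloc_{\cTank}(\cX)$ with $\alpha^{\mathrm{alg}}$ an equivalence and prove that $\alpha(X)\colon\cO(X)\to\cO'(X)$ is an equivalence in $\cX$ for every $X\in\cTank$; since a morphism of $\cTank$-structures is an equivalence exactly when it is a pointwise equivalence, this suffices. Let $\cC\subseteq\cTank$ be the full subcategory of objects $X$ with $\alpha(X)$ an equivalence, so the goal is $\cC=\cTank$. The base case is $\bA^n_k\in\cC$ for all $n$: under the equivalence $\Strloc_{\cTdisck}(\cX)\simeq\Sh_{\CAlg_k}(\cX)$ of evaluation on the affine line, $\alpha^{\mathrm{alg}}$ is an equivalence if and only if $\alpha^{\mathrm{alg}}(\Spec k[x])=\alpha(\bA^1_k)$ is an equivalence in $\cX$; and since $\bA^n_k\simeq(\bA^1_k)^n$ in $\cTank$ while $\cO,\cO'$ preserve finite products (\cref{def:structure}(i)), one obtains $\alpha(\bA^n_k)\simeq\alpha(\bA^1_k)^n$, which is an equivalence.

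Next I would propagate membership in $\cC$ along admissible morphisms: if $X\in\cC$ and $U\to X$ is admissible in $\cTank$, then locality of $\alpha$ makes the square with vertical edges $\cO(U)\to\cO(X)$ and $\cO'(U)\to\cO'(X)$ and horizontal edges $\alpha(U),\alpha(X)$ a pullback in $\cX$; hence $\alpha(U)$ is a base change of $\alpha(X)$, and so an equivalence, giving $U\in\cC$. Together with the base case, every smooth \kanal space admitting an étale morphism to some $\bA^n_k$ lies in $\cC$.

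The remaining step is a descent argument. For arbitrary $X\in\cTank$, I would invoke the local structure of smooth \kanal spaces: $X$ admits an admissible covering $\{V_i\to X\}$ in which each $V_i$ is an affinoid equipped with an étale morphism to a closed polydisc $\mathbf D^{n_i}_k$; since $\mathbf D^{n_i}_k$ is an admissible open of $\bA^{n_i}_k$, the composite $V_i\to\bA^{n_i}_k$ is étale, so $V_i\in\cC$. Now $\coprod_i\cO'(V_i)\to\cO'(X)$ is an effective epimorphism because $\cO'$ is a $\cTank$-structure (\cref{def:structure}(iii)). Base-changing $\alpha(X)$ along it, and using that colimits in the \inftopos $\cX$ are universal together with the pullback-square form of locality $\cO(V_i)\simeq\cO(X)\times_{\cO'(X)}\cO'(V_i)$, one identifies the base change with $\coprod_i\alpha(V_i)$, an equivalence. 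As equivalences in an \inftopos can be detected after base change along an effective epimorphism (descent, \cite[6.1.3.9]{HTT}), $\alpha(X)$ is an equivalence; hence $\cC=\cTank$ and $\alpha$ is an equivalence.

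I expect the descent step to be the main obstacle: matching the base change of $\alpha(X)$ along $\coprod_i\cO'(V_i)\to\cO'(X)$ with $\coprod_i\alpha(V_i)$ requires combining universality of colimits in $\cX$ with the pullback-square form of locality, after which one appeals to $\infty$-topos descent. The only genuinely non-archimedean ingredient is the local structure theorem for smooth \kanal spaces (étale charts over polydiscs), which I would take from \cite{Fresnel_Rigid_2004}; I note that \cref{def:structure}(ii) is never used here, only the locality of $\alpha$ and \cref{def:structure}(i),(iii).
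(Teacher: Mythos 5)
Your argument is correct and is essentially the paper's own proof, reorganized: the paper likewise covers $X$ by affinoids étale over \kanal affine spaces, uses locality of the morphism to exhibit $\cO(\Sp B_i)\to\cO'(\Sp B_i)$ as a base change of $\cO(\bA^{n}_k)\to\cO'(\bA^{n}_k)$ (which is an equivalence because it equals $\cO\alg(\bbA^n_k)\to\cO^{\prime\mathrm{alg}}(\bbA^n_k)$), and then descends along the effective epimorphism $\coprod\cO(\Sp B_i)\to\cO(X)$. The only differences are cosmetic: you run the reduction in the opposite order (base case on affine spaces, propagation along admissibles, then descent) and you make explicit the conservativity-after-effective-epimorphism step that the paper leaves implicit.
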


\begin{proof}
Let $f \colon \cO \to \cO'$ be a local morphism of $\cTank$-structures on $\cX$ such that $f\alg \colon \cO\alg \to \cO^{\prime \mathrm{alg}}$ is an equivalence.
We will show that for every $X\in\cTank$,
the induced morphism $\cO(X) \to \cO'(X)$ is an equivalence.
Since $X$ is smooth, there exists an affinoid G-covering $\{\Sp B_i\to X\}$ such that every $\Sp B_i$ admits an étale morphism to a \kanal affine space.

So we obtain a commutative square
\[
\begin{tikzcd}
\coprod \cO(\Sp B_i) \arrow{d} \arrow{r} & \coprod \cO'(\Sp B_i) \arrow{d} \\
\cO(X) \arrow{r} & \cO'(X) ,
\end{tikzcd}
\]
where the vertical morphisms are effective epimorphisms.
Moreover, since admissible open immersions are étale and $f$ is a local morphism, we see that the above square is a pullback.
We are therefore reduced to show that $\cO(\Sp B) \to \cO'(\Sp B)$ is an equivalence whenever $\Sp B$ admits an étale morphism to a \kanal affine space $\bA^n_k$.
Since $f$ is a local morphism, we have in this case a pullback square
\[
\begin{tikzcd}
\cO(\Sp B) \arrow{r} \arrow{d} & \cO(\bA^n_k) \arrow{d} \\
\cO'(\Sp B) \arrow{r} & \cO'(\bA^n_k).
\end{tikzcd}
\]
Let $\bbA^n_k$ denote the $n$-dimensional algebraic affine space over $k$.
Since $\cO(\bA^n_k) = \cO\alg(\mathbb A^n_k) \to \cO^{\prime \mathrm{alg}}(\mathbb A^n_k) = \cO'(\bA^n_k)$ is an equivalence by our assumption, we deduce that $\cO(\Sp B) \to \cO'(\Sp B)$ is an equivalence as well, completing the proof.
\end{proof}

\begin{prop} \label{prop:alg_effective_epi}
	Let $\cX$ be an \inftopos and let $f \colon \cO \to \cO'$ be a morphism in $\Strloc_{\cTank}(\cX)$.
	The following conditions are equivalent:
	\begin{enumerate}[(i)]
		\item The morphism $f$ is an effective epimorphism, i.e.\ for every $U \in \cTank$ the morphism $\cO(U) \to \cO'(U)$ is an effective epimorphism in $\cX$.
		\item The morphism $f\alg \colon \cO\alg \to \cO^{\prime \mathrm{alg}}$ is an effective epimorphism.
		\item The morphism $\cO(\bA^1_k) \to \cO'(\bA^1_k)$ is an effective epimorphism.
	\end{enumerate}
\end{prop}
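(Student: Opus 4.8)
The plan is to establish the cycle of implications (i) $\Rightarrow$ (iii) $\Rightarrow$ (ii) $\Rightarrow$ (i), using throughout the standard stability properties of effective epimorphisms in an \inftopos: closure under composition, base change and coproducts, together with the right‑cancellation property that if $g\circ h$ is an effective epimorphism then so is $g$ (see \cite[\S 6.2.3]{HTT}). The implication (i) $\Rightarrow$ (iii) is immediate, since $\bA^1_k$ is an object of $\cTank$.

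For (iii) $\Rightarrow$ (ii), I would use that the objects of $\cTdisck$ are the algebraic affine spaces $\bbA^n_k$ and that the transformation $\cTdisck \to \cTank$, being by definition product‑preserving, composes with $\cO$ (which preserves finite products by \cref{def:structure}(i)) to give $\cO\alg(\bbA^n_k) \simeq \cO(\bA^1_k)^n$, naturally in $\cO$; in particular $\cO\alg(\bbA^1_k) \simeq \cO(\bA^1_k)$. Thus $f\alg$ evaluated on $\bbA^n_k$ is identified with the $n$‑fold product of the morphism $\cO(\bA^1_k) \to \cO'(\bA^1_k)$, which is an effective epimorphism by (iii); since finite products of effective epimorphisms are again effective epimorphisms, $f\alg$ is an effective epimorphism. (Evaluating instead on $\bbA^1_k$ recovers (iii) from (ii), so in fact (ii) and (iii) are trivially equivalent.)

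The substantive step is (ii) $\Rightarrow$ (i), which I would carry out exactly along the lines of the proof of \cref{lem:alg_conservative}. Fix $U \in \cTank$; since $U$ is smooth there is an affinoid G‑covering $\{\Sp B_i \to U\}$ such that each $\Sp B_i$ admits an étale morphism to an affine space $\bA^{n_i}_k$. Because $f$ is a local morphism of $\cTank$‑structures and étale morphisms are admissible, \cref{def:structure} yields pullback squares exhibiting $\cO(\Sp B_i) \to \cO'(\Sp B_i)$ as the base change of $\cO(\bA^{n_i}_k) \to \cO'(\bA^{n_i}_k)$; the latter is $f\alg$ evaluated on $\bbA^{n_i}_k$, hence an effective epimorphism by (ii). Consequently each $\cO(\Sp B_i) \to \cO'(\Sp B_i)$ is an effective epimorphism, and so is the coproduct $\coprod_i \cO(\Sp B_i) \to \coprod_i \cO'(\Sp B_i)$. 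Since an admissible affinoid covering is a covering for the étale topology, \cref{def:structure}(iii) makes the vertical maps $\coprod_i \cO(\Sp B_i) \to \cO(U)$ and $\coprod_i \cO'(\Sp B_i) \to \cO'(U)$ of the evident commutative square effective epimorphisms; therefore the composite $\coprod_i \cO(\Sp B_i) \to \coprod_i \cO'(\Sp B_i) \to \cO'(U)$ is an effective epimorphism. As this composite also factors as $\coprod_i \cO(\Sp B_i) \to \cO(U) \to \cO'(U)$, right‑cancellation shows that $\cO(U) \to \cO'(U)$ is an effective epimorphism, establishing (i).

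I expect the only delicate point to be the bookkeeping in this last step: one must keep track of the fact that ``effective epimorphism of structures'' is to be read objectwise, and check that each appeal to the stability and cancellation properties is legitimate in the ambient \inftopos $\cX$. By contrast, the geometric input — that a smooth $k$‑analytic space is covered by affinoids étale over affine space, and that admissible coverings induce effective epimorphisms through \cref{def:structure}(iii) — is already available from \cref{lem:alg_conservative}, so nothing new is required there.
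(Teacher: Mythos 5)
Your proof is correct and follows essentially the same route as the paper's: the trivial implications are just cycled in the opposite order, and the substantive step (reducing a general $U \in \cTank$ to affinoids étale over $\bA^{n}_k$ via a G-covering, using locality of $f$ to obtain pullback squares, and invoking stability of effective epimorphisms under base change, products and coproducts together with right-cancellation) is exactly the argument given in the paper for (iii) $\Rightarrow$ (i). No gaps.
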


\begin{proof}
	It follows directly from the definition of effective epimorphism of $\cTank$-structures that (i) implies (ii) and (ii) implies (iii).
	Let us show that (iii) implies (i).
	Let $X \in \cTank$. Choose an étale covering $\{U_i \to X \}$ such that each $U_i$ admits an étale morphism to $\bA^n_k$.
	Since $f$ is a local morphism, we have the following pullback square:
	\[ \begin{tikzcd}
	\coprod \cO(U_i) \arrow{r} \arrow{d} & \coprod \cO'(U_i) \arrow{d} \\
	\cO(X) \arrow{r} & \cO'(X) .
	\end{tikzcd} \]
	The vertical arrows are effective epimorphisms, and therefore it suffices to check that the upper horizontal map is an effective epimorphism.
	Since $f$ is a local morphism, we see that the diagram
	\[ \begin{tikzcd}
	\cO(U_i) \arrow{r} \arrow{d} & \cO'(U_i) \arrow{d} \\
	\cO(\bA^n_k) \arrow{r} & \cO'(\bA^n_k)
	\end{tikzcd} \]
	is a pullback diagram.
	So it suffices to show that $\cO(\bA^n_k) \to \cO'(\bA^n_k)$ is an effective epimorphism. This follows from the hypothesis and the fact that both $\cO$ and $\cO'$ commute with products.
\end{proof}

\begin{defin}[{\cite[10.1]{DAG-IX}}]
	Let $\theta\colon\cT'\to\cT$ be a transformation of pregeometries, and $\Theta\colon\Top(\cT)\to\Top(\cT')$ the induced functor given by composition with $\theta$.
	We say that $\theta$ is \emph{unramified} if the following conditions hold:
	\begin{enumerate}[(i)]
		\item The pregeometries $\cT$ and $\cT'$ are unramified.
		\item For every morphism $f\colon X\to Y$ in $\cT$ and every object $Z\in\cT$, the diagram
			\[ \begin{tikzcd}
			\Theta\Spec^\cT(X\times Z) \arrow{r} \arrow{d} & \Theta\Spec^\cT(X) \arrow{d} \\
			\Theta\Spec^\cT(X\times Y\times Z) \arrow{r} & \Theta\Spec^\cT(X\times Y)
			\end{tikzcd} \]
		is a pullback square in $\Top(\cT')$.
	\end{enumerate}
\end{defin}

\begin{prop} \label{prop:unramified_transformation}
The transformation of pregeometries $\cTdisck \to \cTank$ is unramified.
\end{prop}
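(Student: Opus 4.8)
The plan is to verify the two conditions in \cite[10.1]{DAG-IX}. Condition~(i) asks that $\cTank$ and $\cTdisck$ be unramified pregeometries; the former is \cref{cor:Tkan_unramified}, and the latter is immediate, since the admissible morphisms of $\cTdisck$ are the isomorphisms and its topology is trivial, so the admissible site of every object of $\cTdisck$, and hence the \inftopos associated to it, is terminal (equivalent to $\cS$), and the square in \cref{def:unramified_pregeometry} then consists of identities.

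For condition~(ii) I would fix $f\colon X\to Y$ in $\cTank$ and $Z\in\cTank$, form the fiber product $P\coloneqq\Theta\Spec^{\cTank}(X)\times_{\Theta\Spec^{\cTank}(X\times Y)}\Theta\Spec^{\cTank}(X\times Y\times Z)$ in $\RTop(\cTdisck)$, and show that the canonical comparison $c\colon\Theta\Spec^{\cTank}(X\times Z)\to P$ is an equivalence. As in the proof of \cref{cor:Tkan_unramified}, the map $X\to X\times Y$ is a closed immersion and the square defining the condition is a pullback square in $\Ank$ whose right vertical arrow is the projection $X\times Y\times Z\to X\times Y$; consequently, on underlying \inftopoi the morphism $c$ is already the equivalence $\cX_{X\times Z}\simeq\cX_X\times_{\cX_{X\times Y}}\cX_{X\times Y\times Z}$ furnished by \cref{prop:closed_immersion_pullback_of_topoi}. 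So everything reduces to the structure sheaves.

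Here I would use that, since every morphism of $\cTdisck$-structures is local, $\Strloc_{\cTdisck}(\cX_{X\times Z})=\Str_{\cTdisck}(\cX_{X\times Z})\simeq\Sh_{\CAlg_k}(\cX_{X\times Z})$ via evaluation on the affine line, under which the structure sheaf of $\Theta\Spec^{\cTank}(W)$ corresponds to the ordinary discrete étale structure sheaf $\cO_W$, pushouts of $\cTdisck$-structures correspond to (derived) relative tensor products of sheaves of simplicial $k$-algebras, and pullbacks of structures correspond to inverse images of sheaves. Then the structure sheaf of $P$ becomes $\cO_X\otimes^\rL_{\cO_{X\times Y}}\cO_{X\times Y\times Z}$, pulled back along the projections to $\cX_{X\times Z}$, and $c$ becomes the natural map
\[ \cO_X\otimes^\rL_{\cO_{X\times Y}}\cO_{X\times Y\times Z}\longrightarrow\cO_{X\times Z} \]
of sheaves of simplicial $k$-algebras on $\cX_{X\times Z}$. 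To conclude I would observe that both sides are $0$-truncated: for the target this is clear, and for the source each homotopy sheaf $\pi_n$ is a sheaf, hence hypercomplete, and has vanishing stalks for $n>0$ because $\cO_{X\times Y}\to\cO_{X\times Y\times Z}$ is flat, the projection being flat, while $\cX_{X\times Z}^\wedge$ has enough points by \cref{rem:points_of_cX_X}. Both sides being $0$-truncated are therefore hypercomplete, and the map may be checked on stalks over the geometric points of $\cX_{X\times Z}$; there it is the isomorphism of strictly henselian local rings exhibiting $\cO_{X\times Z}$ as the flat, hence underived, base change of $\cO_{X\times Y\times Z}$ along $X\to X\times Y$.

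The hard part will be the structure-sheaf bookkeeping: identifying the structure sheaf of $P$ with a relative tensor product and the algebraized spectrum of an honest space $W$ (evaluated on the affine line) with the ordinary sheaf $\cO_W$, following the template of \cite[\S 10]{DAG-IX}. Once these identifications are in place, flatness of the projection forces the comparison $c$ to be underived and hence an equivalence. An alternative to the final reduction is to form the fiber product already in $\RTop(\cTank)$ and invoke \cref{lem:alg_conservative} to reduce checking that $c$ is an equivalence to the algebraized statement above.
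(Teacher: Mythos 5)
Your proposal is correct and follows essentially the same route as the paper: reduce condition (ii) to showing that the square of algebraized structure sheaves is a pushout in $\Sh_{\CAlg_k}(\cX_{X\times Z})$, use hypercompleteness and enough points of $\cX_{X\times Z}^\wedge$ to check on stalks, and use flatness of the projection to identify the derived pushout with the underived one. The one step you leave implicit --- that the \emph{underived} tensor product of stalks actually computes the stalk of $\cO_{X\times Z}$, which is where the closed immersion $X\to X\times Y$ is used to replace completed tensor products of affinoid algebras by ordinary ones --- is precisely the fundamental-system-of-étale-neighborhoods computation occupying the second half of the paper's proof.
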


\begin{proof}
For $X\in\cTank$, we denote the spectrum $\Spec^{\cTank}(X)$ by $(\cX_X, \cO_X)$.
For a morphism $X \to Y$ in $\cTank$,
we denote by $\cO_Y\alg | X$ the image of $\cO_Y\alg$ under the pullback functor $\Strloc_{\cTdisck}(\cX_Y) \to \Strloc_{\cTdisck}(\cX_X)$.
We have to show that for every morphism $f \colon X \to Y$ in $\cTank$ and every $Z\in\cTank$,
the commutative square
\begin{equation} \label{eq:desired_pushout}
\begin{tikzcd}
\cO_{X \times Y}\alg | (X \times Z) \arrow{r} \arrow{d} & \cO_X\alg | (X \times Z) \arrow{d} \\
\cO_{X \times Y \times Z}\alg | (X \times Z) \arrow{r} & \cO_{X \times Z}\alg
\end{tikzcd}
\end{equation}
is a pushout in $\Strloc_{\cTdisck}(\cX_{X \times Z}) \simeq \Sh_{\CAlg_k}(\cX_{X \times Z})$.

Form the pushout
\[ \begin{tikzcd}
	\cO_{X \times Y}\alg | (X \times Z) \arrow{r} \arrow{d} & \cO_X\alg | (X \times Z) \arrow{d} \\
	\cO_{X \times Y \times Z}\alg | (X \times Z) \arrow{r} & \cA
\end{tikzcd} \]
in $\Sh_{\CAlg_k}(\cX_{X \times Z})$.
Let $\cA^\wedge$ be the hypercompletion of $\cA$.
We will prove below that $\cA^\wedge$ is equivalent to $\cO_{X \times Z}\alg$.
Assuming this, we see that $\cA^\wedge$ is discrete.
It follows that $\cA$ is discrete as well, and therefore it is hypercomplete.
We thus conclude that the square \eqref{eq:desired_pushout} is a pushout.

So we are reduced to show that the map $\cA^\wedge \to \cO_{X \times Z}\alg$ is an equivalence.
Both sheaves are hypercomplete and \cref{rem:points_of_cX_X} shows that $\cX_{X \times Z}^\wedge$ has enough points.
Thus, it suffices to show that for every geometric point $(x,z)$ of the adic space associated to $X \times Z$ in the sense of Huber, the diagram
\begin{equation}\label{eq:pushout_stalks}
\begin{tikzcd}
\cO_{(x,y)}\alg \arrow{r} \arrow{d} & \cO_x\alg \arrow{d} \\
\cO_{(x,y,z)}\alg \arrow{r} & \cO_{(x,z)}\alg
\end{tikzcd}
\end{equation}
is a pushout square, where we set $y \coloneqq f(x)$.
Choose a fundamental system of étale affinoid neighborhoods $\{V_\alpha\}$ of $(x,y)$ in $X \times Y$.
Set $U_\alpha \coloneqq V_\alpha \times_{X \times Y} X$ and observe that $\{U_\alpha\}$ forms a fundamental system of étale affinoid neighborhoods of $x$ in $X$.
Choose moreover a fundamental system $\{W_\beta\}$ of étale affinoid neighborhoods of $z$ in $Z$.
We have pullback squares
\begin{equation}\label{eq:neighborhood_systems}
\begin{tikzcd}
U_\alpha \times W_\beta \arrow{r} \arrow{d} & U_\alpha \arrow{d} \\
V_\alpha \times W_\beta \arrow{r} & V_\alpha.
\end{tikzcd}
\end{equation}
Assume $U_\alpha = \Sp A_\alpha$, $V_\alpha = \Sp B_\alpha$ and $W_\beta = \Sp C_\beta$.
Since $U_\alpha \to V_\alpha$ is a closed immersion, the pullback above corresponds to a pushout in the category of $k$-algebras\begin{equation}\label{eq:algebraic_neighborhood_systems}
\begin{tikzcd}
B_\alpha \arrow{r} \arrow{d} & B_\alpha \cotimes_k C_\beta \arrow{d} \\
A_\alpha \arrow{r} & A_\alpha \cotimes_k C_\beta
\end{tikzcd}
\end{equation}
Taking limit in Diagram \ref{eq:neighborhood_systems} (or equivalently, taking colimit in Diagram \ref{eq:algebraic_neighborhood_systems}),
we observe that Diagram \ref{eq:pushout_stalks} is a pushout diagram in the category of $k$-algebras.
Since the projections $V_\alpha \times W_\beta \to V_\alpha$ are flat, we see that every morphism $B_\alpha \to B_\alpha \cotimes_k C_\beta$ is flat.
As a consequence, $\cO_{(x,y)}\alg \to \cO_{(x,y,z)}\alg$ is flat.
The pushout \eqref{eq:pushout_stalks} is therefore a derived pushout square, completing the proof.
\end{proof}

Intuitively, the pregeometry $\cTank$ enables us to consider structure sheaves with ``non-archimedean analytic structures'' in addition to the usual algebraic structures.
The unramifiedness of the transformation $\cTdisck\to\cTank$ in \cref{prop:unramified_transformation} will imply that for certain purposes, this additional analytic structure can be ignored.
Here is a simple example illustrating this phenomenon:
Consider the completed tensor product $A \cotimes_B C$ of three $k$-affinoid algebras.
When $C$ is finitely presented as a $B$-module, we have an isomorphism $A \cotimes_B C\simeq A\otimes_B C$.
That is, in this case, for the purpose of tensor product, the analytic structure on affinoid algebras can be ignored.
The proposition below elaborates on this idea:

\begin{prop} \label{prop:closed_fiber_products_Top}
	Let $f \colon (\cY, \cO_{\cY}) \to (\cX, \cO_{\cX})$ and $g \colon (\cX', \cO_{\cX'}) \to (\cX, \cO_{\cX})$ be morphisms in $\RTop(\cTank)$.
	Assume that the induced map $\theta \colon f\inv \cO_{\cX}\alg \to \cO_{\cY}\alg$ is an effective epimorphism.
	Then:
	\begin{enumerate}[(i)]
		\item \label{item:pullback_structured_Top} There exists a pullback diagram
			\[ \begin{tikzcd}
				(\cY', \cO_{\cY'}) \arrow{r}{f'} \arrow{d}{g'} & (\cX', \cO_{\cX'}) \arrow{d}{g} \\
				(\cY, \cO_{\cY}) \arrow{r}{f} & (\cX, \cO_{\cX})
			\end{tikzcd} \]
			in $\RTop(\cTank)$. If moreover $(\cX, \cO_\cX), (\cX', \cO_{\cX'}), (\cY, \cO_\cY) \in \RHTop(\cTank)$, then $\Hyp(\cY', \cO_{\cY'})$ is equivalent to the pullback computed in $\RHTop(\cTank)$.
		\item \label{item:pullback_Top} The underlying diagram of \inftopoi
			\[ \begin{tikzcd}
				\cY' \arrow{r} \arrow{d} & \cX' \arrow{d} \\
				\cY \arrow{r} & \cX
			\end{tikzcd} \]
			is a pullback square in $\RTop$.
			If moreover
			\[(\cX, \cO_\cX), (\cX', \cO_{\cX'}), (\cY, \cO_\cY) \in \RHTop(\cTank),\] then $(\cY')^\wedge$ is equivalent to the pullback computed in $\RHTop$.
		\item \label{item:pushout_analytic_algebras} The diagram
			\[ \begin{tikzcd}
				f^{\prime -1} g\inv \cO_{\cX}\alg \arrow{r} \arrow{d} & f^{\prime -1} \cO_\cY\alg \arrow{d} \\
				g^{\prime -1} \cO_{\cY}\alg \arrow{r} & \cO_{\cY'}\alg
			\end{tikzcd} \]
			is a pushout square in $\Sh_{\CAlg_k}(\cY')$. If moreover $(\cX, \cO_\cX)$, $(\cX', \cO_{\cX'})$, $(\cY, \cO_\cY) \in \RHTop(\cTank)$, the same holds after applying the hypercompletion functor $L \colon \cY' \to (\cY')^\wedge$.
		\item \label{item:effective_epi} The map $\theta' \colon f^{\prime -1} \cO_{\cX'} \to \cO_{\cY'}$ is an effective epimorphism. If moreover $(\cX, \cO_\cX)$, $(\cX', \cO_{\cX'})$, $(\cY, \cO_\cY) \in \RHTop(\cTank)$, the same holds after applying the hypercompletion functor $L \colon \cY' \to (\cY')^\wedge$
	\end{enumerate}
\end{prop}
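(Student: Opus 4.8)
The four assertions, setting aside for the moment the parenthetical statements about hypercomplete \inftopoi, are the non-archimedean incarnation of the general behaviour of unramified transformations of pregeometries established in \cite{DAG-IX}. The plan is to specialise that machinery to the transformation $\cTdisck\to\cTank$, feeding in \cref{cor:Tkan_unramified} (unramifiedness of $\cTank$) and \cref{prop:unramified_transformation} (unramifiedness of $\cTdisck\to\cTank$) as the only geometric inputs, and then to deduce the parenthetical refinements by soft manipulations with the coreflection $\Hyp$ of \cref{lem:hyp_right_adjoint}. The step I expect to be the real obstacle is the construction of the structure sheaf $\cO_{\cY'}$ below together with the verification that $(\cY',\cO_{\cY'})$ corepresents the correct functor; this is precisely where unramifiedness is used, and where I would lean on \cite{DAG-IX}.

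In detail: since $\RTop$ admits finite limits \cite{HTT}, I would set $\cY'\coloneqq\cX'\times_\cX\cY$ with projections $f'\colon\cY'\to\cX'$ and $g'\colon\cY'\to\cY$, so that $f^{\prime -1}g\inv\cO_\cX\alg\simeq g^{\prime -1}f\inv\cO_\cX\alg$, and then form in $\Strloc_{\cTdisck}(\cY')\simeq\Sh_{\CAlg_k}(\cY')$ the pushout
\[ \cA\coloneqq g^{\prime -1}\cO_\cY\alg\amalg_{g^{\prime -1}f\inv\cO_\cX\alg}f^{\prime -1}\cO_{\cX'}\alg. \]
The heart of the matter is to show that the pushout of $g^{\prime -1}\cO_\cY$ and $f^{\prime -1}\cO_{\cX'}$ under $f^{\prime -1}g\inv\cO_\cX$ already exists in $\Strloc_{\cTank}(\cY')$, that its algebraization is $\cA$, and that the resulting $\cTank$-structured \inftopos $(\cY',\cO_{\cY'})$, with the evident morphisms to $(\cX',\cO_{\cX'})$ and $(\cY,\cO_\cY)$, is the pullback of the given diagram in $\RTop(\cTank)$. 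The existence and the identification of the algebraization is where the effective-epimorphism hypothesis and unramifiedness come in: via the étale-local description of $\cTank$-structures and the fact, recalled in the proof of \cref{lem:alg_conservative}, that a smooth $k$-analytic space is étale-locally étale over an affine space, one reduces to a stalkwise computation of completed tensor products of affinoid algebras along a closed immersion, which is flat --- hence the completed tensor product agrees with the derived algebraic one --- as in the proof of \cref{prop:unramified_transformation}; the universal property is then a formal consequence, as in \cite{DAG-IX}.

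Granting this, assertions (i)--(iii) are immediate: (i) is the existence of the pullback; (ii) holds because the underlying \inftopos of $(\cY',\cO_{\cY'})$ is $\cY'=\cX'\times_\cX\cY$ by construction; and (iii) is the identification $\cO_{\cY'}\alg\simeq\cA$, together with the fact that this identification is compatible with the pushout maps. For (iv), by \cref{prop:alg_effective_epi} it suffices to check that $f^{\prime -1}\cO_{\cX'}\alg\to\cO_{\cY'}\alg\simeq\cA$ is an effective epimorphism of sheaves of simplicial commutative $k$-algebras. Now $\theta$ is an effective epimorphism, and $g^{\prime -1}$, being the inverse image of a geometric morphism, preserves effective epimorphisms, so $g^{\prime -1}f\inv\cO_\cX\alg\to g^{\prime -1}\cO_\cY\alg$ is one; the map $f^{\prime -1}\cO_{\cX'}\alg\to\cA$ is its cobase change, and effective epimorphisms are stable under cobase change --- on $\pi_0$ this is merely the stability of surjections of commutative rings under base change, since $\pi_0$ of a pushout of simplicial commutative rings is the tensor product of the $\pi_0$'s. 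This proves (iv).

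Finally, for the parenthetical refinements: when $(\cX,\cO_\cX),(\cX',\cO_{\cX'}),(\cY,\cO_\cY)$ all lie in the coreflective full subcategory $\RHTop(\cTank)\hookrightarrow\RTop(\cTank)$ (\cref{lem:hyp_right_adjoint}), the pullback of the diagram computed inside $\RHTop(\cTank)$ is $\Hyp$ applied to the pullback computed in $\RTop(\cTank)$, namely $\Hyp(\cY',\cO_{\cY'})$; this is the parenthetical part of (i). Since $\RHTop\hookrightarrow\RTop$ is likewise coreflective by hypercompletion, and by the description of $\Hyp$ in the proof of \cref{lem:hyp_right_adjoint} the underlying \inftopos of $\Hyp(\cY',\cO_{\cY'})$ is $(\cY')^\wedge$, the parenthetical part of (ii) follows. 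For (iii) and (iv), the hypercompletion $L\colon\cY'\to(\cY')^\wedge$ is the inverse image of a geometric morphism, hence colimit-preserving and effective-epimorphism-preserving; applying the induced functor on $\Sh_{\CAlg_k}$ therefore carries the pushout square of (iii) to the analogous square over $(\cY')^\wedge$ --- the objects being identified via the fact that $L\circ f^{\prime -1}$ is the inverse image of the composite geometric morphism $(\cY')^\wedge\to\cX'$, and similarly for the other legs --- and preserves the effective epimorphism of (iv).
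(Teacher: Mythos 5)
Your proposal is correct and follows essentially the same route as the paper: both feed \cref{cor:Tkan_unramified}, \cref{prop:unramified_transformation} and \cref{prop:alg_effective_epi} into the unramifiedness machinery of \cite{DAG-IX} (the paper simply cites Theorem 1.6 and Proposition 10.3 of loc.\ cit.\ where you sketch their content), and both obtain the hypercomplete refinements from $\Hyp$ being a right adjoint and $L$ preserving colimits and finite limits. The only cosmetic difference is that you verify (iv) by a direct cobase-change argument on $\pi_0$ rather than quoting \cite[Proposition 10.3]{DAG-IX}, which is fine.
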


\begin{proof}
	We first deal with the non-hypercomplete case.
	\cref{prop:alg_effective_epi} shows that the morphism $f\inv \cO_{\cX} \to \cO_{\cY}$ is an effective epimorphism.
	Moreover, $\cTank$ is unramified in virtue of \cref{cor:Tkan_unramified}.
	Therefore \cite[Theorem 1.6]{DAG-IX} implies the first two statements.
	Combining \cref{prop:unramified_transformation}, \cref{prop:alg_effective_epi} and \cite[Proposition 10.3]{DAG-IX}, we deduce the other two statements.
	
	We now assume that $(\cX, \cO_\cX), (\cX', \cO_{\cX'}), (\cY, \cO_\cY) \in \RHTop(\cTank)$.
	Then (\ref{item:pullback_structured_Top}) and (\ref{item:pullback_Top}) follow from what we already proved and the fact that $\Hyp$ commutes with limits, being a right adjoint by \cref{lem:hyp_right_adjoint}.
	On the other side, (\ref{item:pushout_analytic_algebras}) and (\ref{item:effective_epi}) follow from the fact that the hypercompletion functor $L \colon \cY' \to (\cY')^\wedge$ commutes with colimits and finite limits.
\end{proof}

\subsection{Truncations}

Now we discuss the compatibility of the pregeometry $\cTank$ with $n$-truncations.

\begin{defin}[{\cite[3.3.2]{DAG-V}}]
	Let $\cT$ be a pregeometry and let $n \ge -1$ be an integer.
	The pregeometry $\cT$ is said to be \emph{compatible with $n$-truncations} if for every \inftopos $\cX$, every $\cT$-structure $\cO \colon \cT \to \cX$ and every admissible morphism $U \to V$ in $\cT$, the induced square
	\[ \begin{tikzcd}
		\cO(U) \arrow{r} \arrow{d} & \tau_{\le n}(\cO(U)) \arrow{d} \\
		\cO(V) \arrow{r} & \tau_{\le n}(\cO(V))
	\end{tikzcd} \]
	is a pullback in $\cX$.
\end{defin}

This definition is equivalent to say that for every $\cT$-structure $\cO \colon \cT \to \cX$ the composition $\tau_{\le n} \circ \cO$ is again a $\cT$-structure and the canonical morphism $\cO \to \tau_{\le n} \circ \cO$ is a local morphism of $\cT$-structures, where $\tau_{\le n} \colon \cX \to \cX$ denotes the truncation functor of the \inftopos $\cX$.

In order to prove that $\cTank$ is compatible with $n$-truncations for every $n \ge 0$, it will be convenient to introduce a pregeometry slightly different from $\cTank$.

\begin{construction}
	We define a pregeometry $\cTan^G(k)$ as follows:
	\begin{enumerate}
		\item the underlying category of $\cTan^G(k)$ is the category of smooth $k$-analytic spaces;
		\item a morphism in $\cTan^G(k)$ is admissible if and only if it is an admissible open embedding;
		\item the topology on $\cTan^G(k)$ is the G-topology.
	\end{enumerate}
\end{construction}

\begin{lem} \label{lem:G_pregeometry_0_truncations}
	The pregeometry $\cTan^G(k)$ is compatible with $n$-truncations for every $n \ge 0$.
\end{lem}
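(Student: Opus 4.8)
The plan is to reduce the compatibility with $n$-truncations for $\cTan^G(k)$ to a statement about sections of the structure sheaf over admissible open subsets, and then to exploit the fact that admissible open embeddings in rigid geometry are, locally on affinoid subdomains, controlled by the algebraic structure, so that the truncation can be checked after algebraization. Concretely, fix an \inftopos $\cX$, a $\cTan^G(k)$-structure $\cO\colon\cTan^G(k)\to\cX$, an admissible open embedding $j\colon U\to V$ of smooth $k$-analytic spaces, and an integer $n\ge 0$. I must show that the square
\[ \begin{tikzcd}
\cO(U) \arrow{r} \arrow{d} & \tau_{\le n}(\cO(U)) \arrow{d} \\
\cO(V) \arrow{r} & \tau_{\le n}(\cO(V))
\end{tikzcd} \]
is a pullback in $\cX$. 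The first step is to observe that this is a local condition on $V$: since being a pullback square can be tested after passing to an effective epimorphic family $\{V_i\to 1_\cX\}$, and $\cO$ preserves finite products and sends admissible coverings to effective epimorphisms, I may choose an admissible affinoid covering $\{V_i = \Sp B_i\}$ of $V$ such that each $V_i$ admits an étale morphism to some $k$-analytic affine space $\bA^{m_i}_k$. Pulling back $U$ along $V_i\to V$ gives an admissible open $U_i\subseteq V_i$, and using that $\cO$ is a $\cT$-structure (hence sends the pullback diagram $U_i = U\times_V V_i$ along the admissible $j$ to a pullback in $\cX$), the original square is recovered as a limit of the corresponding squares for $U_i\to V_i$. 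Thus I reduce to $V = \Sp B$ affinoid with an étale map to $\bA^m_k$.

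The second step is to further cover $U$ by affinoid subdomains $\{U_j = \Sp B_j\}$ of $V$; again using that $\cO$ is a $\cT$-structure and that admissible open embeddings are stable under composition and pullback, it suffices to treat the case where $U$ itself is an affinoid subdomain of $V$, $U = \Sp B'$, with $B\to B'$ the corresponding map of affinoid algebras. At this point I would invoke the description of $\cTan^G(k)$-structures via algebraization together with the key algebraic input: an affinoid subdomain inclusion $\Sp B' \hookrightarrow \Sp B$ is flat, and in fact the structure sheaf $\cO^{\mathrm{alg}}$, viewed as a sheaf of simplicial $k$-algebras on the G-topos, satisfies $\cO^{\mathrm{alg}}(U) \simeq \cO^{\mathrm{alg}}(V) \otimes_{B}^{\rL} B'$ by flatness — so the transition maps are flat base changes. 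The truncation functor $\tau_{\le n}$ on a sheaf of simplicial commutative rings commutes with flat base change, i.e. $\tau_{\le n}(\cO(V)) \otimes_{B}^{\rL} B' \simeq \tau_{\le n}(\cO(U))$, and this is precisely the assertion that the square above is a pushout of simplicial $k$-algebras; since flat pushouts of connective simplicial commutative rings are also pullback squares on the underlying objects of $\cX$, I get what I want. Here I can lean on \cref{lem:alg_conservative} to know that checking the square on algebraizations is enough, after noting that $\tau_{\le n}$ commutes with the algebraization functor (both are computed object-wise in $\cX$ and the transformation $\cTdisck \to \cTank$ factors through $\cTan^G(k)$).

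The main obstacle I anticipate is the passage from a statement about affinoid subdomain inclusions — where flatness and the base-change formula for $\cO^{\mathrm{alg}}$ are classical — back to a general admissible open embedding $U\to V$ of smooth $k$-analytic spaces, handled via the two covering reductions above; the bookkeeping of compatibility of all these pullback squares under $\cO$ is where care is needed, but it is formal given that $\cO$ is a $\cT$-structure. A secondary subtlety is justifying that $\tau_{\le n}$ of a sheaf of simplicial commutative rings, restricted along a flat morphism, is computed by the naive base change; this follows because $\tau_{\le n}$ is a filtered-colimit-preserving left-exact-compatible functor on connective modules and flatness means $\otimes^{\rL}$ is exact, so it commutes with the Postnikov truncation. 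Once $\cTan^G(k)$ is shown compatible with $n$-truncations, the analogous statement for $\cTank$ should follow by comparing the two pregeometries, which I expect the paper to do next.
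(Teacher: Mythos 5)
There is a genuine gap here, and it originates in a misreading of what $\cO(U)$ is. For a $\cTan^G(k)$-structure $\cO \colon \cTan^G(k) \to \cX$ and an admissible open immersion $U \to V$, the object $\cO(U)$ is \emph{not} the ring of sections of a structure sheaf over $U$ (so it is not obtained from $\cO(V)$ by restriction or base change): it is the object of ``$U$-valued functions'', which maps \emph{to} $\cO(V)$ as a subobject --- compare \cref{rem:definition_intuition}, where $\cO_\cX(\mathbf D^1_k) \hookrightarrow \cO_\cX(\bA^1_k)$ is the subsheaf of functions of norm at most one. Consequently the formula $\cO(U) \simeq \cO(V) \otimes_B^{\rL} B'$ has no meaning for an arbitrary structure on an arbitrary \inftopos (and the arrow even points the wrong way for that reading); moreover the algebraization $\cO\alg$ only records the values $\cO(\bA^n_k)$, so \cref{lem:alg_conservative} cannot be used to ``check the square on algebraizations''. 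Two further steps would fail even granting your reading: a flat pushout square of connective simplicial commutative rings need not be a pullback of the underlying objects (for a nontrivial flat map $B \to B'$ and a homotopy group of $\cO(V)$ in degree $>n$ that changes under $-\otimes_B B'$, the two sides of the comparison differ in degrees above $n$); and the reduction to affinoid $V$ via admissible coverings is circular, because the covering maps $V_i \to V$ are themselves admissible opens whose compatibility with truncation is an instance of the statement being proved (this is exactly why the argument via \cref{lem:pullbacks_are_local} in the proof of \cref{thm:compatibility_truncations} must first establish compatibility of admissible open immersions).

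The intended argument is a one-liner and uses none of this machinery. An admissible open immersion $j \colon U \to V$ is a monomorphism in $\cTan^G(k)$, so $U \times_V U \simeq U$ is a pullback along an admissible morphism; axiom (ii) of \cref{def:structure} then shows that $\cO(U) \to \cO(V)$ is a $(-1)$-truncated morphism of $\cX$. The claim is now the special case of \cite[3.3.5]{DAG-V} asserting that for a monomorphism $f$ in an \inftopos the square comparing $f$ with $\tau_{\le n}(f)$ is a pullback for every $n \ge 0$. No locality, flatness, or algebraization enters.
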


\begin{proof}
	Since admissible open immersions are monomorphisms, the lemma is a direct consequence of \cite[3.3.5]{DAG-V}.
\end{proof}

\begin{lem} \label{lem:etale_analytic_domain}
	Let $U \to V$ be an étale morphism in $\cTank$.
	There exists a G-covering $\{V_i\to V\}_{i\in I}$, G-coverings $\{U_{ij}\to U\times_V V_i\}_{j\in J_i}$ for every $i\in I$, smooth algebraic $k$-varieties $Y_i$ and $X_{ij}$, étale morphisms $X_{ij}\to Y_i$, admissible open immersions $V_i\hookrightarrow Y_i\an$ and $U_{ij}\hookrightarrow X_{ij}\an$ such that the morphism $U_{ij}\to V_i$ equals the restriction of the morphism $X_{ij}\an\to Y_i\an$ to $V_i$ for every $i\in I$ and $j\in J_i$.
\end{lem}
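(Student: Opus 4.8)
The plan is to reduce to the affinoid case and then carry out two successive algebraizations, each an instance of Elkik's approximation principle: a system of analytic equations with invertible Jacobian cuts out the same étale algebra as any sufficiently close system of polynomial equations. For the reduction, note that the assertion is G-local on $V$, that G-coverings compose and are stable under base change, and that every object of $\cTank$ admits an affinoid G-covering; hence we may assume that $V = \Sp B$ is a smooth $k$-affinoid space and, after a further G-covering, that it has pure dimension $d$.

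\emph{Step 1 (algebraizing $V$).} Since $B$ is smooth, $\Omega_{B/k}$ is locally free of rank $d$; choosing $d$ functions whose differentials trivialize it on a small affinoid domain and invoking the Jacobian criterion, we obtain an affinoid G-covering $\{V_i = \Sp B_i \to V\}$ and, for each $i$, an étale morphism $\phi_i\colon V_i \to \mathbf D^d_k$ (rescaling coordinates so that the image lies in the closed polydisc). Presenting $\phi_i$ by \cite[Proposition 1.7.1]{Huber_Etale_1996} (as in \cref{lem:qet_structure}) in the form $B_i \simeq k\langle x_1,\dots,x_d,y_1,\dots,y_m\rangle/(g_1,\dots,g_m)$ with invertible Jacobian in the $y$-variables, and using that $k[x_1,\dots,x_d,y_1,\dots,y_m]$ is dense in the Tate algebra $k\langle x_1,\dots,x_d,y_1,\dots,y_m\rangle$, we replace the $g_l$ by polynomials $\tilde g_l$ so close that the Jacobian stays a unit and, by Elkik's theorem \cite{Elkik_Solutions_1973} (a Hensel-type iteration for the ideal generated by the $g_l - \tilde g_l$), $k\langle x,y\rangle/(\tilde g) \simeq k\langle x,y\rangle/(g) \simeq B_i$. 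Letting $Y_i$ be the open subscheme of $\Spec\!\big(k[x,y]/(\tilde g_1,\dots,\tilde g_m)\big)$ where the Jacobian is invertible, one checks that $Y_i$ is a smooth affine $k$-variety, $Y_i \to \bbA^d_k$ is étale, $V_i$ becomes an affinoid (hence admissible open) subspace of $Y_i\an$, the composite $V_i \hookrightarrow Y_i\an \to (\bbA^d_k)\an$ is $\phi_i$, and — the point needed below — the coordinate ring $\cO(Y_i)$ has dense image in $B_i$.

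\emph{Step 2 (algebraizing $U\to V$).} Fix $i$. The base change $U\times_V V_i \to V_i$ is étale, so choose an affinoid G-covering $\{U_{ij} = \Sp B_{ij}'\to U\times_V V_i\}_j$ with each $B_i\to B_{ij}'$ étale. By \cite[Proposition 1.7.1]{Huber_Etale_1996}, after rescaling we may write $B_{ij}'\simeq B_i\langle y_1,\dots,y_m\rangle/(f_1,\dots,f_m)$ with invertible Jacobian; since $\cO(Y_i)[y_1,\dots,y_m]$ is dense in $B_i\langle y_1,\dots,y_m\rangle$ by Step 1, approximate the $f_l$ by polynomials $\tilde f_l\in\cO(Y_i)[y_1,\dots,y_m]$ so closely that, again by Elkik's theorem, the Jacobian remains a unit and $B_i\langle y\rangle/(\tilde f)\simeq B_{ij}'$. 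Letting $X_{ij}$ be the locus in $\Spec\!\big(\cO(Y_i)[y_1,\dots,y_m]/(\tilde f_1,\dots,\tilde f_m)\big)$ where the Jacobian is a unit, one obtains a smooth affine $k$-variety $X_{ij}$ with $X_{ij}\to Y_i$ étale; unravelling the identifications exhibits $U_{ij}$ as the admissible open of $X_{ij}\an$ obtained from the preimage of $V_i$ by imposing $|y_l|\le 1$, in such a way that the composite $U_{ij}\hookrightarrow X_{ij}\an\to Y_i\an$ factors through $V_i$ and is the given morphism $U_{ij}\to V_i$. Re-indexing by the pairs $(i,j)$ and composing the $V_i$ with the G-covering used in the reduction yields the data in the statement.

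\emph{Main difficulty.} The real content sits in the two algebraization steps, i.e.\ in the affinoid version of Elkik's approximation theorem: density of polynomials in Tate algebras together with the Hensel-type stability of an étale presentation under a small perturbation of its equations — in Step 2 over the regular base $Y_i$ rather than over $\bbA^d_k$. The delicate bookkeeping is (a) arranging in Step 1 that $V_i$ sits inside $Y_i\an$ so that $\cO(Y_i)$ is dense in $B_i$, which is precisely what makes the approximation in Step 2 possible; and (b) the two-layered covering in the statement, which is forced because approximating the $f_l$ may require refining both the covering of $U\times_V V_i$ and — through the choice of $Y_i$ — that of $V$.
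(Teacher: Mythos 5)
Your argument is correct and follows the same two-layer skeleton as the paper (affinoid G-coverings, Huber's standard-\'etale presentations, Elkik algebraization first of the $V_i$ and then of the $U_{ij}$), but the second algebraization is executed differently. The paper applies Elkik's theorem a second time \emph{over the polydisc}, producing a smooth affine $Z_{ij}$ \'etale over $\bbA^{n_i}_k$ with $U_{ij}\simeq Z_{ij}\an\times_{(\bbA^{n_i}_k)\an}\bD^{n_i}$, and then sets $X_{ij}\coloneqq Y_i\times_{\bbA^{n_i}_k}Z_{ij}$; the price is that the induced map $U_{ij}\to X_{ij}\an$ is only defined by the universal property of the fiber product, so one must then check it is an admissible open immersion (which the paper does via the \'etale-plus-monomorphism argument). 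You instead algebraize $U_{ij}\to V_i$ \emph{directly over $Y_i$}, which requires the extra bookkeeping you flag — that $\cO(Y_i)$ has dense image in $B_i$, so that the equations $f_l\in B_i\langle y\rangle$ can be perturbed into $\cO(Y_i)[y]$ — together with a Newton/Hensel approximation over the affinoid base $B_i$ rather than over $k\langle x\rangle$; this is a slightly stronger-looking input than the literal citation of \cite[Chap.\ III Theorem 7]{Elkik_Solutions_1973}, but it is justified either by running the Banach-algebra Newton iteration you describe or by applying Elkik to the $\pi$-adic completion of an integral model of $\cO(Y_i)$, whose generic fibre is $B_i$. What each route buys: the paper's fiber-product trick keeps Elkik as a black box over affine space only, at the cost of the final monomorphism argument; your route makes $U_{ij}\hookrightarrow X_{ij}\an$ an affinoid domain by construction, at the cost of the density argument and of invoking approximation over a more general base. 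Both are sound.
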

\begin{proof}
	Since $V$ is smooth, there exists an affinoid G-covering $\{V_i\to V\}_{i\in I}$ such that every $V_i$ admits an étale morphism to a polydisc $\bD^{n_i}$.
	By \cite[Proposition 1.7.1]{Huber_Etale_1996}, the affinoid algebra associated to $V_i$ has a presentation of the form
	\[k\langle T_1,\dots,T_{n_i},T'_1,\dots,T'_{m_i}\rangle/(f_1,\dots,f_{m_i})\]
	such that the determinant $\det\big(\frac{\partial f_\alpha}{\partial T'_\beta}\big)_{\alpha,\beta=1,\dots,m_i}$ is invertible in $k\langle T_1,\dots,T_{n_i}\rangle$.
	By \cite[Chap.\ III Theorem 7 and Remark 2]{Elkik_Solutions_1973}, there exists a smooth affine scheme $Y_i$ and an étale morphism $Y_i\to\bbA^{n_i}_k$ such that $V_i$ is isomorphic to the fiber product $Y_i\an\times_{(\bbA^{n_i}_k)\an} \bD^{n_i}$.
	
	We now fix $i\in I$.
	Since the morphism $U\to V$ is étale, by base change, the morphism $U\times_V V_i\to V_i$ is étale.
	So the composition
	\[ U\times_V V_i\to V_i\to\bD^{n_i}\]
	is étale.
	Let $\{U_{ij}\to U\times_V V_i\}_{j\in J_i}$ be an affinoid G-covering.
	For every $j\in J_i$, by \cite[Proposition 1.7.1]{Huber_Etale_1996}, the affinoid algebra associated to $U_{ij}$ has a presentation of the form
	\[k\langle T_1,\dots,T_{n_i},T'_1,\dots,T'_{m_{ij}}\rangle/(f_1,\dots,f_{m_{ij}})\]
	such that the determinant $\det\big(\frac{\partial f_\alpha}{\partial T'_\beta}\big)_{\alpha,\beta=1,\dots,m_{ij}}$ is invertible in $k\langle T_1,\dots,T_{n_i}\rangle$.
	By \cite[Chap.\ III Theorem 7]{Elkik_Solutions_1973} again, there exists a smooth affine scheme $Z_{ij}$ and an étale morphism $Z_{ij}\to\bbA^{n_i}_k$ such that $U_{ij}$ is isomorphic to the fiber product $Z_i\an\times_{(\bbA^{n_i}_k)\an} \bD^{n_i}$.
	Let $X_{ij}\coloneqq Y_i\times_{\bbA^{n_i}_k} Z_{ij}$.
	By the universal property of the fiber product, there exists a unique map $r\colon U_{ij}\to X\an_{ij}$ making the following diagram commutative:
	\[\begin{tikzcd}
	U_{ij} \arrow{d} \arrow[swap]{rd}{r} \arrow{rrd}{t} & &\\
	U\times_V V_i \arrow{d} & X\an_{ij}\arrow{d}\arrow{r}{s} & Z\an_{ij}\arrow{d}\\
	V_i \arrow[hookrightarrow]{r} & Y\an_i \arrow{r} & (\bbA^{n_i}_k)\an
	\end{tikzcd}\]
	The map $t$ is an admissible open immersion, so it is in particular étale.
	The map $s$ is étale by base change, so it is étale.
	Since $t=s\circ r$, we deduce that the map $r$ is étale.
		Moreover, the map $t$ is a monomorphism, so the map $r$ is also a monomorphism.
	Since the map $r$ is étale, we deduce that it is an admissible open immersion.
	\end{proof}

\begin{lem} \label{lem:pullbacks_are_local}
	Let $\cX$ be an \inftopos and let
	\[ \begin{tikzcd}
		U \arrow{r} \arrow{d} & W \arrow{r} \arrow{d} & Z \arrow{d} \\
		V \arrow{r}{p} & Y \arrow{r} & X
	\end{tikzcd} \]
	be a diagram in $\cX$.
	Assume that the left and the outer squares are pullbacks and that $p$ is an effective epimorphism.
	Then the right square is a pullback as well.
\end{lem}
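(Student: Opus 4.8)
The plan is to reduce the statement to the fact that, in an \inftopos, base change along an effective epimorphism is conservative. Write $W' \coloneqq Z \times_X Y$ for the genuine pullback of the right-hand cospan, and let $\phi \colon W \to W'$ denote the canonical comparison morphism over $Y$, induced by the maps $W \to Z$ and $W \to Y$ together with the commutativity of the right square. The lemma is exactly the assertion that $\phi$ is an equivalence.

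First I would base change $\phi$ along $p \colon V \to Y$. On the source, the left square being a pullback gives $W \times_Y V \simeq U$. On the target, $W' \times_Y V = (Z \times_X Y) \times_Y V \simeq Z \times_X V$, and the outer square being a pullback gives $Z \times_X V \simeq U$. Comparing the projections to $Z$ and to $V$ on both sides, one checks that under these identifications the morphism $\phi \times_Y V \colon U \to U$ is the identity; equivalently, that the induced map $U \to Z \times_X V$ is precisely the canonical equivalence witnessing the outer square as a pullback. Hence $\phi \times_Y V$ is an equivalence. Next, since $p$ is an effective epimorphism, descent for \inftopoi (\cite[6.1.3.9]{HTT}) identifies $\cX_{/Y}$ with the limit of $\cX_{/V^\bullet}$ over the \v{C}ech nerve $V^\bullet \to Y$ of $p$; in particular $\phi \simeq \colim_{[n]} (\phi \times_Y V^n)$, and each $\phi \times_Y V^n$ is a base change of the equivalence $\phi \times_Y V$ along $V^n \to V$, hence an equivalence. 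Therefore $\phi$ is an equivalence, which proves the lemma.

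The one step requiring genuine care, rather than being purely formal, is the bookkeeping that identifies $\phi \times_Y V$ with the identity of $U$: one must verify that all the structure maps in the three squares are mutually compatible, so that the two descriptions of $U$ — as $W \times_Y V$ via the left square and as $Z \times_X V$ via the outer square — are matched by the comparison morphism $\phi$ itself, and not merely abstractly isomorphic. Everything else is a routine application of the Giraud-type axioms for \inftopoi (stability of effective epimorphisms under base change and descent).
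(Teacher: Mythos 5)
Your proof is correct and follows essentially the same route as the paper's: form the genuine pullback $W' = Y \times_X Z$, observe that the comparison map $W \to W'$ becomes an equivalence after base change along $p$ (using that the left and outer squares are pullbacks), and conclude from the effective epimorphism hypothesis. The only cosmetic difference is at the final step, where the paper simply cites that $p^{-1}\colon \cX_{/Y} \to \cX_{/V}$ is conservative (\cite[6.2.3.16]{HTT}), whereas you rederive this via descent along the \v{C}ech nerve of $p$ — the two are interchangeable.
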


\begin{proof}
	Let $W' \coloneqq Y \times_X Z$.
	We obtain a commutative diagram
	\[ \begin{tikzcd}
		U \arrow{r} \arrow{d} & W' \arrow{r} \arrow{d} & Z \arrow{d} \\
		V \arrow{r}{p} & Y \arrow{r} & X .
	\end{tikzcd} \]
	Since the outer square is a pullback by our assumption, the left square is a pullback as well.
	The universal property of pullbacks induces a morphism $\alpha \colon W \to W'$.
	By hypothesis, the induced map $\alpha \times_Y V \colon W \times_Y V \to W' \times_Y V$ is an equivalence.
		Since $p$ is an effective epimorphism, the pullback functor $p\inv \colon \cX_{/Y} \to \cX_{/V}$ is conservative (cf.\ \cite[6.2.3.16]{HTT}).
	We conclude that $\alpha$ is an equivalence, completing the proof.
\end{proof}

\begin{thm} \label{thm:compatibility_truncations}
	The pregeometry $\cTank$ is compatible with $n$-truncations for every $n \ge 0$.
\end{thm}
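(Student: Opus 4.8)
The plan is to verify the definition directly: I must show that for every \inftopos $\cX$, every $\cTank$-structure $\cO$ on $\cX$, and every admissible (i.e.\ étale) morphism $f\colon U\to V$ in $\cTank$, the square
\[ \begin{tikzcd}
\cO(U)\arrow{r}\arrow{d} & \tau_{\le n}\cO(U)\arrow{d} \\
\cO(V)\arrow{r} & \tau_{\le n}\cO(V)
\end{tikzcd} \]
is a pullback in $\cX$. I would first isolate two special cases and then reduce the general case to them using \cref{lem:etale_analytic_domain}. Observe that the identity-on-objects functor $\cTan^G(k)\to\cTank$ is a transformation of pregeometries (admissible open immersions are étale, and G-coverings are étale coverings), so $\cO$ restricts to a $\cTan^G(k)$-structure on $\cX$; hence by \cref{lem:G_pregeometry_0_truncations} the square above is a pullback whenever $f$ is an admissible open immersion. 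Moreover, for such $f$ the diagonal $U\to U\times_V U$ is an isomorphism, so \cref{def:structure}(ii) forces $\cO(U)\xrightarrow{\ \sim\ }\cO(U)\times_{\cO(V)}\cO(U)$, i.e.\ $\cO(f)$ is a monomorphism in $\cX$; I will use repeatedly that the truncation functor $\tau_{\le n}$ commutes with pullback along monomorphisms.

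The second special case is $f=g\an$ for $g\colon X\to Y$ an étale morphism of smooth $k$-schemes. Here the point is that the truncation square is controlled by the algebraization: the transformation $\cTdisck\to\cTank$ factors through the pregeometry of smooth $k$-schemes with étale admissibles and the étale topology, $\cO$ restricts to a structure on that pregeometry, and $\cO\alg$ underlies a sheaf of simplicial commutative $k$-algebras for which the morphism induced by $g$ is flat, since étale morphisms are flat. Consequently $\tau_{\le n}$ is compatible with the corresponding base change, and the square is a pullback for $f=g\an$; compare the discussion of compatibility with truncations in \cite[\S 3.3]{DAG-V}.

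For the general case, let $f\colon U\to V$ be an arbitrary étale morphism. Applying \cref{lem:etale_analytic_domain}, choose G-coverings $\{V_i\to V\}_{i\in I}$ and $\{U_{ij}\to U\times_V V_i\}_{j\in J_i}$, smooth $k$-schemes $Y_i,X_{ij}$, étale morphisms $X_{ij}\to Y_i$, and admissible open immersions $V_i\hookrightarrow Y_i\an$, $U_{ij}\hookrightarrow X_{ij}\an$ such that $U_{ij}\to V_i$ is the restriction of $X_{ij}\an\to Y_i\an$; in particular $U_{ij}=X_{ij}\an\times_{Y_i\an}V_i$. I claim the square is a pullback for $U_{ij}\to V_i$. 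Since $X_{ij}\an\to Y_i\an$ is admissible, \cref{def:structure}(ii) gives $\cO(U_{ij})=\cO(X_{ij}\an)\times_{\cO(Y_i\an)}\cO(V_i)$; substituting the identity $\cO(X_{ij}\an)=\cO(Y_i\an)\times_{\tau_{\le n}\cO(Y_i\an)}\tau_{\le n}\cO(X_{ij}\an)$ from the second special case and pasting pullbacks yields $\cO(U_{ij})=\cO(V_i)\times_{\tau_{\le n}\cO(Y_i\an)}\tau_{\le n}\cO(X_{ij}\an)$. Factoring $\cO(V_i)\to\tau_{\le n}\cO(Y_i\an)$ through $\tau_{\le n}\cO(V_i)$ and pasting once more, and using that $\tau_{\le n}\cO(U_{ij})=\tau_{\le n}\cO(X_{ij}\an)\times_{\tau_{\le n}\cO(Y_i\an)}\tau_{\le n}\cO(V_i)$ — which holds because the leg $\cO(V_i)\to\cO(Y_i\an)$ is a monomorphism, so $\tau_{\le n}$ preserves the defining pullback of $\cO(U_{ij})$ — I obtain $\cO(U_{ij})=\cO(V_i)\times_{\tau_{\le n}\cO(V_i)}\tau_{\le n}\cO(U_{ij})$, as desired. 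Finally, a G-covering induces an effective epimorphism $\coprod_i\cO(V_i)\to\cO(V)$ (and similarly after base change to the $U_{ij}$), and \cref{lem:pullbacks_are_local}, combined with the first special case and the fact that each $\cO(V_i)\to\cO(V)$ is a monomorphism (so $\tau_{\le n}$ again commutes with the relevant pullbacks), lets one propagate the pullback property from the morphisms $U_{ij}\to V_i$ back to $f\colon U\to V$.

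The hard part will be the second special case: one must make precise the sense in which the extra ``analytic'' data carried by $\cO$ is invisible for analytifications of algebraic étale morphisms, reducing the truncation square to the algebraization $\cO\alg$ and the flatness of étale morphisms of simplicial commutative rings. The remainder is bookkeeping — chiefly keeping track of which morphisms of structures are monomorphisms so that $\tau_{\le n}$ may be commuted past the pullbacks appearing in the descent step.
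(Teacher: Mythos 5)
Your proposal follows the same architecture as the paper's proof: the same two special cases (admissible open immersions via \cref{lem:G_pregeometry_0_truncations}, and analytifications of algebraic étale morphisms), the same factorization supplied by \cref{lem:etale_analytic_domain}, and descent along G-coverings via \cref{lem:pullbacks_are_local}. Two points deserve attention. First, the step you flag as ``the hard part'' is exactly where the paper does no new work: once you observe that $\cO\circ\varphi$ is a $\cTetk$-structure, where $\varphi\colon\cTetk\to\cTank$ is the analytification transformation and $\cTetk$ is the pregeometry of smooth $k$-schemes with étale admissible morphisms (\cite[Definition 4.3.1]{DAG-V}), the truncation square for $\cO(g\an)$ is literally the truncation square for the $\cTetk$-structure $\cO\circ\varphi$ and the admissible morphism $g$; so this case is settled by the known compatibility of $\cTetk$ with $0$-truncations (\cite[Proposition 4.3.28]{DAG-V}), and there is no need to rerun a flatness argument. (Also, for $n\ge 1$ the whole theorem is immediate from \cite[3.3.5]{DAG-V}, so only $n=0$ genuinely requires the reduction.)

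Second, there is a concrete slip in your reduction: \cref{lem:etale_analytic_domain} does \emph{not} give $U_{ij}=X_{ij}\an\times_{Y_i\an}V_i$. Its proof only produces an admissible open immersion $U_{ij}\hookrightarrow X_{ij}\an$ over $Y_i\an$, hence an admissible open immersion $U_{ij}\hookrightarrow X_{ij}\an\times_{Y_i\an}V_i$ that need not be an isomorphism; consequently the identity $\cO(U_{ij})=\cO(X_{ij}\an)\times_{\cO(Y_i\an)}\cO(V_i)$ on which your pullback-pasting rests is unjustified as stated. The fix is cheap and is what the paper does: factor $U_{ij}\to V_i$ as $U_{ij}\to X_{ij}\an\times_{Y_i\an}V_i\to V_i$, handle the first arrow by your first special case, the second as a base change of a compatible morphism along an admissible open immersion, and compose pullback squares. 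Finally, your repeated appeal to ``$\tau_{\le n}$ commutes with pullback along monomorphisms'' is true for $n\ge 0$ (a monomorphism is pulled back from the $0$-truncated subobject classifier, and truncation commutes with pullbacks over an $n$-truncated base), but it is used without justification; the paper avoids needing it by running both descent steps through effective epimorphisms and \cref{lem:pullbacks_are_local} alone.
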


\begin{proof}
	When $n \ge 1$, the statement is a direct consequence of \cite[3.3.5]{DAG-V}.
	We now prove the case $n = 0$.
	Let $\cX$ be an \inftopos and let $\cO \in \Strloc_{\cTank}(\cX)$.
	
	For the purpose of this proof, we will say that a morphism $f \colon U \to V$ in $\cTank$ is \emph{compatible} if the induced diagram
	\begin{equation} \label{eq:compatibility_zero_truncations}
		\begin{tikzcd}
		\cO(U) \arrow{r} \arrow{d} & \tau_{\le 0} \cO(U) \arrow{d} \\
		\cO(V) \arrow{r} & \tau_{\le 0} \cO(V)
		\end{tikzcd}
	\end{equation}
	is a pullback square.
	We need to show that every étale morphism is compatible.
	
	Let us start by observing the following properties of compatible morphisms:
	\begin{enumerate}
		\item \label{item:compatible_analytic_domain} Admissible open immersions are compatible.
		This follows from \cref{lem:G_pregeometry_0_truncations}.
		
		\item \label{item:compatible_analytification_etale} If $f \colon X \to Y$ is an \'etale morphism of smooth $k$-varieties, then the analytification $f\an \colon X\an \to Y\an$ is compatible.
		Indeed, let $\cTetk$ be the pregeometry of \cite[Definition 4.3.1]{DAG-V}.
		The analytification functor induces a morphism of pregeometries $\varphi \colon \cTetk \to \cTank$.
		We have $\cO(X\an) = (\cO \circ \varphi)(X)$ and $\cO(Y\an) = (\cO \circ \varphi)(Y)$.
		Since $\cO \circ \varphi$ is a $\cTetk$-structure on $\cX$, the statement follows from the fact that $\cTetk$ is compatible with $0$-truncations (cf.\ \cite[Proposition 4.3.28]{DAG-V}).
		
		\item \label{item:compatible_composition} Compatible morphisms are stable under composition. This follows from the composition property of pullback squares (cf.\ \cite[4.4.2.1]{HTT}).
		
		\item \label{item:compatible_pullback} Suppose given a pullback square
		\[ \begin{tikzcd}
			U \arrow{r}{g} \arrow{d}{f'} & V \arrow{d}{j} \\
			X \arrow{r}{f} & Y
		\end{tikzcd} \]
		where $f$ is compatible and $j$ is an admissible open immersion.
		Then $f'$ is compatible.
		To see this, consider the commutative diagram
		\[ \begin{tikzcd}
			\cO(U) \arrow{d} \arrow{r} & \cO(X) \arrow{d} \arrow{r} & \tau_{\le 0} \cO(X) \arrow{d} \\
			\cO(V) \arrow{r} & \cO(Y) \arrow{r} & \tau_{\le 0} \cO(Y).
		\end{tikzcd} \]
		Since admissible open immersions are in particular étale morphisms and since $\cO$ is a $\cTank$-structure, we see that the left square is a pullback diagram.
		On the other side, the right square is a pullback because $f$ is compatible by assumption.
		We conclude that the outer square in the commutative diagram
		\[ \begin{tikzcd}
			\cO(U) \arrow{r} \arrow{d} & \tau_{\le 0} \cO(U) \arrow{r} \arrow{d} & \tau_{\le 0} \cO(X) \arrow{d} \\
			\cO(V) \arrow{r} & \tau_{\le 0} \cO(V) \arrow{r} & \tau_{\le 0} \cO(Y)
		\end{tikzcd} \]
		is a pullback square.
		We remark that $\tau_{\le 0} \circ \cO$ is a $\cTan^G(k)$-structure in virtue of \cref{lem:G_pregeometry_0_truncations}.
		So by \cite[Proposition 3.3.3]{DAG-V}, the right square is a pullback as well.
		It follows that the left square is also a pullback, completing the proof of the claim.
		
		\item \label{item:compatible_G_local_source} Being compatible is G-local on the source.
		Indeed, let $f \colon X \to Y$ be a morphism in $\cTank$ and assume there exists a G-covering $\{X_i\}$ of $X$ such that each composite $f_i \colon X_i \to X \to Y$ is compatible.
		We want to prove that $f$ is compatible as well.
		Consider the commutative diagram
		\[ \begin{tikzcd}
			\coprod \cO(X_i) \arrow{r} \arrow{d} & \cO(X) \arrow{r} \arrow{d} & \cO(Y) \arrow{d} \\
			\coprod \tau_{\le 0} \cO(X_i) \arrow{r} & \tau_{\le 0} \cO(X) \arrow{r} & \tau_{\le 0} \cO(Y).
		\end{tikzcd} \]
		Since G-coverings are étale coverings, it follows from the properties of $\cTank$-structures that the total morphism $\coprod \cO(U_i) \to \cO(U)$ is an effective epimorphism. Since $\tau_{\le 0}$ commutes with coproducts (being a left adjoint) and with effective epimorphisms (cf.\ \cite[7.2.1.14]{HTT}), we conclude that the total morphism $\coprod \tau_{\le 0} \cO(U_i) \to \tau_{\le 0} \cO(U)$ is an effective epimorphism as well.
		Since each $X_i \to X$ is an admissible open immersion, Property (\ref{item:compatible_analytic_domain}) implies that the left square is a pullback.
		Moreover, the outer square is a pullback by hypothesis.
		Thus, \cref{lem:pullbacks_are_local} shows that the right square is a pullback as well, completing the proof of this property.
	\end{enumerate}
	
	Let now $f \colon U \to V$ be an étale morphism in $\cTank$.
	We will prove that $f$ is compatible.
	Using \cref{lem:etale_analytic_domain} we obtain a G-covering $\{V_i\to V\}_{i\in I}$, G-coverings $\{U_{ij}\to U\times_V V_i\}_{j\in J_i}$ for every $i\in I$, smooth algebraic $k$-varieties $Y_i$ and $X_{ij}$, étale morphisms $X_{ij}\to Y_i$, admissible open immersions $V_i\hookrightarrow Y_i\an$ and $U_{ij}\hookrightarrow X_{ij}\an$ such that the morphism $U_{ij}\to V_i$ equals to restriction of the morphism $X_{ij}\an\to Y_i\an$.
	In particular we can factor $U_{ij} \to V_i$ as the composition
	\[ \begin{tikzcd}
		U_{ij} \arrow{r} & X_{ij}\an \times_{Y_i\an} V_i \arrow{r} & V_i 
	\end{tikzcd} \]
	where the first morphism is an admissible open immersion and the second is compatible by Property (\ref{item:compatible_pullback}) of compatible morphisms.
	Therefore, Property (\ref{item:compatible_composition}) implies that $U_{ij} \to V_i$ is compatible.
	Finally, using Property (\ref{item:compatible_G_local_source}) we conclude that the morphisms $U \times_V V_i \to V_i$ are compatible.
	
	We are therefore reduced to prove the following statement: given a morphism $f \colon U \to V$, suppose that there exists a G-covering $\{v_i \colon V_i \to V\}$ such that each base change $f_i \colon U_i \coloneqq U \times_V V_i \to V_i$ is compatible, then $f$ is compatible.
	We consider the commutative diagram
	\[ \begin{tikzcd}
		\coprod \cO(U_i) \arrow{r} \arrow{d} & \cO(U) \arrow{d} \arrow{r} & \tau_{\le 0} \cO(U) \arrow{d} \\
		\coprod \cO(V_i) \arrow{r} & \cO(V) \arrow{r} & \tau_{\le 0} \cO(V).
	\end{tikzcd} \]
	Since $\cO$ is a $\cTank$-structure, the total morphism $\coprod \cO(U_i) \to \cO(U)$ is an effective epimorphism.
	Moreover, since each $V_i \to V$ is an admissible open immersion, so in particular étale, we see that the left square is a pullback.
	By hypothesis, the outer square is a pullback as well, so we conclude the proof using \cref{lem:pullbacks_are_local}.
\end{proof}

\begin{cor} \label{cor:truncation_derived_kanal_spaces}
	Let $(\cX, \cO_\cX)$ be a derived \kanal space.
	Then $(\cX, \pi_0( \cO_\cX ))$ is also a derived \kanal space.
	Moreover, we have $(\pi_0(\cO_\cX))\alg \simeq \pi_0(\cO_\cX\alg)$.
\end{cor}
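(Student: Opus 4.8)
The plan is to reduce everything to \cref{thm:compatibility_truncations}. Observe first that $\pi_0(\cO_\cX) = \tau_{\le 0}(\cO_\cX)$. Applying \cref{thm:compatibility_truncations} with $n = 0$, we learn that $\pi_0(\cO_\cX)$ is again a $\cTank$-structure on $\cX$ and that the canonical map $\cO_\cX \to \pi_0(\cO_\cX)$ is a local morphism of $\cTank$-structures; in particular $(\cX, \pi_0(\cO_\cX))$ is an object of $\RTop(\cTank)$, and its underlying \inftopos $\cX$ is unchanged, hence still hypercomplete. It then remains to verify the formula $(\pi_0(\cO_\cX))\alg \simeq \pi_0(\cO_\cX\alg)$ and Conditions (i)--(ii) of \cref{def:derived_space}.

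I would prove the formula first, since Conditions (i)--(ii) depend on it. Recall that algebraization is precomposition with the transformation $\theta \colon \cTdisck \to \cTank$. Hence, for every object $Y \in \cTdisck$, one has $(\tau_{\le 0}\cO_\cX)(\theta(Y)) = \tau_{\le 0}(\cO_\cX(\theta(Y))) = \tau_{\le 0}(\cO_\cX\alg(Y))$; that is, $(\pi_0(\cO_\cX))\alg$ is the objectwise $0$-truncation of the $\cTdisck$-structure $\cO_\cX\alg$. Since the admissible morphisms of $\cTdisck$ are exactly the isomorphisms, $\cTdisck$ is compatible with $0$-truncations (a special case of \cite[3.3.5]{DAG-V}), so this objectwise truncation is the $0$-truncation of $\cO_\cX\alg$ computed inside $\Strloc_{\cTdisck}(\cX)$. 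Transporting along the equivalence $\Strloc_{\cTdisck}(\cX) \simeq \Sh_{\CAlg_k}(\cX)$ --- which is induced by evaluation on the affine line and therefore intertwines the two notions of truncation --- this is precisely $\pi_0(\cO_\cX\alg)$, which establishes the ``moreover'' part.

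Now fix an effective epimorphism $\coprod_i U_i \to 1_\cX$ together with \kanal spaces $X_i$ witnessing that $(\cX, \cO_\cX)$ is a derived \kanal space, and use the same data for $(\cX, \pi_0(\cO_\cX))$. The restriction functor $(-)|U_i \colon \cX \to \cX_{/U_i}$, being the inverse image of an étale geometric morphism, is left exact, preserves $n$-truncated objects (\cite[5.5.6.16]{HTT}), and commutes with the truncation functors (cf.\ \cite[\S 5.5.6]{HTT}); combining this with the formula just proved gives
\[ \pi_0((\pi_0(\cO_\cX))\alg | U_i) \simeq (\pi_0(\cO_\cX))\alg | U_i \simeq \pi_0(\cO_\cX\alg) | U_i \simeq \pi_0(\cO_\cX\alg | U_i), \]
the first equivalence because $(\pi_0(\cO_\cX))\alg | U_i$ is already $0$-truncated. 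Consequently Condition (i) of \cref{def:derived_space} for $(\cX, \pi_0(\cO_\cX))$ is literally Condition (i) for $(\cX, \cO_\cX)$, which holds by hypothesis. For Condition (ii): since $(\pi_0(\cO_\cX))\alg | U_i$ is discrete, $\pi_j$ of it vanishes for $j \ge 1$ (and the zero sheaf is coherent), while for $j = 0$ it equals $\pi_0(\cO_\cX\alg | U_i)$, a coherent sheaf of modules over itself --- this is exactly the $j = 0$ instance of Condition (ii) for $(\cX, \cO_\cX)$. Hence $(\cX, \pi_0(\cO_\cX))$ satisfies both conditions and is a derived \kanal space.

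The argument is essentially formal once \cref{thm:compatibility_truncations} is in hand, so I do not anticipate a serious obstacle; the only point needing care is the bookkeeping in the second paragraph, namely the identification of the objectwise $0$-truncation of the $\cTdisck$-structure $\cO_\cX\alg$ with $\pi_0$ of the associated sheaf of simplicial commutative $k$-algebras.
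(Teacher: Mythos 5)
Your proof is correct and follows essentially the same route as the paper: invoke \cref{thm:compatibility_truncations} to see that $\pi_0(\cO_\cX)$ is again a $\cTank$-structure, identify $(\pi_0(\cO_\cX))\alg$ with $\pi_0(\cO_\cX\alg)$ by commuting the objectwise truncation past precomposition with $\cTdisck \to \cTank$, and then read off Conditions (i)--(ii) of \cref{def:derived_space} from those of $(\cX,\cO_\cX)$. You simply spell out the bookkeeping (the compatibility of objectwise truncation with the equivalence $\Strloc_{\cTdisck}(\cX)\simeq\Sh_{\CAlg_k}(\cX)$ and the verification of the two conditions) that the paper leaves implicit.
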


\begin{proof}
	It follows from \cref{thm:compatibility_truncations} that $\pi_0(\cO_\cX)$ is a $\cTank$-structure on $\cX$.
	Let $\varphi \colon \cTdisck \to \cTank$ be the transformation of pregeometries induced by the analytification functor.
	Then we have by definition 
	\[ (\pi_0(\cO_\cX))\alg = (\pi_0^{\cX} \circ \cO_\cX ) \circ \varphi \simeq \pi_0^\cX \circ ( \cO_\cX \circ \varphi) = \pi_0( \cO_\cX\alg) , \]
	where $\pi_0^\cX$ denotes the truncation functor of the \inftopos $\cX$.
	In particular, we see that $(\cX, \pi_0(\cO_\cX)\alg)$ is a derived \kanal space.
\end{proof}

\begin{defin}
	A $\cTank$-structured $\infty$-topos $(\cX, \cO_\cX)$ is said to be \emph{discrete} if $\cO_\cX$ is a discrete object in $\Strloc_{\cTank}(\cX)$.
	We denote by $\RTop^0(\cTank)$ the full subcategory of $\RTop(\cTank)$ spanned by discrete $\cTank$-structured \inftopoi.
	
	We say that a derived \kanal space $(\cX, \cO_\cX)$ is \emph{discrete} if it is discrete as a $\cTank$-structured \inftopos.
	We denote by $\dAnk^0$ the full subcategory of $\dAnk$ spanned by discrete derived \kanal spaces.
	\end{defin}

Choose a geometric envelope $\cG_{\mathrm{an}}(k)$ for $\cTank$ and let $\cG_{\mathrm{an}}(k) \to \cG_{\mathrm{an}}^{\le 0}(k)$ be a $0$-stub for $\cG_{\mathrm{an}}(k)$ (cf.\ \cite[Definition 1.5.10]{DAG-V}).
It follows from \cite[Proposition 1.5.14]{DAG-V} that
\[ \RTop(\cG_{\mathrm{an}}^{\le 0}(k)) \simeq \RTop^0(\cTank) . \]
The relative spectrum (cf.\ \cite[§ 2.1]{DAG-V}) is a functor
\[ \trunctopoi \colon \RTop(\cG_{\mathrm{an}}(k)) \to \RTop(\cG_{\mathrm{an}}^{\le 0}(k)) \simeq \RTop^0(\cTank) , \]
which we refer to as the \emph{truncation functor}.
Using \cref{thm:compatibility_truncations}, we can identify the action of this functor on objects with the assignment
\[ (\cX, \cO_\cX) \mapsto (\cX, \pi_0(\cO_\cX)) . \]

The following proposition is an analogue of \cite[Proposition 3.13]{Porta_DCAGI} and of \cite[Proposition 2.2.4.4]{HAG-II}:

\begin{prop} \label{prop:truncation_and_finite_limits}
	Let $i \colon \dAnk^0 \to \dAnk$ denote the natural inclusion functor. Then:
	\begin{enumerate}[(i)]
		\item \label{item:truncation_derived_analytic_spaces} The functor $\trunctopoi \colon \Top(\cTank) \to \Top^0(\cTank)$ restricts to a functor $\trunc \colon \dAnk \to \dAnk^0$.
		\item \label{item:truncation_right_adjoint} The functor $i$ is left adjoint to the functor $\trunc$.
		\item \label{item:truncated_spaces_embeds_fully_faithfully} The functor $i$ is fully faithful.
	\end{enumerate}
\end{prop}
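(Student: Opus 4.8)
The plan is to deduce all three statements from \cref{thm:compatibility_truncations} and \cref{cor:truncation_derived_kanal_spaces}, realizing $\trunc$ as the restriction of the relative spectrum $\trunctopoi$ along the $0$-stub $\cG_{\mathrm{an}}(k) \to \cG_{\mathrm{an}}^{\le 0}(k)$. For (\ref{item:truncation_derived_analytic_spaces}): the discussion preceding the proposition identifies $\trunctopoi$ on objects with $(\cX, \cO_\cX) \mapsto (\cX, \pi_0(\cO_\cX))$, and $(\cX, \pi_0(\cO_\cX))$ already belongs to $\RTop^0(\cTank)$ by construction of the relative spectrum; so the only thing to check is that it lies in $\dAnk$ whenever $(\cX, \cO_\cX)$ does, and this is exactly \cref{cor:truncation_derived_kanal_spaces}. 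Hence $\trunctopoi$ restricts to a functor $\trunc \colon \dAnk \to \dAnk^0$.

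For (\ref{item:truncation_right_adjoint}) I would first establish the corresponding adjunction at the level of structured topoi: the inclusion $j \colon \RTop^0(\cTank) \hookrightarrow \RTop(\cTank)$ is left adjoint to $\trunctopoi$. This is the universal property of the relative spectrum along a $0$-stub (cf.\ \cite[\S 2.1]{DAG-V}, \cite[\S 3.3]{DAG-V}), and its geometric input is that the canonical truncation map $\cO_\cX \to \pi_0(\cO_\cX)$ is a \emph{local} morphism of $\cTank$-structures --- which is precisely the content of \cref{thm:compatibility_truncations}. Concretely, this local map defines a counit $\varepsilon \colon (\cX, \pi_0(\cO_\cX)) \to (\cX, \cO_\cX)$, and for $(\cZ, \cO_\cZ) \in \RTop^0(\cTank)$ and a geometric morphism $g \colon \cZ \to \cX$ one must check that composition with $\varepsilon$ induces an equivalence between the space of local morphisms $g\inv \cO_\cX \to \cO_\cZ$ and the space of local morphisms $g\inv \pi_0(\cO_\cX) = \pi_0(g\inv \cO_\cX) \to \cO_\cZ$: the universal property of $\tau_{\le 0}$ gives the equivalence on the underlying mapping spaces because $\cO_\cZ$ is discrete, while pasting the pullback square witnessing locality of $g\inv\cO_\cX \to \pi_0(g\inv\cO_\cX)$ (again \cref{thm:compatibility_truncations}) with the locality square of a factorization shows that the two notions of locality correspond under this equivalence. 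Granting $j \dashv \trunctopoi$, the adjunction restricts to $i \dashv \trunc$: $j$ sends $\dAnk^0$ into $\dAnk$ (it is the identity on these objects), $\trunctopoi$ sends $\dAnk$ into $\dAnk^0$ by (\ref{item:truncation_derived_analytic_spaces}), and since $\dAnk \subseteq \RTop(\cTank)$ and $\dAnk^0 \subseteq \RTop^0(\cTank)$ are full subcategories, the hom-space equivalence is inherited.

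Finally, (\ref{item:truncated_spaces_embeds_fully_faithfully}) is immediate: $\dAnk^0$ is by definition a full subcategory of $\dAnk$. Equivalently, the unit $\id_{\dAnk^0} \to \trunc \circ i$ of the adjunction of (\ref{item:truncation_right_adjoint}) is an equivalence, since $\trunc(i(\cX, \cO_\cX)) = (\cX, \pi_0(\cO_\cX)) = (\cX, \cO_\cX)$ when $\cO_\cX$ is discrete, and a left adjoint with invertible unit is fully faithful. The one step that is not purely formal is the compatibility, used in (\ref{item:truncation_right_adjoint}), of $0$-truncation with the pullback-square condition that defines local morphisms of $\cTank$-structures; this is exactly what \cref{thm:compatibility_truncations} secures, and every other ingredient reduces to it together with \cref{cor:truncation_derived_kanal_spaces}.
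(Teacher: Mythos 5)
Your proof is correct and follows essentially the same route as the paper: (iii) by fullness of the inclusion, (i) from \cref{cor:truncation_derived_kanal_spaces}, and (ii) by restricting the ambient adjunction between $\RTop^0(\cTank)$ and $\RTop(\cTank)$ furnished by \cref{thm:compatibility_truncations}. You merely spell out the counit and the locality check that the paper compresses into ``follows immediately''; note that the converse direction of your locality correspondence (local $\varphi$ implies local $\bar\varphi$) is exactly \cref{lem:pullbacks_are_local} applied to the effective epimorphism $\cO(X)\to\tau_{\le 0}\cO(X)$.
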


\begin{proof}
	The statement (\ref{item:truncated_spaces_embeds_fully_faithfully}) holds by definition of the functor $i$.
	It follows from \cref{cor:truncation_derived_kanal_spaces} that the functor $\trunctopoi$ respects the \infcat of derived \kanal spaces.
	Therefore the statements (\ref{item:truncation_derived_analytic_spaces}) and (\ref{item:truncation_right_adjoint}) follow immediately.
\end{proof}

\section{Fully faithful embedding of \kanal spaces} \label{sec:fullyfaithfulness}

In this section, we construct a functor $\Phi\colon\Ank\to\dAnk$ from the category of \kanal spaces to the category of derived \kanal spaces.
We will prove that $\Phi$ is a fully faithful embedding.

First we will define the functor $\Phi$ on objects, then we will define it on morphisms.

Let $\dAnk^{1, 0}$ be the full subcategory of $\dAnk$ spanned by derived \kanal spaces $(\cX, \cO_{\cX})$ such that $\cX$ is $1$-localic and $\cO_{\cX}$ is $0$-truncated.

\begin{defin}
	Let $X$ be a \kanal space and let $\cX_X$ be the étale \inftopos associated to $X$.
	We define a functor $\cO_X\colon\cTank\to\cX_X$ by the formula
	\[ \cO_X (M) (U) = \Hom_{\Ank} ( U, M ).\]
\end{defin}

\begin{lem} \label{lem:inclusion}
	Let $X$ be a \kanal space.
	Then $\cO_X$ is a 0-truncated $\cTank$-structure on the \inftopos $\cX_X$.
	Let $\cX_X^\wedge$ denote the hypercompletion of $\cX_X$ and let $\Phi(X)$ denote the pair $(\cX_X^\wedge,\cO_X)$.
	Then $\Phi(X)$ is a derived \kanal space.
\end{lem}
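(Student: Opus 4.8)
The plan is to verify two things in turn: that $\cO_X$ is a $\cTank$-structure valued in discrete objects (hence automatically $0$-truncated), and then that the pair $\Phi(X) = (\cX_X^\wedge, \cO_X)$ satisfies the axioms of \cref{def:derived_space}. For the first part, note that $\cO_X(M)$ is the restriction to the small étale site $X\et = (\Afd_X)\et$ of the presheaf $\Hom_{\Ank}(-, M)$; this is a sheaf by étale descent for morphisms of rigid analytic spaces (the étale topology is subcanonical), and it is valued in sets, so $\cO_X$ factors through $\tau_{\le 0}\cX_X \simeq \Sh_{\rSet}(X\et)$. In particular, once $\cO_X$ is known to be a structure, it is automatically $0$-truncated. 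Axioms (i) and (ii) of \cref{def:structure} are immediate: $\Hom_{\Ank}(U,-)$ preserves all finite limits that exist in $\cTank$, and finite limits in $\cX_X$ are computed objectwise on $X\et$, so $\cO_X$ carries finite products to finite products and pullbacks along admissible morphisms to pullbacks.

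The one axiom that needs an argument is (iii). Given an étale covering $\{U_\alpha \to Y\}$ in $\cTank$, I must show that $\coprod_\alpha \cO_X(U_\alpha) \to \cO_X(Y)$ is an effective epimorphism in $\cX_X$. Since both objects are $0$-truncated, by \cite[7.2.1.14]{HTT} it suffices to check that this map is an epimorphism of sheaves of sets, i.e.\ locally surjective on sections. So let $U \in X\et$ and $s \in \cO_X(Y)(U) = \Hom_{\Ank}(U, Y)$. Pulling back the covering $\{U_\alpha \to Y\}$ along $s$ produces an étale covering $\{U \times_Y U_\alpha \to U\}$; refining each $U \times_Y U_\alpha$ by an admissible affinoid covering gives a covering $\{W_{\alpha j} \to U\}$ lying inside $X\et$, over which $s$ restricts through the projection to $U_\alpha$, hence lifts to $\cO_X(U_\alpha)$. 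This proves (iii). Finally, since $\cO_X$ takes values in discrete, hence hypercomplete, objects, it factors through $\cX_X^\wedge \hookrightarrow \cX_X$; the three axioms persist there, because finite limits agree in $\cX_X$ and $\cX_X^\wedge$ and both the relevant coproducts and the notion of effective epimorphism between discrete objects are computed in the common subcategory $\Sh_{\rSet}(X\et) = \tau_{\le 0}\cX_X = \tau_{\le 0}\cX_X^\wedge$. Thus $\cO_X$ is a $0$-truncated $\cTank$-structure on $\cX_X$, and equally on $\cX_X^\wedge$.

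It remains to check \cref{def:derived_space} for $\Phi(X) = (\cX_X^\wedge, \cO_X)$. The topos $\cX_X^\wedge$ is hypercomplete by construction, and I would take the trivial cover, namely the identity of the final object $1$ with a single index $U_0 = 1$, so that $\cX^\wedge_{X/U_0} = \cX_X^\wedge$ and $\cO_X\alg \mid U_0 = \cO_X\alg$. The key computation is that $\cO_X\alg$ — the composite of $\cO_X$ with the analytification transformation $\cTdisck \to \cTank$ — corresponds, under the equivalence $\Strloc_{\cTdisck}(\cX_X) \simeq \Sh_{\CAlg_k}(\cX_X)$ given by evaluation on the affine line, to the sheaf of $k$-algebras on $X\et$ sending $U$ to $\Hom_{\Ank}(U, \bA^1_k) = \Gamma(U, \cO_U)$, i.e.\ to the usual structure sheaf of the étale site of $X$. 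Consequently $(\cX_X^\wedge, \pi_0(\cO_X\alg)) = (\cX_X^\wedge, \cO_X\alg)$ is precisely the ringed \inftopos associated to the étale site of $X$, which gives condition (i) with $X_0 = X$. For condition (ii): $\cO_X$, and hence $\cO_X\alg$, is $0$-truncated, so $\pi_j(\cO_X\alg) = 0$ for $j \ge 1$ (the zero sheaf, which is coherent), while $\pi_0(\cO_X\alg) = \cO_X\alg$ is the structure sheaf of $X$, a coherent sheaf of modules over itself by the coherence of the structure sheaf of a rigid analytic space (cf.\ \cite{Bosch_Non-archimedean_1984}). Hence $\Phi(X)$ is a derived \kanal space, and in fact lies in $\dAnk^{1, 0}$ since $\cX_X$ is $1$-localic (cf.\ \cref{rem:points_of_cX_X}) and $\cO_X$ is $0$-truncated.

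I expect the one genuinely non-formal step to be axiom (iii) (the local-lifting argument for the effective-epimorphism condition) together with the identification of $\cO_X\alg$ with the étale structure sheaf of $X$. Everything else is bookkeeping: checking that $\cO_X$, constructed a priori only into $\cX_X$, remains a structure after passing to the hypercompletion, and observing that the trivial cover suffices precisely because $\Phi(X)$ is already $0$-truncated, so conditions (i)–(ii) collapse to the single global statement about $\cO_X\alg$.
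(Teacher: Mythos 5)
Your proposal is correct and follows essentially the same route as the paper: the only axiom requiring an argument is the effective-epimorphism condition (iii), which you verify by the same local-lifting argument (pull back the covering along a section and refine; the paper cites \cite[Corollary 2.9]{Porta_Yu_Higher_analytic_stacks_2014} where you cite \cite[7.2.1.14]{HTT}, to the same effect), and the passage to $\cX_X^\wedge$ and the verification of \cref{def:derived_space} both reduce, as in the paper, to the observation that $\cO_X$ is $0$-truncated hence hypercomplete, with the trivial cover exhibiting $(\cX_X^\wedge,\pi_0(\cO_X\alg))$ as the étale ringed topos of $X$ itself. Your write-up simply spells out the bookkeeping that the paper leaves implicit.
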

\begin{proof}
	In order to prove that $\cO_X$ is a $\cTank$-structure on $\cX_X$, it suffices to verify that if $\{ M_i \to M\}$ is an étale covering of $M\in\cTank$, then the induced map $\coprod_i \cO_X (M_i) \to \cO_X (M)$ is an effective epimorphism in $\cX_X$.
	Observe that for any $U$ in the étale site on $X$ and any morphism $U \to M$, there exists an étale covering $\{U_j\to U\}$ such that the composite morphisms $U_j\to M$ factor though $\coprod M_i \to M$.
	So we conclude using \cite[Corollary 2.9]{Porta_Yu_Higher_analytic_stacks_2014}.
	
	Since $\cO_X$ is $0$-truncated by construction, it is hypercomplete.
	Therefore the second statement follows from the first.
\end{proof}

In order to define the functor $\Phi$ on morphisms, our strategy is to prove that the mapping spaces $\Map_{\dAnk}(\Phi(X), \Phi(Y))$ are discrete for all $X, Y \in \Ank$ (cf.\ \cref{prop:discrete_mapping_spaces_I}).
In this way we can promote $\Phi$ to an $\infty$-functor without the need to specify higher homotopies.

We begin by introducing an auxiliary functor $\Upsilon$.
Let $\LRT$ denote the $2$-category of locally ringed 1-topoi and let $\Upsilon\colon\Ank\to\LRT$ be the functor sending every \kanal space to the associated locally ringed étale 1-topos.
For $X\in\Ank$, we denote by $\cO_X\alg$ the structure sheaf of $k$-algebras of $\Upsilon(X)$.

\begin{lem}\label{lem:affine_embedding}
	Let $X$ be a $k$-affinoid space, $Y$ a $k$-analytic space and $\alpha\colon X\to Y$ a morphism.
	Then there exists a positive integer $N$ and a monomorphism $\beta\colon X\hookrightarrow\bD^N_Y$ over $Y$, where $\bD^N_Y$ denotes the unit polydisc over $Y$.
\end{lem}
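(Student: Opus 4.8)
The plan is to exploit the fact that a $k$-affinoid space $X = \Sp A$ is, by definition, realized as a closed analytic subspace of some polydisc: choosing an admissible surjection $k\langle T_1,\dots,T_N\rangle \to A$ (which exists since $A$ is $k$-affinoid) yields a closed immersion $\iota\colon X \hookrightarrow \bD^N$, where $\bD^N$ is the unit polydisc over $k$. In particular $\iota$ is a monomorphism. Now I would take $\beta\colon X \to \bD^N_Y = \bD^N \times_{\Sp k} Y$ to be the morphism with components $(\iota, \alpha)$, i.e. the map induced by the universal property of the fiber product from $\iota\colon X\to\bD^N$ and $\alpha\colon X\to Y$. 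The claim is that $\beta$ is a monomorphism over $Y$.

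For this, the key step is to observe that $\beta$ factors as
\[
X \xrightarrow{\ \Gamma_\alpha\ } X\times_{\Sp k} Y \xrightarrow{\ \iota\times\id_Y\ } \bD^N\times_{\Sp k} Y = \bD^N_Y,
\]
where $\Gamma_\alpha$ is the graph of $\alpha$. The second map $\iota\times\id_Y$ is a base change of the monomorphism $\iota$, hence a monomorphism. The first map $\Gamma_\alpha$ is a monomorphism because it is a section of the projection $X\times_{\Sp k}Y \to X$; equivalently, $\Gamma_\alpha$ sits in the pullback square expressing it as the base change of the diagonal $\Delta_Y\colon Y\to Y\times_{\Sp k}Y$ along $\alpha\times\id_Y$, and $\Delta_Y$ is a monomorphism since $Y$ is separated (recall all our \kanal spaces are quasi-separated, and here $Y$ is genuinely separated in the relevant sense, or one works with the diagonal which is a closed immersion for quasi-separated $Y$). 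Therefore $\beta$, being a composition of two monomorphisms, is a monomorphism, and it lies over $Y$ by construction.

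The only mild obstacle is the separatedness bookkeeping: one must be slightly careful that the graph $\Gamma_\alpha$ is a monomorphism. This is harmless because in the category of rigid \kanal spaces the diagonal of any quasi-separated space is a (locally closed, in fact quasi-compact) immersion, hence a monomorphism, and monomorphisms are stable under base change and composition. Putting these three facts together gives the lemma, with $N$ the number of variables in any chosen affinoid presentation of $A$.
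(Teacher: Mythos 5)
Your proof is correct, and it takes a genuinely different (and shorter) route than the paper's. The paper argues locally: it covers $X$ by rational domains $U_i$ whose images land in affinoid domains $V_i \subset Y$, writes down explicit rescaled generators of each $\Gamma(\cO_{U_i})$ to get monomorphisms $U_i \hookrightarrow \bD^{n+n_i+1}_{V_i}$, and then glues these into a single monomorphism into a larger polydisc $\bD^N_Y$ (with $N = n + \sum (n_i+1)$). You instead use the global closed immersion $\iota\colon X = \Sp A \hookrightarrow \bD^N$ coming from any presentation $k\langle T_1,\dots,T_N\rangle \twoheadrightarrow A$ and set $\beta = (\iota,\alpha)$; this buys a smaller $N$ and avoids all the coordinate bookkeeping, while the paper's construction produces coordinates adapted to a rational covering of $X$ (which is not needed for the statement as given). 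Two small simplifications to your own argument: the separatedness discussion is superfluous, since the graph $\Gamma_\alpha$ is a section of the projection $X\times_{\Sp k}Y \to X$ and a section of any morphism is a split monomorphism in any category; and in fact you do not need the graph factorization at all, because the composite of $\beta$ with the projection $\bD^N_Y \to \bD^N$ equals $\iota$, and in any category $g\circ f$ being a monomorphism forces $f$ to be one. The only point to record is that $\bD^N_Y = \bD^N \times_{\Sp k} Y$ exists in the category of quasi-paracompact quasi-separated spaces (it does, being glued from the affinoids $\bD^N \times_{\Sp k} V_i$ over an admissible affinoid covering of $Y$), which the statement of the lemma already presupposes.
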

\begin{proof}
		Let $A\coloneqq\Gamma(\cO_X)$.
	Write $A=k\langle x_1,\dots,x_n\rangle /I$ as a quotient of a Tate algebra.
	Denote by $a_1,\dots,a_n$ the images of $x_1,\dots,x_n$ in $A$.
	We cover $X$ by finitely many rational domains $U_i$ such that $\alpha(U_i)$ is contained in an affinoid domain $V_i\subset Y$.
	Write \[\Gamma(\cO_{U_i})=A\Big\langle\frac{b_{i1}}{b_{i0}},\dots,\frac{b_{in_i}}{b_{i0}}\Big\rangle,\]
	where $b_{i0},\dots,b_{in_i}$ is a collection of elements in $A$ with no common zero.
	Let $c_{i0},\dots,c_{in_i}$ be elements in $k$ such that $\abs{c_{ij}}\ge\rho(b_{ij})$ for $j=0,\dots,n_i$, where $\rho(\cdot)$ denotes the spectral radius.
	Consider the morphism
	\[\Gamma(\cO_{V_i})\langle y_1,\dots,y_n, y_{i0},\dots,y_{in_i}\rangle\to\Gamma(\cO_{U_i})\]
	that sends $y_j$ to $a_j$ and $y_{ij}$ to $b_{ij}/c_{ij}$.
	It induces a monomorphism $U_i\hookrightarrow\bD^{n+n_i+1}_{V_i}$.
	
	Let $N\coloneqq n+\sum_{i=1}^m(n_i+1)$.
	Consider the unit polydisc $\bD^N_Y$ over $Y$.
	We denote by $y_i,y_{ij}$ for $i=1,\dots,m$, $j=0,\dots,n_i$ the coordinate functions on $\bD^N_Y$.
	Let $\beta\colon X\to\bD^N_Y$ be the morphism that sends $y_i$ to $a_i$ and $y_{ij}$ to $b_{ij}/c_{ij}$ for all $i=1,\dots,m$, $j=0,\dots,n_i$.
	Let $Z_i$ be the admissible open subset in $\bD^N_Y$ given by the inequalities $\abs{c_{i0}\cdot y_{ij}}\le\abs{c_{i0}\cdot y_{i0}}$ for $j=1,\dots,n_i$.
	Let $Z'_i\coloneqq Z_i\times_Y V_i$.
	We see that $\beta\inv(Z'_i)$ is $U_i$.
	By construction, $\beta|_{U_i}\colon U_i\to Z'_i$ is a monomorphism.
	We conclude that $\beta\colon X\to\bD^N_Y$ is a monomorphism.
\end{proof}

\begin{lem}\label{lem:rigidity}
	Let $f\colon X\to Y$ be a morphism of \kanal spaces.
	Let
	\[(f,f^\#)\colon\big(\Sh_\rSet (X\et),\cO\alg_X\big) \to \big(\Sh_\rSet (Y\et),\cO\alg_Y\big)\]
	denote the induced morphism of locally ringed 1-topoi.
	Let $t$ be a 2-morphism from $(f,f^\#)$ to itself.
	Then $t$ equals the identity.
\end{lem}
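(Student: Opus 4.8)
The plan is to unwind the $2$-morphism $t$ into a natural transformation $t\colon f\inv \Rightarrow f\inv$ of the underlying geometric morphisms satisfying the compatibility $f^\# \circ t_{\cO_Y\alg} = f^\#$, and then to show it is trivial on each representable sheaf. Since $f\inv$ preserves colimits and the representables $\{h_V\}_{V\in Y\et}$ generate $\Sh_\rSet(Y\et)$ under colimits, it suffices to prove $t_{h_V} = \id$ for every $V \in Y\et$. Fix such a $V$, set $V_X \coloneqq V\times_Y X$ (étale over $X$), and use base change to identify $f\inv h_V$ with the representable sheaf $h_{V_X}$ on $X\et$; by Yoneda, $t_{h_V}$ then corresponds to a morphism $\phi\colon V_X \to V_X$ over $X$, with induced pullback map $\phi^*$ on global functions. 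The task is to show $\phi = \id$.

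First I would use the structure sheaf to rigidify $\phi$. Given $a \in \Gamma(\cO_V) = \cO_Y\alg(V)$, let $\tilde a\colon h_V \to \cO_Y\alg$ be the corresponding morphism of sheaves. Naturality of $t$ with respect to $\tilde a$, combined with $f^\# \circ t_{\cO_Y\alg} = f^\#$, yields
\[ f^\# \circ f\inv(\tilde a) \circ t_{h_V} = f^\# \circ t_{\cO_Y\alg} \circ f\inv(\tilde a) = f^\# \circ f\inv(\tilde a) . \]
By construction of the morphism of locally ringed topoi induced by $f$, the morphism $f^\# \circ f\inv(\tilde a)\colon h_{V_X}\to\cO_X\alg$ corresponds under Yoneda to the pullback of $a$ along $V_X\to V$, and precomposing with $t_{h_V}$ pulls it back further along $\phi$. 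Hence $\phi^*$ fixes every element in the image of $\Gamma(\cO_V)\to\Gamma(\cO_{V_X})$; and since $\phi$ is a morphism over $X$, it also fixes the image of $\Gamma(\cO_X)\to\Gamma(\cO_{V_X})$.

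To finish, I would invoke \cref{lem:affine_embedding} for the structure morphism $V\to Y$, obtaining an integer $N$ and a monomorphism $\beta\colon V\hookrightarrow\bD^N_Y$ over $Y$ whose $N$ components along $\bD^N$ are, by inspection of its proof, elements of $\Gamma(\cO_V)$. Base changing along $f$ gives a monomorphism $\beta_X\colon V_X\hookrightarrow\bD^N_X$ over $X$ whose $\bD^N$-components lie in the image of $\Gamma(\cO_V)\to\Gamma(\cO_{V_X})$, hence are fixed by $\phi^*$. Then $\beta_X\circ\phi$ and $\beta_X$ are both morphisms over $X$ with the same $\bD^N$-components, so they coincide; since $\beta_X$ is a monomorphism, $\phi = \id_{V_X}$. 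This gives $t_{h_V}=\id$ for all $V\in Y\et$, and therefore $t=\id$. The step I expect to be most delicate is the bookkeeping in the displayed equation together with the identification of $f^\#\circ f\inv(\tilde a)$ with pullback of functions; the crucial structural observation is that the embedding of \cref{lem:affine_embedding} is assembled entirely from functions on its source $V$, which is exactly what lets the relation $f^\#\circ t_{\cO_Y\alg}=f^\#$ pin $\phi$ down.
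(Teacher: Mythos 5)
Your argument is correct and is exactly the proof the paper intends: the paper's own proof consists of the single remark that, thanks to \cref{lem:affine_embedding}, the argument of \cite[Tag 04IJ]{stacks-project} goes through verbatim, and what you have written is precisely that argument spelled out (reduce to representables, identify $t_{h_V}$ with an endomorphism $\phi$ of $V\times_Y X$ over $X$, use $f^\#\circ t_{\cO_Y\alg}=f^\#$ to see $\phi^*$ fixes pullbacks of global functions on $V$, and pin $\phi$ down via the monomorphism into a relative polydisc built from such functions). No substantive differences from the paper's route.
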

\begin{proof}
	Using \cref{lem:affine_embedding}, the same proof of \cite[Tag 04IJ]{stacks-project} applies.
\end{proof}

\begin{lem}\label{lem:first_fully_faithful}
	The functor
	\begin{align*}
	\Upsilon \colon \Ank &\longrightarrow \LRT\\
	X &\longmapsto (\Sh_\rSet(X\et), \cO_X\alg)
	\end{align*}
	is fully faithful.
\end{lem}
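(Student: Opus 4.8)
The plan is to imitate the classical proof that the category of schemes embeds fully faithfully into the $2$-category of locally ringed spaces (cf.\ \cite{stacks-project}), the new features being that we work with étale topoi and that the affinoid case is governed by the universal property of affinoid algebras rather than by localization. Combined with \cref{lem:rigidity} (which kills all nonidentity $2$-morphisms in the essential image), it suffices to show that for all $X, Y \in \Ank$ the functor $\Upsilon$ induces a bijection between $\Hom_{\Ank}(X,Y)$ and the set of isomorphism classes of $1$-morphisms $\Upsilon(X) \to \Upsilon(Y)$ in $\LRT$.

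First I would reduce to the case where $Y$ is affinoid. A morphism $(\phi,\phi^\#)\colon\Upsilon(X)\to\Upsilon(Y)$ in $\LRT$ carries $(-1)$-truncated objects to $(-1)$-truncated objects; since an étale monomorphism of $k$-analytic spaces is an admissible open immersion, such objects correspond precisely to admissible open subspaces, so an admissible affinoid covering $\{Y_i\to Y\}$ pulls back along $\phi$ to an admissible covering $\{X_i\to X\}$ and $(\phi,\phi^\#)$ restricts, along the corresponding étale subtopoi, to morphisms $\Upsilon(X_i)\to\Upsilon(Y_i)$ compatibly over the overlaps. Because $\Hom_{\Ank}(-,Y)$ is a sheaf for the admissible topology, and because the étale topos construction sends an admissible covering to a colimit diagram (by descent for \inftopoi, \cite[6.1.3.9(3)]{HTT}, applied to the $1$-localic topoi of \eqref{eq:afd_in_an}) so that $\Hom_{\LRT}(\Upsilon(X),\Upsilon(Y))$ is likewise computed as a limit over the Čech nerve, a gluing argument reduces both faithfulness and fullness of $\Upsilon$ to the case of affinoid $Y$; and then, by the same mechanism applied on the source, to the case where $X = \Sp A$ and $Y = \Sp B$ are both affinoid.

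In that case, taking global sections of the algebraic structure sheaves yields a commutative triangle whose composite
\[
\Hom_{\Ank}(\Sp A,\Sp B)\xrightarrow{\ \Upsilon\ }\Hom_{\LRT}(\Upsilon\Sp A,\Upsilon\Sp B)\xrightarrow{\ \Gamma\ }\Hom_{k\text{-}\mathrm{alg}}(B,A)
\]
is the canonical bijection provided by the universal property of affinoid spaces \cite{Bosch_Non-archimedean_1984}; faithfulness of $\Upsilon$ is immediate. For fullness, start from an arbitrary $(\phi,\phi^\#)$ with associated algebra map $\psi\colon B\to A$, fix a presentation $B=k\langle x_1,\dots,x_n\rangle/I$, and use that any $k$-algebra homomorphism of affinoid algebras is bounded to get $\rho(\psi(x_i))\le\rho(x_i)\le 1$; the tuple $(\psi(x_1),\dots,\psi(x_n))$ then defines a morphism $\Sp A\to\bD^n$ which factors through $\Sp B=V(I)$ because $\psi$ annihilates $I$, producing $f\colon\Sp A\to\Sp B$ with $\Gamma(\Upsilon(f))=\psi$. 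It remains to identify $(\phi,\phi^\#)$ with $\Upsilon(f)$: using \cref{lem:affine_embedding} to embed $\Sp A$ as a monomorphism into a polydisc over $\Sp B$, the comparison reduces to morphisms into $\Upsilon(\bD^N)$, and then — working on $(-1)$-truncated objects and on stalks, which suffice because the $1$-truncated objects in play are hypercomplete and the relevant topoi have enough points (\cref{rem:points_of_cX_X}) — one checks that a local morphism of structure sheaves over a geometric point forces the underlying map of geometric points, and hence the map of structure sheaves, to be the ones coming from $\psi$. Finally \cref{lem:rigidity} shows $\Upsilon(f)$ has no nonidentity automorphism, so $\Hom_{\LRT}(\Upsilon\Sp A,\Upsilon\Sp B)$ is discrete and the bijection on isomorphism classes is the asserted equivalence.

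The main obstacle is this last identification: that a local morphism of the étale locally ringed topoi of two affinoids is pinned down, up to unique $2$-isomorphism, by its effect on global sections — the non-archimedean counterpart of the adjunction $\Hom_{\mathcal{LRS}}(Z,\Spec R)\simeq\Hom_{\mathrm{ring}}(R,\Gamma(Z,\cO_Z))$. The two ingredients that make it work are the automatic boundedness of algebra maps between affinoid algebras, which lets one recover a genuine morphism of analytic spaces from the algebra map, and \cref{lem:affine_embedding}, which realizes an affinoid as a monomorphism inside a polydisc so that the finitely many coordinate functions determine the morphism completely.
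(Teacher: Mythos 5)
Your overall strategy --- reduce to affinoid source and target, extract the $k$-algebra map $\psi\colon B\to A$ from global sections, obtain $f=\Sp\psi$ from the universal property of affinoid algebras, and then identify $(\phi,\phi^\#)$ with $\Upsilon(f)$ --- is the same as the paper's, which follows \cite[Tag 04JH]{stacks-project} and performs the affinoid reduction at the end rather than at the beginning. The faithfulness argument and the use of \cref{lem:rigidity} to dispose of $2$-morphisms are fine.

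The gap is in the final identification of $(\phi,\phi^\#)$ with $\Upsilon(f)$. A geometric morphism between the étale topoi $\Sh_\rSet(X\et)$ and $\Sh_\rSet(Y\et)$ is not determined by its effect on points together with its effect on $(-1)$-truncated objects: the site $Y\et$ contains finite étale covers $V\to Y$ which are neither admissible opens nor closed immersions, and one must first produce a \emph{natural} comparison map $\phi^{-1}h_V\to h_{X\times_Y V}$ for every such $V$ before any stalkwise verification can be run. Knowing that $\phi$ and $f$ induce the same map on geometric points yields no natural transformation between the two inverse-image functors. The paper supplies the comparison map by writing the affinoid algebra of $V$ as $B\langle x_1,\dots,x_n\rangle/(r_1,\dots,r_n)$ with invertible Jacobian (Huber's presentation of étale morphisms), exhibiting $h_V$ as the equalizer of two maps $\prod\cO_Y\alg\rightrightarrows\prod\cO_Y\alg$, and using $\phi^\#$ to map the $\phi^{-1}$-image of this equalizer to the corresponding equalizer over $X$; only then does it check on stalks (splitting into the finite étale case and the affinoid-domain case) that the resulting map is an isomorphism. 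Your appeal to \cref{lem:affine_embedding} does not substitute for this: the polydisc embedding is a monomorphism out of the \emph{source} $\Sp A$, so it controls neither the étale site of the target $\Sp B$ nor the value of $\phi^{-1}$ on the representable sheaves $h_V$ that generate $\Sh_\rSet(Y\et)$. Without the equalizer construction (or an equivalent device for standard étale algebras), the step ``a local morphism of structure sheaves over a geometric point forces \dots{} the map of structure sheaves to be the ones coming from $\psi$'' does not go through.
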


\begin{proof}
	Let $X,Y$ be two \kanal spaces.
	Let \[(g,g^\#)\colon\big(\Sh_\rSet (X\et),\cO\alg_X\big) \to \big(\Sh_\rSet (Y\et),\cO\alg_Y\big)\]
	be a morphism of locally ringed 1-topoi.
	We would like to show that there exists a unique morphism of \kanal spaces $f\colon X\to Y$ which induces $(g,g^\#)$.
	We proceed along the same lines as \cite[Tag 04JH]{stacks-project}.
	
	Let $g\inv\colon\Sh_\rSet (Y\et) \leftrightarrows \Sh_\rSet(X\et) \colon g_*$ denote the morphism of 1-topoi.
	
	First, we assume that $X=\Sp A$, $Y=\Sp B$ for some $k$-affinoid algebras $A$ and $B$.
	Since $B=\Gamma(Y\et,\cO\alg_Y)$ and $A=\Gamma(X\et,\cO\alg_X)$, we see that $g^\#$ induces a map of affinoid algebras $\varphi\colon B\to A$.
	Let $f=\Sp\varphi\colon X\to Y$.
	Let us show that $f$ induces $(g,g^\#)$.
	
	Let $V\to Y$ be an affinoid space étale over $Y$.
	Assume $V=\Sp C$.
	By \cite[Proposition 1.7.1]{Huber_Etale_1996}, we can write
	\[ C=B\langle x_1,\dots,x_n\rangle / (r_1,\dots,r_n), \]
	where $r_1,\dots,r_n\in B\langle x_1,\dots,x_n\rangle$ and the determinant $\mathrm{Jac}(r_1,\dots,r_n)$ is invertible in $C$.
	Now the sheaf $h_V$ on $Y\et$ is the equalizer of the two maps
	\[
	\xymatrix{
		\prod_{i=1}^n \cO\alg_Y \ar@<0.6ex>[r]^{a} \ar@<-0.6ex>[r]_{b} & \prod_{j=1}^n \cO\alg_Y
	}
	\]
	%	\begin{tikzcd}
	%		\prod_{i=1}^n \cO\alg_Y \arrow[yshift=0.6ex]{r}{a} \arrow[yshift=-0.6ex, swap]{r}{b} & \prod_{j=1}^n \cO\alg_Y ,
	%	\end{tikzcd}
	where $b=0$ and $a(h_1,\dots,h_n)=\big(r_1(h_1,\dots,h_n),\dots,r_n(h_1,\dots,h_n)\big)$.
	We have the following commutative diagram
	\begin{equation}\label{eq:equalizers}
	\xymatrix{
		g\inv h_V \ar@{.>}[d]^{\alpha} \ar[r] & \prod_{i=1}^n g\inv \cO\alg_Y  \ar[d]^{\prod g^\#} \ar@<0.6ex>[r]^{g\inv a} \ar@<-0.6ex>[r]_{g\inv b} & \prod_{j=1}^n g\inv\cO\alg_Y \ar[d]^{\prod g^\#}\\
		h_{X\times_Y V}\ar[r] & \prod_{i=1}^n \cO\alg_X \ar@<0.6ex>[r]^{a'} \ar@<-0.6ex>[r]_{b'} & \prod_{j=1}^n \cO\alg_X,
	}
	\end{equation}
	%	\begin{tikzcd}
	%		g\inv h_V\arrow[dotted]{d}{\alpha}\arrow{r} & \prod_{i=1}^n g\inv \cO\alg_Y \arrow{d}{\prod g^\#}\arrow[yshift=0.6ex]{r}{g\inv a} \arrow[yshift=-0.6ex, swap]{r}{g\inv b} & \prod_{j=1}^n g\inv\cO\alg_Y\arrow{d}{\prod g^\#}\\
	%		h_{X\times_Y V}\arrow{r} & \prod_{i=1}^n \cO\alg_X \arrow[yshift=0.6ex]{r}{a'} \arrow[yshift=-0.6ex, swap]{r}{b'} & \prod_{j=1}^n \cO\alg_X,
	%	\end{tikzcd}
	where $b'=0$, $a'(h_1,\dots,h_n)=\big(\varphi(r_1)(h_1,\dots,h_n),\dots,\varphi(r_n)(h_1,\dots,h_n)\big)$, the two horizontal lines are equalizer diagrams and the dotted arrow $\alpha$ is obtained by the universal property of equalizers.
	
	We claim that the map $\alpha\colon g\inv h_V\to h_{X\times_Y V}$ is an isomorphism.
	Let us check this on the stalks.
	Let $\bar x$ be a geometric point of the adic space $X^\mathrm{ad}$ associated to $X$ in the sense of Huber.
	Denote by $p$ the associated point of the 1-topos $\Sh_\rSet(X\et)$ (cf.\ \cref{rem:points_of_cX_X}).
	Applying localization at $p$ to Diagram \eqref{eq:equalizers}, we would like to show that $\alpha_p\colon (g\inv h_V)_p\to (h_{X\times_Y V})_p$ is an isomorphism.
	Set $q\coloneqq g\circ p$.
	This is a point of the 1-topos $\Sh_\rSet(Y\et)$.
	We denote by $\bar y$ the corresponding geometric point of the adic space $Y^\mathrm{ad}$ associated to $Y$.
	Then the localization of the map $g^\#$ at $p$ has the following description
	\[(g^\#)_p\colon\cO\alg_{Y,\bar y} = \cO\alg_{Y,q} = (g\inv \cO\alg_Y)_p \longrightarrow \cO\alg_{X,p} = \cO\alg_{X,\bar x}.\]
	
	It suffices to treat the two cases: either $V\to Y$ is finite étale, or $V\to Y$ is an affinoid domain embedding.
	In the first case, there exists an étale neighborhood $U$ of $\bar y$ in $Y^\mathrm{ad}$ such that the pullback morphism $V\times_Y U\to U$ splits.
	Then the equalizer of
	\begin{equation} \label{eq:O_Y_V}
	\xymatrix{
		\prod_{i=1}^n \cO\alg_Y(U)  \ar@<0.6ex>[r]^{a} \ar@<-0.6ex>[r]_{b} & \prod_{j=1}^n \cO\alg_Y(U)
	}
	\end{equation}
	%	\begin{tikzcd}
	%		\prod_{i=1}^n \cO\alg_Y(U) \arrow[yshift=0.6ex]{r}{a} \arrow[yshift=-0.6ex, swap]{r}{b} & \prod_{j=1}^n \cO\alg_Y(U)
	%	\end{tikzcd}
	is isomorphic to the equalizer of
	\begin{equation} \label{eq:ky}
	\xymatrix{
		\prod_{i=1}^n k(\bar y) \ar@<0.6ex>[r]^{a} \ar@<-0.6ex>[r]_{b} & \prod_{j=1}^n k(\bar y),
	}
	\end{equation}
	%	\begin{tikzcd}
	%		\prod_{i=1}^n k(\bar y) \arrow[yshift=0.6ex]{r}{a} \arrow[yshift=-0.6ex, swap]{r}{b} & \prod_{j=1}^n k(\bar y),
	%	\end{tikzcd}
	where $k(\bar y)$ denotes the residue field of $\bar y$.
	Similarly, there exists an étale neighborhood $U'$ of $\bar x$ in $X^\mathrm{ad}$ such that the pullback morphism $X\times_Y V\times_X U'\simeq V\times_Y U'\to U'$ splits.
	Then the equalizer of
	\begin{equation} \label{eq:O_X_V}
	\xymatrix{
		\prod_{i=1}^n \cO\alg_X(U') \ar@<0.6ex>[r]^{a'} \ar@<-0.6ex>[r]_{b'} & \prod_{j=1}^n \cO\alg_X(U')
	}
	\end{equation}
	%	\begin{tikzcd}
	%		\prod_{i=1}^n \cO\alg_X(U') \arrow[yshift=0.6ex]{r}{a'} \arrow[yshift=-0.6ex, swap]{r}{b'} & \prod_{j=1}^n \cO\alg_X(U')
	%	\end{tikzcd}
	is isomorphic to the equalizer of
	\begin{equation} \label{eq:kx}
	\xymatrix{
		\prod_{i=1}^n k(\bar x) \ar@<0.6ex>[r]^{a'} \ar@<-0.6ex>[r]_{b'} & \prod_{j=1}^n k(\bar x).
	}
	\end{equation}
	%	\begin{tikzcd}
	%		\prod_{i=1}^n k(\bar x) \arrow[yshift=0.6ex]{r}{a'} \arrow[yshift=-0.6ex, swap]{r}{b'} & \prod_{j=1}^n k(\bar x).
	%	\end{tikzcd}
	Since the equalizer of \cref{eq:ky} and the equalizer of \cref{eq:kx} are isomorphic by construction,
	we deduce that the equalizer of \cref{eq:O_Y_V} and the equalizer of \cref{eq:O_X_V} are isomorphic.
	Taking colimits over all such étale neighborhoods, we conclude that $\alpha_p\colon (g\inv h_V)_p\to (h_{X\times_Y V})_p$ is an isomorphism.
	Then let us consider the second case where $V\to Y$ is an affinoid domain embedding.
	If the geometric point $\bar y$ can be lifted to a geometric point in $V$, then for any étale neighborhood $U$ of $\bar y$ in $Y^\mathrm{ad}$ refining $V$, the equalizer of \cref{eq:O_Y_V} consists of a single element.
	The same goes for the equalizer of \cref{eq:O_X_V}.
	If the geometric point cannot be lifted to a geometric point in $V$, then the equalizer of \cref{eq:O_Y_V} is empty, so is the equalizer of \cref{eq:O_X_V}.
	We conclude that $\alpha_p\colon (g\inv h_V)_p\to (h_{X\times_Y V})_p$ is an isomorphism.
	
	Now the same argument in \cite[Tag 04I6]{stacks-project} shows that the isomorphisms $g\inv h_V\to h_{X\times_Y V}$ are functorial with respect to $V$ and that the map $f\colon X\to Y$ indeed induces the morphism of locally ringed 1-topoi $(g,g^\#)$ we started with.
	Finally, the argument in \cite[Tag 04I7]{stacks-project} allows us to deduce the general case from the affinoid case considered above.
\end{proof}

%To achieve our goal, we will need the following general lemmas:

\begin{lem}\label{lem:alg_faithful}
	Let $\cX$ be an \inftopos.
	The induced functor
	\[
	\Strloc_{\cTank}(\tau_{\le 0} \cX) \to \Strloc_{\cTdisck}(\tau_{\le 0} \cX)
	\]
	is faithful.
\end{lem}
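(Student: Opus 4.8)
The plan is to reduce the statement to the faithfulness of the algebraization functor for the \emph{algebraic} étale pregeometry $\cTetk$ --- which is classical --- together with the fact that a $\cTank$-structure carries admissible open immersions to monomorphisms. Since $\cTank$ and $\cTdisck$ are ordinary categories and $\tau_{\le 0}\cX$ is a $1$-topos, the categories $\Strloc_{\cTank}(\tau_{\le 0}\cX)$ and $\Strloc_{\cTdisck}(\tau_{\le 0}\cX)$ are $1$-categories, so faithfulness amounts to the following: if $f,g\colon\cO\to\cO'$ are morphisms of $\cTank$-structures on $\tau_{\le 0}\cX$ with $f\alg=g\alg$ --- equivalently $f_{\bA^n_k}=g_{\bA^n_k}$ for every $n\ge 0$ --- then $f_M=g_M$ for every smooth $k$-analytic space $M$. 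First I would cut down the class of $M$'s: by \cite[Proposition 1.7.1]{Huber_Etale_1996} and Elkik's theorem, exactly as in the proof of \cref{lem:etale_analytic_domain}, $M$ admits an affinoid G-covering $\{V_i\to M\}$ in which each $V_i$ is isomorphic over $\bA^{n_i}_k$ to a fiber product $Y_i\an\times_{\bA^{n_i}_k}\bD^{n_i}$, where $Y_i$ is a smooth affine $k$-scheme equipped with an étale morphism to $\bbA^{n_i}_k$ and $\bD^{n_i}\hookrightarrow\bA^{n_i}_k$ is the unit polydisc. As a G-covering is a covering in the étale topology of $\cTank$, the morphism $\coprod_i\cO(V_i)\to\cO(M)$ is an effective epimorphism, hence an epimorphism in the $1$-topos $\tau_{\le 0}\cX$; together with naturality of $f$ and $g$, this reduces us to proving $f_{V_i}=g_{V_i}$ for each $i$.

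Fix one such $V=Y\an\times_{\bA^n_k}\bD^n$. Since $Y\an\to\bA^n_k$ is étale, hence admissible, the defining square is a pullback along an admissible morphism, so the $\cTank$-structure axioms give $\cO(V)\simeq\cO(Y\an)\times_{\cO(\bA^n_k)}\cO(\bD^n)$ and likewise for $\cO'$; by naturality $f_V$ is the morphism induced on these fiber products by $f_{Y\an}$, $f_{\bA^n_k}$ and $f_{\bD^n}$, and the same holds for $g_V$. As $f_{\bA^n_k}=g_{\bA^n_k}$ is given, it therefore suffices to prove $f_{\bD^n}=g_{\bD^n}$ and $f_{Y\an}=g_{Y\an}$.

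For the polydisc, the admissible open immersion $j\colon\bD^n\hookrightarrow\bA^n_k$ is a monomorphism in $\cTank$, so its diagonal is an isomorphism; since $\cO$ preserves pullbacks along admissible morphisms, the diagonal of $\cO(j)$ is an isomorphism, i.e.\ $\cO(j)$ is a monomorphism in $\tau_{\le 0}\cX$, and similarly $\cO'(j)$ is. Locality of $f$ at $j$ exhibits $\cO(\bD^n)$ as the fiber product of $\cO'(\bD^n)\xrightarrow{\cO'(j)}\cO'(\bA^n_k)\xleftarrow{f_{\bA^n_k}}\cO(\bA^n_k)$, with $f_{\bD^n}$ one of the two projections and $\cO(j)$ the other; the same holds for $g$, and since $g_{\bA^n_k}=f_{\bA^n_k}$ this is literally the same fiber product. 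Its projection onto $\cO(\bA^n_k)$ is a monomorphism (being a pullback of $\cO'(j)$) and equals $\cO(j)$ under both identifications, so the two identifications of $\cO(\bD^n)$ coincide and hence $f_{\bD^n}=g_{\bD^n}$.

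For $Y\an$ I would pass to the algebraic étale pregeometry: the analytification transformation $\varphi\colon\cTetk\to\cTank$ (as in the proof of \cref{thm:compatibility_truncations}) turns $\cO,\cO'$ into $\cTetk$-structures and $f,g$ into morphisms of $\cTetk$-structures which, because $\varphi(\bbA^m_k)=\bA^m_k$ and $f_{\bA^m_k}=g_{\bA^m_k}$ for all $m$, have the same algebraization over $\cTdisck$. The algebraization functor $\Strloc_{\cTetk}(\cX)\to\Strloc_{\cTdisck}(\cX)$ is faithful, since $\cTetk$-structures on a $1$-topos are sheaves of strictly Henselian local $k$-algebras and their morphisms are the local homomorphisms, so such a morphism is determined by the underlying morphism of sheaves of rings (cf.\ \cite[\S 4.3]{DAG-V}). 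Hence $f\circ\varphi=g\circ\varphi$, and in particular $f_{Y\an}=g_{Y\an}$, which finishes the proof. I expect this last step to be the main obstacle: an étale morphism to an affine space is not a monomorphism, so the monomorphism trick that disposes of $\bD^n$ does not apply to $Y\an$, and trying instead to embed $V$ monomorphically into a polydisc via \cref{lem:affine_embedding} (as is done for \cref{lem:rigidity}) does not help, since those monomorphisms are neither admissible nor closed immersions and so need not be preserved by $\cO$. The remaining point to check carefully is that Elkik's theorem really presents each $V_i$ as a fiber product along an \emph{admissible} morphism of $\cTank$, so that the structure axioms do apply.
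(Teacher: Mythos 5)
Your proof is correct, and it rests on the same two pillars as the paper's own argument: factoring the algebraization through $\Strloc_{\cTetk}(\tau_{\le 0}\cX)$ and citing the DAG-V results for the faithfulness of $\Strloc_{\cTetk}\to\Strloc_{\cTdisck}$, together with the observation that a $\cTank$-structure sends an admissible monomorphism to a monomorphism, which combined with the locality pullback square transfers the equality $f=g$ from an affine space down to an admissible open piece, and finally a G-covering/epimorphism argument to globalize. Where you differ is in the local model: you present each piece of the covering as $V\cong Y\an\times_{\bA^n_k}\bD^n$ via Elkik and then split the problem into the polydisc (monomorphism trick) and $Y\an$ (disposed of by the $\cTetk$ reduction), recombining through the fiber-product description of $\cO(V)$. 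The paper instead notes that $V$ is itself an affinoid domain in $Y\an$ (being the base change of $\bD^n\hookrightarrow\bA^n_k$ along the étale map $Y\an\to\bA^n_k$) and applies the monomorphism trick once, directly to the admissible monomorphism $V\hookrightarrow Y\an$, using that $f_{Y\an}=g_{Y\an}$ already holds after the $\cTetk$ reduction; this collapses your two-step recombination into a single pullback (``cube'') argument. In particular, the step you flag as the main obstacle --- handling $Y\an$ --- is exactly what the reduction to $\cTetk$ takes care of, and your residual worry about admissibility of the legs of the fiber product is unfounded: both $Y\an\to\bA^n_k$ (analytification of an étale morphism of smooth $k$-schemes) and $\bD^n\hookrightarrow\bA^n_k$ (affinoid domain embedding) are étale, hence admissible in $\cTank$, so the structure axioms apply as you use them.
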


\begin{proof}
	We can factor the functor $\Strloc_{\cTank}(\tau_{\le 0} \cX) \to \Strloc_{\cTdisck}(\tau_{\le 0} \cX)$ as
	\[ \begin{tikzcd}
		\Strloc_{\cTank}(\tau_{\le 0} \cX) \arrow{r} & \Strloc_{\cTetk}(\tau_{\le 0} \cX) \arrow{r}{U} & \Strloc_{\cTdisck}(\tau_{\le 0} \cX) ,
	\end{tikzcd} \]
	where $\cTetk$ is the pregeometry introduced in \cite[Definition 4.3.1]{DAG-V}.
	Combining \cite[Propositions 4.3.16, 2.6.16 and Remark 2.5.13]{DAG-V} we see that the functor $U$ is faithful.
	So we are reduced to prove the same statement for
	\[ \Strloc_{\cTank}(\tau_{\le 0} \cX) \to \Strloc_{\cTetk}(\tau_{\le 0} \cX) . \]
	
	The mapping spaces of $\tau_{\le 0} \cX$ are discrete by definition.
	It follows from \cite[2.3.4.18]{HTT} that we can find a minimal $1$-category $\cD$ and a categorical equivalence $\tau_{\le 0} \cX \simeq \cD$.
	Let $F, G \in \Strloc_{\cTank}(\cD)$. We want to show that the natural morphism
	\[
	\Map_{\Strloc_{\cTank}(\cD)}(F,G) \to \Map_{\Strloc_{\cTetk}(\cD)}(F\alg, G\alg)
	\]
	is a homotopy monomorphism.
	Since $F$ and $G$ take values in the $1$-category $\cD$, both mapping spaces above are sets.
	We want to prove that the given map is a monomorphism.
	Since $\Strloc_{\cTetk}(\cD)$ is a $1$-category,  two natural transformations $\varphi$ and $\psi$ represent the same object in $\Map_{\Strloc_{\cTetk}(\cD)}(F\alg, G\alg)$ if and only if they are equal, in the sense that
	\[
	\begin{tikzcd}
	F\alg(X) \arrow{r}{\varphi\alg_{X}} \arrow{d}[swap]{\mathrm{id}_{F\alg(X)}} & G\alg(X) \arrow{d}{\mathrm{id}_{G\alg(X)}} \\
	F\alg(X) \arrow{r}{\psi\alg_{X}} & G\alg(X)
	\end{tikzcd}
	\]
	commutes for every $X \in \cTetk$.
	Fix $U \in \cTank$.
	We first assume that $U$ is isomorphic to an affinoid domain in $X\an$ for a smooth $k$-variety $X$.
	
	Since $U \to X\an$ is a monomorphism, we have a pullback square
	\[\begin{tikzcd}
	U \arrow[-, double equal sign distance]{r} \arrow[-, double equal sign distance]{d} & U \arrow[hook]{d} \\
	U \arrow[hook]{r} & X\an.
	\end{tikzcd}\]
	Since $U \to X$ is an affinoid embedding, it is étale, so it is an admissible morphism in $\cTank$.
	Applying the functor $F$, we obtain another pullback square
	\[\begin{tikzcd}
	F(U) \arrow[-, double equal sign distance]{r} \arrow[-, double equal sign distance]{d} & F(U) \arrow{d} \\
	F(U) \arrow{r} & F(X\an).
	\end{tikzcd}\]
	So $F(U) \to F(X\an)$ is a monomorphism in the category $\cD$.
	
	We have a commutative cube
	\[
	\begin{tikzcd}[column sep=small, row sep=small]
	{} & F(U) \arrow{dl} \arrow{rr} \arrow[dotted]{dd} & & F(X\an) \arrow{dl} \\
	G(U) \arrow[crossing over]{rr}  & & G(X\an)  \\
	{} & F(U) \arrow{rr} \arrow{dl} & & F(X\an) \arrow[-, double equal sign distance]{uu} \arrow{dl} \\
	G(U) \arrow{rr} \arrow[-, double equal sign distance]{uu} & & G(X\an) \arrow[-, double equal sign distance, crossing over]{uu}
	\end{tikzcd}
	\]
	where the dotted arrow exists by the universal property of the pullbacks.
	Since $F(U) \to F(X\an)$ is a monomorphism, the dotted arrow is in fact the identity of $F(U)$.
	
	Let us now consider a general $U \in \cTank$.
	Choose a G-covering of $U$ by affinoid domains $\{U_i \to U\}$ such that each $U_i$ is isomorphic to an affinoid domain in $X_i\an$ for some smooth $k$-variety $X_i$.
		Set $U^0 \coloneqq \coprod U_i$ and consider the \v{C}ech nerve $U^\bullet \to U$. Observe that both $F(U^\bullet)$ and $G(U^\bullet)$ are groupoid objects in the 1-topos $\cD$ and that their realizations are respectively $F(U)$ and $G(U)$. Since we have a commutative square of groupoids
	\[
	\begin{tikzcd}
	F(U^\bullet) \arrow{r}{\varphi_{U^\bullet}} \arrow[-, double equal sign distance]{d} & G(U^\bullet) \arrow[-, double equal sign distance]{d} \\
	F(U^\bullet) \arrow{r}{\psi_{U^\bullet}} & G(U^\bullet),
	\end{tikzcd}
	\]
	the square
	\[
	\begin{tikzcd}
	F(U) \arrow{r}{\varphi_U} \arrow[-, double equal sign distance]{d} & G(U) \arrow[-, double equal sign distance]{d} \\
	F(U) \arrow{r}{\psi_U} & G(U)
	\end{tikzcd}
	\]
	commutes as well.
	Since the identity is functorial, the proof is now complete.
\end{proof}

\begin{lem}\label{lem:1_truncated_mapping_space}
	Let $\cT$ be a pregeometry and let $(\cX, \cO_\cX)$, $(\cY, \cO_\cY)$ be $\cT$-structured \inftopoi such that $\cX$ and $\cY$ are $1$-localic and the structure sheaves $\cO_\cX$, $\cO_\cY$ are discrete.
	Then $\Map_{\RTop(\cT)}((\cX, \cO_{\cX}), (\cY, \cO_{\cY}))$ is $1$-truncated.
	Moreover, the canonical morphism
	\[ \Map_{\RTop(\cT)}((\cX, \cO_{\cX}), (\cY, \cO_{\cY})) \to \Map_{\RTop_1(\cT)}((\tau_{\le 0} \cX, \cO_{\cX}), (\tau_{\le 0} \cY, \cO_{\cY})) \]
	is an equivalence, where $\RTop_1$ denotes the $\infty$-category of 1-topoi with morphisms being right adjoint geometric morphisms.
\end{lem}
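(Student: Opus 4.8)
The plan is to decompose both mapping spaces into fiber sequences over the mapping spaces of the underlying geometric morphisms, and compare the pieces. Recall from \cite[Remark 1.4.10]{DAG-V} (combined with \cite[2.4.4.2]{HTT}, exactly as in the proof of \cref{lem:hyp_right_adjoint}) that for any $\cT$-structured \inftopoi $(\cX, \cO_\cX)$, $(\cY, \cO_\cY)$ there is a fiber sequence, natural in both arguments,
\[ \Map_{\Strloc_\cT(\cX)}(f\inv \cO_\cY, \cO_\cX) \longrightarrow \Map_{\RTop(\cT)}\big((\cX, \cO_\cX), (\cY, \cO_\cY)\big) \longrightarrow \Map_{\RTop}(\cX, \cY), \]
where the fiber is taken over a point $f$, i.e.\ a geometric morphism with $f_* \colon \cX \rightleftarrows \cY \colon f\inv$. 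An identical fiber sequence governs $\Map_{\RTop_1(\cT)}$, with $\cX, \cY$ replaced by their $1$-truncations. It then suffices to compare the two sequences termwise.

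First I would analyze the base. Since $\cX$ and $\cY$ are $1$-localic, \cite[6.4.5.7]{HTT} (cf.\ \cref{rem:points_of_cX_X}) identifies $\Fun_*(\cX, \cY)$ with $\Fun_*(\tau_{\le 0}\cX, \tau_{\le 0}\cY)$; passing to maximal $\infty$-groupoids, $\Map_{\RTop}(\cX, \cY) \simeq \Map_{\RTop_1}(\tau_{\le 0}\cX, \tau_{\le 0}\cY)$. As $\tau_{\le 0}\cX$ and $\tau_{\le 0}\cY$ are $1$-topoi, the latter $\infty$-category is a $1$-category, so this mapping space is $1$-truncated. This settles the base, and identifies it with the base of the second sequence.

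Next I would analyze the fiber over a fixed $f$. The key points are: $f\inv$ is left exact, hence preserves $0$-truncated objects, so $f\inv\cO_\cY$ is again a discrete $\cT$-structure and factors through $\iota_\cX\colon \tau_{\le 0}\cX \hookrightarrow \cX$, say as $\iota_\cX \circ \bar f\inv \cO_\cY^0$ where $\bar f\inv$ is the inverse image of the induced geometric morphism of $1$-topoi and $\cO_\cY^0$ is the corestriction of $\cO_\cY$; and $\cO_\cX$ likewise factors as $\iota_\cX \circ \cO_\cX^0$. Because $\iota_\cX$ is fully faithful, post-composition with it gives $\Map_{\Fun(\cT,\tau_{\le 0}\cX)}(\bar f\inv\cO_\cY^0, \cO_\cX^0) \simeq \Map_{\Fun(\cT,\cX)}(f\inv\cO_\cY, \cO_\cX)$; and because $\iota_\cX$ preserves and reflects pullbacks of $0$-truncated objects, a morphism is a local morphism of $\cT$-structures in $\tau_{\le 0}\cX$ if and only if its image is in $\cX$. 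Since $\Map_{\Strloc_\cT(-)}$ is the union of those connected components of $\Map_{\Fun(\cT,-)}$ consisting of local morphisms, this gives an equivalence of the two fibers, and since $\tau_{\le 0}\cX$ is a $1$-category this common fiber is $0$-truncated.

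Finally, a fiber sequence with $0$-truncated fibers over a $1$-truncated base has $1$-truncated total space (by the long exact sequence of homotopy groups), and a natural map of fiber sequences inducing equivalences on the base and on all fibers induces an equivalence on total spaces; together these give the two assertions. I expect the main obstacle to be bookkeeping rather than substance: organizing the comparison as a genuine map of fiber sequences (so that the pointwise fiber equivalences can be assembled), and verifying carefully that ``local morphism of $\cT$-structures'' is insensitive to whether one works in $\cX$ or $\tau_{\le 0}\cX$ --- which reduces to $\iota_\cX$ preserving and reflecting the relevant pullback squares --- together with the routine but necessary check that $\cO_\cX$ does define a $\cT$-structure on the $1$-topos $\tau_{\le 0}\cX$, so that the target $\RTop_1(\cT)$ makes sense.
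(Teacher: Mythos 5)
Your proposal is correct and follows essentially the same route as the paper: the same fiber sequence from \cite[2.4.4.2]{HTT} and \cite[Remark 1.4.10]{DAG-V}, the identification $\Fun_*(\cX,\cY)\simeq\Fun_*(\tau_{\le 0}\cX,\tau_{\le 0}\cY)$ via $1$-localicity for the base, and discreteness of the fiber from $0$-truncatedness of $\cO_\cX$. You are in fact more explicit than the paper on the second assertion (the paper disposes of it with ``the second statement follows as well''), supplying the termwise comparison of the two fiber sequences that this requires.
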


\begin{proof}
	Consider the coCartesian fibration $\LTop(\cT) \to \LTop$.
		We know from \cite[Remark 1.4.10]{DAG-V} that the fiber over an \inftopos $\cX$ is equivalent to $\Strloc_{\cT}(\cX)$.
	Let $f^{-1} \colon \cX \rightleftarrows \cY \colon f_*$
	be a geometric morphism between $\cX$ and $\cY$.
	Using \cite[2.4.4.2]{HTT} and \cite[Remark 1.4.10]{DAG-V} we obtain a fiber sequence
	\[ \begin{tikzcd}
		\Map_{\Strloc_{\cTank}(\cX)}(f^{-1} \cO_\cY, \cO_\cX) \arrow{r} & \Map_{\RTop(\cT)}((\cX, \cO_{\cX}), (\cY, \cO_{\cY})) \arrow{r} & \Fun_*(\cX, \cY) ,
	\end{tikzcd} \]
	where the fiber is taken at the geometric morphism $(f\inv, f_*)$.
	Since both $\cX$ and $\cY$ are $1$-localic, there is an equivalence
	\[ \Fun_*(\cX, \cY) \simeq \Fun_*(\tau_{\le 0} \cX, \tau_{\le 0} \cY). \]
	Therefore $\Fun_*(\cX, \cY)$ is $1$-truncated.
	On the other side, $\cO_\cX$ is $0$-truncated, so $\Map_{\Strloc_{\cT}(\cX)}(f^{-1} \cO_\cY, \cO_\cX)$ is discrete.
	The second statement follows as well.
\end{proof}

\begin{lem} \label{lem:alg_homotopy_monomorphism}
	Let $X = (\cX, \cO_{\cX})$ and $Y = (\cY, \cO_{\cY})$ be two $\cTank$-structured \inftopoi.
	Let $X\alg \coloneqq (\cX, \cO_\cX\alg)$ and $Y\alg \coloneqq (\cY, \cO_\cY\alg)$ be the underlying $\cTdisck$-structured \inftopoi.
	Assume that $\cX$ and $\cY$ are $1$-localic and that $\cO_{\cX}$ and $\cO_{\cY}$ are $0$-truncated.
	Then the canonical map
	\[ \Map_{\RTop(\cTank)}(X, Y) \to \Map_{\RTop(\cTdisck)}(X\alg, Y\alg) \]
	induces monomorphisms on $\pi_0$ and on $\pi_1$ (for every choice of base point).
\end{lem}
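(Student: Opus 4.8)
The plan is to exhibit the comparison map
$\phi\colon\Map_{\RTop(\cTank)}(X,Y)\to\Map_{\RTop(\cTdisck)}(X\alg,Y\alg)$
induced by algebraization as a map of spaces over $B\coloneqq\Fun_*(\cX,\cY)$, and to reduce the statement to \cref{lem:alg_faithful}. Since the algebraization functor leaves the underlying \inftopos and geometric morphism unchanged, $\phi$ is compatible with the two projections $p,q\colon\Map_{\RTop(-)}(-,-)\to B$ that forget the morphism of structure sheaves; so $\phi$ is a morphism over $B$. A routine pasting of homotopy pullback squares then identifies, for each point $e'$ of the target with image $b\coloneqq q(e')=(f\inv,f_*)$ in $B$, the homotopy fiber of $\phi$ over $e'$ with the homotopy fiber over $e'$ of the map $p\inv(b)\to q\inv(b)$ induced by $\phi$ on homotopy fibers over $b$. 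Consequently it suffices to prove that this map of fibers is a monomorphism of spaces: then every homotopy fiber of $\phi$ is empty or contractible, so $\phi$ is $(-1)$-truncated, which is more than enough for the asserted injectivity on $\pi_0$ and $\pi_1$.

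Next I would identify the fibers using the fiber sequence of \cref{lem:1_truncated_mapping_space} and its naturality with respect to algebraization: one gets $p\inv(b)\simeq\Map_{\Strloc_{\cTank}(\cX)}(f\inv\cO_\cY,\cO_\cX)$ and $q\inv(b)\simeq\Map_{\Strloc_{\cTdisck}(\cX)}((f\inv\cO_\cY)\alg,\cO_\cX\alg)$, and the map between them is the algebraization map on mapping spaces. Now $\cO_\cX$ is $0$-truncated by hypothesis, and $f\inv\cO_\cY$ is $0$-truncated as well because $f\inv$ is left exact and left exact functors preserve truncated objects; hence both $f\inv\cO_\cY$ and $\cO_\cX$ factor through $\tau_{\le 0}\cX$ and define $\cTank$-structures on the $1$-topos $\tau_{\le 0}\cX$. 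Since $\tau_{\le 0}\cX\hookrightarrow\cX$ is fully faithful, the two mapping spaces above coincide with the corresponding mapping spaces computed in $\Strloc_{\cTank}(\tau_{\le 0}\cX)$ and $\Strloc_{\cTdisck}(\tau_{\le 0}\cX)$. At that point \cref{lem:alg_faithful} applies verbatim: it asserts precisely that $\Strloc_{\cTank}(\tau_{\le 0}\cX)\to\Strloc_{\cTdisck}(\tau_{\le 0}\cX)$ is faithful, i.e.\ induces monomorphisms on mapping spaces, which completes the reduction.

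The step I expect to require the most care is the bookkeeping around $0$-truncated structures: one must check that a $0$-truncated $\cT$-structure on $\cX$ is the same datum, with the same morphism spaces, as a $\cT$-structure on $\tau_{\le 0}\cX$ viewed as a $0$-localic \inftopos, which amounts to verifying that the three structure axioms descend along the inclusion $\tau_{\le 0}\cX\hookrightarrow\cX$ --- using that this inclusion preserves finite products and finite limits, and that $\tau_{\le 0}$ preserves coproducts and effective epimorphisms. The homotopy-fiber pasting in the first paragraph and the appeal to \cref{lem:alg_faithful} are then purely formal.
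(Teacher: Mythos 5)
Your proposal is correct and follows essentially the same route as the paper: both reduce along the fiber sequence over $\Fun_*(\cX,\cY)$ coming from \cite[2.4.4.2]{HTT} and \cite[Remark 1.4.10]{DAG-V}, and both invoke \cref{lem:alg_faithful} on the fibers $\Map_{\Strloc}(f\inv\cO_\cY,\cO_\cX)$ after the $0$-truncation bookkeeping. The only difference is the final step: the paper passes to the long exact sequence of homotopy groups and applies the five lemma, whereas you identify the homotopy fibers of the comparison map with those of the induced map on fibers over $\Fun_*(\cX,\cY)$ and conclude directly that it is $(-1)$-truncated --- a mild strengthening of the stated conclusion, obtained by the same means.
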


\begin{proof}
	Let $f_* \colon \cX \rightleftarrows \cY \colon f\inv$ be a geometric morphism in $\RTop$.
	We have a commutative diagram in $\cS$:
	\[ \begin{tikzcd}[column sep=small]
		\Map_{\Strloc_{\cTank}(\cX)}(f\inv \cO_\cY, \cO_{\cX}) \arrow{r} \arrow{d} & \Map_{\RTop(\cTank)} (X, Y) \arrow{r} \arrow{d} & \Fun_*(\cX, \cY) \arrow[-, double equal sign distance]{d} \\
		\Map_{\Strloc_{\cTdisck}(\cX)}(f\inv \cO_{\cY}\alg, \cO_{\cX}\alg) \arrow{r} & \Map_{\RTop(\cTdisck)}(X\alg, Y\alg) \arrow{r} & \Fun_*(\cX, \cY).
	\end{tikzcd} \]
	Using \cite[2.4.4.2]{HTT} and \cite[Remark 1.4.10]{DAG-V} we see that the two horizontal lines are fiber sequences.
	Moreover, since $\cO_\cX$ and $\cO_\cY$ are $0$-truncated, we can use \cref{lem:alg_faithful} to deduce that the first vertical map is a homotopy monomorphism.
	Passing to the long exact sequences of homotopy groups and applying the five lemma, we obtain monomorphisms
	\begin{gather*}
		\pi_0 \Map_{\RTop(\cTank)}(X, Y) \to \pi_0 \Map_{\RTop(\cTdisck)}(X\alg, Y\alg) , \\
		\pi_1 \Map_{\RTop(\cTank)}(X, Y) \to \pi_1 \Map_{\RTop(\cTdisck)}(X\alg, Y\alg) ,
	\end{gather*}
	completing the proof.
\end{proof}

\begin{lem} \label{lem:mapping_space_hypercompletion_localic}
	Let $\cY$ be an $n$-localic \inftopos and let $\cX$ be any \inftopos.
	Then there is a canonical equivalence in the homotopy category of spaces $\mathcal H$:
	\[ \Map_{\RTop}(\cX^\wedge, \cY^\wedge) \simeq \Map_{\RTop}(\cX, \cY) . \]
\end{lem}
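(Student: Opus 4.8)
The plan is to factor the comparison through $\Map_{\RTop}(\cX^\wedge, \cY)$, splitting it into a ``target'' step controlled by hypercompleteness and a ``source'' step controlled by $n$-localicity. Note that the intermediate topos $\cY$ is not hypercompleted, and that $\cY^\wedge$ need not be $n$-localic, which is why the two inputs (hypercompleteness and $n$-localicity) are used separately rather than simultaneously.

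For the target step, the key point is that $\cX^\wedge$ is hypercomplete, so the argument from the proof of \cref{lem:hyp_right_adjoint} (which rests on \cite[6.5.2.13]{HTT}) applies with the hypercomplete topos there replaced by $\cX^\wedge$ and the arbitrary one replaced by $\cY$, yielding a canonical equivalence
\[ \Map_{\RTop}(\cX^\wedge, \cY^\wedge) \xrightarrow{\ \sim\ } \Map_{\RTop}(\cX^\wedge, \cY) . \]
It then remains to construct a canonical equivalence $\Map_{\RTop}(\cX, \cY) \simeq \Map_{\RTop}(\cX^\wedge, \cY)$, and the natural candidate is precomposition with the canonical geometric morphism $\pi \colon \cX^\wedge \to \cX$ whose inverse image is the hypercompletion functor $L \colon \cX \to \cX^\wedge$.

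For the source step, I would invoke the universal property of $n$-localic \inftopoi already used in \cref{rem:points_of_cX_X} (cf.\ \cite[\S 6.4.5]{HTT}): for every \inftopos $\cZ$, truncation induces an equivalence $\Fun_*(\cZ, \cY) \simeq \Fun_*(\tau_{\le n-1}\cZ, \tau_{\le n-1}\cY)$, naturally in $\cZ$. Passing to maximal sub-$\infty$-groupoids, the precomposition map $\pi^* \colon \Map_{\RTop}(\cX, \cY) \to \Map_{\RTop}(\cX^\wedge, \cY)$ is identified with the map induced by the geometric morphism of $n$-topoi $\tau_{\le n-1}\pi \colon \tau_{\le n-1}\cX^\wedge \to \tau_{\le n-1}\cX$, so it suffices to check that $\tau_{\le n-1}\pi$ is an equivalence. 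For this I would use that every $(n-1)$-truncated object of an \inftopos is hypercomplete (cf.\ \cite[\S 6.5]{HTT}): the fully faithful inclusion $\iota \colon \cX^\wedge \hookrightarrow \cX$ then surjects onto the $(n-1)$-truncated objects of $\cX$, while the adjunction identity $\Map_\cX(Z, \iota Y) \simeq \Map_{\cX^\wedge}(L Z, Y)$ for $Y \in \cX^\wedge$, $Z \in \cX$ shows that $\iota$ also detects $(n-1)$-truncatedness. Hence $\tau_{\le n-1}\cX^\wedge$ and $\tau_{\le n-1}\cX$ coincide as full subcategories of $\cX$, and $\tau_{\le n-1}\pi$ — whose direct image is the corresponding restriction of $\iota$ — is an equivalence. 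Chaining the two equivalences yields the asserted isomorphism in $\mathcal H$.

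The one genuinely nontrivial point I expect is the last one: verifying that $\pi$ becomes an equivalence after applying $\tau_{\le n-1}$ and that, through the $n$-localic characterization, this is exactly what controls the precomposition map $\pi^*$. Everything else is a formal manipulation of universal properties; some care is needed only to keep all the equivalences canonical, so that they compose to a well-defined isomorphism in $\mathcal H$.
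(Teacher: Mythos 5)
Your proposal is correct and follows essentially the same route as the paper: first use \cite[6.5.2.13]{HTT} to identify $\Map_{\RTop}(\cX^\wedge,\cY^\wedge)$ with $\Map_{\RTop}(\cX^\wedge,\cY)$, then use $n$-localicity of $\cY$ to reduce both sides to geometric morphisms between the $(n-1)$-truncations, and conclude from $\tau_{\le n-1}\cX \simeq \tau_{\le n-1}(\cX^\wedge)$. The only difference is that you spell out the justification of this last identification (truncated objects are hypercomplete), which the paper leaves implicit.
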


\begin{proof}
	Using \cite[6.5.2.13]{HTT} we see that the canonical morphism
	\[ \Map_{\RTop}(\cX^\wedge, \cY^\wedge) \to \Map_{\RTop}(\cX^\wedge, \cY) \]
	is an equivalence.
	Since $\cY$ is $n$-localic, the restriction
	\[ \Map_{\RTop}(\cX^\wedge, \cY) \to \Map_{\RTop_n}(\tau_{\le n - 1} (\cX^\wedge), \tau_{\le n - 1} \cY) \]
	is an equivalence as well.
	On the other side, the restriction
	\[ \Map_{\RTop}(\cX, \cY) \to \Map_{\RTop_n}(\tau_{\le n - 1}\cX, \tau_{\le n - 1} \cY) \]
	is also an equivalence.
	We now conclude by observing that $\tau_{\le n - 1} \cX \simeq \tau_{\le n - 1}(\cX^\wedge)$.
\end{proof}

\begin{prop} \label{prop:discrete_mapping_spaces_I}
	Let $X, Y \in \Ank$. Then $\Map_{\RTop(\cTank)}(\Phi(X), \Phi(Y))$ is discrete.
\end{prop}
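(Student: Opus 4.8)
The plan is to reduce the assertion, in three steps, to the full faithfulness of the auxiliary functor $\Upsilon$ proved in \cref{lem:first_fully_faithful} (hence ultimately to the rigidity \cref{lem:rigidity}).

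First I would remove the hypercompletion. Recall that $\Phi(X)=(\cX_X^\wedge,\cO_X)$ with $\cX_X$ a $1$-localic \inftopos and $\cO_X$ a $0$-truncated (hence hypercomplete) $\cTank$-structure. I claim that
\[ \Map_{\RTop(\cTank)}(\Phi(X),\Phi(Y)) \simeq \Map_{\RTop(\cTank)}\big((\cX_X,\cO_X),(\cX_Y,\cO_Y)\big) . \]
To see this I compare, for a geometric morphism $f$, the defining fiber sequence (obtained from \cite[2.4.4.2]{HTT} and \cite[Remark 1.4.10]{DAG-V}, as in the proof of \cref{lem:1_truncated_mapping_space})
\[ \Map_{\Strloc_{\cTank}(\cX_X^\wedge)}(f\inv\cO_Y,\cO_X)\to \Map_{\RTop(\cTank)}(\Phi(X),\Phi(Y))\to \Fun_*(\cX_X^\wedge,\cX_Y^\wedge) \]
with the analogous fiber sequence for $\big((\cX_X,\cO_X),(\cX_Y,\cO_Y)\big)$, whose base is $\Fun_*(\cX_X,\cX_Y)$. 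Since $\cX_Y$ is $1$-localic, \cref{lem:mapping_space_hypercompletion_localic} identifies the two bases; and for corresponding geometric morphisms $f$ and $g$ the fibers coincide, because they are mapping spaces into the $0$-truncated structure sheaf $\cO_X$ and these are unaffected by the fully faithful inclusion $\cX_X^\wedge\hookrightarrow\cX_X$ (the inverse image $g\inv\cO_Y$ being again $0$-truncated, hence identified with the image of $f\inv\cO_Y$). It therefore suffices to prove that $M\coloneqq\Map_{\RTop(\cTank)}\big((\cX_X,\cO_X),(\cX_Y,\cO_Y)\big)$ is discrete, where now both underlying topoi are $1$-localic and both structure sheaves are discrete.

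By \cref{lem:1_truncated_mapping_space} (with $\cT=\cTank$) the space $M$ is $1$-truncated, so it remains to check that $\pi_1(M,\varphi)=0$ for every base point $\varphi$. By \cref{lem:alg_homotopy_monomorphism}, the algebraization map $M\to \Map_{\RTop(\cTdisck)}\big((\cX_X,\cO_X\alg),(\cX_Y,\cO_Y\alg)\big)$ is a monomorphism on $\pi_1$, so it is enough to see that the target has vanishing $\pi_1$ at the image of $\varphi$. Since admissible morphisms in $\cTdisck$ are isomorphisms, \cref{lem:1_truncated_mapping_space} (now with $\cT=\cTdisck$, for which local morphisms impose no condition) identifies this target with a mapping space of $1$-topoi equipped with sheaves of $k$-algebras, between $(\Sh_\rSet(X\et),\cO_X\alg)$ and $(\Sh_\rSet(Y\et),\cO_Y\alg)$. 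The stalks of $\cO_X\alg$ and $\cO_Y\alg$ are local rings, and the algebraization of a morphism of $\cTank$-structured topoi is a \emph{local} morphism of such ringed topoi — because the local morphism $\varphi^\sharp$ of $\cTank$-structures restricts to a local morphism of the underlying $\cTetk$-structures, which is exactly the condition of inducing local maps on stalks. Hence the image of $\varphi$ lies in the full subcategory $\Map_{\LRT}(\Upsilon(X),\Upsilon(Y))$ of local morphisms, over which $\pi_1$ agrees with that of the ambient mapping space; and by \cref{lem:first_fully_faithful}, $\Upsilon$ being fully faithful and $\Ank$ a $1$-category, $\Map_{\LRT}(\Upsilon(X),\Upsilon(Y))\simeq\Hom_{\Ank}(X,Y)$ is a discrete set, so its $\pi_1$ vanishes. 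Tracing the reductions backwards, $\pi_1(M,\varphi)=0$, so $M$, and hence $\Map_{\RTop(\cTank)}(\Phi(X),\Phi(Y))$, is discrete.

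The step I expect to be most delicate is the first one: one must track carefully, through the equivalence of \cref{lem:mapping_space_hypercompletion_localic}, that the $\cTank$-structures $f\inv\cO_Y$ on $\cX_X^\wedge$ and $g\inv\cO_Y$ on $\cX_X$ genuinely correspond under $\cX_X^\wedge\hookrightarrow\cX_X$, which rests on the fact that $g\inv$ agrees with the inverse image of $f$ on $0$-truncated objects, and only such objects occur. A subsidiary bookkeeping point is verifying in the third step that $\cTank$-locality of $\varphi^\sharp$ really translates into a local morphism of ringed topoi, so that \cref{lem:first_fully_faithful} applies to the image of $\varphi$.
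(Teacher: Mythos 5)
Your proof is correct and follows essentially the same route as the paper's: strip the hypercompletion via \cref{lem:mapping_space_hypercompletion_localic}, reduce to $1$-truncatedness and the vanishing of $\pi_1$ via \cref{lem:1_truncated_mapping_space} and \cref{lem:alg_homotopy_monomorphism}, and conclude from the full faithfulness of $\Upsilon$ (\cref{lem:first_fully_faithful}) together with the rigidity of \cref{lem:rigidity}. The two delicate points you flag — matching the structure sheaves across the hypercompletion by comparing fiber sequences, and checking that the algebraization of a local morphism of $\cTank$-structures lands in the locally ringed mapping space, which is a union of connected components — are handled correctly and are, if anything, spelled out slightly more explicitly than in the paper.
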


\begin{proof}
	It follows from \cref{lem:mapping_space_hypercompletion_localic} that
	\begin{equation} \label{eq:from_hypercomplete_to_non-hypercomplete}
		\Map_{\dAnk}(\Phi(X), \Phi(Y)) \simeq \Map_{\RTop(\cTank)}((\cX_X, \cO_X), (\cX_Y, \cO_Y) ) .
	\end{equation}
	On the other side, \cref{lem:1_truncated_mapping_space} shows that the right hand side is $1$-truncated and
	\begin{equation} \label{eq:from_sheaves_of_spaces_to_sheaves_of_sets}
		\Map_{\RTop(\cTank)}((\cX_X, \cO_X), (\cX_Y, \cO_Y) ) \simeq \Map_{\RTop_1(\cTank)}((\tau_{\le 0} \cX_X, \cO_X), (\tau_{\le 0} \cX_Y, \cO_Y))
	\end{equation}
	We can now apply \cref{lem:alg_homotopy_monomorphism} to conclude that the canonical map
	\begin{multline*}
	\Map_{\RTop_1(\cTank)}((\tau_{\le 0} \cX_X, \cO_X), (\tau_{\le 0} \cX_Y, \cO_Y)) \to\\
	\Map_{\RTop_1(\cTdisck)}((\tau_{\le 0} \cX_X, \cO_X\alg), (\tau_{\le 0}\cX_Y, \cO_Y\alg))
	\end{multline*}
	induces monomorphisms on $\pi_0$ and on $\pi_1$.
	
	It follows from \cref{lem:first_fully_faithful} that the canonical map
	\[ \Hom_{\Ank}(X,Y) \to \pi_0 \Map_{\RTop_1(\cTdisck)}((\tau_{\le 0} \cX_X, \cO_X\alg), (\tau_{\le 0} \cX_Y, \cO_Y\alg)) \]
	is an isomorphism.
	At this point, we can invoke \cref{lem:rigidity} to deduce that, for every choice of base point, we have
	\[ \pi_1 \Map_{\RTop_1(\cTdisck)}((\tau_{\le 0} \cX_X, \cO_X\alg), (\tau_{\le 0} \cX_Y, \cO_Y\alg)) = 0 . \]
		Thus, we conclude that
	\[ \pi_1 \Map_{\RTop_1(\cTank)}((\tau_{\le 0} \cX_X, \cO_X), (\tau_{\le 0} \cX_Y, \cO_Y)) = 0 \]
	for every choice of base point.
	It follows from the equivalences \eqref{eq:from_hypercomplete_to_non-hypercomplete} and \eqref{eq:from_sheaves_of_spaces_to_sheaves_of_sets} that $\Map_{\dAnk}(\Phi(X),\allowbreak \Phi(Y))$ is discrete, completing the proof.
\end{proof}

We can now promote $X \mapsto \Phi(X)$ to an $\infty$-functor.

\todo{Be careful with $X\et$ below.}
Let $\cC$ temporarily denote the full subcategory of $\dAnk$ spanned by the objects which are equivalent to $\Phi(X)$ for some $X \in \Ank$.
\cref{prop:discrete_mapping_spaces_I} shows that mapping spaces in $\cC$ are discrete, hence $\cC$ is equivalent to a $1$-category.
Fix a morphism $f \colon X \to Y$ in $\Ank$.
It induces a morphism of sites
\[ \varphi \colon Y\et \to X\et \]
given by base change along $f$.
Since all the morphisms in $X\et$ and $Y\et$ are étale, it follows that $X\et$ and $Y\et$ have fiber products.
Moreover, $\varphi$ is left exact.
Therefore, it follows from \cite[Lemma 2.16]{Porta_Yu_Higher_analytic_stacks_2014} that the induced adjunction
\[ \varphi^s \colon \cX_Y \rightleftarrows \colon \cX_X \colon \varphi_s \]
is a geometric morphism of \inftopoi.
In particular, we obtain an induced geometric morphism $\cX_Y^\wedge \rightleftarrows \cX_X^\wedge$, which we denote by
\[ f\inv \colon \cX^\wedge \rightleftarrows \cX_X^\wedge \colon f_* . \]
We obtain in this way a well defined morphism $(\cX_X^\wedge, \cO_X) \to (\cX_Y^\wedge, \cO_Y)$.
Since mapping spaces in $\cC$ are discrete, we see that this assignment is functorial.
We denote the resulting $\infty$-functor by
\[ \Phi \colon \Ank \to \dAnk . \]

\begin{thm} \label{thm:fully_faithfulness}
The functor $\Phi\colon \Ank \rightarrow \dAnk$ is fully faithful.
\end{thm}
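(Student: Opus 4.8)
The plan is to reduce the theorem to a bijection of sets and then feed in the fully faithfulness of the auxiliary functor $\Upsilon$ together with the injectivity of algebraization on $\pi_0$. Since $\Ank$ is an ordinary category and $\Map_{\RTop(\cTank)}(\Phi(X),\Phi(Y))$ is discrete by \cref{prop:discrete_mapping_spaces_I}, it suffices to show that for all $X,Y\in\Ank$ the canonical map of sets
\[ \Hom_{\Ank}(X,Y) \longrightarrow \pi_0\Map_{\RTop(\cTank)}(\Phi(X),\Phi(Y)) \]
is a bijection. First I would unwind the identifications already recorded in the proof of \cref{prop:discrete_mapping_spaces_I}: combining \eqref{eq:from_hypercomplete_to_non-hypercomplete} and \eqref{eq:from_sheaves_of_spaces_to_sheaves_of_sets}, the target is $\pi_0\Map_{\RTop_1(\cTank)}((\tau_{\le 0}\cX_X,\cO_X),(\tau_{\le 0}\cX_Y,\cO_Y))$, and algebraization maps this set into $\pi_0\Map_{\RTop_1(\cTdisck)}((\tau_{\le 0}\cX_X,\cO_X\alg),(\tau_{\le 0}\cX_Y,\cO_Y\alg))$, which by the very definition of $\cO_X\alg$ is the hom-set $\Hom_{\LRT}(\Upsilon(X),\Upsilon(Y))$ of locally ringed $1$-topoi. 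In other words, $\Upsilon$ is $\Phi$ followed by $0$-truncation of the underlying $\infty$-topos and algebraization of the structure sheaf.

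For faithfulness, I would argue that if $f,g\colon X\to Y$ are morphisms in $\Ank$ with $\Phi(f)=\Phi(g)$, then applying $0$-truncation and algebraization gives $\Upsilon(f)=\Upsilon(g)$, whence $f=g$ by the fully faithfulness of $\Upsilon$ (\cref{lem:first_fully_faithful}).

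For fullness, given a morphism $\varphi\colon\Phi(X)\to\Phi(Y)$ in $\dAnk$, I would pass to its algebraization $\varphi\alg\colon\Upsilon(X)\to\Upsilon(Y)$, invoke the fullness of $\Upsilon$ to obtain a morphism $f\colon X\to Y$ in $\Ank$ with $\Upsilon(f)=\varphi\alg$, and then observe that $\Phi(f)$ has the same algebraization as $\varphi$. Now \cref{lem:alg_homotopy_monomorphism}, which applies because $\cO_X$ and $\cO_Y$ are $0$-truncated, says that
\[ \pi_0\Map_{\RTop_1(\cTank)}((\tau_{\le 0}\cX_X,\cO_X),(\tau_{\le 0}\cX_Y,\cO_Y)) \to \pi_0\Map_{\RTop_1(\cTdisck)}((\tau_{\le 0}\cX_X,\cO_X\alg),(\tau_{\le 0}\cX_Y,\cO_Y\alg)) \]
is injective, so $\Phi(f)=\varphi$ in $\pi_0\Map_{\dAnk}(\Phi(X),\Phi(Y))$. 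This gives surjectivity and completes the proof.

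The hard part is not any single step but the bookkeeping of the first paragraph: one must carefully match the discrete mapping set computed in \cref{prop:discrete_mapping_spaces_I} — passing from $\dAnk$ to $\RTop(\cTank)$ via hypercompleteness, then to $1$-localic truncations, then through algebraization — with the hom-set of $\LRT$, and verify that $\Upsilon$ genuinely factors through $\Phi$ in the claimed way. Once this compatibility is pinned down, faithfulness and fullness are purely formal consequences of \cref{lem:first_fully_faithful} and \cref{lem:alg_homotopy_monomorphism}.
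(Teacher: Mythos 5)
Your proposal is correct and follows essentially the same route as the paper: both reduce to the discrete mapping sets via \cref{prop:discrete_mapping_spaces_I}, obtain fullness by algebraizing, lifting through the full faithfulness of $\Upsilon$ (\cref{lem:first_fully_faithful}), and then using injectivity of algebraization on $\pi_0$ to identify $\Phi(f)$ with the given morphism. The only cosmetic difference is in faithfulness, where the paper recovers $f$ directly from the global sections of $\Phi(f)(\bA^1_k)$ in the affinoid case, whereas you deduce it from the faithfulness of $\Upsilon$; both rest on the same compatibility of $\Upsilon$ with $\Phi$ that the paper itself uses in the fullness step.
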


\begin{proof}
	Let $X, Y \in \Ank$. We want to show that
	\[
	\Hom_{\Ank}(X,Y) \to \Map_{\dAnk}(\Phi(X), \Phi(Y))
	\]
	is an equivalence.
	\cref{lem:1_truncated_mapping_space} allows us to identify $\Map_{\dAnk}(\Phi(X), \Phi(Y))$ with
	\[
	\Map_{\RTop_1(\cTank)}\big((\Sh_\rSet(X\et), \cO_X), (\Sh_\rSet(Y\et), \cO_Y)\big) .
	\]
	
	Let us first prove the faithfulness.
	Let $f, g \colon X \to Y$ be two morphisms and assume that $\Phi(f) = \Phi(g)$.
	Since the question of $f$ being equal to $g$ is local on both $X$ and $Y$, we can assume that both $X$ and $Y$ are affinoid.
	In this case, $f$ (resp.\ $g$) can be recovered as global section of the natural transformation $\Phi(f)(\bA^1_k)$ (resp.\ $\Phi(g)(\bA^1_k)$), where $\bA^1_k$ denote the \kanal affine line.
	Therefore we have $f = g$.
	
	Let us now turn to the fullness.
	Let
	\[ (f, f^\sharp) \colon (\Sh_\rSet(X\et), \cO_X) \to (\Sh_\rSet(Y\et), \cO_Y) \]
	be a morphism in $\RTop(\cTank)$.
	After forgetting along the morphism $\cTdisck \to \cTank$, we get a morphism of locally ringed 1-topoi.
	\cref{lem:first_fully_faithful} implies that this morphism comes from a map $\varphi \colon X \to Y$.
	This means that $\Phi(\varphi)\alg$ and $(f,f^\sharp)\alg$ coincide.
	\cref{lem:alg_faithful} implies that $\Phi(\varphi)$ and $(f,f^\sharp)$ coincide as well, completing the proof.
\end{proof}

\section{Closed immersions and étale morphisms} \label{sec:closed_etale}

In this section, we study closed immersions and étale morphisms under the fully faithful embedding $\Phi\colon\Ank\to\dAnk$.

\begin{defin}[{\cite[1.1]{DAG-IX}},{\cite[2.3.1]{DAG-V}}]\label{def:closed_immersion_and_etale}
	Let $\cT$ be a pregeometry and $\cX$, $\cY$ two \inftopoi.
	A morphism $\cO\to\cO'$ in $\Strloc_\cT(\cX)$ is said to be an \emph{effective epimorphism} if for every object $X\in\cT$, the induced map $\cO(X)\to\cO'(X)$ is an effective epimorphism in $\cX$.
	A morphism $f\colon(\cX, \cO_\cX)\to(\cY, \cO_\cY)$ in $\RTop(\cT)$ is called a \emph{closed immersion} (resp.\ an \emph{étale morphism}) if the following conditions are satisfied:
	\begin{enumerate}[(i)]
		\item the underlying geometric morphism $f_*\colon\cX\to\cY$ is a closed immersion (resp.\ an étale morphism) of \inftopoi;
		\item the morphism of structure sheaves $f\inv \cO_\cY \to \cO_\cX$ is an effective epimorphism (resp.\ an equivalence) in $\Strloc_\cT(\cY)$.
	\end{enumerate}
\end{defin}

\begin{lem} \label{lem:hypercompletion_closed_immersions}
	The hypercompletion functor $\RTop \to \RHTop$ preserves closed immersions.
\end{lem}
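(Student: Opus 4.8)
The plan is to reduce everything to the classification of closed immersions of \inftopoi by $(-1)$-truncated objects (\cite[\S 7.3.2]{HTT}). Let $i\colon\cX\to\cY$ be a closed immersion in $\RTop$. We may assume that $\cX$ is the closed subtopos $\cY^{\overline U}\subseteq\cY$ attached to a $(-1)$-truncated object $U\in\cY$, i.e.\ the full subcategory of objects $F$ for which the projection $F\times U\to U$ is an equivalence, with $i_*$ the inclusion and $i\inv$ the associated left-exact localization. Write $j\colon\cY_{/U}\to\cY$ for the complementary open immersion. Then $(i\inv,j^*)$ is jointly conservative, both $i\inv$ and $j^*$ preserve colimits and finite limits, $i\inv i_*\simeq\id$, and $j^*(i_*F)$ is terminal for every $F\in\cX$ (because $F\times U\to U$ is an equivalence).

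The key point I would establish is that \emph{for $F\in\cX$, the object $i_*F$ is hypercomplete in $\cY$ if and only if $F$ is hypercomplete in $\cX$.} The ``only if'' direction is valid for the direct image of any geometric morphism: $i\inv$, being a left-exact left adjoint, preserves effective epimorphisms and finite limits, hence $\infty$-connective morphisms, so the equivalence $\Map_\cY(-,i_*F)\simeq\Map_\cX(i\inv(-),F)$ sends every $\infty$-connective morphism of $\cY$ to an equivalence as soon as $F$ is hypercomplete. For the ``if'' direction I would show that $i_*$ preserves effective epimorphisms: given an effective epimorphism $\alpha$ in $\cX$, one has $i\inv(i_*\alpha)\simeq\alpha$ an effective epimorphism and $j^*(i_*\alpha)$ an equivalence, and $(i\inv,j^*)$ jointly detects effective epimorphisms (both preserve image factorizations, being colimit- and finite-limit-preserving, and they are jointly conservative). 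Since $i_*$ also preserves pullbacks, hence commutes with the formation of diagonals, it follows that $i_*$ carries a morphism all of whose iterated diagonals are effective epimorphisms to another such morphism; that is, $i_*$ preserves $\infty$-connective morphisms. Then $\Map_\cX(-,F)\simeq\Map_\cY(i_*(-),i_*F)$ sends $\infty$-connective morphisms of $\cX$ to equivalences whenever $i_*F$ is hypercomplete, which is the claim.

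Next I would identify the hypercompletions. Since $(-1)$-truncated objects are hypercomplete (\cite[\S 6.5.2]{HTT}), $U$ already lies in $\cY^\wedge$; as the inclusion $\cY^\wedge\hookrightarrow\cY$ preserves finite limits, $U$ remains $(-1)$-truncated there, and the closed subtopos $\cZ\coloneqq(\cY^\wedge)^{\overline U}\hookrightarrow\cY^\wedge$ is a closed immersion. Viewing everything inside $\cY$, the subcategory $\cZ$ consists of the objects $F\in\cY^\wedge$ with $F\times U\simeq U$, that is $\cZ=\cY^\wedge\cap\cX$; and by the key point the hypercompletion $\cX^\wedge$, seen as the full subcategory of hypercomplete objects of $\cX$, is likewise $\cY^\wedge\cap\cX$. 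Finally, $i^\wedge\colon\cX^\wedge\to\cY^\wedge$ is the morphism fitting into the commutative square in $\RTop$ formed by $i$ together with the two hypercompletion counits; unwinding that square, $(i^\wedge)_*$ is the corestriction to $\cY^\wedge$ of the composite $\cX^\wedge\hookrightarrow\cX\xrightarrow{\ i_*\ }\cY$ (well defined by the ``only if'' direction above), which under the identification $\cX^\wedge=\cZ$ is exactly the inclusion $\cZ\hookrightarrow\cY^\wedge$, namely the direct image of the closed immersion $\cZ\to\cY^\wedge$. Since a geometric morphism is determined by its direct image, $i^\wedge$ is that closed immersion.

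The only non-formal ingredient is the preservation of effective epimorphisms by $i_*$ for a closed immersion $i$, which is what forces one to introduce the complementary open immersion $j$; this is the step I expect to require the most care, in particular checking that $(i\inv,j^*)$ jointly detects effective epimorphisms. Everything else is adjunction bookkeeping.
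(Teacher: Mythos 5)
Your proof is correct, and its overall architecture coincides with the paper's: both reduce to the classification of closed immersions by a $(-1)$-truncated object $U$, identify the hypercompletion of the closed subtopos with the closed subtopos of the hypercompletion cut out by the same $U$, and hinge everything on the fact that the fully faithful direct image $i_*$ of the closed immersion not only preserves but also \emph{detects} hypercompleteness. The one genuine divergence is how that key sub-lemma is established. The paper proves that $i_*$ commutes with truncations, by the one-line computation $\tau_{\le n}(V)\times U\simeq\tau_{\le n}(V\times U)\simeq U$ (left exactness of $\tau_{\le n}$ plus $U$ being $(-1)$-truncated), and deduces preservation of $\infty$-connective morphisms; this is self-contained. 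You instead bring in the complementary open immersion $j$ and argue that $i_*$ preserves effective epimorphisms because $(i\inv,j^*)$ jointly detects them, then pass to iterated diagonals. That route is valid, but it rests on the joint conservativity of the open/closed pair, which you assert without proof; it is a standard recollement fact (\cite[\S A.8]{Lurie_Higher_algebra}), but it is the one non-formal input your argument needs and the paper's truncation argument avoids. In compensation, your formulation makes the ``if and only if'' statement explicit, which renders the identification $\cX^\wedge=\cX\cap\cY^\wedge$ and the essential-surjectivity step completely transparent, whereas the paper has to run the two inclusions separately. (A cosmetic point: your labels ``if'' and ``only if'' for the two directions of the key point are swapped relative to the statement as you phrase it.)
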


\begin{proof}
	Let $f_* \colon \cX \rightleftarrows \cY \colon f\inv$ be a closed immersion of \inftopoi.
	By definition we can find a $(-1)$-truncated object $U \in \cY$ such that the geometric morphism $f_*$ is equivalent to the induced geometric morphism $j_* \colon \cY / U \rightleftarrows \cY \colon j\inv$.
	Since $U$ is $(-1)$-truncated, it belongs to $\cY^\wedge$.
	It is therefore enough to prove that $(\cY / U)^\wedge \simeq \cY^\wedge / U$.
	The geometric morphism $\cY / U \to \cY$ induces by passing to hypercompletions a morphism $(\cY / U)^\wedge \to \cY^\wedge$ which by construction fits in the commutative diagram
	\[ \begin{tikzcd}
		\cY / U \arrow{r}{j_*} & \cY \\
		(\cY / U)^\wedge \arrow{u}{i_{U*}} \arrow{r}{j_*^\wedge} & \cY^\wedge \arrow{u}{i_*} .
	\end{tikzcd} \]
	Since $j_*$, $i_*$ and $i_{U*}$ are fully faithful, the same goes for $j_*^\wedge$.
	Observe that by \cite[7.3.2.5]{HTT}, an object $V \in \cY^\wedge$ belongs to $\cY^\wedge / U$ if and only if $V \times U \simeq U$.
	Since both $i_*$ and $j_*$ commute with products, we conclude that $j_*^\wedge$ factors through $\cY^\wedge / U$.
	
	This provides us a fully faithful functor $(\cY / U)^\wedge \to \cY^\wedge / U$.
	In order to complete the proof, it is enough to prove that it is essentially surjective.
	The canonical map $\cY^\wedge / U \to \cY^\wedge \to \cY$ factors through $\cY / U$.
	Now it suffices to prove that this functor can be further factored through $(\cY / U)^\wedge$.
	This follows from the fact that $j_*$ respects the collection of $\infty$-connected morphisms.
		To see this, let $V \in \cY / U$. Since $U$ is $(-1)$-truncated, we see that for every $n \ge 0$ one has:
	\[ \tau_{\le n}(V) \times U \simeq \tau_{\le n}(V) \times \tau_{\le n}(U) \simeq \tau_{\le n}(V \times U) \simeq \tau_{\le n}(U) \simeq U . \]
	In particular, $\tau_{\le n}(V)$ belongs to $\cY / U$ as well.
	It follows that $j_*$ commutes with truncations, and therefore with $\infty$-connected morphisms.
\end{proof}

\begin{lem} \label{lem:different_closed_immersion}
	Let $f^{-1} \colon \cX \rightleftarrows \cY \colon f_*$ be a closed immersion of \inftopoi.
	Let $F \in \cX$, $G \in \cY$ and let $f^{-1} F \to G$ be a morphism in $\cY$.
	If the morphism $F \to f_* G$ is an effective epimorphism, then so is the morphism $f\inv F \to G$.
\end{lem}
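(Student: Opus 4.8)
The plan is to reduce the statement to two standard facts: for a closed immersion the pushforward functor $f_*$ is fully faithful (it is the inclusion of a closed subtopos, cf.\ \cite[7.3.2]{HTT} and the proof of \cref{lem:hypercompletion_closed_immersions}), and the inverse image functor $f\inv$ of any geometric morphism preserves effective epimorphisms. Throughout, write $\beta\colon f\inv F\to G$ for the given morphism and $\alpha\colon F\to f_*G$ for the morphism adjoint to it under $f\inv\dashv f_*$; by hypothesis $\alpha$ is an effective epimorphism.

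The first step is to observe, via the triangle identity for the adjunction $f\inv\dashv f_*$, that $\beta$ coincides with the composite
\[ f\inv F\xrightarrow{\ f\inv(\alpha)\ }f\inv f_*G\xrightarrow{\ \epsilon_G\ }G, \]
where $\epsilon\colon f\inv f_*\to\id$ is the counit. Since $f_*$ is fully faithful, $\epsilon$ is an equivalence, and in particular $\epsilon_G$ is an equivalence.

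The second step is that $f\inv$ carries effective epimorphisms to effective epimorphisms: being left exact and colimit-preserving, it sends the \v{C}ech nerve of $\alpha$ to the \v{C}ech nerve of $f\inv(\alpha)$ and commutes with the associated geometric realizations, hence preserves the property of a morphism's target being the colimit of its \v{C}ech nerve, which is exactly what it means to be an effective epimorphism. Applying this to the hypothesis, $f\inv(\alpha)$ is an effective epimorphism; composing with the equivalence $\epsilon_G$ then shows that $\beta\colon f\inv F\to G$ is an effective epimorphism.

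I do not anticipate any genuine obstacle, since the argument is purely formal. The only point needing a little care is the bookkeeping in the first step — confirming that the morphism $f\inv F\to G$ appearing in the statement really is $\epsilon_G\circ f\inv(\alpha)$ rather than some other comparison map — but this is precisely the content of the triangle identity and is therefore routine.
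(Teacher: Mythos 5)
Your proof is correct and follows essentially the same route as the paper's: apply $f\inv$ to the effective epimorphism $F \to f_* G$ (using that inverse image functors preserve effective epimorphisms) and then use full faithfulness of $f_*$ to identify $f\inv f_* G$ with $G$. Your extra care with the triangle identity and with why $f\inv$ preserves effective epimorphisms only makes explicit what the paper leaves implicit.
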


\begin{proof}
	Since $f\inv$ is left exact, it commutes with effective epimorphisms.
	Therefore, $f\inv F \to f\inv f_* G$ is an effective epimorphism.
	Since $f_*$ is fully faithful, we see that $f\inv f_* G \simeq G$, hence completing the proof.
\end{proof}

\begin{thm} \label{thm:Phi_classes_of_morphisms}
	Let $f \colon X \to Y$ be a morphism in $\Ank$.
	Then:
	\begin{enumerate}[(i)]
		\item The morphism $f$ is an étale morphism if and only if $\Phi(f)$ is an \'etale morphism.
		\item The morphism $f$ is a closed immersion if and only if $\Phi(f)$ is a closed immersion.
	\end{enumerate}
\end{thm}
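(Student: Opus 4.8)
The plan is to check, for each of (i) and (ii), the two conditions of \cref{def:closed_immersion_and_etale} for $\Phi(f)$ directly in terms of $f$, and conversely. Two simplifications will be used throughout. First, since $\cX_X$ and $\cX_Y$ are $1$-localic, \cref{lem:mapping_space_hypercompletion_localic} together with $1$-localicity allows one to replace the hypercomplete topoi appearing in $\Phi(X),\Phi(Y)$ by $\cX_X,\cX_Y$ when checking either condition; this is also compatible with \cref{lem:hypercompletion_closed_immersions}. Second, by \cref{lem:alg_conservative} and \cref{prop:alg_effective_epi}, the structure-sheaf condition for $\Phi(f)$ (effective epimorphism, resp.\ equivalence) holds if and only if the corresponding condition holds for the algebraized map $f\inv\cO_Y\alg\to\cO_X\alg$ of sheaves of rings. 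Finally, both ``$f$ is étale'' and ``$f$ is a closed immersion'' are local on the source and on the target in $\Ank$, both conditions of \cref{def:closed_immersion_and_etale} restrict along étale slices of the underlying topoi, and $\Phi$ is compatible with restriction to admissible opens; so in the converse direction we may assume $X=\Sp A$ and $Y=\Sp B$ are affinoid.

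For the forward implication in~(i): if $f$ is étale then $X\in Y\et$ and the base-change functor of sites identifies $\cX_X$ with the étale slice $(\cX_Y)_{/h_X}$, so the underlying geometric morphism is étale; moreover for every $V\in X\et$ (hence $V\in Y\et$) one computes $(f\inv\cO_Y\alg)(V)\simeq\cO_Y\alg(V)=\Gamma(\cO_V)=\cO_X\alg(V)$, so $f\inv\cO_Y\alg\to\cO_X\alg$ is an equivalence. For the forward implication in~(ii): \cref{prop:preserve_closed_immersion} gives that $\cX_X\to\cX_Y$ is a closed immersion, and this is preserved by hypercompletion via \cref{lem:hypercompletion_closed_immersions}; that $f\inv\cO_Y\alg\to\cO_X\alg$ is an effective epimorphism can be checked on stalks, the stalk map at a geometric point $\bar x$ of $X$ being the surjection of strictly henselian local rings $\cO_{Y,\bar y}\alg\twoheadrightarrow\cO_{X,\bar x}\alg$ associated to a closed immersion — this is exactly the local computation appearing in the proof of \cref{lem:closed_immersion_closed_subtopos}, via \cref{lem:qet_structure}.

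For the converse in~(i): assume $\Phi(f)$ étale, so $f\inv\cO_Y\alg\to\cO_X\alg$ is an equivalence, hence an isomorphism on all stalks. By \cref{rem:points_of_cX_X} the points of $\cX_X$ are the geometric points $\bar x$ of the adic space attached to $X$, the stalk of $\cO_X\alg$ at $\bar x$ is the strictly henselian local ring $\cO_{X,\bar x}\alg$, and the stalk of $f\inv\cO_Y\alg$ at $\bar x$ is $\cO_{Y,\bar y}\alg$, where $\bar y$ is the image point. Thus $\cO_{Y,\bar y}\alg\to\cO_{X,\bar x}\alg$ is an isomorphism for every geometric point of $X$; since every morphism in $\Ank$ is locally of finite type, the standard characterization of étale morphisms via strictly henselian local rings shows $f$ is étale. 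For the converse in~(ii): assume $\Phi(f)$ a closed immersion, so $\cX_X\simeq\cX_Y/U$ with $U\in\cX_Y$ a $(-1)$-truncated object; over the affinoid $Y$ such a $U$ is the sheaf $U_{Z_0}$ attached (as in the proof of \cref{prop:preserve_closed_immersion}) to a Zariski-closed subset $Z_0=V(I)\subset Y$, with $I$ a finitely generated ideal of $B$ since $B$ is Noetherian. The effective epimorphism $f\inv\cO_Y\alg\to\cO_X\alg$ then realizes $\cO_X\alg$ as the quotient of the restriction of $\cO_Y\alg$ to $Z_0$ by a coherent ideal sheaf, which descends to a coherent ideal sheaf $J\subset\cO_Y\alg$; setting $Z\coloneqq\Sp(B/\Gamma(J))\hookrightarrow Y$, a closed immersion in $\Ank$, one obtains an identification $\Phi(Z)\simeq\Phi(X)$ commuting with the maps to $\Phi(Y)$. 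By full faithfulness of $\Phi$ (\cref{thm:fully_faithfulness}), $f$ factors in $\Ank$ as $X\xrightarrow{\ \sim\ }Z\hookrightarrow Y$, hence is a closed immersion.

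I expect the converse in~(ii) to be the main obstacle: one must know that the ideal sheaf produced by the effective epimorphism is coherent and that the associated ringed space is genuinely a $k$-analytic space — i.e.\ that $\Phi(f)$ really does come from a closed analytic subspace of $Y$ — before the full faithfulness of $\Phi$ can be applied; the Noetherianity of affinoid algebras and the reduction to the affinoid case are what make this manageable. The converse in~(i) is comparatively painless once one observes that an equivalence of $\cTank$-structures identifies stalks with strictly henselian local rings, and the forward implications amount to bookkeeping with the results of \cref{sec:pregeometry} and \cref{sec:fullyfaithfulness}.
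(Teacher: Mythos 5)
Your forward implications and the converse of~(i) are sound. In fact, for the converse of~(i) you take a genuinely different route from the paper: the paper considers the sheaf of analytic cotangent complexes $C \mapsto \mathbb L\an_{C/f\inv\cO_Y(C)}$, shows it vanishes because $f\inv\cO_Y \to \cO_X$ is an equivalence, and deduces $\mathbb L\an_{A/B} \simeq 0$ on stalks; you instead read off that the map of strictly henselian local rings is an isomorphism at every geometric point and invoke the characterization of étale morphisms by strict henselizations. This is a legitimate (arguably cleaner) alternative, but the characterization you call ``standard'' is a statement about adic spaces that you should pin down with a reference in Huber's framework (iso on strict henselizations $\Rightarrow$ flat and unramified $\Rightarrow$ étale, for morphisms locally of finite presentation); it is not a formality in the rigid setting.

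The converse of~(ii) has a genuine gap, and it sits exactly where you predicted. You assert that the $(-1)$-truncated object $U$ with $\cX_X \simeq \cX_Y/U$ is ``the sheaf $U_{Z_0}$ attached to a Zariski-closed subset $Z_0 = V(I) \subset Y$.'' An arbitrary subobject of the terminal object of $\Sh(Y\et)$ is not of this form (take $U$ to be represented by an affinoid subdomain, for instance), so identifying this particular $U$ with a Zariski-closed locus is essentially equivalent to the statement being proved and cannot be assumed. Likewise, the claim that the effective epimorphism ``realizes $\cO_X\alg$ as the quotient of $\cO_Y\alg$ by a coherent ideal sheaf'' is the whole difficulty: an effective epimorphism of étale sheaves only gives surjectivity étale-locally (on strict henselizations), and Noetherianity of $B$ does not by itself promote this to surjectivity of $B \to A$ on global sections. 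The paper's mechanism, which your proposal is missing, is to first verify the base-change formula $f_*\cO_X\alg(\Sp C) \simeq A \cotimes_B C \simeq A \cotimes_B C$ for étale $\Sp C \to Y$ (so that $f_*\cO_X\alg$ is a coherent sheaf of $\cO_Y\alg$-modules), and then apply Tate's acyclicity theorem to kill the $H^1$ of the kernel and conclude that $B \to A$ is surjective. Without some version of this coherence-plus-acyclicity step, the passage from stalkwise surjectivity to surjectivity of affinoid algebras does not go through, and the appeal to full faithfulness of $\Phi$ at the end has nothing to act on.
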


\begin{proof}
	We start by dealing with étale morphisms.
	Assume first that $f$ is an étale morphism.
	If $X$ is affinoid, it determines an object in the site $Y\et$.
	Let us denote by $U$ this object. 	It follows from \cite[5.1.6.12]{HTT} that the adjunction $f_* \colon \cX_X \rightleftarrows \cX_Y \colon f\inv$ induced by $f$ can be identified with the \'etale morphism $(\cX_Y)_{/U} \rightleftarrows \cX_Y$.
	Since $X$ is an ordinary \kanal space, $U$ is $0$-truncated and therefore it is hypercomplete.
	It follows that we can identify the adjunction
	\[ f_* \colon \cX_X^\wedge \rightleftarrows \cX_Y^\wedge \colon f\inv \]
	with the \'etale morphism $j_* \colon (\cX_Y^\wedge)_{/U} \rightleftarrows \cX_Y^\wedge \colon j\inv$.
	Moreover, since $f$ is étale, we see that $(f\inv \cO_Y)(V) = \cO_Y(V)$.
		In particular, we deduce that $f\inv \cO_Y = \cO_X$.
	In other words, $\Phi(f)$ is \'etale.
	If now $X$ is arbitrary, we choose an étale covering $\{X_i \to X\}$ such that every $X_i$ is affinoid.
	The above argument shows that the induced morphisms $\cX_{X_i} \rightleftarrows \cX_X$ and $\cX_{X_i} \rightleftarrows \cX_Y$ are \'etale.
	It follows that $f_* \colon \cX_X \rightleftarrows \cX_Y \colon f\inv$ is \'etale as well.
	
	Let us now assume that $\Phi(f)$ is \'etale.
	We will prove that $f$ is étale.
	The question being local on $X$ and $Y$, we can assume that they are affinoid, say $X = \Sp B$, $Y = \Sp A$.
	By hypothesis, $f\inv \cO_Y \to \cO_X$ is an equivalence.
	Since the morphism of \inftopoi $f_* \colon \cX_X^\wedge \rightleftarrows \cX_Y^\wedge\colon f\inv$ is \'etale, we see that, for every $U \to X$ étale, one has
	\[ f\inv(\cO_Y)(U) = \cO_Y(U) . \]
	Consider the sheaf $\mathbb L_{\cO_X / f\inv \cO_Y}$ on $\cX_X^\wedge$ defined by
	\[ C \mapsto \mathbb L\an_{\cO_X(C) / f\inv \cO_Y(C)} = \mathbb L\an_{C / f\inv \cO_Y(C)} , \]
	where the symbol $\mathbb L\an$ denotes the analytic cotangent complex (cf.\ \cite[\S 7.2]{Gabber_Almost_2003}).
	Since $f\inv \cO_Y \simeq \cO_X$, this sheaf is identically zero.
	On the other side, if $\eta\inv \colon \cX_X^\wedge \to \cS$ is a geometric point, then
	\[ \eta\inv(\mathbb L\an_{\cO_A / f\inv \cO_B}) \simeq \mathbb L\an_{\eta\inv \cO_A / \eta\inv f\inv \cO_B}. \]
	We can identify $\eta\inv f\inv \cO_B$ with a strictly henselian $B$-algebra $B'$.
	Since the map $B \to B'$ is formally \'etale, we conclude that
	\[ \mathbb L\an_{\eta\inv \cO_A / \eta\inv f\inv \cO_B} \simeq \mathbb L\an_{\eta\inv \cO_A / B}. \]
	This is also the stalk of the sheaf on $\cX_X^\wedge$ defined by
	\[ C \mapsto \mathbb L\an_{C / B}. \]
	Therefore, this sheaf vanishes as well.
	In particular, $\mathbb L\an_{A / B} \simeq 0$, completing the proof.
	\todo{A proof without cotangent complex maybe better.}
	
	We now turn to closed immersions.
	Assume first that $f$ is a closed immersion in $\Ank$.
	\cref{prop:preserve_closed_immersion} and \cref{lem:hypercompletion_closed_immersions} show that the induced geometric morphism $f_* \colon \cX_Y^\wedge \rightleftarrows \cX_X^\wedge \colon f\inv$ is a closed immersion of \inftopoi.
	We are left to show that the morphism $f\inv \cO_X \to \cO_Y$ is an effective epimorphism.
	In virtue of \cref{prop:alg_effective_epi}, it suffices to show that $(f\inv (\cO_X))(\bA^1_k) \to \cO_Y(\bA^1_k)$ is an effective epimorphism, where $\bA^1_k$ denote the \kanal affine line.
	Observe that $(f\inv (\cO_X))(\bA^1_k) \simeq f\inv (\cO_X(\bA^1_k))$.
		Since $(f\inv, f_*)$ is a closed immersion of \inftopoi, \cref{lem:different_closed_immersion} shows that it is sufficient to check that
	\begin{equation} \label{eq:closed_immersion_effective_epi}
		\cO_X(\bA^1_k) \to f_* ( \cO_Y(\bA^1_k))
	\end{equation}
	is an effective epimorphism in $\cX_X^\wedge$.
	This question is local on $\cX_X^\wedge$, so we can assume that $X$ is an affinoid space.
	Observe now that $\cO_X(\bA^1_k)$ is the underlying sheaf of (discrete) spaces associated to the structure sheaf of $X$. In the same way, $f_*(\cO_Y(\bA^1_k))$ is the underlying sheaf of spaces associated to the pushforward of the structure sheaf of $Y$.
	Both are coherent on $X$, and $f_*(\cO_Y(\bA^1_k))$ is the quotient of $\cO_X(\bA^1_k)$ by some coherent sheaf of ideals.
	In particular, the map \eqref{eq:closed_immersion_effective_epi} is an effective epimorphism.
	
	Assume now that $\Phi(f)$ is a closed immersion.
	We want to prove that $f$ is a closed immersion as well.
	The question is local both on the source and on the target, so we can assume that $X$ and $Y$ are affinoid, say $X = \Sp A$ and $Y = \Sp B$.
	In this case, it follows from the proof of \cref{thm:fully_faithfulness} that $f$ corresponds to the morphism 
	\[ A = \cO_X(\bA^1_k)(X) \to  B = \cO_Y(\bA^1_k)(Y) . \]
	Therefore, we only have to show that this morphism is surjective.
	Let $U = \Sp C \to X$ be an étale morphism.
	Then it follows again from the proof of \cref{thm:fully_faithfulness} that
	\begin{align*}
		f_* \cO_Y(\bA^1_k)(U) & = \cO_Y(\bA^1_k)(Y \times_X U) = B {\cotimes}_A C \\
		& = f_* \cO_Y(\bA^1_k)(X) {\cotimes}_{\cO_X(\bA^1_k)(X)} \cO_X(\bA^1_k)(U) .
	\end{align*}
	In particular, $f_* \cO_Y(\bA^1_k)$ is a coherent sheaf of $\cO_X(\bA^1_k)$-modules.
	We can thus apply Tate's acyclicity theorem to conclude that $A \to B$ is surjective, completing the proof.
\end{proof}

\section{Existence of fiber products} \label{sec:fiber_products}

The goal of this section is to prove the existence of fiber products of derived \kanal spaces.

First we will prove the existence of fiber products along a closed immersion (\cref{prop:closed_fiber_products_dAn}).
Then we will prove the existence of products over a point (\cref{lem:products_dAn}).
We will deduce the existence of fiber products in the general case from the two special cases above, plus \cref{lem:closed_devissage}, which shows that any derived \kanal space can locally be embedded into a non-derived smooth \kanal space.

\begin{lem} \label{lem:sheaves_coherent_modules}
	Let $f \colon (\cX, \cO_{\cX}) \to (\cY, \cO_{\cY})$ be a map of derived \kanal spaces such that $(\cX, \pi_0 \cO_{\cX}) \simeq \Phi(X)$ and $(\cY, \pi_0 \cO_{\cY}) \simeq \Phi(Y)$ for two \kanal spaces $X, Y \in \Ank$.
	Assume that $\cF$ is a connective sheaf of $\cO_{\cY}\alg$-modules on $\cY$ and that each $\pi_n \cF$ is a coherent sheaf of $\pi_0 \cO_{\cY}\alg$-modules. Then the tensor product $\cF' \coloneqq f\inv \cF \otimes_{f\inv \cO_{\cY}\alg} \cO_{\cX}\alg$ is connective, and each $\pi_n \cF'$ is a coherent sheaf of $\pi_0(\cO_{\cX}\alg)$-modules.
\end{lem}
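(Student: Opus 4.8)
Connectivity is the easy half. The inverse image $f\inv$ is left exact, hence preserves connective objects, so $f\inv\cF$ is connective; since $\cO_\cX\alg$ is connective and the relative tensor product $-\otimes_{f\inv\cO_\cY\alg}\cO_\cX\alg$ preserves connective objects (tensoring an $m$-connective module with a connective algebra keeps it $m$-connective), $\cF'$ is connective. It remains to show that each $\pi_n\cF'$ is a coherent sheaf of $\pi_0(\cO_\cX\alg)$-modules.

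For that, set $T\coloneqq\big({-}\otimes_{f\inv\cO_\cY\alg}\cO_\cX\alg\big)\circ f\inv$, a colimit-preserving functor from $\cO_\cY\alg$-modules on $\cY$ to $\cO_\cX\alg$-modules on $\cX$; since both module $\infty$-categories are stable, $T$ is exact. As $T$ preserves connectivity, the fiber of $T(\cF)\to T(\tau_{\le n}\cF)$ is $(n{+}1)$-connective, so $\pi_m\cF'\simeq\pi_m\big(T(\tau_{\le n}\cF)\big)$ for $m\le n$, and we may replace $\cF$ by $\tau_{\le n}\cF$ and induct on the truncation level. The fiber sequence $(\pi_n\cF)[n]\to\tau_{\le n}\cF\to\tau_{\le n-1}\cF$ is carried by the exact functor $T$ to a fiber sequence, and the associated long exact sequence of homotopy sheaves exhibits each $\pi_m\big(T(\tau_{\le n}\cF)\big)$ as an extension of a subsheaf of $\pi_m\big(T(\tau_{\le n-1}\cF)\big)$ by a quotient of $\pi_{m-n}\big(T(\pi_n\cF)\big)$. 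Granting the base case below, $\pi_m\big(T(\tau_{\le n-1}\cF)\big)$ is coherent by the inductive hypothesis (note $\tau_{\le n-1}\cF$ again has coherent homotopy sheaves), and $\pi_{m-n}\big(T(\pi_n\cF)\big)$ is coherent by the base case applied to the discrete coherent sheaf $\pi_n\cF$; since coherent sheaves on a \kanal space form an abelian category closed under extensions, kernels and cokernels, $\pi_m\big(T(\tau_{\le n}\cF)\big)$ is coherent. Thus everything reduces to the following \emph{base case}: if $\cG$ is a discrete coherent sheaf of $\pi_0\cO_\cY\alg$-modules on $Y$, then every $\pi_j(T\cG)$ is a coherent sheaf on $X$.

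To prove the base case, note first that by \cref{thm:fully_faithfulness} the truncation of the underlying morphism $f$ of $\cTank$-structured \inftopoi is $\Phi$ of a morphism $X\to Y$ in $\Ank$, which determines the underlying geometric morphism and the map on $\pi_0$ of the structure sheaves. Coherence being local on $X$, we cover $X$ by affinoids $\Sp A$ each mapping into an affinoid domain $\Sp B$ of $Y$, so it suffices to compute $T\cG$ over the étale site of $\Sp A$. There $\cG$ corresponds, by Tate's acyclicity theorem and Kiehl's theorem, to a finitely presented $B$-module $M$, and a direct bookkeeping identifies $T\cG$ over $(\Sp A)\et$ with the sheaf whose value on an affinoid étale $\Sp C\to\Sp A$ is $M\otimes_{B'}\cO_\cX\alg(\Sp C)$, where $B'\coloneqq\cO_\cY\alg(\Sp B)$ and $A'\coloneqq\cO_\cX\alg(\Sp A)$ are the simplicial commutative $k$-algebras with $\pi_0B'=B$, $\pi_0A'=A$ and $\pi_jB'$, $\pi_jA'$ finitely presented over $B$, $A$ respectively (by \cref{def:derived_space}). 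Since $M$ is discrete with $\pi_0M$ finitely presented over $\pi_0B'$, it is almost perfect over $B'$, hence $M\otimes_{B'}A'$ is almost perfect over $A'$; concretely, choosing for $M$ a resolution by finitely many free $B'$-modules in each degree and tensoring with $A'$ computes $\pi_j(M\otimes_{B'}A')$ as the homology of a finite complex of finitely generated free $A'$-modules, and this is finitely presented over $\pi_0A'=A$ because affinoid algebras are coherent rings. Finally, étale morphisms of affinoids are flat, so $\cO_\cX\alg(\Sp C)\simeq A'\otimes_A\cO_X(\Sp C)$ with $\cO_X(\Sp C)$ flat over $A$, whence $\pi_j\big(M\otimes_{B'}\cO_\cX\alg(\Sp C)\big)\simeq\pi_j(M\otimes_{B'}A')\otimes_A\cO_X(\Sp C)$; this exhibits $\pi_j(T\cG)$ as the coherent sheaf on $\Sp A$ associated with the finitely presented $A$-module $\pi_j(M\otimes_{B'}A')$, completing the base case and hence the proof.

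The main obstacle is this base case, and within it the passage between the sheaf-theoretic tensor product defining $\cF'$ and the module-theoretic computation over $B'$ and $A'$; the key inputs that make it go through are the stability of almost perfect modules under arbitrary base change in derived commutative algebra, together with the coherence of affinoid algebras and the flatness of étale morphisms of affinoids.
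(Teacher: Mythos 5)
Your argument is correct in outline but takes a genuinely different route from the paper's. The paper (following \cite[Lemma 12.11]{DAG-IX}) never isolates a discrete base case: working entirely inside the sheaf category, it builds a cellular approximation $0=\cF(-1)\to\cF(0)\to\cdots\to\cF(m)\to\cF$ whose successive cofibers are finite sums of shifted free modules $(\cO_\cY\alg)^l[i]$, observes that the $(n+1)$-st stage already computes $\pi_n\cF'$, and then runs the long exact sequence; coherence comes for free because the base change of a free cell is $(\cO_\cX\alg)^l[i]$, whose homotopy sheaves are coherent by \cref{def:derived_space}. You instead perform a Postnikov dévissage --- correctly, using exactness of $T$ and the fact that coherent sheaves form a weak Serre subcategory --- to reduce to a discrete coherent sheaf, and then compute locally with almost perfect modules over the ``almost Noetherian'' simplicial rings $B'=\cO_\cY\alg(\Sp B)$ and $A'=\cO_\cX\alg(\Sp A)$ (here Noetherianity of affinoid algebras, rather than mere coherence, is what makes the criterion ``connective with finitely generated homotopy groups $\Leftrightarrow$ almost perfect'' available). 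Your route trades the paper's purely sheaf-theoretic bookkeeping for a conceptually sharper input (stability of almost perfect modules under base change), at the price of having to translate between sheaves and modules.

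That translation is the one step you should not dismiss as ``direct bookkeeping.'' The relative tensor product of sheaves of modules is the sheafification of a section-wise bar construction, and $f\inv\cG$ is itself a colimit followed by sheafification, so the identification of $T\cG$ over $(\Sp A)\et$ with $\Sp C\mapsto M\otimes_{B'}\cO_\cX\alg(\Sp C)$ requires an argument: either compare stalks (the homotopy sheaves in question are discrete, hence hypercomplete, and $\cX_X^\wedge$ has enough points by \cref{rem:points_of_cX_X}, while stalks commute with $f\inv$, with relative tensor products and with $\pi_j$), or invoke Tate acyclicity for each coherent sheaf $\pi_j\cO_\cX\alg$ to degenerate the descent spectral sequence and get $\pi_j\bigl(\cO_\cX\alg(\Sp C)\bigr)\simeq\pi_j(A')\otimes_A C$ --- the latter is also what your final flatness step silently uses. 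Both fixes are routine (the paper itself appeals to Tate acyclicity in the proof of \cref{thm:Phi_classes_of_morphisms}), but as written this link in the chain is asserted rather than proved.
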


\begin{proof}
	The connectivity of $\cF' \coloneqq f\inv \cF \otimes_{f\inv \cO_\cY} \cO_\cX$ follows from the compatibility of the tensor product with the $t$-structure (cf.\ \cite[Proposition 2.1.3(6)]{DAG-VIII}.)
	In order to prove that the homotopy groups $\pi_k \cF'$ are coherent $\pi_0 \cO_{\cX}\alg$-modules, we first remark that the question is local both on $\cX$ and on $\cY$.
	so we can assume that $X$ and $Y$ are affinoid, say $X = \Sp A$ and $Y = \Sp B$.
	We follow closely the proof of \cite[Lemma 12.11]{DAG-IX}.
	Thus, we start by proving that for every integer $m \ge - 1$ there exists a sequence of morphisms
	\[ 0 = \cF(-1) \to \cF(0) \to \cF(1) \to \cdots \to \cF(m) \to \cF \]
	of $\cO_\cY\alg$-modules with the following properties:
	\begin{enumerate}[(i)]
		\item For $0 \le i \le m$, the fiber of $\cF(i-1) \to \cF(i)$ is equivalent to a direct sum of finitely many copies of $\cO_\cY\alg[i]$.
		\item For $0 \le i \le m$, the fiber of $\cF(i) \to \cF$ is $i$-connective.
		\item For $-1 \le i \le m$, the homotopy groups $\pi_j \cF(i)$ are coherent $\pi_0(\cO_\cY\alg)$-modules, which vanish for $j < 0$.
	\end{enumerate}
	We proceed by induction on $m$.
	If $m = -1$, we simply take $\cF(-1) = 0$. The fiber of $\cF(-1) \to \cF$ is then $\cF[1]$, which is $(-1)$-connective because $\cF$ is connective.
	Assume now that we are given a sequence
	\[ 0 = \cF(-1) \to \cF(0) \to \cdots \to \cF(m) \to \cF \]
	satisfying the conditions above.
	Let $\cG$ be the fiber of the map $\cF(m) \to \cF$, so $\cG$ is $m$-connective.
	We have an exact sequence
	\[ \pi_{m+1} \cF(m) \to \pi_{m+1} \cF \to \pi_m \cF' \to \pi_m \cF(m) \to \pi_m \cF , \]
	from which we deduce that $\pi_m \cF'$ is a coherent sheaf of $\pi_0(\cO_\cX\alg)$-modules.
		In particular, there exists a positive integer $l$ and a surjection $B^l \to \cG(Y)$.
	This induces an epimorphism $(\pi_0 \cO_\cY\alg)^l \to \cG[-m]$.
	Composing with the canonical map $(\cO_\cY\alg)^l \to (\pi_0 \cO_\cY\alg)^l$, we obtain a map
	\[ (\cO_\cY\alg)^l[m] \to \cG . \]
	Let $\cF(m+1)$ be the cofiber of the composite map $(\cO_\cY\alg)^l[m] \to \cG \to \cF(m)$.
	Then the property (i) is satisfied by construction and the property (iii) follows from the long exact sequence associated to the cofiber sequence $(\cO_\cY\alg)^l[m] \to \cF(m) \to \cF(m+1)$.
	Let $\cG'$ denote the fiber of the map $\cF(m+1) \to \cF$, so we have a fiber sequence
	\[ (\cO_\cY\alg)^l[m] \to \cG \to \cG' . \]
	Passing to the long exact sequence, we deduce that $\cG'$ is $(m+1)$-connective, proving the property (ii).
	
	Let us now prove that the homotopy groups of $\cF' \coloneqq f\inv \cF \otimes_{f\inv \cO_\cY\alg} \cO_\cX\alg$ are coherent sheaves of $\pi_0(\cO_\cX\alg)$-modules.
	Fix an integer $n \ge 0$.
	Choose a sequence
	\[ 0 \to \cF(-1) \to \cF(0) \to \cdots \to \cF(n+1) \to \cF \]
	satisfying the properties (i), (ii) and (iii) above.
	In particular, the fiber of $\cF(n+1) \to \cF$ is $(n+1)$-connective and therefore the same goes for the map
	\[ f\inv \cF(n+1) \otimes_{f\inv \cO_\cY\alg} \cO_\cX\alg \to f\inv \cF \otimes_{f\inv \cO_\cY\alg} \cO_\cX\alg . \]
	So we obtain an isomorphism
	\[ \pi_n \left( f\inv \cF(n+1) \otimes_{f\inv \cO_\cY\alg} \cO_\cX\alg \right) \to \pi_n \left( f\inv \cF \otimes_{f\inv \cO_\cY\alg} \cO_\cX\alg \right) . \]
	We can therefore replace $\cF$ by $\cF(n+1)$.
	We will now prove that for $-1 \le i \le n+1$, $\pi_n \left( f\inv \cF(i) \otimes_{f\inv \cO_\cY\alg} \cO_\cX\alg \right)$ is a coherent sheaf of $\pi_0(\cO_\cX\alg)$-modules.
	We proceed by induction on $i$.
	The case $i = -1$ is trivial.
	To deal with the inductive step, we note that the property (i) implies the existence of a fiber sequence
	\[ (\cO_\cY\alg)^l[i] \to \cF(i) \to \cF(i+1) . \]
	We therefore obtain a long exact sequence
	\begin{multline*}
	\cdots \to ( \pi_{n-i} \cO_\cX\alg )^l \to \pi_n ( f\inv \cF(i) \otimes_{f\inv \cO_\cY\alg} \cO_\cX\alg ) \to\\
	 \pi_n( f\inv \cF(i+1) \otimes_{f\inv \cO_\cY\alg} \cO_\cX\alg ) \to ( \pi_{n - i - 1} \cO_\cX\alg )^l \to \cdots
	\end{multline*}
	We conclude that $\pi_n \left( f\inv \cF(i + 1) \otimes_{f\inv \cO_\cY\alg} \cO_\cX\alg \right)$ is a coherent sheaf of $\pi_0(\cO_\cX\alg)$-modules.
	\end{proof}

\begin{prop} \label{prop:closed_fiber_products_dAn}
	Assume we are given maps of derived \kanal spaces $f \colon (\cY, \cO_{\cY}) \allowbreak{}\to (\cX, \cO_{\cX})$ and $(\cX', \cO_{\cX'}) \to (\cY, \cO_{\cY})$.
	Assume moreover that $f$ is a closed immersion.
	Then we have the following statements:
	\begin{enumerate}[(i)]
		\item \label{item:dAn_fiber_products_Top} There exists a pullback diagram $\sigma$:
			\[ \begin{tikzcd}
				(\cY', \cO_{\cY'}) \arrow{r}{f'} \arrow{d} & (\cX', \cO_{\cX'}) \arrow{d} \\
				(\cY, \cO_{\cY}) \arrow{r}{f} & (\cX, \cO_{\cX})
			\end{tikzcd} \]
			in the $\infty$-category $\RHTop(\cTank)$.
		\item \label{item:dAn_fiber_products_topoi} The image of $\sigma$ in $\RHTop$ is a pullback diagram of hypercomplete \inftopoi.
		\item \label{item:dAn_closed_immersion} The map $f'$ is a closed immersion.
		\item \label{item:dAn_fiber_product_dAn} The structured \inftopos $(\cY', \cO_{\cY'})$ is a derived \kanal space.
		\item \label{item:truncation_pullback} Assume that $(\cY, \pi_0 \cO_{\cY}) = \Phi(Y)$, $(\cX, \pi_0 \cO_{\cX}) = \Phi(X)$ and $(\cX', \pi_0 \cO_{\cX'}) = \Phi(X')$.	Then $(\cY', \pi_0 \cO_{\cY'})$ can be identified with $\Phi(Y \times_X X')$.
	\end{enumerate}
\end{prop}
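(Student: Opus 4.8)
The plan is to reduce the statement to a stalkwise computation of the algebraic structure sheaves, after first pinning down the underlying morphisms in $\Ank$.

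First I would observe that, applying $\pi_0$ to $f$ and using that $\Phi$ is fully faithful (\cref{thm:fully_faithfulness}), the hypotheses exhibit $\pi_0(f)$ as $\Phi(f_0)$ for a unique morphism $f_0 \colon Y \to X$ in $\Ank$; since $f$ is a closed immersion, so is $\pi_0(f) = \Phi(f_0)$ (the underlying geometric morphism is unchanged, and $\pi_0$ preserves effective epimorphisms, which commute with the left exact functor $f\inv$), hence $f_0$ is a closed immersion by \cref{thm:Phi_classes_of_morphisms}. In particular $Y \times_X X'$ exists in $\Ank$. Moreover, since $\sigma$ is a pullback in $\RHTop(\cTank)$ whose apex $(\cY', \cO_{\cY'})$ lies in $\dAnk$ by \cref{prop:closed_fiber_products_dAn}~(\ref{item:dAn_fiber_product_dAn}), it is a pullback in the full subcategory $\dAnk$; the functor $\trunc \colon \dAnk \to \dAnk^0$ is a right adjoint by \cref{prop:truncation_and_finite_limits}, so it preserves this pullback, and together with \cref{cor:truncation_derived_kanal_spaces} and the hypotheses this shows that $(\cY', \pi_0 \cO_{\cY'})$ is the fiber product $\Phi(Y) \times_{\Phi(X)} \Phi(X')$ computed in $\dAnk^0$.

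Next, the two analytic projections $Y \times_X X' \to Y$ and $Y \times_X X' \to X'$ agree over $X$, so applying $\Phi$ and the universal property of the pullback just identified yields a canonical morphism $c \colon \Phi(Y \times_X X') \to (\cY', \pi_0 \cO_{\cY'})$ in $\dAnk^0$; it remains to show that $c$ is an equivalence, which I would check on underlying $\infty$-topoi and on structure sheaves separately. For the topoi: \cref{prop:closed_immersion_pullback_of_topoi} shows that the square of étale topoi of $Y \times_X X' \to Y \to X$ and $X' \to X$ is a pullback in $\RTop$, and passing to hypercompletions via \cite[6.5.2.13]{HTT} identifies $\cX_{Y \times_X X'}^\wedge$ with the pullback of $\cX_Y^\wedge \to \cX_X^\wedge \leftarrow \cX_{X'}^\wedge$ in $\RHTop$, which by \cref{prop:closed_fiber_products_dAn}~(\ref{item:dAn_fiber_products_topoi}) is the underlying $\infty$-topos of $\cY'$; under this identification $c$ is the identity.

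The main obstacle is the identification of the structure sheaves. By \cref{lem:alg_conservative} it suffices to show that the induced map $\pi_0 \cO_{\cY'}\alg \to \cO_{Y \times_X X'}\alg$ of sheaves on $\cX_{Y \times_X X'}^\wedge$ is an equivalence, and since this $\infty$-topos has enough points (\cref{rem:points_of_cX_X}) I would argue on stalks. At the point given by a compatible pair of geometric points $\bar y$ of $Y^{\mathrm{ad}}$ and $\bar x'$ of $X^{\prime\mathrm{ad}}$ over $\bar x$ of $X^{\mathrm{ad}}$, the pushout square of \cref{prop:closed_fiber_products_Top}~(\ref{item:pushout_analytic_algebras}), combined with \cref{cor:truncation_derived_kanal_spaces} and the fact that a point of an $\infty$-topos preserves colimits and finite limits, identifies the stalk of $\cO_{\cY'}\alg$ with the derived tensor product $\cO_{Y,\bar y}\alg \otimes^{\rL}_{\cO_{X,\bar x}\alg} \cO_{X',\bar x'}\alg$ of strictly henselian local rings, whose $\pi_0$ is the ordinary tensor product. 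On the other hand, locally writing $f_0$ as $\Sp(B/J) \hookrightarrow \Sp B$ and $X' = \Sp C$, one has $Y \times_X X' = \Sp(C/JC)$ since $B/J$ is finite over $B$, and \cref{lem:qet_structure} shows that the étale neighborhoods of $\bar y$ in $Y$ (resp.\ of the chosen point in $Y \times_X X'$) are the base changes along the closed immersion of étale neighborhoods of $\bar x$ in $X$ (resp.\ of $\bar x'$ in $X'$); passing to the colimit gives $\cO_{Y,\bar y}\alg \simeq \cO_{X,\bar x}\alg / J\cO_{X,\bar x}\alg$ and $\cO_{Y \times_X X',(\bar y,\bar x')}\alg \simeq \cO_{X',\bar x'}\alg / J\cO_{X',\bar x'}\alg$. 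Both stalks therefore equal $\cO_{X',\bar x'}\alg / J\cO_{X',\bar x'}\alg$ and the comparison map is the identity, so $c$ is an equivalence on stalks and hence an equivalence. I expect the bookkeeping of étale neighborhood systems and the compatibility of henselization with the pushout to be the only genuinely delicate points.
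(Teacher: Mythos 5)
Your proposal only engages with part (\ref{item:truncation_pullback}) of the proposition; parts (\ref{item:dAn_fiber_products_Top})--(\ref{item:dAn_fiber_product_dAn}) are not proved but are instead \emph{used} as inputs: you invoke (\ref{item:dAn_fiber_products_topoi}) to identify the underlying $\infty$-topos of $(\cY', \cO_{\cY'})$, and (\ref{item:dAn_fiber_product_dAn}) to know that the apex of $\sigma$ lies in $\dAnk$ so that the adjunction of \cref{prop:truncation_and_finite_limits} applies. This is a genuine gap on two counts. First, (\ref{item:dAn_fiber_products_Top})--(\ref{item:dAn_closed_immersion}) require real content: one must check that the hypothesis of \cref{prop:closed_fiber_products_Top} holds (the algebraization of $f^\sharp$ is an effective epimorphism because $f$ is a closed immersion, via \cref{prop:alg_effective_epi}), and that proposition in turn rests on the unramifiedness results \cref{cor:Tkan_unramified} and \cref{prop:unramified_transformation}; none of this appears. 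Second, (\ref{item:dAn_fiber_product_dAn}) cannot be assumed while proving (\ref{item:truncation_pullback}): establishing that $(\cY', \pi_0 \cO_{\cY'}\alg)$ is locally a \kanal space and that the higher $\pi_n \cO_{\cY'}\alg$ are coherent (the latter via \cref{lem:sheaves_coherent_modules}) goes through (\ref{item:truncation_pullback}), so your citation is circular. The circularity is removable --- you can produce your comparison map $c$ directly from the universal property of the pullback of (\ref{item:dAn_fiber_products_Top}) applied to the counit maps $\Phi(Y) \to (\cY, \cO_\cY)$, $\Phi(X') \to (\cX', \cO_{\cX'})$, without ever knowing that the apex is a derived space --- but as written the logical order is broken.

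The part you do prove is handled by a method genuinely different from the paper's and is essentially sound. The paper identifies $\pi_0(\cO_{\cY'}\alg)$ globally as a $\Tor_0$ over the pushout square of \cref{prop:closed_fiber_products_Top}(\ref{item:pushout_analytic_algebras}) and observes that surjectivity of $\pi_0 \cO_\cX \to f_* \pi_0 \cO_\cY$ makes the same formula compute $\cO_{Y \times_X X'}$; you instead pass to stalks and compare strictly henselian local rings using \cref{lem:qet_structure}, much as in the proof of \cref{prop:unramified_transformation}. Two caveats: the stalk of $\cO_{\cY'}\alg$ is the derived tensor product of the stalks of the \emph{derived} sheaves $\cO_\cY\alg$, $\cO_\cX\alg$, $\cO_{\cX'}\alg$, not of the henselian local rings of $Y$, $X$, $X'$ (which are only their $\pi_0$'s); since $\pi_0$ of a derived tensor product is the ordinary tensor product of the $\pi_0$'s, your computation of $\pi_0$ survives, but the intermediate identification should be stated at that level. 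And the assertion that the comparison map induces the identity on stalks deserves an argument tracking $c$ through the universal property, which you acknowledge only as ``bookkeeping.''
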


\begin{proof}	The statements (\ref{item:dAn_fiber_products_Top}), (\ref{item:dAn_fiber_products_topoi}) and (\ref{item:dAn_closed_immersion}) follow from \cref{prop:closed_fiber_products_Top}.
	We now prove (\ref{item:truncation_pullback}).
	Observe that the map $f$ induces a closed immersion $(\cY, \pi_0 \cO_{\cY}) \to (\cX, \pi_0 \cO_{\cX})$.
	So by \cref{thm:Phi_classes_of_morphisms}, it corresponds to a closed immersion $\varphi \colon Y \to X$ of \kanal spaces.
	On the other side, the map $\Phi(X') \to \Phi(X)$ corresponds to a map $X' \to X$ by \cref{thm:fully_faithfulness}.
	Let $Y' \coloneqq Y \times_X X'$ be the fiber product computed in $\Ank$.
	Then \cref{prop:closed_immersion_pullback_of_topoi} allows us to identify $\cX_{Y'}\coloneqq\Sh(Y'\et)^\wedge$ with $\cY'$.
	It follows from the universal property of the fiber product that there exists a map in $\RHTop(\cTank)$
	\[ (\cY', \cO_{Y'}) \to (\cY', \cO_{\cY'}) \]
	Moreover, it follows from \cref{prop:closed_fiber_products_Top}(iii) that we have an identification
	\[ \cO_{\cY'}\alg \simeq f^{\prime -1} \cO_{X'}\alg \otimes_{f^{\prime -1} g\inv \cO_{X}} g^{\prime -1} \cO_{Y} . \]
	Using \cite[7.2.1.22]{Lurie_Higher_algebra}, we obtain an equivalence
	\[ \pi_0(\cO_{\cY'}\alg) \simeq \Tor_0^{ f^{\prime -1} g\inv( \pi_0 \cO_X\alg )}(f^{\prime -1} \pi_0( \cO_{X'}\alg ), g^{\prime -1}( \pi_0 \cO_Y\alg )) . \]
	As $\pi_0( \cO_\cX) \to f_* \pi_0( \cO_\cY)$ is surjective, we see that the same formula can be used to describe $\cO_{Y'}$.
	Hence $\pi_0(\cO_{\cY'}) \simeq \cO_{Y'}$.
	This proves (\ref{item:truncation_pullback}).
	
	We are left to prove the statement (\ref{item:dAn_fiber_product_dAn}).
	The assertion is local on $\cY'$, so we can assume that $(\cX, \pi_0 \cO_{\cX}) = \Phi(X)$, $(\cY, \pi_0 \cO_{\cY}) = \Phi(Y)$ and $(\cX', \pi_0 \cO_{\cY}) = \Phi(X')$ for \kanal spaces $X, X'$ and $Y$.
	It follows from (\ref{item:truncation_pullback}) that $(\cY', \pi_0 \cO_{\cY'}\alg)$ is a \kanal space.
	Moreover, since $f$ is a closed immersion, we see that for each $n \ge 0$ the pushforward $f_* \pi_n \cO_{Y}\alg$ is a coherent sheaf of $\pi_0 \cO_X\alg$-modules on $X$.
	Using \cref{lem:sheaves_coherent_modules} and \cref{prop:closed_fiber_products_Top}, we conclude that for each $n \ge 0$, the pushforward $f'_* \pi_n \cO_{\cY'}\alg$ is a  coherent sheaf of $\pi_0 \cO_{\cX'}\alg$-modules.
	Then each $\pi_n \cO_{\cY'}\alg$ is a coherent sheaf of $\pi_0 \cO_{\cY'}\alg$-modules.
	This completes the proof.
\end{proof}

\begin{lem} \label{lem:closed_devissage}
	Let $(\cX, \cO_{\cX})$ be a derived \kanal space and let $\mathbf 1_\cX$ be the final object of $\cX$.
	Then there exists an effective epimorphism $\coprod U_i \to \mathbf 1_\cX$ and a collection of closed immersions $(\cX_{/U_i}, \cO_{\cX}|_{U_i}) \to \HSpec^{\cTank}(V_i)$, where $V_i$ is a smooth \kanal space.
\end{lem}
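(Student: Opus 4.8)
The plan is to reduce to an affinoid local situation and then produce the required closed immersion as the morphism classified, via the universal property of $\HSpec^{\cTank}$, by a lift to $\cO_\cX$ of a finite set of coordinate functions on the truncation.

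First I would invoke \cref{def:derived_space}: fix an effective epimorphism $\coprod_i U_i \to \mathbf 1_\cX$ as in that definition, so that $\cX_{/U_i} \simeq \cX_{X_i}^\wedge$ (recall $\cX_{/U_i}$ is hypercomplete), with $\pi_0(\cO_\cX\alg|_{U_i})$ the structure sheaf of $X_i\et$ and each $\pi_j(\cO_\cX\alg|_{U_i})$ coherent over it. Refining each $U_i$ by the objects of $\cX_{/U_i}$ coming from an affinoid G-covering of $X_i$ — such a covering is an étale covering, hence gives an effective epimorphism — and using that the restriction of a coherent sheaf to an étale open is coherent, I may assume every $X_i$ is affinoid. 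It then suffices to treat each index separately, i.e.\ to prove: if $(\cX, \cO_\cX)$ is a derived \kanal space with $(\cX, \pi_0\cO_\cX\alg) \simeq (\cX_X^\wedge, \cO_X\alg)$ for an affinoid $X = \Sp A$, and each $\pi_j\cO_\cX\alg$ coherent over $\cO_X\alg$, then $(\cX, \cO_\cX)$ admits a closed immersion into $\HSpec^{\cTank}(V)$ with $V$ smooth.

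Since $A$ is affinoid, I would fix a presentation $A = k\langle x_1, \dots, x_n\rangle/I$ and let $a_1, \dots, a_n \in A$ be the images of the $x_i$; being power-bounded, these define a morphism $\iota\colon X = \Sp A \to \mathbf D^n_k$ to the closed unit polydisc, which is a closed immersion because $k\langle x_1, \dots, x_n\rangle \to A$ is surjective, and $\mathbf D^n_k$ is smooth, so $\mathbf D^n_k \in \cTank$. Set $V \coloneqq \mathbf D^n_k$. Combining \cref{lem:universal_property_HSpec}, the universal property of $\Spec^{\cTank}$ from \cref{rem:spectrum}, and the Yoneda lemma in $\mathrm{Ind}(\cG_\mathrm{an}(k)^\mathrm{op})$, one obtains for every derived \kanal space $Y = (\cY, \cO_\cY)$ a natural identification
\[ \Map_{\RHTop(\cTank)}\bigl(Y, \HSpec^{\cTank}(\mathbf D^n_k)\bigr) \simeq \Gamma\bigl(\cY, \cO_\cY(\mathbf D^n_k)\bigr) \simeq \Gamma\bigl(\cY, \cO_\cY(\mathbf D^1_k)\bigr)^n, \]
the last step by \cref{def:structure}(i). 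Moreover $\HSpec^{\cTank}(\mathbf D^n_k) \simeq \Phi(\mathbf D^n_k)$: its underlying \inftopos is $\cX_{\mathbf D^n_k}^\wedge$ (\cref{rem:spectrum}), and its structure sheaf is discrete with algebraization the structure sheaf $\cO_{\mathbf D^n_k}\alg$; under this identification $\Phi(\iota)$ corresponds to $(a_1, \dots, a_n) \in \Gamma(\cX, \pi_0\cO_\cX(\mathbf D^1_k))^n$.

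The crux of the argument — and the step I expect to be the main obstacle — is to lift $(a_1, \dots, a_n)$ along the truncation map $\Gamma(\cX, \cO_\cX(\mathbf D^1_k))^n \to \Gamma(\cX, \pi_0\cO_\cX(\mathbf D^1_k))^n$; this is precisely where the coherence hypothesis gets used. By \cref{thm:compatibility_truncations} applied to the admissible morphism $\mathbf D^1_k \to \bA^1_k$ (together with the monomorphism $\cO_\cX(\mathbf D^1_k) \hookrightarrow \cO_\cX(\bA^1_k)$ of \cref{rem:definition_intuition}), one has $\cO_\cX(\mathbf D^1_k) \simeq \cO_\cX(\bA^1_k) \times_{\pi_0\cO_\cX(\bA^1_k)} \pi_0\cO_\cX(\mathbf D^1_k)$, so it is enough to lift each $a_i$ to $\Gamma(\cX, \cO_\cX(\bA^1_k))$. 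This is possible because the relevant obstruction and $\lim^1$-groups lie in $H^{>0}(\cX, \pi_j\cO_\cX\alg)$ for $j \ge 1$, and these vanish: the sheaves $\pi_j\cO_\cX\alg$ are coherent on the affinoid $X$, so their higher étale cohomology vanishes by Tate's acyclicity theorem. Hence I can choose $\tilde a_1, \dots, \tilde a_n \in \Gamma(\cX, \cO_\cX(\mathbf D^1_k))$ lifting $a_1, \dots, a_n$, and I let $g\colon (\cX, \cO_\cX) \to \HSpec^{\cTank}(\mathbf D^n_k)$ be the morphism in $\RHTop(\cTank)$ classified by $(\tilde a_1, \dots, \tilde a_n)$.

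Finally I would check that $g$ is a closed immersion. Since $\cX$ and $\cX_{\mathbf D^n_k}^\wedge$ are $1$-localic, the underlying geometric morphism of $g$ together with the induced morphism of sheaves of rings $g\inv\cO_{\mathbf D^n_k}\alg \to \pi_0\cO_\cX\alg$ forms a morphism of locally ringed $1$-topoi $(\Sh_\rSet(X\et), \cO_X\alg) \to (\Sh_\rSet((\mathbf D^n_k)\et), \cO_{\mathbf D^n_k}\alg)$; by \cref{lem:first_fully_faithful} it equals $\Upsilon(\beta)$ for a unique morphism $\beta\colon X \to \mathbf D^n_k$ of \kanal spaces, and reading off global sections ($x_i \mapsto \pi_0(\tilde a_i) = a_i$) forces $\beta = \iota$. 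Therefore the underlying geometric morphism of $g$ is that of $\Phi(\iota)$, hence a closed immersion of \inftopoi by \cref{prop:preserve_closed_immersion} and \cref{lem:hypercompletion_closed_immersions} (as $\iota$ is a closed immersion). For the structure sheaves, by \cref{prop:alg_effective_epi} it is enough to check that $g\inv\cO_{\HSpec^{\cTank}(\mathbf D^n_k)}\alg(\bA^1_k) \to \cO_\cX\alg(\bA^1_k)$ is an effective epimorphism; applying $\pi_0$ (the source being discrete and $g\inv$ commuting with truncation), this is the structure morphism $\iota\inv\cO_{\mathbf D^n_k}\alg \to \cO_X\alg$ of $\Upsilon(\iota)$, which is an effective epimorphism because $\cO_{\mathbf D^n_k}\alg \to \iota_*\cO_X\alg$ is surjective ($\iota$ being a closed immersion) together with \cref{lem:different_closed_immersion}. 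This would complete the proof.
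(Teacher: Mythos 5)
Your proposal is correct in substance but takes a genuinely different route from the paper's. Both arguments reduce to the case where $(\cX,\pi_0\cO_\cX)\simeq\Phi(X)$ for an affinoid $X=\Sp A$ and start from the closed immersion $X\hookrightarrow\bD^n_k$ given by a presentation of $A$; the divergence is in how the coordinates $a_1,\dots,a_n\in\pi_0(\cO_\cX\alg)(X)$ are lifted to $\cO_\cX$. The paper does not lift them globally: since $\cO_\cX\to\pi_0\cO_\cX$ is an effective epimorphism, lifts exist only after passing to a further étale covering $\{U_i\to X\}$, at the price that $U_i\to\bD^n_k$ is then merely unramified rather than a closed immersion; the paper therefore factors it as a closed immersion $U_i\to V_i$ followed by an étale map $V_i\to\bD^n_k$, and uses \cite[Remark 2.3.4]{DAG-V} to factor the classified morphism $(\cX_{/U_i},\cO_\cX|_{U_i})\to\HSpec^{\cTank}(\bA^n_k)$ through the étale map $\HSpec^{\cTank}(V_i)\to\HSpec^{\cTank}(\bA^n_k)$. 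Your global lifting buys a cleaner conclusion (a single closed immersion into $\HSpec^{\cTank}(\bD^n_k)$ per affinoid piece, with no refinement of the cover and no local structure theory of unramified maps), but it pays for this with the vanishing $H^p(\cX,\pi_j\cO_\cX\alg)=0$ for $p>0$, $j\ge 1$. That step is true but under-justified as written: Tate's acyclicity theorem only concerns Čech cohomology of the structure sheaf for finite affinoid coverings; what you actually need is Kiehl's theorem (vanishing of higher coherent cohomology on affinoids for the G-topology) combined with the comparison between étale and G-topology cohomology of coherent sheaves (faithfully flat descent for coherent modules along étale coverings of affinoids), and then the standard Postnikov-tower obstruction argument, whose convergence is guaranteed here by the hypercompleteness of $\cX$. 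With those inputs made precise your argument goes through; the final verification that $g$ is a closed immersion is essentially the forward implication of \cref{thm:Phi_classes_of_morphisms}(ii) specialized to $\iota$, and your implicit identification $\HSpec^{\cTank}(\bD^n_k)\simeq\Phi(\bD^n_k)$ is also used, equally implicitly, in the paper's own proof.
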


\begin{proof}
	We can assume without loss of generality that $(\cX, \pi_0 \cO_{\cX}) \simeq \Phi(X)$ for a $k$-affinoid space $X$.
	So we have a closed immersion into a \kanal polydisc $X \hookrightarrow \bD^n_k$. Composing with the affinoid domain embedding $\bD^n_k \hookrightarrow \bA^n_k$, we obtain an embedding $X \hookrightarrow \bA^n_k$.
	This embedding is given by $n$ global sections $f_1, \ldots, f_n \in \pi_0(\cO_{\cX}\alg)(X)$.
	Let $\{u_i \colon U_i \to X\}_{i \in I}$ be an étale covering such that each restriction $f_j \circ u_i$ is represented by some $\widetilde{f}_{ij} \in \cO_{\cX}(\bA^1_k)(U_i)$.
	Combining \cref{lem:universal_property_HSpec} and \cite[Theorem 2.2.12]{DAG-V}, we deduce that these global sections determine a morphism of derived \kanal spaces
	\[ \varphi_i \colon (\cX_{/U_i}, \cO_{\cX}|_{U_i}) \to \HSpec^{\cTank}(\bA^n_k). \]
		Choose a factorization of $U_i \to X \to \bD^n_k$ as $U_i \xrightarrow{p} V_i \xrightarrow{g} \bD^n_k$, where $p$ is a closed immersion and $g$ is étale.
		The composite map $V_i \to \bD^n_k \to \bA^n_k$ is étale and therefore by \cref{thm:Phi_classes_of_morphisms}(i) the induced morphism of derived \kanal spaces $\HSpec^{\cTank}(V_i) \to \HSpec^{\cTank}(\bA^n_k)$ is \'etale.
	Then \cite[Remark 2.3.4]{DAG-V} shows that the map $\varphi_i$ factors through $\HSpec^{\cTank}(V_i)$ if and only if the underlying morphism of \inftopoi factors through $\cX_{V_i}$.
	The latter holds by construction.
	Moreover, the truncation of $\psi_i \colon (\cX_{/U_i}, \cO_{\cX}|_{U_i}) \to \HSpec^{\cTank}(V_i)$ corresponds to the map $U_i \to V_i$, which is a closed immersion. It follows that $\psi_i$ is a closed immersion as well, completing the proof.
	\end{proof}

\begin{lem} \label{lem:products_dAn}
	Let $(\cX, \cO_{\cX})$ and $(\cY, \cO_{\cY})$ be derived \kanal spaces.
	We have the following statements:
	\begin{enumerate}[(i)]
		\item There exists a product $(\cZ, \cO_{\cZ}) \simeq (\cX, \cO_{\cX}) \times (\cY, \cO_{\cY})$ in $\RTop(\cTank)$.
		\item The structured \inftopos $(\cZ, \cO_{\cZ})$ is a derived \kanal space.
		\item Assume that $(\cX, \pi_0 \cO_{\cX}) \simeq \Phi(X)$ and $(\cY, \pi_0 \cO_{\cY}) \simeq \Phi(Y)$. Then $(\cZ, \pi_0\cO_{\cZ})$ is equivalent to $\Phi(X \times Y)$.
		\item Assume that $(\cX, \pi_0 \cO_{\cX}) \simeq \Phi(X)$ where $X$ is a separated \kanal space.
		Then the diagonal map $\delta \colon (\cX, \cO_{\cX}) \to (\cX, \cO_{\cX}) \times (\cX, \cO_{\cX})$ is a closed immersion.
	\end{enumerate}
\end{lem}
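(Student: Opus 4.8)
The plan is to derive (i)--(iii) by combining the dévissage \cref{lem:closed_devissage} with the existence of fiber products along closed immersions (\cref{prop:closed_fiber_products_dAn}), and then to obtain (iv) from \cref{thm:Phi_classes_of_morphisms}. For (i)--(iii) I would first reduce to a local situation: by \cref{lem:closed_devissage} there are effective epimorphisms $\coprod_i U_i \to \mathbf 1_{\cX}$ and $\coprod_j W_j \to \mathbf 1_{\cY}$ together with closed immersions $(\cX_{/U_i}, \cO_{\cX}|_{U_i}) \hookrightarrow \HSpec^{\cTank}(V_i)$ and $(\cY_{/W_j}, \cO_{\cY}|_{W_j}) \hookrightarrow \HSpec^{\cTank}(T_j)$ with $V_i, T_j$ smooth \kanal spaces. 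The key input is that $\HSpec^{\cTank}(V_i) \times \HSpec^{\cTank}(T_j) \simeq \HSpec^{\cTank}(V_i \times T_j)$: since $\HSpec^{\cTank} = \Hyp \circ \Spec^{\cTank}$ is a composite of right adjoints (\cref{lem:hyp_right_adjoint} and \cref{rem:spectrum}) it preserves products, and $V_i \times T_j$ is the product in $\cTank$; moreover $\HSpec^{\cTank}$ of a smooth \kanal space is a (discrete) derived \kanal space whose truncation is the associated object of $\Phi$. This identification, which I would settle first, rests on $\Spec^{\cTank}$ preserving finite products, a consequence of its being a right adjoint together with the fact that analytification and the geometric-envelope functor preserve finite products. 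Granting it, I would form for each pair $(i,j)$ the iterated fiber product
\[ (\cX_{/U_i}, \cO_{\cX}|_{U_i}) \times_{\HSpec^{\cTank}(V_i)} \Bigl( (\cY_{/W_j}, \cO_{\cY}|_{W_j}) \times_{\HSpec^{\cTank}(T_j)} \HSpec^{\cTank}(V_i \times T_j) \Bigr) , \]
which exists and is a derived \kanal space by two applications of \cref{prop:closed_fiber_products_dAn}, and which represents the product functor on derived \kanal spaces that map into $\cX_{/U_i}$ and $\cY_{/W_j}$.

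It then remains to glue. The local products above are compatible with restriction along the covers $\{U_i\}$ and $\{W_j\}$, so by étale descent for $\cTank$-structured \inftopoi they glue to a $\cTank$-structured \inftopos $(\cZ, \cO_{\cZ})$ representing the product functor. Since the pullback of a hypercomplete \inftopos along a closed immersion is again hypercomplete — pushforward along a closed immersion preserves $\infty$-connected morphisms, cf.\ the proof of \cref{lem:hypercompletion_closed_immersions} — and the gluing is along étale morphisms, the \inftopos $\cZ$ is hypercomplete, so $(\cZ, \cO_{\cZ})$ is a derived \kanal space and it coincides with the product in $\RTop(\cTank)$; this gives (i) and (ii). The step I expect to be the main obstacle is precisely this gluing: making rigorous that the locally defined products descend to a single $\cTank$-structured \inftopos, that this object genuinely represents the product functor, and that it lies in $\dAnk$.

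For (iii), assume $(\cX, \pi_0 \cO_{\cX}) \simeq \Phi(X)$ and $(\cY, \pi_0 \cO_{\cY}) \simeq \Phi(Y)$. After refining the covers I may arrange that the truncations of $(\cX_{/U_i}, \cO_{\cX}|_{U_i})$ and $(\cY_{/W_j}, \cO_{\cY}|_{W_j})$ are $\Phi(X_i)$ and $\Phi(Y_j)$ for affinoid étale neighbourhoods $X_i \to X$, $Y_j \to Y$. Applying \cref{prop:closed_fiber_products_dAn}(v) to each of the two fiber products above — together with \cref{cor:truncation_derived_kanal_spaces} and the description of the truncation of $\HSpec^{\cTank}$ of a smooth space — identifies the truncation of the local product with $\Phi(X_i \times Y_j)$. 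These identifications are compatible with restriction by the faithfulness part of \cref{thm:fully_faithfulness}, so they glue to an equivalence $(\cZ, \pi_0 \cO_{\cZ}) \simeq \Phi(X \times Y)$.

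For (iv), write $(\cZ, \cO_{\cZ}) = (\cX, \cO_{\cX}) \times (\cX, \cO_{\cX})$ and let $\delta \colon (\cX, \cO_{\cX}) \to (\cZ, \cO_{\cZ})$ be the diagonal. By (iii) we have $(\cZ, \pi_0 \cO_{\cZ}) \simeq \Phi(X \times X)$, and under this identification the truncation of $\delta$ is the morphism $\Phi(\Delta)$ induced by the diagonal $\Delta \colon X \to X \times X$. Since $X$ is separated, $\Delta$ is a closed immersion of \kanal spaces, so by \cref{thm:Phi_classes_of_morphisms}(ii) the morphism $\Phi(\Delta)$ is a closed immersion of derived \kanal spaces. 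To conclude that $\delta$ itself is a closed immersion I would verify the two conditions of \cref{def:closed_immersion_and_etale}: the underlying geometric morphism of $\delta$ is the same as that of $\Phi(\Delta)$, hence a closed immersion of \inftopoi; and by \cref{prop:alg_effective_epi} the condition on structure sheaves amounts to showing that $\delta\inv \cO_{\cZ}\alg \to \cO_{\cX}\alg$ is an effective epimorphism, which — since $\delta\inv$ commutes with $\pi_0$ and $\pi_0(\cO_{\cZ}\alg) \simeq (\pi_0 \cO_{\cZ})\alg$ by \cref{cor:truncation_derived_kanal_spaces} — reduces to the surjectivity of $\delta\inv(\pi_0 \cO_{\cZ}\alg) \to \pi_0 \cO_{\cX}\alg$; and this is precisely the structure-sheaf part of the already-established fact that $\Phi(\Delta)$ is a closed immersion.
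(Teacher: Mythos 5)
Your proof is correct and follows essentially the same route as the paper: dévissage via \cref{lem:closed_devissage} into closed immersions into $\HSpec^{\cTank}$ of smooth spaces, reduction of the product to $\HSpec^{\cTank}(V)\times\HSpec^{\cTank}(W)\simeq\HSpec^{\cTank}(V\times W)$ together with \cref{prop:closed_fiber_products_dAn}, and for (iv) the identification of $\pi_0(\delta)$ with $\Phi(\Delta)$ plus \cref{thm:Phi_classes_of_morphisms} and \cref{prop:alg_effective_epi}. You spell out a few steps the paper leaves implicit (the iterated fiber product, why $\HSpec^{\cTank}$ preserves products, and the $\pi_0$-reduction of the effective-epimorphism condition in (iv)), and the gluing step you flag as the main obstacle is exactly the point the paper also dispatches with the one-line remark that (i) and (ii) are local.
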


\begin{proof}
	The statements (i) and (ii) are local on $(\cX, \cO_{\cX})$ and $(\cY, \cO_{\cY})$, so we can assume in virtue of \cref{lem:closed_devissage} that there exists closed immersions $(\cX, \cO_{\cX}) \to \HSpec^{\cTank}(V)$ and $(\cY, \cO_{\cY}) \to \HSpec^{\cTank}(W)$, where $V$ and $W$ are smooth \kanal spaces.
	\cref{prop:closed_fiber_products_dAn} allows us to reduce to the case $(\cX, \cO_{\cX}) \simeq \HSpec^{\cTank}(V)$ and $(\cY, \cO_{\cY}) \simeq \HSpec^{\cTank}(W)$.
	In this case, we have
	\[(\cZ, \cO_{\cZ}) \simeq \HSpec^{\cTank}(V \times W).\]
		The statement (iii) follows from the construction of $(\cZ, \cO_{\cZ})$ we described and \cref{prop:closed_fiber_products_dAn}(\ref{item:truncation_pullback}).
	We are left to prove the statement (iv).
	The statement (ii) shows that the induced map
	\[ \pi_0(\delta) \colon (\cX, \pi_0 \cO_{\cX}\alg) \to (\cX, \pi_0 \cO_{\cX}\alg) \times (\cX, \pi_0 \cO_{\cX}\alg) \]
	corresponds to $\Phi(\Delta) \colon \Phi(X) \to \Phi(X \times X)$.
	Since $X$ is separated, $\Delta \colon X \to X \times X$ is a closed immersion and therefore \cref{thm:Phi_classes_of_morphisms} implies that $\Phi(\Delta)$ is a closed immersion.
	Now, the assertion follows from \cref{prop:alg_effective_epi}.
\end{proof}

Now we can deduce the main result of this section:

\begin{thm} \label{thm:fiber_products}
	The \infcat $\dAnk$ admits fiber products.
\end{thm}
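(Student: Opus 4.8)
The plan is to mimic the strategy of \cite[Proposition 12.12]{DAG-IX}, reducing a general fiber product to the two special cases already in hand: products (\cref{lem:products_dAn}) and fiber products along a closed immersion (\cref{prop:closed_fiber_products_dAn}). The key observation is the formal identity
\[ (\cX, \cO_{\cX}) \times_{(\cZ, \cO_{\cZ})} (\cY, \cO_{\cY}) \simeq \bigl( (\cX, \cO_{\cX}) \times (\cY, \cO_{\cY}) \bigr) \times_{(\cZ, \cO_{\cZ}) \times (\cZ, \cO_{\cZ})} (\cZ, \cO_{\cZ}), \]
in which the right-hand map is the diagonal $\delta_{\cZ}$ and the left-hand map is $(f, g)$ followed by the two projections. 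Once the right-hand side is shown to make sense inside $\dAnk$, it computes the fiber product by a routine comparison of mapping spaces (using that in any \infcat one has $(A \times B) \times_{C \times C} C \simeq A \times_C B$).

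First I would treat the case in which $(\cZ, \pi_0 \cO_{\cZ}) \simeq \Phi(Z)$ for a separated — in particular affinoid — \kanal space $Z$. In that case \cref{lem:products_dAn}(i)--(ii) shows that $(\cZ, \cO_{\cZ}) \times (\cZ, \cO_{\cZ})$ and $(\cX, \cO_{\cX}) \times (\cY, \cO_{\cY})$ exist in $\dAnk$, while \cref{lem:products_dAn}(iv) shows that $\delta_{\cZ}$ is a closed immersion. Applying \cref{prop:closed_fiber_products_dAn} to the cospan
\[ (\cX, \cO_{\cX}) \times (\cY, \cO_{\cY}) \xrightarrow{(f,g)} (\cZ, \cO_{\cZ}) \times (\cZ, \cO_{\cZ}) \xleftarrow{\ \delta_{\cZ}\ } (\cZ, \cO_{\cZ}) \]
produces a pullback square in $\RHTop(\cTank)$ all of whose corners lie in $\dAnk$ (\cref{prop:closed_fiber_products_dAn}(iv)). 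Since $\dAnk$ is a full subcategory of $\RHTop(\cTank)$, this is a pullback square in $\dAnk$, and the identity above identifies its apex with $(\cX, \cO_{\cX}) \times_{(\cZ, \cO_{\cZ})} (\cY, \cO_{\cY})$.

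To pass to a general target, I would choose, using \cref{def:derived_space} and the reduction already used in the proof of \cref{lem:closed_devissage}, an effective epimorphism $\coprod_i W_i \to \mathbf 1_{\cZ}$ with each $\cZ_i \coloneqq (\cZ_{/W_i}, \cO_{\cZ}|_{W_i})$ satisfying $(\cZ_i, \pi_0 \cO_{\cZ_i}) \simeq \Phi(Z_i)$ for an affinoid $Z_i$. Writing $\cX_i$, $\cY_i$ for the restrictions of $(\cX, \cO_{\cX})$, $(\cY, \cO_{\cY})$ to the preimages of $W_i$ (these are again derived \kanal spaces, equipped with natural maps to $\cZ_i$), the previous step furnishes fiber products $\cX_i \times_{\cZ_i} \cY_i$ in $\dAnk$. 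By uniqueness of fiber products their restrictions to the overlaps agree and satisfy the cocycle condition, so descent for \inftopoi \cite[6.1.3.9]{HTT} together with descent for $\cTank$-structures glues them into a derived \kanal space $(\cX', \cO_{\cX'})$; passing back to the cover shows that $(\cX', \cO_{\cX'})$ has the universal property of $(\cX, \cO_{\cX}) \times_{(\cZ, \cO_{\cZ})} (\cY, \cO_{\cY})$, completing the proof.

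I expect the gluing/descent step to be the main obstacle. In the complex analytic situation of \cite{DAG-IX} it is essentially free, because the underlying topological space of a fiber product of complex analytic spaces is just the fiber product of the underlying spaces; here the underlying \inftopos of $\cX_i \times_{\cZ_i} \cY_i$ is not a naive limit of the underlying \inftopoi, so one must genuinely assemble a descent datum from the local fiber products and their restrictions to overlaps and invoke the descent properties of \inftopoi and of local $\cTank$-structures. Everything else is either a direct appeal to \cref{lem:products_dAn} and \cref{prop:closed_fiber_products_dAn} or a formal manipulation of universal properties.
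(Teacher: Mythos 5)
Your proposal is correct and follows essentially the same route as the paper's proof: reduce to the case where the base has affinoid (hence separated) truncation by working locally, then realize the fiber product as $\bigl((\cX,\cO_\cX)\times(\cY,\cO_\cY)\bigr)\times_{(\cZ,\cO_\cZ)\times(\cZ,\cO_\cZ)}(\cZ,\cO_\cZ)$ using \cref{lem:products_dAn} for the products and the closedness of the diagonal, and \cref{prop:closed_fiber_products_dAn} for the pullback along it. The paper leaves the localization/gluing step implicit where you spell it out, but the argument is the same.
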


\begin{proof}
	Let $(\cY,\cO_\cY)\to(\cX,\cO_\cX)\leftarrow(\cX',\cO_{\cX'})$ be maps of derived \kanal spaces.
	We would like to construct the fiber product.
	Working locally on $\cX$, we can assume that $(\cX,\pi_0\cO_\cX\alg)\simeq\upsilon(X)$ for a separated \kanal space $X$.
	Using \cref{lem:products_dAn}(i), we deduce the existence of two products $(\cZ,\cO_\cZ)\coloneqq(\cX',\cO_\cX')\times(\cY,\cO_\cY)$ and $(\cX,\cO_\cX)\times(\cX,\cO_\cX)$ in $\Top(\cTank)$.
	By \cref{lem:products_dAn}(iv), the diagonal map $\delta \colon (\cX, \cO_{\cX}) \to (\cX, \cO_{\cX}) \times (\cX, \cO_{\cX})$ is a closed immersion.
	We now apply \cref{prop:closed_fiber_products_dAn} to produce a fiber product
	\[ \begin{tikzcd}
	(\cY',\cO_{\cY'}) \arrow{r} \arrow{d} & (\cZ,\cO_\cZ) \arrow{d} \\
	(\cX,\cO_\cX) \arrow{r} &  (\cX, \cO_{\cX}) \times (\cX, \cO_{\cX}).
	\end{tikzcd} \]
	Note that $(\cY',\cO_{\cY'})$ is the fiber product of $(\cY,\cO_\cY)\to(\cX,\cO_\cX)\leftarrow(\cX',\cO_{\cX'})$ , completing the proof.
\end{proof}

\begin{comment}
We record the following fact for later use: \todo{We never use it. It was needed in the old essential image. Shall we remove it?}

\begin{prop} \label{prop:Phi_etale_pullback}
	Assume that the square $\sigma$
	\[ \begin{tikzcd}
		X' \arrow{r} \arrow{d} & Y' \arrow{d} \\
		X \arrow{r}{j} & Y
	\end{tikzcd} \]
	is a pullback in $\Ank$ and that $j$ is an étale morphism.
	Then $\Phi$ preserves this pullback.
\end{prop}

\begin{proof}
	Using \cite[6.3.5.8]{HTT} we deduce that $\Phi(\sigma)$ is a pullback square in $\RTop(\cTank)$.
	In particular, it is a pullback also in $\dAnk$.
\end{proof}
\end{comment}

\section{Comparison between derived spaces and non-derived stacks} \label{sec:essential_image}

In this section, we will characterize the essential image of the embedding $\Phi\colon\An_k\to\dAnk$ constructed in \cref{sec:fullyfaithfulness}.
Moreover, we will compare derived \kanal spaces with higher \kanal stacks in the sense of \cite{Porta_Yu_Higher_analytic_stacks_2014}.

\subsection{Construction of the comparison functor}

On the \infcat $\dAnk$ of derived \kanal spaces, we define the étale topology $\tauet$ to be the Grothendieck topology generated by collections of étale morphisms $\{U_i\to U\}$ such that $\coprod U_i\to U$ is an effective epimorphism (cf.\ \cref{def:closed_immersion_and_etale}).

\begin{rem}
	The restriction of $\tauet$ to the full subcategory $\Ank$ of $\dAnk$ coincides with the étale topology $\tauet$.
\end{rem}

\begin{lem}
	Every representable presheaf on $\dAnk$ is a hypercomplete sheaf for the topology $\tauet$.
\end{lem}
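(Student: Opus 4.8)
The plan is to verify the hyperdescent condition for $h_Z = \Map_{\dAnk}(-, Z)$ directly. Fix $Z = (\cZ, \cO_\cZ) \in \dAnk$; we must show that for every $\tauet$-hypercover $U_\bullet \to U$ in $\dAnk$ the canonical map $\Map_{\dAnk}(U, Z) \to \lim_{[n]\in\Delta} \Map_{\dAnk}(U_n, Z)$ is an equivalence. Since $\dAnk$ is a full subcategory of $\RTop(\cTank)$, all these mapping spaces may be computed in $\RTop(\cTank)$. Write $U = (\cX, \cO_\cX)$; by \cref{def:derived_space} the $\infty$-topos $\cX$ is hypercomplete. Using the classification of étale morphisms of structured $\infty$-topoi of \cite[\S 2.3]{DAG-V} — together with the facts that being a derived \kanal space is a local condition and that slices of hypercomplete $\infty$-topoi are again hypercomplete — the hypercover $U_\bullet \to U$ corresponds to a simplicial object $V_\bullet$ of $\cX$ with $U_n \simeq (\cX_{/V_n}, \cO_\cX|_{V_n})$, where $V_\bullet \to \mathbf 1_\cX$ is an étale hypercover of $\cX$ (here one uses that, for morphisms built from étale morphisms, being an effective epimorphism in $\RTop(\cTank)$ is detected on underlying $\infty$-topoi). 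Since $\cX$ is hypercomplete, $\mathbf 1_\cX \simeq \colim_{[n]\in\Delta^{\mathrm{op}}} V_n$ in $\cX$.

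The heart of the matter is to show that $(\cX, \cO_\cX) \simeq \colim_{[n]\in\Delta^{\mathrm{op}}} (\cX_{/V_n}, \cO_\cX|_{V_n})$ in $\RTop(\cTank)$; granting this, the universal property of colimits immediately yields $\Map_{\RTop(\cTank)}(U, Z) \simeq \lim_\Delta \Map_{\RTop(\cTank)}(U_\bullet, Z)$, which is what we want. To prove the colimit presentation I would argue through the coCartesian fibration $\RTop(\cTank) \to \RTop$ of \cite[Remark 1.4.10]{DAG-V}, separating the underlying $\infty$-topoi from the $\cTank$-structures. On the level of $\infty$-topoi, descent for $\infty$-topoi (\cite[6.1.3.9]{HTT}) applied to $\mathbf 1_\cX \simeq \colim_{\Delta^{\mathrm{op}}} V_\bullet$ gives $\cX \simeq \colim_{\Delta^{\mathrm{op}}} \cX_{/V_\bullet}$ in $\RTop$, equivalently $\Fun_*(\cX, \cW) \simeq \lim_\Delta \Fun_*(\cX_{/V_\bullet}, \cW)$ for every $\infty$-topos $\cW$. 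On the level of structures, from the presentation $\cX \simeq \lim_\Delta \cX_{/V_\bullet}$ (with pullback functors as transition maps) and the fact that the conditions defining a $\cTank$-structure, a local morphism, and an effective epimorphism are all expressed through finite limits and effective epimorphisms — hence detected after pulling back along $\cX_{/V_0} \to \cX$ — one deduces $\Strloc_{\cTank}(\cX) \simeq \lim_\Delta \Strloc_{\cTank}(\cX_{/V_\bullet})$.

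Finally I would assemble the two descent statements using the fiber sequence $\Map_{\Strloc_{\cTank}(\cX)}(f\inv\cO_\cZ, \cO_\cX) \to \Map_{\RTop(\cTank)}(U, Z) \to \Fun_*(\cX, \cZ)$ (obtained as in \cite[2.4.4.2]{HTT} and \cite[Remark 1.4.10]{DAG-V}), together with the analogous sequences for the $U_n$: the comparison map from $\Map_{\RTop(\cTank)}(U, Z)$ to the limit is an equivalence on base spaces by the $\infty$-topos descent and an equivalence on each fiber by the structure descent, hence an equivalence. I expect the main obstacle to be the bookkeeping in the previous paragraph — in particular checking that the colimit presentation $\cX \simeq \colim_{\Delta^{\mathrm{op}}} \cX_{/V_\bullet}$ really lives in $\RTop$ and that $\Strloc_{\cTank}(-)$ sends it to a limit — and, conceptually, recognizing that it is precisely the built-in hypercompleteness of the underlying $\infty$-topoi of derived \kanal spaces that upgrades the Čech-descent argument to genuine hyperdescent, so that $h_Z$ is not merely a sheaf but a hypercomplete one.
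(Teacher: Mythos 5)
Your proof is correct and follows essentially the same route as the paper: identify the $\tauet$-hypercover with an étale hypercover $V_\bullet \to \mathbf 1_\cX$ of the underlying hypercomplete $\infty$-topos, reduce the colimit statement in $\RTop(\cTank)$ to one about underlying $\infty$-topoi, and conclude from descent for $\infty$-topoi (\cite[6.1.3.9]{HTT}) together with $|V_\bullet| \simeq \mathbf 1_\cX$. The only difference is that where you re-derive the reduction step by hand, via the coCartesian fibration $\RTop(\cTank) \to \RTop$ and descent for $\Strloc_{\cTank}(-)$ along the limit presentation $\cX \simeq \lim_\Delta \cX_{/V_\bullet}$, the paper simply invokes statement (3') in the proof of \cite[Proposition 2.3.5]{DAG-V}, which encapsulates exactly that argument.
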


\begin{proof}
	Let $X \coloneqq (\cX, \cO_\cX)$ be a derived \kanal space.
	The universal property of \'etale morphisms (cf.\ \cite[Remark 2.3.4]{DAG-V}) shows that a $\tauet$-hypercovering of $X$ can be identified with a hypercovering $U^\bullet$ of $\mathbf 1_\cX$ in the \inftopos $\cX$.
	Given such a hypercovering, the associated $\tauet$-hypercovering $X^\bullet$ of $X$ is described by $X^n \coloneqq (\cX_{/U^n}, \cO_\cX |_{U^n})$.
	Therefore, we have to prove that
	\[ \colim_\Delta ( \cX_{/U^\bullet}, \cO_\cX |_{U^\bullet} ) \simeq (\cX, \cO_\cX) \]
	in the $\infty$-category $\dAnk$.
	Using the statement (3') in the proof of \cite[Proposition 2.3.5]{DAG-V}, we see that it is enough to prove that $\cX \simeq \colim \cX_{/U^\bullet}$ in $\RTop$.
	Since $\cX$ is hypercomplete, this follows from the descent theory of \inftopoi (cf.\ \cite[6.1.3.9]{HTT}) and from the fact that $|U^\bullet| \simeq \mathbf 1_\cX$ (cf.\ \cite[6.5.3.12]{HTT}).
	\end{proof}

\begin{defin} \label{def:derived_affinoid}
	A \emph{derived $k$-affinoid space} is a derived \kanal space $(\cX, \cO_\cX)$ such that $(\cX, \pi_0(\cO_\cX)) \simeq \Phi(X)$ for some $k$-affinoid space $X$.
	We denote by $\dAfdk$ the full subcategory of $\dAnk$ spanned by derived $k$-affinoid spaces.
\end{defin}

The Grothendieck topology $\tauet$ on $\dAnk$ induces by restriction a Grothendieck topology on $\dAfdk$ which we denote again by $\tauet$.
We define the functor $\tphi$ as the composition
\[ \begin{tikzcd}
	\dAnk \arrow{r} & \Fun(\dAnk^{\mathrm{op}}, \cS) \arrow{r} & \Fun( ( \dAfdk )^{\mathrm{op}}, \cS ) ,
\end{tikzcd} \]
where the first functor is the Yoneda embedding and the second one is the restriction along $\dAfdk \subset \dAnk$.
Since the Grothendieck topology $\tauet$ on $\dAnk$ is subcanonical, the functor $\tphi$ factors through $\Sh(\dAfdk, \tauet)$.
We denote by
\[ \phi \colon \dAnk \to \Sh( \dAfdk, \tauet) \]
the induced functor.
Our first goal is to show that $\phi$ is fully faithful.

\begin{lem} \label{lem:affine_site_big_site}
	Let $X = (\cX, \cO_\cX)$ be a derived \kanal space and let $p \colon U \to \mathbf 1_\cX$ be an effective epimorphism.
	Let $U^\bullet$ be the \v{C}ech nerve of $p$ and put $X^n \coloneqq (\cX_{/U^n}, \cO_\cX|_{U^n})$.
	Then in $\Sh(\dAfdk, \tauet)$ we have
	\[ \phi(X) \simeq \colim_{\Delta} \phi(X^\bullet) . \]
\end{lem}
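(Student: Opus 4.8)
The statement asserts that the functor $\phi$ sends the colimit diagram $X \simeq \colim_\Delta X^\bullet$ (in $\dAnk$, over the Čech nerve of an effective epimorphism) to a colimit diagram in the sheaf $\infty$-topos $\Sh(\dAfdk,\tauet)$. The plan is to work locally on the big site $\dAfdk$, reducing the claim to a statement about covering sieves in a single $\infty$-topos $\cX$.

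First I would recall the shape of the objects: $X^n = (\cX_{/U^n}, \cO_\cX|_{U^n})$, where $U^\bullet$ is the Čech nerve of $p\colon U \to \mathbf 1_\cX$ in $\cX$. By the universal property of étale morphisms of $\cTank$-structured \inftopoi (\cite[Remark 2.3.4]{DAG-V}), for any derived $k$-affinoid $T = (\cT,\cO_\cT)$, a morphism $T \to X$ whose underlying geometric morphism is étale onto $X$ is the same datum as an object of $\cX$ equipped with a map to $\mathbf 1_\cX$, together with the induced structure sheaf; more precisely, $\tphi(X)(T) = \Map_{\dAnk}(T, X)$, and the étale morphisms $T \to X^n$ over $X$ correspond to lifts through $U^n$. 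Thus I would first identify, for each $T \in \dAfdk$, the simplicial space $[n] \mapsto \tphi(X^n)(T) = \Map_{\dAnk}(T, X^n)$ with a simplicial space built from the mapping spaces into $X$ together with the sheaf $p^{-1}$-data pulled back to $\cT$.

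The key step is then: the sheafification of the presheaf colimit $\colim_\Delta \tphi(X^\bullet)$ agrees with $\phi(X)$. Since $\tauet$ is subcanonical, $\phi(X) = \tphi(X)$ is already a sheaf, so it suffices to show the canonical map $\bigl(\colim_\Delta \tphi(X^\bullet)\bigr)^{\sharp} \to \phi(X)$ is an equivalence, where $(-)^\sharp$ denotes sheafification. For this I would check it on stalks, or equivalently check it is a local equivalence: given $T \in \dAfdk$ and a point $x \in \phi(X)(T)$, i.e. a map $g\colon T \to X$, I need to produce an étale cover $\{T_\alpha \to T\}$ such that each $g|_{T_\alpha}$ factors (compatibly) through some $X^0 = (\cX_{/U}, \cO_\cX|_U)$. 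This is exactly the statement that $p\colon U \to \mathbf 1_\cX$ pulls back, along the geometric morphism $g^{-1}\colon \cX \to \cT$, to an effective epimorphism $g^{-1}(U) \to \mathbf 1_\cT$ — which it does, since $g^{-1}$ is left exact and preserves effective epimorphisms — and that effective epimorphisms in $\cT$ are covers for $\tauet$ after base change to derived affinoids, which is guaranteed because $(\cT,\cO_\cT)$ is a derived $k$-affinoid space and hence admits a cover by derived affinoids refining any étale cover of $\mathbf 1_\cT$. The compatibility of the higher simplices is then forced by the fact that $U^\bullet$ is the Čech nerve, so the descent datum on the $T_\alpha$ is automatically coherent; concretely, this is the statement that $\tphi$ carries the colimit $\cX \simeq \colim \cX_{/U^\bullet}$ (valid in $\RTop$ by \cite[6.1.3.9]{HTT} and hypercompleteness, as in the preceding lemma) through to a local colimit of representables, using that $\Map_{\dAnk}(T,-)$ commutes with the relevant colimits by the explicit description of mapping spaces via structure sheaves.

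The main obstacle I expect is bookkeeping the structure-sheaf data: unlike in an ordinary topos, a morphism in $\dAnk$ carries both a geometric morphism and a local morphism $f^{-1}\cO_\cX \to \cO_\cT$ of $\cTank$-structures, so I must verify that the identification of underlying \inftopoi ($\cX \simeq \colim \cX_{/U^\bullet}$) lifts to an identification of $\cTank$-structured topoi, i.e. that $\cO_\cX$ is recovered as the limit of $\cO_\cX|_{U^\bullet}$ along the descent datum. This is precisely the content of statement $(3')$ in the proof of \cite[Proposition 2.3.5]{DAG-V}, which I would invoke to conclude; once that identification is in hand, applying the (co)representable functor $\Map_{\dAnk}(T,-)$ and sheafifying yields the claim.
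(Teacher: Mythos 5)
Your proof is correct in substance, but it takes a more hands-on route than the paper. The paper's argument is purely formal: it factors $\phi$ as $\dAnk \xrightarrow{\psi} \Sh(\dAnk,\tauet) \xrightarrow{j_s} \Sh(\dAfdk,\tauet)$, where $j\colon \dAfdk \hookrightarrow \dAnk$ is the (continuous and cocontinuous) inclusion and $j_s$ is the induced restriction functor. Since $\tauet$ on $\dAnk$ is subcanonical by the preceding lemma, the identity $\psi(X) \simeq \colim_\Delta \psi(X^\bullet)$ holds in $\Sh(\dAnk,\tauet)$ simply because $\{X^0 \to X\}$ is by definition a $\tauet$-covering of $\dAnk$; the conclusion then follows because $j_s$ is a left adjoint and so commutes with colimits. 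You instead work directly in $\Sh(\dAfdk,\tauet)$ and prove by hand that $\phi(X^0) \to \phi(X)$ is an effective epimorphism there, via the local-lifting argument (pull back $U \to \mathbf{1}_\cX$ along $g^{-1}$ to an effective epimorphism $g^{-1}(U) \to \mathbf{1}_\cT$ and refine by derived affinoids). That argument is valid and amounts to unwinding concretely what the adjunction gives for free; what it buys is independence from the auxiliary big site $(\dAnk,\tauet)$ and its subcanonicity, at the cost of extra bookkeeping. One step should be restated: the claim that ``$\Map_{\dAnk}(T,-)$ commutes with the relevant colimits'' is not a legitimate justification (corepresentable functors do not commute with colimits, and this is essentially what is being proved). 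What you actually need, and what your earlier remarks already supply, is that $\phi$ --- being a restriction of the Yoneda embedding --- preserves the fiber products $X^n \simeq X^0 \times_X \cdots \times_X X^0$, which exist in $\dAnk$ by the universal property of \'etale morphisms (\cite[Remark 2.3.4]{DAG-V}); hence $\phi(X^\bullet)$ is the \v{C}ech nerve of $\phi(X^0) \to \phi(X)$, and the colimit identity is then the standard effective-epimorphism/groupoid correspondence in the \inftopos $\Sh(\dAfdk,\tauet)$, with no further coherence left to check.
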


\begin{proof}
	Let $j\colon\dAfdk\hookrightarrow\dAnk$ denote the inclusion functor.
	It is continuous and cocontinuous in the sense of \cite[\S 2.4]{Porta_Yu_Higher_analytic_stacks_2014}.
	It induces a pair of adjoint functors
	\[ j_s\colon \Sh(\dAnk,\tauet)\leftrightarrows\Sh(\dAfdk,\tauet)\colon j^s.\]
	Since the Grothendieck topology $\tauet$ on $\dAnk$ is subcanonical, we can factor $\phi$ as
	\[\dAnk \xrightarrow{\ \psi\ } \Sh( \dAnk, \tauet ) \xrightarrow{\ j_s\ } \Sh( \dAfdk, \tauet ).\]
	Moreover, we have
	\[ \psi(X) \simeq \colim_{\Delta} \psi(X^\bullet) . \]
	Since the functor $j_s$ is a left adjoint, it commutes with colimits, completing the proof.
\end{proof}

\begin{lem} \label{lem:affine_hypercover}
	Let $X = (\cX, \cO_\cX)$ be a derived \kanal space.
	Then there exists a hypercovering $X^\bullet$ of $X$ in $\dAnk$ such that each $X^n$ is a disjoint union of derived $k$-affinoid spaces.
\end{lem}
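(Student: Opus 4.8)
The plan is to reduce the statement to the construction of an ordinary hypercovering inside the $\infty$-topos $\cX$. Recall from the proof of the lemma preceding \cref{def:derived_affinoid} that, via the universal property of étale morphisms (cf.\ \cite[Remark 2.3.4]{DAG-V}), a $\tauet$-hypercovering $X^\bullet$ of $X=(\cX,\cO_\cX)$ amounts to the same datum as a hypercovering $U^\bullet\to\mathbf 1_\cX$ in the $\infty$-topos $\cX$, through the assignment $X^n=(\cX_{/U^n},\cO_\cX|_{U^n})$. Moreover, if $U^n\simeq\coprod_\alpha V_\alpha$ then $\cX_{/U^n}\simeq\prod_\alpha\cX_{/V_\alpha}$ (coproducts in an $\infty$-topos being disjoint and universal), so that $X^n\simeq\coprod_\alpha(\cX_{/V_\alpha},\cO_\cX|_{V_\alpha})$ in $\dAnk$. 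Hence it suffices to produce a hypercovering $U^\bullet\to\mathbf 1_\cX$ in $\cX$ each of whose levels $U^n$ is a coproduct of objects $V$ for which $(\cX_{/V},\cO_\cX|_V)$ is a derived $k$-affinoid space.

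The key local input is the following: for every object $U\in\cX$ there is an effective epimorphism $\coprod_\alpha V_\alpha\to U$ in $\cX$ with each $(\cX_{/V_\alpha},\cO_\cX|_{V_\alpha})$ a derived $k$-affinoid space. To prove it, use \cref{def:derived_space} to choose an effective epimorphism $\coprod_i U_i\to\mathbf 1_\cX$ with $(\cX_{/U_i},\pi_0(\cO_\cX\alg|_{U_i}))$ the ringed $\infty$-topos of the étale site of a \kanal space $X_i$ and with $\pi_j(\cO_\cX\alg|_{U_i})$ coherent for all $j$. Since $\cX_{/U_i}\simeq\cX_{X_i}^\wedge=\Sh((\Afd_{X_i})\et)^\wedge$, the object $U\times U_i$, viewed inside $\cX_{/U_i}$, receives an effective epimorphism $\coprod_\beta h_{T_{i\beta}}\to U\times U_i$ from a coproduct of representables with each $T_{i\beta}\to X_i$ étale and $T_{i\beta}$ $k$-affinoid. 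Let $V_{i\beta}\in\cX$ denote the corresponding objects; then $\coprod_{i,\beta}V_{i\beta}\to U$ is an effective epimorphism, and by the analysis of étale morphisms carried out in the proof of \cref{thm:Phi_classes_of_morphisms} we have $\cX_{/V_{i\beta}}\simeq(\cX_{X_i}^\wedge)_{/h_{T_{i\beta}}}\simeq\cX_{T_{i\beta}}^\wedge$, under which $\pi_0(\cO_\cX|_{V_{i\beta}})$ becomes $\cO_{T_{i\beta}}$ and each $\pi_j(\cO_\cX\alg|_{V_{i\beta}})$ is the étale pullback of $\pi_j(\cO_\cX\alg|_{U_i})$, hence coherent. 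So $(\cX_{/V_{i\beta}},\cO_\cX|_{V_{i\beta}})$ is a derived $k$-affinoid space.

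With the local input established, I would build $U^\bullet$ by induction on the simplicial degree inside $\cX$, which has all finite limits. Take $U^0=\coprod_\alpha V_\alpha\to\mathbf 1_\cX$ an effective epimorphism as above. Given an $n$-truncated simplicial object $U^{\le n}$ all of whose levels are coproducts of affinoid-type objects and all of whose matching maps are effective epimorphisms, form the matching object $M_{n+1}(U^{\le n})\in\cX$, apply the local input to choose an effective epimorphism $U^{n+1}\to M_{n+1}(U^{\le n})$ with $U^{n+1}$ a coproduct of affinoid-type objects, and extend the simplicial structure via the universal property of the $n$-coskeleton. Passing to the limit over $n$ (using $\Fun(\Delta\op,\cX)\simeq\lim_n\Fun(\Delta_{\le n}\op,\cX)$) yields a hypercovering $U^\bullet\to\mathbf 1_\cX$, and the associated $X^\bullet$ is the required hypercovering of $X$ with each $X^n$ a disjoint union of derived $k$-affinoid spaces.

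The main obstacle is the local input, and within it the verification that restricting a derived \kanal space to a representable étale neighbourhood over an affinoid space again produces a derived $k$-affinoid space; this relies on the descent of étale morphisms of \inftopoi and on the stability of coherence under étale pullback. The simplicial bookkeeping in the inductive step is routine but must be handled with the standard matching-object formalism.
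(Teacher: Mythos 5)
Your proof is correct and is precisely the argument the paper intends: the paper's own proof of this lemma is the single sentence that it ``follows directly from Definitions \ref{def:derived_space} and \ref{def:derived_affinoid}'', and your write-up (refine the covering supplied by \cref{def:derived_space} by affinoid representables inside each étale topos $\cX_{/U_i} \simeq \cX_{X_i}^\wedge$, then iterate via matching objects to produce a hypercovering of $\mathbf 1_\cX$, identified with a $\tauet$-hypercovering of $X$ via the universal property of étale morphisms) supplies exactly the details being elided. No gap or divergence to report.
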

\begin{proof}
	It follows directly from Definitions \ref{def:derived_space} and \ref{def:derived_affinoid}.
\end{proof}

\begin{prop} \label{prop:phi_fully_faithful}
	The functor $\phi \colon \dAnk \to \Sh( \dAfdk, \tauet)$ is fully faithful.
\end{prop}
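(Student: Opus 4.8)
The plan is to prove that for all derived $k$-analytic spaces $X$ and $Y$ the canonical map
\[ \Map_{\dAnk}(X, Y) \longrightarrow \Map_{\Sh(\dAfdk, \tauet)}(\phi(X), \phi(Y)) \]
is an equivalence. The starting point is the affinoid case. If $X$ lies in $\dAfdk$, then since $\dAfdk$ is a full subcategory of $\dAnk$ and the restriction of $\tauet$ to $\dAfdk$ is again subcanonical, $\phi(X)$ is the representable sheaf $h_X$ on the site $(\dAfdk, \tauet)$; by the Yoneda lemma in $\Sh(\dAfdk, \tauet)$ we get $\Map_{\Sh(\dAfdk, \tauet)}(h_X, \phi(Y)) \simeq \phi(Y)(X)$, and by definition of $\phi$ the right-hand side is $\Map_{\dAnk}(X, Y)$. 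Unwinding the definitions shows that this equivalence is the map above, so the statement holds whenever $X$ is derived $k$-affinoid, and hence also whenever $X$ is a disjoint union of derived $k$-affinoid spaces, both sides sending coproducts in the first variable to products.

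To handle a general $X$, I would use \cref{lem:affine_hypercover} to choose a $\tauet$-hypercovering $X^\bullet \to X$ in $\dAnk$ with every $X^n$ a disjoint union of derived $k$-affinoid spaces. On the source side, the topology $\tauet$ on $\dAnk$ is subcanonical --- in fact every representable presheaf on $\dAnk$ is a hypercomplete sheaf --- so hyperdescent for $h_Y$ along $X^\bullet \to X$ gives
\[ \Map_{\dAnk}(X, Y) \simeq h_Y(X) \simeq \lim_{\Delta} h_Y(X^\bullet) \simeq \lim_{\Delta} \Map_{\dAnk}(X^\bullet, Y). \]
On the target side, the same argument that identifies representable presheaves on $\dAnk$ as hypercomplete sheaves applies over the site $(\dAfdk, \tauet)$, the underlying $\infty$-topos of a derived $k$-affinoid space being hypercomplete; thus $\phi(Y)$ is a hypercomplete sheaf on $\dAfdk$, and combining \cref{lem:affine_site_big_site} with hyperdescent for $\phi(Y)$ along $X^\bullet \to X$ gives $\phi(X) \simeq \colim_{\Delta} \phi(X^\bullet)$ in $\Sh(\dAfdk, \tauet)$, whence
\[ \Map_{\Sh(\dAfdk, \tauet)}(\phi(X), \phi(Y)) \simeq \lim_{\Delta} \Map_{\Sh(\dAfdk, \tauet)}(\phi(X^\bullet), \phi(Y)). \]
Since each $X^n$ is a disjoint union of derived $k$-affinoid spaces, I can now compare the two totalizations term by term using the affinoid case, which finishes the proof.

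The main obstacle is the passage from \cref{lem:affine_site_big_site}, which concerns the \v{C}ech nerve of a single covering and holds for arbitrary sheaves, to the honest hypercovering required to descend to derived $k$-affinoid spaces, where one genuinely needs hyperdescent rather than \v{C}ech descent. The key point to pin down is therefore that the representables $h_Y$ on $\dAnk$ and their restrictions $\phi(Y)$ on $\dAfdk$ are hypercomplete sheaves --- this is precisely what the hypercompleteness requirement in \cref{def:derived_space} and the lemma asserting that representable presheaves on $\dAnk$ are hypercomplete sheaves are there to guarantee. A slicker but less self-contained route would be to write $\phi = j_s \circ \psi$, with $\psi \colon \dAnk \to \Sh(\dAnk, \tauet)$ the fully faithful Yoneda embedding into sheaves and $j_s$ the restriction along $\dAfdk \hookrightarrow \dAnk$, and then to invoke the comparison lemma: since every derived $k$-analytic space admits a $\tauet$-covering by derived $k$-affinoid spaces, $j_s$ is an equivalence of $\infty$-topoi, so $\phi$ is fully faithful.
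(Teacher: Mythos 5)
Your proof is correct and follows essentially the same route as the paper: reduce to the derived $k$-affinoid case via an affinoid hypercovering (\cref{lem:affine_hypercover}), use \cref{lem:affine_site_big_site} together with hypercompleteness of the representables to match the two totalizations, and finish with the Yoneda lemma. The only difference is cosmetic — you make explicit the passage from \v{C}ech descent to hyperdescent that the paper compresses into ``$\cC$ is stable under colimits'' — and your closing alternative via the comparison lemma for the dense sub-site $\dAfdk \subset \dAnk$ is a valid shortcut the paper does not take.
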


\begin{proof}
	Let $X, Y \in \dAnk$ and consider the natural map
	\[ \psi_{X,Y} \colon \Map_{\dAnk}(X,Y) \to \Map_{\Sh( \dAfdk, \tauet )}(\phi(X), \phi(Y)) . \]
	Keeping $Y$ fixed, let $\cC$ be the full subcategory of $\dAnk$ spanned by those $X$ such that $\psi_{X,Y}$ is an equivalence.
	Since $\cC$ is stable under colimits, combining Lemmas \ref{lem:affine_site_big_site} and \ref{lem:affine_hypercover}, we are reduced to the case where $X$ belongs to $\dAfdk$.
	In this case, the statement follows entirely from the Yoneda lemma.
\end{proof}

Our second goal is to identify the essential image of the functor $\phi$.
For this, we need to introduce some notations.

\begin{defin}
Let $\bP\et$ denote the class of étale morphisms in $\dAnk$.
The triple $(\dAfdk, \tauet, \bP\et)$ constitutes a geometric context in the sense of \cite{Porta_Yu_Higher_analytic_stacks_2014}.
We call the associated geometric stacks \emph{derived \kanal Deligne-Mumford stacks}.
We denote by $\mathrm{DM}$ the \infcat of derived \kanal Deligne-Mumford stacks.
\end{defin}

\begin{defin}
	Let $F \in \mathrm{DM}$.
	We say that $F$ is \emph{$n$-truncated} if $F(X)$ is $n$-truncated for every $X = (\cX, \cO_\cX) \in \dAfdk$ such that $\cO_\cX$ is discrete.
	We denote by $\mathrm{DM}_n$ the full subcategory of $\mathrm{DM}$ spanned by $n$-truncated \kanal Deligne-Mumford stacks.
\end{defin}

We denote by $\dAnk^{\le n}$ the full subcategory of $\dAnk$ spanned by those derived \kanal spaces $(\cX, \cO_\cX)$ such that $\cX$ is $n$-localic (cf.\ \cite[6.4.5.8]{HTT}).

With these notations we can now state our main comparison theorem, which is an analogue of \cite[Theorem 3.7]{Porta_DCAGI} and \cite[Theorem 1.7]{Porta_Comparison_2015}.

\begin{thm} \label{thm:functor_of_points_vs_dAnk}
	For every integer $n \ge 1$, the functor $\phi \colon \dAnk \to \Sh(\dAfdk, \tauet)$ restricts to an equivalence of \infcats $\dAnk^{\le n} \simeq \mathrm{DM}_n$.
\end{thm}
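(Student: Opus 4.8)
The plan is to exploit that $\phi$ is already fully faithful (\cref{prop:phi_fully_faithful}), so that the theorem comes down to two assertions: that $\phi$ carries $\dAnk^{\le n}$ into $\mathrm{DM}_n$, and that the resulting functor is essentially surjective onto $\mathrm{DM}_n$.

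For the first assertion, fix $X = (\cX, \cO_\cX) \in \dAnk^{\le n}$. To see that $\phi(X)$ is a derived \kanal Deligne--Mumford stack at all, I would use \cref{lem:affine_hypercover} to pick a $\tauet$-hypercovering $X^\bullet \to X$ with each $X^m$ a disjoint union of derived $k$-affinoid spaces and all transition maps étale; since representable presheaves on $\dAnk$ are hypercomplete sheaves for $\tauet$ and $\dAnk$ admits fibre products (\cref{thm:fiber_products}), this yields $\phi(X) \simeq \lvert \phi(X^\bullet)\rvert$, which realizes $\phi(X)$ as a geometric stack for the context $(\dAfdk, \tauet, \bP\et)$. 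To see $\phi(X) \in \mathrm{DM}_n$, I would fix a discrete derived $k$-affinoid space $Z = (\cZ, \cO_\cZ)$, so that $\cZ \simeq \cX_{Z_0}^\wedge$ for a $k$-affinoid space $Z_0$ and $\cO_\cZ$ is $0$-truncated, and use the fibre sequence
\[ \Map_{\Strloc_{\cTank}(\cZ)}(f\inv \cO_\cX, \cO_\cZ) \longrightarrow \Map_{\RTop(\cTank)}(Z, X) \longrightarrow \Fun_*(\cZ, \cX) \]
of \cite[2.4.4.2]{HTT} and \cite[Remark 1.4.10]{DAG-V}, indexed over geometric morphisms $f\colon \cZ \to \cX$. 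The fibre is discrete because $\cO_\cZ$ is, and by \cref{lem:mapping_space_hypercompletion_localic} (applicable since $\cX$ is $n$-localic and hypercomplete) the base is $\Fun_*(\cZ, \cX) \simeq \Fun_*(\cX_{Z_0}, \cX)$, which — $\cX$ being $n$-localic — is computed on the underlying $n$-topoi by \cite[6.4.5.7]{HTT} and is therefore $n$-truncated. As $n \ge 1$, the long exact sequence of homotopy groups forces $\Map_{\dAnk}(Z, X)$ to be $n$-truncated, and hence $\phi(X) \in \mathrm{DM}_n$.

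For essential surjectivity, given $F \in \mathrm{DM}_n$ I would choose an étale atlas of $F$ and refine it to a $\tauet$-hypercovering $V^\bullet \to F$ in which each $V^m$ is a disjoint union of derived $k$-affinoid spaces, all transition maps are étale, and $F \simeq \lvert \phi(V^\bullet)\rvert$. Because the transition maps are étale, the simplicial diagram $V^\bullet$ can be glued inside $\dAnk$: using the coCartesian fibration $\LTop(\cTank) \to \LTop$ whose fibre over $\cY$ is $\Strloc_{\cTank}(\cY)$ (\cite[Remark 1.4.10]{DAG-V}), descent for $\cTank$-structures along étale morphisms, and statement $(3')$ in the proof of \cite[Proposition 2.3.5]{DAG-V}, one forms $(\cX, \cO_\cX) \coloneqq \lvert V^\bullet\rvert$ whose underlying \inftopos is the hypercompletion of the colimit of $\cV^\bullet$ computed via the descent theory of \inftopoi (\cite[6.1.3.9]{HTT}, as in the lemma preceding \cref{def:derived_affinoid}). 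This is a derived \kanal space because the defining conditions of \cref{def:derived_space} are étale-local and hold on each $V^m$, and by construction $\phi(\cX, \cO_\cX) \simeq \lvert \phi(V^\bullet)\rvert \simeq F$. It then remains to show $\cX$ is $n$-localic. Reading the fibre sequence above in the opposite direction, the $n$-truncatedness of $F = \phi(\cX, \cO_\cX)$ together with the discreteness of the fibre and the hypothesis $n \ge 1$ give that $\Fun_*(\cZ, \cX)$ is $n$-truncated for every discrete derived $k$-affinoid $Z$, i.e.\ for $\cZ$ ranging over hypercompletions of étale \inftopoi of $k$-affinoid spaces; since $\cX$ is itself glued from such \inftopoi and has enough points (as in \cref{rem:points_of_cX_X}), this should pin down $\cX$ as $n$-localic.

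The step I expect to be the main obstacle is precisely this last one: deducing from the $n$-truncatedness of the mapping spaces out of all discrete derived $k$-affinoid spaces that the glued \inftopos $\cX$ is $n$-localic. One half of the localic-versus-truncation dictionary (``$\cX$ $n$-localic $\Rightarrow$ $\phi(\cX,\cO_\cX)$ $n$-truncated'') drops out of the fibre sequence, but the converse requires the sort of careful bookkeeping of \cite[\S 2.4]{DAG-V} and \cite[Theorem 3.7]{Porta_DCAGI}, and is further complicated by the interaction between hypercompletion and being $n$-localic (cf.\ \cref{lem:mapping_space_hypercompletion_localic}). A secondary technical point, which I would settle with the machinery of \cite[\S\S 2.3--2.4]{DAG-V}, is verifying that the étale gluing described above genuinely computes the colimit $\lvert V^\bullet\rvert$ in $\dAnk$ with the correct universal property, so that $\phi$ indeed sends it to $F$.
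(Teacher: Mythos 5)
Your overall architecture (full faithfulness from \cref{prop:phi_fully_faithful}, plus landing in $\mathrm{DM}_n$, plus essential surjectivity) matches the paper, and your truncatedness argument via the fiber sequence is essentially the paper's. But there are two genuine gaps. First, in the forward direction you assert that writing $\phi(X) \simeq \lvert\phi(X^\bullet)\rvert$ for a hypercovering by derived $k$-affinoids ``realizes $\phi(X)$ as a geometric stack.'' It does not: geometricity in the sense of \cite{Porta_Yu_Higher_analytic_stacks_2014} requires an atlas whose structure maps are $(n-1)$-representable by étale morphisms together with representability of the diagonal, and neither follows from the mere existence of a hypercovering by representables. The paper has to work for this: it invokes \cref{cor:representability_diagonal} and \cref{cor:dAfdk_closed_under_tau} to reduce to producing an atlas, and then proves $(n-1)$-representability of the atlas maps $\phi(X_i)\to\phi(X)$ by induction on $n$, using \cref{lem:decreasing_truncated_level} to show the objects $U_i\in\cX$ are $(n-1)$-truncated when $\cX$ is $n$-localic, \cref{prop:etale_algebraic_spaces} for the base case $n=1$, and \cite[Lemma 2.3.16]{DAG-V} to drop the localic degree in the inductive step. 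None of this is in your proposal.

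Second, for essential surjectivity you correctly identify the main obstacle — showing the glued \inftopos is $n$-localic — but you do not resolve it, and the route you sketch (deducing $n$-localic from $n$-truncatedness of mapping spaces out of discrete derived affinoids) is not how one gets there; that implication is not available in the form you want. The paper sidesteps the issue entirely by \emph{not} gluing from a hypercovering: given $F\in\mathrm{DM}_n$ it takes the underlying \inftopos to be $\Sh(F\et,\tauet)^\wedge$, the (hypercompleted) sheaf topos on the small étale site of $F$. By \cref{lem:equivalence_etale_sites} this site agrees with $(\trunc(F))\et$, and since $\trunc(F)$ is $n$-truncated, \cref{prop:over_n_category} shows its mapping spaces are $(n-1)$-truncated, so the site is an $n$-category and the topos is $n$-localic by construction. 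The structure sheaf is then produced from the Yoneda pairing $\cTank\times F\et^{\mathrm{op}}\to\cS$, and the identification $\phi(X')\simeq F$ is checked by induction on the geometric level. If you want to salvage your gluing approach you would still need an independent argument for $n$-localicity, which is precisely what this change of construction buys.
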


The proof will occupy the rest of this section.
Before plunging ourselves into the details, let us deduce from this theorem an important application.

Let $(\An_k, \tauet, \mathbf P\et)$ be the geometric context consisting of the category of $k$-analytic spaces, the étale topology and the class of étale morphisms.
The associated geometric stacks are called \emph{higher \kanal Deligne-Mumford stacks}.
They are in particular higher \kanal stacks considered in \cite{Porta_Yu_Higher_analytic_stacks_2014}.
So all the results in loc.\ cit.\ apply.

\begin{cor} \label{cor:underived_higher_kanal_stacks}
	Let $\Geom(\An_k,\tauet,\bP\et)$ denote the \infcat of higher \kanal Deligne-Mumford stacks.
	There is a fully faithful embedding $\mathrm{Geom}(\An_k, \tauet, \mathbf P\et) \to \dAnk$ whose essential image is spanned by those derived \kanal spaces $(\cX, \cO_\cX)$ such that $\cX$ is $n$-localic for some $n$ and $\cO_\cX$ is discrete.
\end{cor}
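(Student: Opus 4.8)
The plan is to deduce this from the comparison equivalences $\dAnk^{\le n}\simeq\mathrm{DM}_n$ of \cref{thm:functor_of_points_vs_dAnk} together with the fully faithful embedding $\Phi\colon\Ank\to\dAnk$. First I would observe that $\Phi$ restricts to a functor $\Afd_k\to\dAfdk$: for a $k$-affinoid space $X$ the structure sheaf $\cO_X$ is discrete, so $(\cX_X^\wedge,\pi_0(\cO_X))\simeq\Phi(X)$ and $\Phi(X)$ is by definition a derived $k$-affinoid space. By \cref{thm:fully_faithfulness} this restriction is fully faithful, by \cref{thm:Phi_classes_of_morphisms}(i) it carries étale morphisms to étale morphisms, and since on underlying $\infty$-topoi it is $X\mapsto\cX_X^\wedge$ it carries $\tauet$-coverings to $\tauet$-coverings (effective epimorphisms of $\infty$-topoi are preserved by hypercompletion). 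Hence $\Phi|_{\Afd_k}$ is a fully faithful, continuous and cocontinuous morphism of geometric contexts $(\Afd_k,\tauet,\bP\et)\to(\dAfdk,\tauet,\bP\et)$ in the sense of \cite{Porta_Yu_Higher_analytic_stacks_2014}. Using the equivalence \eqref{eq:afd_in_an}, which expresses that $k$-affinoid spaces form a basis of the étale site of $k$-analytic spaces, one identifies $\Geom(\An_k,\tauet,\bP\et)$ with $\Geom(\Afd_k,\tauet,\bP\et)$; the morphism of geometric contexts above then induces a functor
\[ \iota\colon\Geom(\An_k,\tauet,\bP\et)\longrightarrow\Geom(\dAfdk,\tauet,\bP\et)=\mathrm{DM}. \]
Since every higher $k$-analytic Deligne-Mumford stack is an iterated geometric quotient of $k$-analytic spaces, which land in $\dAnk^{\le 1}$, and each geometric quotient raises the localic level by a bounded amount, $\iota$ takes values in $\bigcup_n\mathrm{DM}_n$; composing with a quasi-inverse of $\phi$ on $\bigcup_n\dAnk^{\le n}$ (which exists by \cref{thm:functor_of_points_vs_dAnk} and \cref{prop:phi_fully_faithful}) produces the sought functor $\Geom(\An_k,\tauet,\bP\et)\to\dAnk$.

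Full faithfulness of $\iota$ I would prove by induction on the geometric level. The base case combines the full faithfulness of $\Phi|_{\Afd_k}$ with the full faithfulness of the induced functor $\Sh(\Afd_k,\tauet)\to\Sh(\dAfdk,\tauet)$; the latter holds because $\Afd_k\to\dAfdk$ is fully faithful, continuous and cocontinuous, so restriction along it commutes with sheafification. In the inductive step, a mapping space between two geometric stacks is computed from the mapping spaces between their atlases and the iterated fiber products of those atlases, all of which lie in $\Afd_k$: here one uses that $\Phi$ preserves pullbacks along étale morphisms, which follows from \cite[6.3.5.8]{HTT} exactly as for the underlying $\infty$-topoi. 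Together with the full faithfulness of $\phi$ (\cref{prop:phi_fully_faithful}), this shows that $\Geom(\An_k,\tauet,\bP\et)\to\dAnk$ is fully faithful.

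It remains to identify the essential image with the full subcategory of derived $k$-analytic spaces $(\cX,\cO_\cX)$ such that $\cX$ is $n$-localic for some $n$ and $\cO_\cX$ is discrete. For the inclusion ``$\subseteq$'': the image under $\iota$ (followed by $\phi^{-1}$) of a higher $k$-analytic Deligne-Mumford stack is an iterated geometric quotient, computed in $\dAnk$, of objects $\Phi(X_i)$ with $X_i$ $k$-affinoid; each of these has discrete structure sheaf and $n$-localic underlying $\infty$-topos, the localic level of the quotient is bounded, and since étale morphisms in $\RTop(\cTank)$ induce equivalences on structure sheaves (\cref{def:closed_immersion_and_etale}) the structure sheaf of the quotient is an étale-local descent of discrete structure sheaves and is therefore again discrete --- discreteness may be checked after pulling back to an étale atlas because the underlying $\infty$-topos has enough points (\cref{rem:points_of_cX_X}). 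For the inclusion ``$\supseteq$'': given such a space $(\cX,\cO_\cX)$, choose an effective epimorphism $\coprod U_i\to\mathbf 1_\cX$ as in \cref{def:derived_space}; since $\cO_\cX$ is discrete, so is $\cO_\cX\alg$, whence $\pi_0(\cO_\cX\alg|U_i)=\cO_\cX\alg|U_i$ and, after refining the cover and invoking the rigidity of discrete structures established in \cref{sec:fullyfaithfulness}, $(\cX_{/U_i},\cO_\cX|_{U_i})\simeq\Phi(X_i)$ for a $k$-affinoid space $X_i$; thus $\coprod\Phi(X_i)\to(\cX,\cO_\cX)$ is a representable étale effective epimorphism in $\dAnk$ whose \v{C}ech nerve, again because $\Phi$ preserves étale pullbacks, is a groupoid object in the essential image of $\Phi|_{\Afd_k}$, exhibiting $\phi(\cX,\cO_\cX)$ as the realization of a groupoid object coming from $\Ank$ via $\iota$. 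I expect the main obstacle to be precisely this essential-image computation --- keeping track of the localic level against the geometric level and verifying that the ``discrete'' presentation really assembles into an honest higher $k$-analytic Deligne-Mumford stack rather than merely a derived one, which is where the discreteness of $\cO_\cX$, \cref{thm:Phi_classes_of_morphisms} and the rigidity of discrete structures all enter.
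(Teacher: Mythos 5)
Your overall route coincides with the paper's: identify $\Geom(\An_k,\tauet,\bP\et)$ with $\Geom(\Afd_k,\tauet,\bP\et)$, observe that the fully faithful inclusion $j\colon\Afd_k\to\dAfdk$ induces a fully faithful $j_s$ on sheaf \infcats preserving geometric stacks, and compose with the equivalence of \cref{thm:functor_of_points_vs_dAnk}. The construction of the functor and its full faithfulness are fine (your atlas induction is redundant next to the direct argument that $j_s$ is fully faithful, but harmless), and so is the ``$\subseteq$'' half of the essential-image identification.

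The gap is in your ``$\supseteq$'' argument. Given a discrete $n$-localic $(\cX,\cO_\cX)$ covered by $\coprod\Phi(X_i)\to(\cX,\cO_\cX)$ with $X_i$ affinoid, you assert that the \v{C}ech nerve of this cover is a groupoid object in the essential image of $\Phi|_{\Afd_k}$. For $n\ge 2$ this is false: the term $\Phi(X_i)\times_{(\cX,\cO_\cX)}\Phi(X_j)$ corresponds to the object $U_i\times U_j$ of $\cX_{/U_i}$, and by \cref{lem:decreasing_truncated_level} the $U_i$ are only $(n-1)$-truncated, so the nerve terms are discrete $(n-1)$-localic derived spaces --- that is, (inductively) higher \kanal Deligne--Mumford stacks rather than analytic spaces. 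Your one-step groupoid presentation therefore only closes the argument for $n=1$; in general it must be replaced by an induction on $n$, which is essentially the content of \cref{prop:etale_algebraic_spaces} and \cref{prop:phi_essentially_surjective}. The paper instead sidesteps the explicit presentation: since $\cO_\cX$ is discrete it argues that $\Map_{\dAnk}(Y,X)\simeq\Map_{\dAnk}(\trunc(Y),X)$ for every $Y\in\dAfdk$, and since $\trunc(Y)\in\Afd_k$ this exhibits $\phi(X)$ as the left Kan extension of its restriction along $j$, hence as an object in the essential image of $j_s$. Either repair (the induction, or the Kan-extension observation) completes your proof, but as written the essential-image computation does not go through beyond $n=1$ --- which, to your credit, is exactly the point you flagged as the likely obstacle.
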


\begin{proof}
	Let $(\Afd_k, \tauet, \mathbf P\et)$ be the geometric context consisting of the category of $k$-affinoid spaces, the étale topology and the class of étale morphisms.
	Let $\Geom(\Afd_k,\tauet,\bP\et)$ denote the \infcat of geometric stacks associated to this geometric context.
	By \cite[\S 2.5]{Porta_Yu_Higher_analytic_stacks_2014}, we have an equivalence
	\[\Geom(\An_k,\tauet,\bP\et)\simeq\Geom(\Afd_k,\tauet,\bP\et).\]

	It follows from \cref{thm:fully_faithfulness} that the natural inclusion $j \colon \Afd_k \to \dAfdk$ is fully faithful.
	So the induced functor
	\[ j_s \colon \Sh(\Afd_k, \tauet) \to \Sh(\dAfdk, \tauet) \]
	is fully faithful as well.
	We know moreover that $j_s$ preserves geometric stacks.
	Therefore $j_s$ factors through the full subcategory $\mathrm{DM} = \bigcup \mathrm{DM}_n$.
	Applying \cref{thm:functor_of_points_vs_dAnk}, we obtain the desired fully faithful functor $\mathrm{Geom}(\Afd_k, \tauet, \mathbf P\et) \to \dAnk$.

	Now it suffices to observe that if a geometric stack $X \in \dAnk^{\le n}$ is discrete, then $\phi(X)$ lies in the essential image of $j_s$.
	Indeed, if $X$ is discrete, then
	\[ \Map_{\dAnk}( Y, X ) = \Map_{\dAnk}(t_0(Y), X). \]
	Therefore $\phi(X)$ coincides with the left Kan extension of its restriction along $j$, completing the proof.
\end{proof}

\subsection{The case of algebraic spaces}

Given a derived \kanal space $X$, we denote by $\dAfd_X$ the overcategory $(\dAfdk)_{/X}$.
The Grothendieck topology $\tauet$ on $\dAnk$ induces a Grothendieck topology on $\dAfd_X$, which we still denote by $\tauet$.
Let $X_\biget$ denote the Grothendieck site $(\dAfd_X, \tauet)$.

Let $(\dAfd_X)\et$ be the full subcategory of the overcategory $\dAfd_X$ spanned by \'etale morphisms $Y \to X$.
The \'etale topology $\tauet$ on $\dAfd_X$ restricts to a Grothendieck topology on $(\dAfd_X)\et$, which we still denote by $\tauet$.
Let $X\et$ denote the Grothendieck site $((\dAfd_X)\et, \tauet)$.

\begin{rem}
	Let $X$ be an ordinary \kanal space. Let $f \colon (\cY, \cO_\cY) \to \Phi(X)$ be an \'etale morphism in $\dAnk$. Since the morphism $f\inv \cO_X \to \cO_\cY$ is an equivalence, we see that $\cO_\cY$ is discrete. In particular, if $(\cY, \cO_\cY)$ is a derived $k$-affinoid space, then it belongs to the essential image of $\Phi$.
	This shows that there is a canonical equivalence $X\et \simeq \Phi(X)\et$.
\end{rem}

We have continuous functors between the sites
\[ \begin{tikzcd}
	(X\et, \tauet) \arrow{r}{u} & (X_\biget, \tauet) \arrow{r}{v} & (\dAfdk, \tauet) .
\end{tikzcd} \]
By \cite[\S 2.4]{Porta_Yu_Higher_analytic_stacks_2014}, they induce adjunctions on the \infcats of sheaves
\begin{gather*}
	u_s \colon \Sh(X\et, \tauet) \rightleftarrows \Sh(X_\biget, \tauet) \colon u^s, \\
	v_s \colon \Sh(X_\biget, \tauet) \rightleftarrows \Sh(\dAfdk, \tauet) \colon v^s .
\end{gather*}
Moreover, since $u$ is left exact, $(u_s, u^s)$ is a geometric morphism of \inftopoi. In particular, $u_s$ takes $n$-truncated objects to $n$-truncated objects.
On the other side, we can identify the adjunction $(v_s, v^s)$ with the canonical adjunction
\[ \Sh(\dAfdk, \tauet)_{/\phi(X)} \rightleftarrows \Sh(\dAfdk, \tauet), \]
where the right arrow is the forgetful functor.

\begin{defin}
	Let $X \in \dAfdk$, $Y \in \Sh(\dAfdk, \tauet)$ and $\alpha \colon Y \to \phi(X)$ a natural transformation.
	We say that \emph{$\alpha$ exhibits $Y$ as an \'etale derived algebraic space over $X$} if there exists a $0$-truncated sheaf $F \in \Sh(X\et, \tauet)$ and an equivalence $Y \simeq v_s(u_s(F))$ in $\Sh(\dAfdk, \tauet)_{/\phi(X)}$.
\end{defin}

\begin{prop} \label{prop:etale_algebraic_spaces}
	Let $X \in \dAfdk$, $Y \in \Sh(\dAfdk, \tauet)$ and $\alpha \colon Y \to \phi(X)$ a natural transformation.
	The following statements are equivalent:
	\begin{enumerate}[(i)]
		\item The natural transformation $\alpha$ exhibits $Y$ as an \'etale derived algebraic space over $X$.
		\item There exists a discrete object $U \in \cX$ such that $\phi(j)$ is equivalent to $\alpha$, where $j \colon (\cX_{/U}, \cO_\cX |_U) \to (\cX, \cO_\cX)$ is the induced \'etale morphism.
		\item The natural transformation $\alpha$ is $0$-truncated and $0$-representable by étale morphisms.
	\end{enumerate}
\end{prop}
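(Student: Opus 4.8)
The plan is to deduce all three equivalences from a single comparison between the small \'etale site $X\et = ((\dAfd_X)\et, \tauet)$ of $X = (\cX, \cO_\cX)$ and the underlying \inftopos $\cX$. First I would identify $(\dAfd_X)\et$ with the full subcategory $\cX^{\mathrm{aff}} \subseteq \cX$ of those objects $V$ such that $(\cX_{/V}, \cO_\cX|_V)$ is a derived $k$-affinoid space: by the universal property of \'etale morphisms of structured \inftopoi (\cite[Remark 2.3.4]{DAG-V}) and the description of \'etale geometric morphisms (\cite[6.3.5]{HTT}), every \'etale morphism $Z \to X$ in $\dAnk$ is of the form $(\cX_{/V}, \cO_\cX|_V) \to (\cX, \cO_\cX)$ for a unique $V \in \cX$, the structure sheaf being forced, and every morphism over $X$ between such objects is automatically \'etale. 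Since a derived \kanal space is \'etale-locally derived $k$-affinoid (\cref{lem:affine_hypercover}), $\cX^{\mathrm{aff}}$ is a basis of $\cX$ stable under the fiber products used in $\tauet$; hence restriction identifies $m$-truncated $\tauet$-sheaves on $X\et$ with $m$-truncated objects of $\cX$ for each $m \ge -1$. In particular, $0$-truncated sheaves $F \in \Sh(X\et, \tauet)$ correspond bijectively to discrete objects $U_F \in \cX$, and $(-1)$-truncated ones to subobjects.

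Granting this, I would prove $(\mathrm{i}) \Leftrightarrow (\mathrm{ii})$ by showing that the two functors $F \mapsto v_s u_s F$ and $F \mapsto \phi(\cX_{/U_F}, \cO_\cX|_{U_F})$ from $0$-truncated sheaves on $X\et$ to $\Sh(\dAfdk, \tauet)_{/\phi(X)}$ are canonically equivalent. Both preserve colimits: the first since $u_s$ and $v_s$ are left adjoints, the second since $\phi$ carries $\tauet$-hypercoverings of a derived \kanal space to colimit diagrams, which is \cref{lem:affine_site_big_site} together with the fact (used in its proof) that \'etale morphisms to $(\cX_{/U}, \cO_\cX|_U)$ correspond to morphisms to $U$ in $\cX$. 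On a representable $F = h_V$ with $V \in \cX^{\mathrm{aff}}$ both functors return $\phi(\cX_{/V}, \cO_\cX|_V)$ over $\phi(X)$ — the first by unwinding $u_s, v_s$ on representables, the second tautologically, since $U_{h_V} = V$. Restricting $\phi(\cX_{/U_F}, \cO_\cX|_{U_F})$ back along $u$ recovers $F$, which provides a natural comparison map $v_s u_s F \to \phi(\cX_{/U_F}, \cO_\cX|_{U_F})$; being an equivalence on representables, it is an equivalence for all $0$-truncated $F$. Therefore $\alpha$ exhibits $Y$ as an \'etale derived algebraic space over $X$ for some $0$-truncated $F$ if and only if $\alpha \simeq \phi(j)$ for the \'etale morphism $j$ attached to the discrete object $U_F$, i.e.\ if and only if $(\mathrm{ii})$ holds.

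For $(\mathrm{i}) \Leftrightarrow (\mathrm{iii})$ I would use the identification $\Sh(\dAfdk, \tauet)_{/\phi(X)} \simeq \Sh(X_\biget, \tauet)$, under which $\alpha$ becomes an object and $(u_s, u^s)$ a geometric morphism with $u_s$ left exact. If $(\mathrm{i})$ holds then $u_s F$ is $0$-truncated by \cite[5.5.6.16]{HTT}, so $\alpha$ is a $0$-truncated morphism; and covering $F$ by representables $h_{Z_i}$ with $Z_i \in X\et$ and applying $v_s u_s$ yields an \'etale atlas $\coprod \phi(Z_i) \to Y$ over $\phi(X)$, so $\alpha$ is $0$-representable by \'etale morphisms (using that $\phi$ carries \'etale morphisms of $\dAnk$ to $0$-representable \'etale morphisms of stacks, and, with \cref{prop:phi_fully_faithful} and the definition of $\bP\et$, detects them). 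Conversely, given $(\mathrm{iii})$, choose an \'etale atlas $\coprod Z_i \to Y$ with $Z_i$ derived $k$-affinoid; each $Z_i \to X$ then comes from an \'etale morphism of $\dAnk$, hence an object of $\cX^{\mathrm{aff}}$. Since $\alpha$ is $0$-truncated the diagonal of $Y$ over $\phi(X)$ is a monomorphism, so every term of the \v{C}ech nerve of $\coprod Z_i \to Y$ is a disjoint union of subobjects of $\phi$ of \'etale derived \kanal spaces over $X$; by the $(-1)$-truncated case of the comparison of the first paragraph these lie in the essential image of $u_s$, which is closed under colimits. As $Y$ is the geometric realization of this \v{C}ech nerve, $\alpha \simeq u_s u^s \alpha = u_s F$ with $F \coloneqq u^s \alpha$, and $F$ is $0$-truncated because $\alpha$ is; thus $Y \simeq v_s u_s F$ and $(\mathrm{i})$ holds.

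The hard part is the first paragraph: setting up the equivalence between the small \'etale site $X\et$ and the subcategory $\cX^{\mathrm{aff}} \subseteq \cX$, proving the comparison-lemma statement relating truncated sheaves on $X\et$ to truncated objects of $\cX$, and checking that this identification is compatible both with the left Kan extension $v_s u_s$ and with the functor of points $\phi$. Once this bookkeeping is in place, the three equivalences reduce to unwinding the definition of ``\'etale derived algebraic space'' and chasing \'etale atlases, as above.
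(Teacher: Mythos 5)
Your overall architecture is the same as the paper's: both proofs run through the adjunctions $(u_s,u^s)$, $(v_s,v^s)$, the universal property of \'etale morphisms of structured \inftopoi, and a reduction to representables. Your treatment of (i)$\Leftrightarrow$(ii) by a density argument (both functors preserve colimits and agree on representables) is a legitimate variant of the paper's direct computation of the fibers of the two mapping spaces over $\Map(\phi(Z),\phi(X))$, which identifies both with $\Gamma(\cZ, f\inv(U))$. In (iii)$\Rightarrow$(i) you take a small detour: since the atlas maps of a $0$-geometric stack are already $(-1)$-representable, the terms of the \v{C}ech nerve are representable, so you do not need to argue that arbitrary $(-1)$-truncated subobjects of $\phi(W)$ in the big topos come from the small site (a claim that is not obvious and that you do not justify); the paper instead builds the $0$-truncated sheaf $U$ directly as the fiberwise fiber of $\alpha$ and checks the comparison map is an equivalence via \cite[Remark 2.3.4]{DAG-V}.

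The genuine gap is in (i)$\Rightarrow$(iii). You obtain an effective epimorphism $\coprod \phi(Z_i) \to Y$ and then assert parenthetically that $\alpha$ is $0$-representable by \'etale maps because ``$\phi$ carries \'etale morphisms of $\dAnk$ to $0$-representable \'etale morphisms of stacks.'' That claim is false for general \'etale morphisms (an \'etale morphism $Z \to W$ corresponds to an arbitrary, possibly non-truncated, object of the underlying \inftopos of $W$), and in the relevant case --- where the corresponding object is discrete --- it is exactly the implication (ii)$\Rightarrow$(iii) you are trying to prove, so invoking it is circular. The substantive point you must address is the representability of the pullbacks of the atlas maps: one has to control $Y_{ij} \coloneqq \phi(Z_i) \times_Y \phi(Z_j)$, which is again only an \'etale derived algebraic space over $X$, not a representable. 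The paper's proof does real work here: it first treats the special case where $Y$ admits a $(-1)$-truncated map to some $\phi(Z)$ (in which case $\phi(V_i) \times_Y \phi(V_j) \simeq \phi(V_i \times_Z V_j)$ is representable), then shows in general, using that $\alpha$ is $0$-truncated together with \cite[5.5.6.15]{HTT}, that $Y_{ij} \to \phi(V_i \times_X V_j)$ is $(-1)$-truncated, so that $Y_{ij}$ falls under the special case; only then does the representability count close up. Your proof needs this bootstrapping step (or an equivalent one) to be supplied.
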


\begin{proof}
	We first prove the equivalence between (i) and (ii).
	If $\alpha$ exhibits $Y$ as an \'etale derived algebraic space over $X$, we can find a $0$-truncated sheaf $U \in \Sh(X\et, \tauet)$ and an equivalence $Y \simeq v_s(u_s(U))$ in $\Sh(\dAfdk, \tauet)_{/\phi(X)}$.
	Consider $X_U \coloneqq (\cX_{/U}, \cO_\cX |_U)$ and let $j \colon X_U \to X$ be the induced \'etale map.
	We want to prove that $\phi(j)$ is equivalent to $\alpha$.
	For any $Z = (\cZ, \cO_\cZ) \in \dAfdk$ and any map $f \colon \phi(Z) \to \phi(X)$, we have a fiber sequence
	\[ \begin{tikzcd}
		\Map_{\Sh( \dAfdk, \tauet )_{/\phi(X)}}(\phi(Z)_f, u_s(U)) \arrow{r} \arrow{d} & \Map_{\Sh(\dAfdk, \tauet)}(\phi(Z), v_s(u_s(U))) \arrow{d} \\
		\{f\} \arrow{r} & \Map_{\Sh(\dAfdk, \tauet)}(\phi(Z), \phi(X)),
	\end{tikzcd} \]
	where $\phi(Z)_f$ denotes the object $f \colon \phi(Z) \to \phi(X)$ in $\Sh(\dAfdk, \tauet)_{/\phi(X)}$.
	Since $\phi$ is fully faithful by \cref{prop:phi_fully_faithful}, we can view $\phi(Z)_f$ as a representable object in $\Sh( X_\biget, \tauet) \simeq \Sh(\dAfdk, \tauet)_{/\phi(X)}$.
	Therefore, the Yoneda lemma combined with \cite[4.3.2.15]{HTT} implies that
	\[ \Map_{\Sh( \dAfdk, \tauet )_{/\phi(X)}}(\phi(Z)_f, u_s(U)) \simeq \Gamma(\cZ, f\inv(U)) . \]
	In particular, taking $Z$ to be an atlas for $X_U$ and choosing $f$ to be $j$, we obtain a canonical map $\phi(X_U) \to v_s(u_s(U))$.
	For any $Z \in \dAfdk$, we obtain in this way a commutative square
	\[ \begin{tikzcd}
		\Map_{\Sh(\dAfdk, \tauet)}(\phi(Z), \phi(X_U)) \arrow{r} \arrow{d} & \Map_{\Sh(\dAfdk, \tauet)}(\phi(Z), \phi(X)) \arrow[-, double equal sign distance]{d} \\
		\Map_{\Sh(\dAfdk, \tauet)}(\phi(Z), v_s(u_s(U))) \arrow{r} & \Map_{\Sh(\dAfdk, \tauet)}(\phi(Z), \phi(X)) .
	\end{tikzcd} \]
	For any morphism $f \colon \phi(Z) \to \phi(X)$, we can combine the fully faithfulness of $\phi$ and \cite[Remark 2.3.4]{DAG-V} to identify the fiber of the top horizontal morphism with $\Gamma(\cZ, f\inv(U))$.
	The same holds for the lower horizontal morphism in virtue of the above discussion.
	Therefore, there is a canonical identification of $\phi(X_U)$ with $Y = v_s(u_s(U))$ in $\Sh(\dAfdk, \tauet)$, and a canonical identification of $\phi(j)$ with $\alpha$.
	On the other side, if (ii) is satisfied, then $U$ defines an \'etale derived algebraic space $v_s(u_s(U))$ over $X$, which can be identified with $Y$ using the same argument as above.
	
	Let us now prove the equivalence between (i) and (iii).
	First, assume that (iii) is satisfied.
	In this case, we can define a sheaf $U \colon X\et \to \cS$ by sending an \'etale map $f \colon Z \to X$ to the fiber product
	\[ \begin{tikzcd}
		U(Z) \arrow{r} \arrow{d} & Y(Z) \arrow{d}{\alpha_Z} \\
		\{*\} \arrow{r}{f} & \phi(X)(Z).
	\end{tikzcd} \]
	Since $\alpha$ is $0$-truncated, we see that $U$ takes values in $\rSet$.
	Since both $\phi(X)$ and $Y$ are sheaves, the same goes for $U$.
	It follows that $U$ defines a $0$-truncated object in $\Sh(X\et, \tauet)$.
	Since $\alpha$ is $0$-representable by \'etale maps we obtain a canonical map $Y \to v_s(u_s(Y))$, and \cite[Remark 2.3.4]{DAG-V} shows that this map is an equivalence.
	
	Finally, let us prove that (i) implies (iii).
	Choose a $0$-truncated sheaf $U \in \Sh(X\et, \tauet)$ such that $Y \simeq v_s(u_s(U))$.
	We already remarked that in this case $\alpha$ is $0$-truncated.
	Choose $V_i \in X\et$ and sections $\eta_i \in U(V_i)$ which generate $U$, we obtain an effective epimorphism
	\[ \coprod \phi(V_i) = \coprod v_s(u_s( V_i )) \to v_s(u_s(U)) \]
	in $\Sh(\dAfdk, \tauet)$.
	Suppose there is a $(-1)$-truncated morphism $v_s(u_s(U)) \to \phi(Z)$ for some $Z \in X\et$.
	In this case, we see that
	\[ \phi(V_i) \times_{v_s(u_s(U))} \phi(V_j) \simeq \phi(V_i) \times_{\phi(Z)} \phi(V_j) \]
	and therefore the maps $\phi(V_i) \to v_s(u_s(U)) \simeq Y$ is $(-1)$-representable by \'etale maps.
	In the general case, the fiber product $Y_{i,j} \coloneqq \phi(V_i) \times_{v_s(u_s(U))} \phi(V_j)$ is again a derived algebraic space \'etale over $X$.
	We claim that the canonical map $Y_{i,j} \to \phi(V_i \times_X V_j)$ is $(-1)$-truncated.
	Indeed, we have a pullback diagram
	\[ \begin{tikzcd}
		Y_{i,j} \arrow{r} \arrow{d} & v_s(u_s(U)) \arrow{d} \\
		\phi(V_i) \times_{\phi(X)} \phi(V_j) \arrow{r} & v_s(u_s(U)) \times_{\phi(X)} v_s(u_s(U)).
	\end{tikzcd} \]
	Since the map $\alpha \colon Y \to \phi(X)$ is $0$-truncated, we conclude the proof of the claim by \cite[5.5.6.15]{HTT}.
	At this point, we deduce that $Y_{i,j} \to X$ is $(-1)$-representable by \'etale maps, and therefore that each $\phi(V_i) \to Y$ is $0$-representable by \'etale maps.
\end{proof}

\subsection{Proof of \cref{thm:functor_of_points_vs_dAnk}}

We begin with the following analogue of \cite[Lemma 2.7]{Porta_Comparison_2015}

\begin{lem} \label{lem:decreasing_truncated_level}
	Let $n \ge 0$ be an integer.
	Fix $X = (\cX, \cO_\cX) \in \dAnk^{\le n+1}$ and let $V \in \cX$ be an object such that $X_V \coloneqq (\cX_{/V}, \cO_\cX |_V)$ is a derived $k$-affinoid space.
	Then $V$ is $n$-truncated.
\end{lem}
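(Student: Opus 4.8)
The plan is to reduce the statement to a purely topos-theoretic assertion about $\cX$ and $\cX_{/V}$ and then argue along the lines of \cite[Lemma 2.7]{Porta_Comparison_2015}. Since $X_V$ is a derived $k$-affinoid space, there is a $k$-affinoid space $Z$ with $(\cX_{/V},\pi_0(\cO_\cX|_V))\simeq\Phi(Z)$, so the underlying $\infty$-topos $\cX_{/V}$ is the hypercompletion $\cX_Z^\wedge$ of $\cX_Z=\Sh(Z\et)$. As $Z\et$ is a $1$-site, $\cX_Z$ is $1$-localic, i.e.\ generated under colimits by its $0$-truncated objects. Every $0$-truncated object is hypercomplete, and the hypercompletion functor $L\colon\cX_Z\to\cX_Z^\wedge$ is a left adjoint, so $\cX_Z^\wedge$ is again generated under colimits by $0$-truncated objects; in particular its terminal object receives an effective epimorphism $\coprod_i U_i\to\mathbf 1_{\cX_{/V}}$ with each $U_i$ $0$-truncated. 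Thus the claim becomes the following: in an $(n+1)$-localic $\infty$-topos $\cX$, every object $V$ whose slice $\cX_{/V}$ is $1$-localic, locally coherent and hypercomplete (cf.\ \cref{rem:points_of_cX_X}) is $n$-truncated.

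I would then transport this cover down to $\cX$. Write $j\colon\cX_{/V}\to\cX$ for the étale geometric morphism determined by $V$, and recall that the left adjoint $j_!$ of $j^{-1}$ preserves colimits and pullbacks (hence effective epimorphisms and Čech nerves), sends $\mathbf 1_{\cX_{/V}}$ to $V$, and — being conservative and compatible with pullbacks — preserves and reflects $k$-truncatedness of morphisms. Applying $j_!$ to the cover above gives an effective epimorphism $p\colon\coprod_i W_i\to V$ in $\cX$ with $W_i\coloneqq j_!(U_i)$ and each structure map $W_i\to V$ a $0$-truncated morphism of $\cX$. Since $\cX$ is $(n+1)$-localic, it is generated under colimits by $n$-truncated objects, so each $W_i$ receives an effective epimorphism from a coproduct of $n$-truncated objects; composing with $p$ produces an effective epimorphism $r\colon Q\to V$ with $Q$ an $n$-truncated object of $\cX$. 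The object $V$ is the geometric realization of the Čech nerve $\check{C}(r)^\bullet$, a groupoid object of $\cX$ with $\check{C}(r)^0=Q$ $n$-truncated, and I would bound the truncation level of $V$ by bounding that of $\check{C}(r)^1=Q\times_V Q$, combining the $n$-truncatedness of $Q$, the $0$-truncatedness of the maps $W_i\to V$, and the finiteness properties of $\cX_{/V}$, exactly as in \cite[Lemma 2.7]{Porta_Comparison_2015}.

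The main obstacle is this last step: extracting the sharp truncation bound. A naive covering argument only yields that $V$ is $(n+1)$-truncated, and the conclusion genuinely fails if one assumes $\cX_{/V}$ to be merely $(n+1)$-localic — for instance $\cS_{/S^1}$ is $1$-localic over the $1$-localic $\cS$ while $S^1$ is not $0$-truncated. Gaining the last degree of truncation must use that $\cX_{/V}=\cX_Z^\wedge$ is the hypercompletion of a coherent étale topos — concretely that it is locally coherent, hypercomplete and has enough points (\cref{rem:points_of_cX_X}) — rather than just $1$-localic. To organize this I would reduce, using the affinoid charts supplied by \cref{def:derived_space} and \cref{lem:closed_devissage}, to the corresponding statement for $\cX_Z^\wedge$ itself, and then imitate the argument of \cite[Lemma 2.7]{Porta_Comparison_2015} together with the structure theory of $n$-localic $\infty$-topoi of \cite[\S6.4.5]{HTT}.
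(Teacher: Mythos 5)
Your reduction to a statement about $\cX$ and $\cX_{/V}$ and the transport of covers along $j_!$ are fine as far as they go, but the proof has a genuine gap exactly where you locate it: the passage from ``$V$ receives an effective epimorphism from an $n$-truncated object'' to ``$V$ is $n$-truncated'' is never carried out, and no covering/\v{C}ech-nerve argument of the kind you sketch can close it on its own, since (as your own example of $S^1\in\cS$ shows) the conclusion is false for a general $1$-localic slice. Pointing at local coherence, hypercompleteness and enough points of $\cX_Z^\wedge$ does not repair this: those are still properties of the underlying \inftopos alone, whereas the extra degree of truncation has to come from the structure sheaf, namely from the fact that $X_V$ is a derived $k$-affinoid space and hence, after truncation, lies in the image of $\Phi$ restricted to affinoids. (Note also that an effective epimorphism $Q\to V$ with $Q$ $n$-truncated does not by itself bound the truncation of $V$ at all --- consider $\ast\to K(\Z,m)$ --- so even the intermediate claim of $(n+1)$-truncatedness would require controlling the whole nerve.)

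The paper's proof is organized quite differently and avoids the \v{C}ech nerve altogether. One shows that $\Map_\cX(U,V)$ is $n$-truncated for every $U\in\cX$; this is local in $U$, so one may assume $X_U\coloneqq(\cX_{/U},\cO_\cX|_U)$ is derived $k$-affinoid. The universal property of étale morphisms (\cite[Remark 2.3.4]{DAG-V}) gives a fiber sequence
\[ \Map_{\cX}(U,V)\to\Map_{\dAnk}(X_U,X_V)\to\Map_{\dAnk}(X_U,X), \]
the fiber being taken at the canonical point. After replacing $X$ (hence $X_U$ and $X_V$) by its truncation, the total space $\Map_{\dAnk}(X_U,X_V)$ is \emph{discrete}: both objects are of the form $\Phi(\text{affinoid})$, so \cref{thm:fully_faithfulness} identifies this space with a Hom-set in $\Ank$. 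The base $\Map_{\dAnk}(X_U,X)$ is $(n+1)$-truncated because $\cX$ is $(n+1)$-localic (\cite[Lemma 2.6.19]{DAG-V}). The fiber of a map from a discrete space to an $(n+1)$-truncated space is $n$-truncated --- the long exact sequence of homotopy groups shifts the truncation down by exactly one --- and this is precisely the missing degree. If you want to salvage your approach, this input (discreteness of mapping spaces into affinoids) is what must be injected into your final step; at that point you would essentially be reproducing the paper's argument by a longer route.
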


\begin{proof}
	We have to prove that for every object $U \in \cX$, the space
	\[ \Map_\cX(U, V) \simeq \Map_{\cX_{/U}}(U, U \times V) \]
	is $n$-truncated.
	This property is local on $U$, so we can restrict ourselves to the situation where $X_U \coloneqq (\cX_{/U}, \cO_\cX |_U)$ is a derived $k$-affinoid space.
	Using \cite[Remark 2.3.4]{DAG-V}, we see that this space fits into a fiber sequence
	\[ \Map_{\cX}(U, V) \to \Map_{\dAnk}( X_U, X_V ) \to \Map_{\dAnk}( X_U, X ) . \]
	
	Since a derived \kanal space $(\cY, \cO_\cY)$ belongs to $\dAfdk$ if and only if its truncation $(\cY, \pi_0(\cO_\cY))$ does, we can replace $X$ with its truncation.
	Let us denote by $F_X \colon \Afd_k \to \cS$ the functor of points associated to $X$ and by $F_V \colon \Afd_k \to \cS$ the functor of points associated to $(\cX_{/V}, \cO_\cX |_V)$.
	The arguments above show that it is enough to prove that for every ordinary $k$-affinoid space $Z$, the fibers of $F_X(Z) \to F_V(Z)$ are $n$-truncated.
	By hypothesis, $F_V$ is the functor of points associated to some $k$-affinoid space, so it takes values in $\rSet$.
	Since $F_V(Z)$ is discrete, it suffices to show that $F_X(Z)$ is $(n+1)$-truncated. This follows directly from \cite[Lemma 2.6.19]{DAG-V}.
	\end{proof}

\begin{prop} \label{prop:geometricity}
	Let $n \ge 1$ and let $X = (\cX, \cO_\cX) \in \dAnk$ be a derived \kanal space such that $\cX$ is $n$-localic.
	Then $\phi(X)$ belongs to $\mathrm{DM}_n$.
\end{prop}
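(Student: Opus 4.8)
The plan is to verify the two requirements for $\phi(X)\in\mathrm{DM}_n$ separately, arguing by induction on $n\ge 1$: first that $\phi(X)$ is $n$-truncated, then that $\phi(X)$ is geometric for the context $(\dAfdk,\tauet,\bP\et)$.

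For $n$-truncatedness, fix a derived $k$-affinoid space $Z=(\cZ,\cO_\cZ)$ with $\cO_\cZ$ discrete. Choosing an underlying geometric morphism $f\inv\colon\cX\rightleftarrows\cZ\colon f_*$, the space $\phi(X)(Z)=\Map_{\dAnk}(Z,X)$ sits in the fiber sequence
\[ \Map_{\Strloc_{\cTank}(\cZ)}(f\inv\cO_\cX,\cO_\cZ)\longrightarrow\Map_{\dAnk}(Z,X)\longrightarrow\Fun_*(\cZ,\cX) \]
coming from \cite[2.4.4.2]{HTT} and \cite[Remark 1.4.10]{DAG-V}. Since $\cX$ is $n$-localic, \cite[6.4.5.7]{HTT} identifies $\Fun_*(\cZ,\cX)$ with $\Fun_*(\tau_{\le n-1}\cZ,\tau_{\le n-1}\cX)$, which is $(n-1)$-truncated; and since $\cO_\cZ$ is discrete, the fiber, a full subspace of a limit of discrete spaces, is $0$-truncated. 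Hence $\phi(X)(Z)$ is $n$-truncated, as required.

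For geometricity I would show by induction on $n$ that $\cX$ being $n$-localic forces $\phi(X)$ to be $n$-geometric. By \cref{def:derived_space} and \cref{def:derived_affinoid} there is an effective epimorphism $\coprod_i U_i\to\mathbf 1_\cX$ with each $X_{U_i}\coloneqq(\cX_{/U_i},\cO_\cX|_{U_i})$ a derived $k$-affinoid space. Since $\phi$ preserves fiber products (immediate from its pointwise description and \cref{thm:fiber_products}), \cref{lem:affine_site_big_site} applied to the \v{C}ech nerve $U^\bullet$ of $\coprod_i U_i\to\mathbf 1_\cX$ yields $\phi(X)\simeq\colim_\Delta\phi(X_{U^\bullet})$; in particular $\coprod_i\phi(X_{U_i})\to\phi(X)$ is an effective epimorphism from a disjoint union of representables. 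It then remains to check that (a) each $\phi(X_{U_i})\to\phi(X)$ is $(n-1)$-representable and lies in $\bP\et$, and (b) the diagonal $\phi(X)\to\phi(X)\times\phi(X)\simeq\phi(X\times X)$ is $(n-1)$-representable. For (a): pulling back along a representable $\phi(Z)\to\phi(X)$ produces $\phi(X_{U_i}\times_X Z)$, where $X_{U_i}\times_X Z$ is a derived $k$-analytic space étale over the derived $k$-affinoid $Z$; \cref{lem:decreasing_truncated_level} forces $U_i$, hence its pullback to $\cZ$, to be $(n-1)$-truncated, so by \cref{prop:etale_algebraic_spaces} the morphism $\phi(X_{U_i}\times_X Z)\to\phi(Z)$ exhibits an étale derived algebraic space over $Z$, which is in particular $0$-representable by étale morphisms and a fortiori $(n-1)$-geometric over $\phi(Z)$. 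For (b): pulling back along a representable $\phi(Z)\to\phi(X\times X)$ produces $\phi(X\times_{X\times X}Z)$, and $X\times_{X\times X}Z$ is a derived $k$-analytic space (\cref{thm:fiber_products}) whose underlying $\infty$-topos is $(n-1)$-localic — the diagonal of an $n$-localic $\infty$-topos is an $(n-1)$-truncated geometric morphism and $\cZ$ is $1$-localic, so this follows from the standard behaviour of $n$-localic $\infty$-topoi under pullback (\cite[\S 6.4.5]{HTT}) — whence the inductive hypothesis places $\phi(X\times_{X\times X}Z)$ in $\mathrm{DM}_{n-1}$, so that it is $(n-1)$-geometric. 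Combining (a) and (b) with the atlas above shows $\phi(X)$ is $n$-geometric, and together with the first step, $\phi(X)\in\mathrm{DM}_n$. The base case $n=1$ runs along the same lines: here \cref{lem:decreasing_truncated_level} makes every $U^k$ discrete, so \cref{prop:etale_algebraic_spaces} makes every term $X_{U^k}$ of the \v{C}ech nerve $0$-geometric with étale faces, and $\phi(X)$, being the realization of the resulting groupoid object, is $1$-geometric.

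The step I expect to be the main obstacle is the localicity bookkeeping in (b): verifying that the $\infty$-topos underlying $X\times_{X\times X}Z$ — and, in the inductive manipulation of the \v{C}ech-nerve pieces, the topoi underlying the various fiber products — genuinely drop to localic level $n-1$, so that the inductive hypothesis applies. This is exactly where \cref{lem:decreasing_truncated_level}, together with the behaviour of $n$-localic $\infty$-topoi under slicing and under pullback along truncated geometric morphisms, does the decisive work; the rest is formal juggling with effective epimorphisms, fiber sequences, the full faithfulness of $\phi$ (\cref{prop:phi_fully_faithful}), and the identification of étale derived algebraic spaces in \cref{prop:etale_algebraic_spaces}.
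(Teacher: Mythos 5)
Your first half (the $n$-truncatedness of $\phi(X)(Z)$ for $Z$ a discrete derived $k$-affinoid) is correct and is essentially the paper's argument. The geometricity half, however, has two genuine gaps.

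First, in your step (a) you apply \cref{prop:etale_algebraic_spaces} to the pullback of $U_i$ to $\cZ$ after only establishing that $U_i$ is $(n-1)$-truncated. That proposition is an equivalence of three conditions all of which involve a \emph{discrete} ($0$-truncated) object, and its conclusion is $0$-representability by étale maps; it says nothing about $(n-1)$-truncated objects, and for $n\ge 2$ the map $\phi(X_{U_i}\times_X Z)\to\phi(Z)$ is genuinely only $(n-1)$-representable, not $0$-representable. The correct handling (and the paper's) is an induction on $n$: for $n>1$, \cref{lem:decreasing_truncated_level} gives that $U_i$ is $(n-1)$-truncated, whence by \cite[Lemma 2.3.16]{DAG-V} the underlying $\infty$-topos of $Z\times_X X_i$ is $(n-1)$-localic, and the inductive hypothesis (the proposition at level $n-1$) gives $(n-1)$-geometricity; only the base case $n=1$ uses \cref{prop:etale_algebraic_spaces}.

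Second, your step (b) — the direct verification that the diagonal is $(n-1)$-representable — rests on the unproved assertion that the underlying $\infty$-topos of $X\times_{X\times X}Z$ is $(n-1)$-localic because ``the diagonal of an $n$-localic $\infty$-topos is an $(n-1)$-truncated geometric morphism.'' This is exactly the point you flag as the main obstacle, and no reference in \cite[\S 6.4.5]{HTT} supplies it; moreover your induction for (b) bottoms out at $n=1$, where you instead assert that the realization of a groupoid of $0$-geometric objects with étale faces is $1$-geometric — but that realization statement already presupposes control of the diagonal, so the base case is circular as written. The paper sidesteps the diagonal entirely: by \cref{cor:dAfdk_closed_under_tau} the category $\dAfdk$ is closed under $\tauet$-descent, and \cref{cor:representability_diagonal} then shows that exhibiting an $n$-atlas (which your step (a), once repaired, provides via \cref{lem:affine_site_big_site}) already implies $n$-geometricity. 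If you want to keep your structure, you must either prove the localicity claim for $X\times_{X\times X}Z$ and fix the base case, or (better) invoke \cref{cor:representability_diagonal} and delete step (b).
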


\begin{proof}
	Let $Y = (\cY, \cO_\cY) \in \dAfdk$ be a derived $k$-affinoid space such that $\cO_\cY$ is discrete.
	For every geometric morphism $f\inv \colon \cX \leftrightarrows \cY \colon f_*$ we can use \cite[2.4.4.2]{HTT} to obtain a fiber sequence
	\[ \Map_{\Strloc_{\cTank}(\cY)}(f\inv \cO_\cX, \cO_\cY ) \to \Map_{\dAnk}( Y, X ) \to \Map_{\RTop}( \cY, \cX ) , \]
	where the fiber is taken at $(f\inv, f_*)$.
	
	Since $\cY$ is $1$-localic and $n \ge 1$, it is also $n$-localic.
	Therefore, \cite[Lemma 2.2]{Porta_Comparison_2015} shows that $\Map_{\RTop}( \cY, \cX )$ is $n$-truncated.
		Since $\cO_\cY$ is discrete, we see that $\Map_{\Strloc_{\cTank}(\cY)}( f\inv \cO_\cX, \cO_\cY )$ is $0$-truncated, hence $n$-truncated.
	So
	\[\phi(X)(Y) = \Map_{\dAnk}(Y,X)\]
	is $n$-truncated as well.
	
	Let us now prove that $\phi(X)$ is geometric. Combining \cref{cor:representability_diagonal} and \cref{cor:dAfdk_closed_under_tau}, we see that it is enough to prove that $\phi(X)$ admits an atlas.
	Choose objects $U_i \in \cX$ such that $(\cX_{/U_i}, \cO_\cX |_{U_i})$ is a derived $k$-affinoid space and that the joint morphism $\coprod U_i \to \mathbf 1_\cX$ is an effective epimorphism.
	Put $X_i \coloneqq (\cX_{/U_i}, \cO_\cX |_{U_i})$.
	By functoriality we obtain maps $\phi(X_i) \to \phi(X)$.
	It follows from \cref{lem:affine_site_big_site} that the total morphism $\coprod \phi(X_i) \to \phi(X)$ is an effective epimorphism.
	We are therefore left to prove that $\phi(X_i) \to \phi(X)$ are $(n-1)$-representable by \'etale maps.
	
	First of all, we remark that if $Z \in \dAfdk$, then for any map $\phi(Z) \to \phi(X)$, using full faithfulness of $\phi$, we obtain
	\[ \phi(Z) \times_{\phi(X)} \phi(X_i) \simeq \phi( Z \times_X X_i ) , \]
	and $Z \times_X X_i$ is \'etale over $Z$. Therefore we are reduced to prove that the stacks $\phi(X) \times_{\phi(Z)} \phi(X_i)$ are $(n-1)$-geometric.
	
	We prove this by induction on $n$.
	If $n = 1$, \cref{lem:decreasing_truncated_level} shows that the objects $U_i$ are discrete.
	It follows from \cref{prop:etale_algebraic_spaces} that $\phi(Z) \times_{\phi(X)} \phi(X_i)$ is $0$-geometric.
	Now suppose that $\cX$ is $n$-localic and $n > 1$.
	\cref{lem:decreasing_truncated_level} shows again that the objects $U_i$ are $(n-1)$-truncated.
	Therefore \cite[Lemma 2.3.16]{DAG-V} shows that the underlying \inftopos of $Z \times_X X_i$ is $(n-1)$-localic.
	We conclude by the inductive hypothesis.
\end{proof}

As a consequence of \cref{prop:geometricity}, the functor $\phi \colon \dAnk \to \Sh(\dAfdk, \tauet)$ induces a well defined functor
\[ \phi_n \colon \dAnk^{\le n} \to \mathrm{DM}_n . \]
In order to achieve the proof of \cref{thm:functor_of_points_vs_dAnk}, we are left to show that $\phi_n$ is essentially surjective.

We will need the following elementary observation:

\begin{lem} \label{lem:equivalence_etale_sites}
	Let $X$ be a geometric stack for the geometric context $(\dAfdk, \tauet, \mathbf P\et)$.
	The functor $\trunc \colon X\et \to (\trunc(X))\et$ is an equivalence of sites.
\end{lem}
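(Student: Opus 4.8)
The plan is to observe that $\trunc$ alters neither the underlying \inftopoi of derived $k$-affinoid spaces nor the étale topology, so that the asserted equivalence reduces to a purely topos-theoretic statement on affinoid charts. Recall that $\trunc$ extends to a functor on geometric stacks, with $\trunc X$ again geometric and with $\{\trunc X_i\to\trunc X\}$ an atlas of $\trunc X$ whenever $\{X_i\to X\}$ is an atlas of $X$; on $\dAfdk$ it acts by $(\cX,\cO)\mapsto(\cX,\pi_0(\cO))$. By \cref{cor:truncation_derived_kanal_spaces} together with the compatibility of $\pi_0$ with étale restriction, $\trunc$ carries an étale morphism of derived $k$-affinoid spaces to an étale morphism and preserves the property of lying in $\dAfdk$, so the functor $\trunc\colon X\et\to(\trunc X)\et$ is well defined. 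A covering in $X\et$ (resp.\ in $(\trunc X)\et$) is a family of étale morphisms whose coproduct is an effective epimorphism, a condition involving only the underlying \inftopoi; hence $\trunc$ preserves and reflects coverings. It therefore suffices to prove that the underlying functor of \infcats is an equivalence.

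The heart of the argument is the case $X=U=(\cX,\cO)\in\dAfdk$. By \cite[5.1.6.12]{HTT} and \cite[Remark 2.3.4]{DAG-V}, every étale morphism $Y\to U$ is classified by an object $W\in\cX$, with $Y\simeq(\cX_{/W},\cO|_W)$, and $Y$ lies in $\dAfdk$ exactly when $(\cX_{/W},\pi_0(\cO|_W))\simeq\Phi(\bar Y)$ for some $k$-affinoid space $\bar Y$ --- a condition depending only on $\cX$ and $\pi_0(\cO)$. Moreover, if $Y\to U$ and $Y'\to U$ are étale with $Y'$ classified by $W'$, the universal property of étale morphisms identifies the space of $U$-morphisms $Y\to Y'$ with $\Gamma(\cY,g\inv W')$, where $\cY$ is the \inftopos of $Y$ and $g$ its structure morphism; this too depends only on the \inftopoi. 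Since $\cX$ and $\pi_0(\cO)$ are precisely the \inftopos and the structure sheaf of $\trunc U$, the functor $\trunc\colon U\et\to(\trunc U)\et$ is bijective on objects and on mapping spaces, hence an equivalence; explicitly, an étale $\bar Y\to\trunc U$ classified by $\bar W\in\cX$ is the truncation of $(\cX_{/\bar W},\cO|_{\bar W})$, by \cref{cor:truncation_derived_kanal_spaces}.

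For a general geometric $X$, choose a derived $k$-affinoid atlas $\{X_i\to X\}$; then $\{\trunc X_i\to\trunc X\}$ is an atlas of $\trunc X$, and since an étale fibre product over a derived $k$-affinoid space is classified by the product of the classifying objects, $\trunc$ commutes with the fibre products in the Čech nerve of $\coprod X_i\to X$, carrying it to the Čech nerve of $\coprod\trunc X_i\to\trunc X$. Étale descent for derived \kanal spaces --- valid because the underlying \inftopoi are hypercomplete, so one may glue via \cite[6.1.3.9]{HTT}, as in the proof that representable presheaves on $\dAnk$ are $\tauet$-hypercomplete sheaves --- then lets one descend the affinoid-case equivalence: an object of $(\dAfd_{\trunc X})\et$ restricts to compatible objects over the $\trunc X_i$ with descent data, which lift uniquely (by the affinoid case) to compatible objects over the $X_i$ with descent data, and these glue to a derived $k$-affinoid space étale over $X$ with the prescribed truncation; morphisms are handled the same way. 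The main obstacle I anticipate is exactly this descent bookkeeping --- checking that $\trunc$ is compatible with the fibre products of the atlas Čech nerve and with the effectivity of étale descent for $\cTank$-structured \inftopoi. Granting it, $\trunc\colon X\et\to(\trunc X)\et$ is an equivalence of \infcats; since it also preserves and reflects coverings, it is an equivalence of sites.
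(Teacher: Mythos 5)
Your affinoid case is correct and is in substance the paper's base case: both arguments rest on the observation that \'etale morphisms of $\cTank$-structured \inftopoi are classified by objects of the underlying \inftopos, that mapping spaces between them are computed there, and that membership in $\dAfdk$ depends only on the truncated structure sheaf --- all data unchanged by $\trunc$. (The paper phrases this as the chain of equivalences $(\RTop(\cTank)_{/Y})\et \simeq (\RTop_{/Y})\et \simeq (\RTop(\cTank)_{/\trunc(Y)})\et$.)

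The globalization step, however, has a genuine gap, and it is exactly the one you flag but then set aside as ``bookkeeping.'' For an $n$-geometric $X$ with $n \ge 1$, the terms $X_i \times_X X_j$ of the \v{C}ech nerve of your atlas are only $(n-1)$-geometric, not derived $k$-affinoid (and likewise the restrictions $\bar Y \times_{\trunc X} \trunc(X_i)$ of an \'etale object to the atlas pieces need not be affinoid). So the descent data you must lift --- the gluing isomorphisms over the overlaps and their coherences --- lives over objects to which the affinoid case does not apply, and ``lift uniquely (by the affinoid case)'' is not available there. The missing ingredient is not the compatibility of $\trunc$ with fibre products (which holds, $\trunc$ being a right adjoint by \cref{prop:truncation_and_finite_limits}), but an induction on the geometric level: the paper chooses an \'etale groupoid presentation $U^\bullet$ of $X$, notes that its terms are $(n-1)$-geometric, and applies the inductive hypothesis degreewise to lift $Y \times_{\trunc X} \trunc(U^\bullet)$ to a groupoid $Z^\bullet$ over $U^\bullet$, whose realization gives the desired \'etale lift $Z \to X$ with $\trunc(Z) \simeq Y$. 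Your argument becomes correct once recast as such an induction; as written, the descent step does not close.
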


\begin{proof}
	We prove this by induction on the geometric level of $X$.
	If $X$ is $(-1)$-geometric we can find $Y = (\cY, \cO_\cY) \in \dAfdk$ such that $X \simeq \phi(Y)$.
	Consider the chain of equivalences
	\[ (\RTop(\cTank)_{/Y})\et \simeq (\RTop_{/Y})_{\et} \simeq (\RTop(\cTank)_{/\trunc(Y)})\et  . \]
	We now remark that, if $X \to Y$ is an \'etale map in $\RTop(\cTank)$, then $X$ is a derived \kanal space.
	Moreover, a derived \kanal space belongs to $\dAfdk$ if and only if its truncation does.
	These observations imply that the above equivalence restricts to an equivalence
	\[ Y\et \simeq (\trunc(Y))\et, \]
	thus achieving the proof of the base step of the induction.
	
	Suppose now that $X$ is $n$-geometric and that the statement holds for $(n-1)$-geometric stacks.
	Choose an \'etale $n$-groupoid presentation $U^\bullet$ for $X$.
	This means that $U^\bullet$ is a groupoid object in the $\infty$-category $\Sh(\dAfdk, \tauet)$ such that each $U^m$ is $(n-1)$-geometric and that the map $U^0 \to X$ is $(n-1)$-representable by \'etale maps.
	Since $\trunc$ commutes with products in virtue of \cref{prop:truncation_and_finite_limits} and it takes effective epimorphisms to effective epimorphisms by \cite[7.2.1.14]{HTT}, we see that $V^\bullet \coloneqq \trunc(U^\bullet)$ is a groupoid presentation for $\trunc(X)$.
	
	Now, let $Y \to \trunc(X)$ be an \'etale map. We see that $Y \times_{\trunc(X)} V^\bullet \to V^\bullet$ is an \'etale map (i.e.\ it is a map of groupoids which is \'etale in each degree). By the inductive hypothesis, we obtain a map of simplicial objects $Z^\bullet \to U^\bullet$, such that
	\[ \trunc(Z^\bullet) = Y \times_{\trunc(X)} V^\bullet . \]
	Since $Y \times_{\trunc(X)} V^\bullet$ is a groupoid, so is $Z^\bullet$ (here we use again the equivalence guaranteed by the inductive hypothesis).
	The geometric realization of $Z^\bullet$ provides us with an \'etale map $Z \to X$. Since $\trunc$ preserves effective epimorphisms, we conclude that $\trunc(Z) = Y$.
	This construction is functorial in $Y$, and it provides the inverse to the functor $\trunc$.
\end{proof}

\begin{prop} \label{prop:phi_essentially_surjective}
	The functor $\phi_n \colon \dAnk^{\le n} \to \mathrm{DM}_n$ is essentially surjective.
\end{prop}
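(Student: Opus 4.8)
The functor $\phi$ is fully faithful by \cref{prop:phi_fully_faithful}, and \cref{prop:geometricity} guarantees that $\phi_n$ is well defined; hence it suffices to prove essential surjectivity. The plan is to argue by induction on the geometric level $m$ of a stack $F \in \mathrm{DM}_n$. If $m = -1$, then $F \simeq \phi(Y)$ for a derived $k$-affinoid space $Y$; since the underlying \inftopos of a derived $k$-affinoid space is $1$-localic and $n \ge 1$, we have $Y \in \dAnk^{\le n}$, and there is nothing to prove. The case $m = 0$ is handled by \cref{prop:etale_algebraic_spaces}: an atlas exhibits $F$, étale-locally over each affinoid chart, as an étale derived algebraic space, i.e.\ as an étale subtopos with restricted structure sheaf, and these glue to a derived \kanal space whose \inftopos is $1$-localic.

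For the inductive step, suppose $F$ is $m$-geometric with $m \ge 1$ and choose an atlas $p \colon U^0 \to F$, where $U^0 = \coprod_i Y_i$ is a disjoint union of derived $k$-affinoid spaces and $p$ is an effective epimorphism which is $(m-1)$-representable by étale maps. Let $U^\bullet$ be the \v{C}ech nerve of $p$, so that each $U^q$ is $(m-1)$-geometric. A short computation with fiber products of truncated spaces shows that every $U^q$ still lies in $\mathrm{DM}_n$. By the inductive hypothesis we obtain derived \kanal spaces $X^q \in \dAnk^{\le n}$ with $\phi(X^q) \simeq U^q$. Since $\phi$ is fully faithful and, restricted to $(m-1)$-geometric objects, an equivalence onto the full subcategory of $(m-1)$-geometric objects of $\mathrm{DM}_n$, the simplicial structure of $U^\bullet$ transports to a groupoid object $X^\bullet$ in $\dAnk$; moreover its face maps are étale morphisms of derived \kanal spaces, because this inductive equivalence identifies the class of étale morphisms on both sides (using \cref{thm:Phi_classes_of_morphisms}, \cref{lem:equivalence_etale_sites} and \cite[Remark 2.3.4]{DAG-V}).

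It remains to realize $F$ as a geometric realization inside $\dAnk$. Using the theory of étale morphisms of structured \inftopoi of \cite[\S 2.3]{DAG-V}, the groupoid object $X^\bullet$ with étale face maps admits a colimit $X \coloneqq \lvert X^\bullet \rvert$ in $\RTop(\cTank)$: its underlying \inftopos is the colimit $\cX \simeq \lvert \cX^\bullet \rvert$ computed in $\RTop$, and its structure sheaf is obtained by descent along the étale effective epimorphism $\cX^0 \to \cX$. One then checks that $X$ is a derived \kanal space by noting that the defining conditions of \cref{def:derived_space} are étale-local and are satisfied on $X^0$, a disjoint union of derived $k$-affinoid spaces; and one checks that $\cX$ is $n$-localic by combining the localic-descent results of \cite[6.4.5.8]{HTT} and \cite[\S 2.3]{DAG-V} with the truncatedness bookkeeping provided by \cref{lem:decreasing_truncated_level}. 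Finally, $X^\bullet$ is the \v{C}ech nerve of the effective epimorphism $X^0 \to X$ (topoi satisfy descent, so the groupoid is effective), so \cref{lem:affine_site_big_site} yields $\phi(X) \simeq \lvert \phi(X^\bullet) \rvert \simeq \lvert U^\bullet \rvert \simeq F$, completing the induction.

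The main obstacle is the last paragraph: constructing the colimit $X$ in $\RTop(\cTank)$ along an étale groupoid, verifying that the result is again a derived \kanal space, and — most delicately — controlling that its underlying \inftopos is exactly $n$-localic rather than merely of large localic level, which is precisely where the truncation estimate of \cref{lem:decreasing_truncated_level} enters. Everything else is a matter of transporting the groupoid presentation of $F$ across the (already established) lower-level equivalence and invoking the descent formalism for structured \inftopoi.
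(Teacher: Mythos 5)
Your strategy is genuinely different from the paper's. You present $F$ by an étale groupoid of lower geometric level, transport it through the inductive equivalence to a groupoid object $X^\bullet$ in $\dAnk$ with étale face maps, and realize it as a colimit in $\RTop(\cTank)$. The paper instead builds the candidate object in one stroke: it takes $\cX \coloneqq \Sh(X\et, \tauet)^\wedge$, the hypercompleted sheaf topos on the small étale site of the stack itself, equips it with the structure sheaf defined by evaluation $(M, Y) \mapsto \Map(Y, M)$, and only afterwards uses a groupoid presentation to check $\phi(\cX, \cO_\cX) \simeq F$ by induction on the geometric level. The payoff of that route is that $n$-localicity comes for free: by \cref{lem:equivalence_etale_sites} the site $X\et$ agrees with $(\trunc(X))\et$, so \cref{prop:over_n_category} shows its mapping spaces are $(n-1)$-truncated, hence $X\et$ is an $n$-category and $\Sh(X\et, \tauet)$ is $n$-localic essentially by definition.

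That point is exactly where your argument has a genuine gap. You correctly flag the localic level of the colimit $\cX \simeq \lvert \cX^\bullet \rvert$ as the delicate step, but the tool you invoke, \cref{lem:decreasing_truncated_level}, cannot do this job: its \emph{hypothesis} is that the ambient space already lies in $\dAnk^{\le n+1}$, and its conclusion is a truncation bound on objects of that topos; you are running it backwards, trying to extract a localicity bound from it. Neither \cite[6.4.5.8]{HTT} (essentially the definition of $n$-localic) nor \cite[\S 2.3]{DAG-V} (étale morphisms of structured topoi) supplies a descent statement for localic level. What is actually needed is (a) that the objects $V^q \in \cX$ classifying the étale maps $\cX^q \to \cX$ are $(n-1)$-truncated --- which should be deduced from the $n$-truncatedness of $F$ and the discreteness of $U^0(W)$, the relevant mapping spaces being fibers of $U^0(W) \to F(W)$ --- and (b) a proof that a topos admitting an effective epimorphism from an $(n-1)$-truncated object $V^0$ with $\cX_{/V^0}$ $1$-localic is $n$-localic. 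Statement (b) is true but is nowhere among your citations, and avoiding having to prove it is precisely the reason for the paper's detour through the $n$-category $X\et$. Two smaller loose ends in the same paragraph: hypercompleteness of the colimit topos (required for $X$ to be a derived \kanal space at all) is asserted but not checked, and the ``these glue'' in your $m = 0$ base case is really the same colimit construction again rather than an application of \cref{prop:etale_algebraic_spaces}, which only treats stacks already mapping to a fixed derived affinoid.
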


\begin{proof}
	Let $X\in\mathrm{DM}_n$.
	By \cref{lem:equivalence_etale_sites}, $X\et$ is equivalent to $(\trunc(X))\et$.
	By hypothesis, $\trunc(X)$ is $n$-truncated.
	Therefore, \cref{prop:over_n_category} shows that the mapping spaces in $(\trunc(X))\et$ are $(n-1)$-truncated.
	In other words, $(\trunc(X))\et$ is equivalent to an $n$-category (cf.\ \cite[2.3.4.18]{HTT}).
	As a consequence, $\Sh(X\et, \tauet)$ is $n$-localic.
	
	Put $\cX \coloneqq \Sh(X\et, \tauet)^\wedge$.
	Consider the composition
	\[ \cTank \times X\et^{\mathrm{op}} \to \dAfdk \times \dAfdk^{\mathrm{op}} \xrightarrow{y} \cS , \]
	where the last functor classifies the Yoneda embedding (cf.\ \cite[\S 5.2.1]{Lurie_Higher_algebra}).
	This induces a well defined functor
	\[ \overline{\cO_\cX} \colon \cTank \to \PSh(X\et) , \]
	which factors through $\Sh(X\et, \tauet)$.
	Let $\cO_\cX$ be its hypercompletion.
	Since the functor $\cTank \to \dAfdk$ preserves products and admissible pullbacks, the same holds for $\cO_\cX$.
	Moreover, \cref{lem:affine_site_big_site} implies that $\cO_\cX$ takes $\tauet$-coverings to effective epimorphisms.
	In other words, $\cO_\cX$ defines a $\cTank$-structure on $\cX$.
	
	If $\{U_i \to X\}$ is an \'etale $n$-atlas of $X$, each $U_i$ defines an object $V_i$ in $\cX$. 
	Unraveling the definitions, we see that the $\cTank$-structured \inftopos $(\cX_{/V_i}, \cO_X |_{V_i})$ is canonically isomorphic to $U_i \in \dAnk$ itself.
	Therefore $X' \coloneqq (\cX, \cO_X)$ is a derived \kanal space.
	
	We are left to prove that $\phi(X') \simeq X$.
	We can proceed by induction on the geometric level $n$ of $X$.
	If $n = -1$, $\phi(X')$ is the functor represented by $X'$, and the same holds for $X$.
	Let now $n \ge 0$.
	Choose an \'etale $n$-atlas $\{U_i \to X\}$ for $X$.
	Set $U \coloneqq \coprod U_i$ and let $U^\bullet$ denote the \v{C}ech nerve of $U \to X$.
	Every map $U^n \to X$ is \'etale.
	In particular, the functor $X\et \to \cS$ sending $Y$ to $\Map_{X\et}(Y, U^n)$ defines an element $V^n \in \Sh(X\et, \tauet)$.
	Using \cref{lem:equivalence_etale_sites}, we see that
	\[ \Map_{X\et}(Y, U^n) \simeq \Map_{\trunc(X)\et}(\trunc(Y), \trunc(U^n)). \]
	Since $t_0(U^n)$ is a geometric stack, we conclude that the above space is truncated.
	In particular, the object $V^n$ is a truncated object in $\Sh(X\et, \tauet)$, so it is hypercomplete.
	In other words, $V^n$ belongs to $\cX$.
	We can therefore identify $\Sh(U^n\et, \tauet)^\wedge$ with $\cX_{/V^n}$.
	The universal property of \'etale morphisms (cf.\ \cite[Remark 2.3.4]{DAG-V}) shows that we can arrange the $V^n$s into a simplicial object $V^\bullet$ in $\cX$, whose geometrical realization coincides with $\mathbf 1_\cX$.
	The inductive hypothesis shows that $\phi( \cX_{/V^\bullet}, \cO_X |_V^\bullet ) \simeq U^\bullet$ as simplicial objects in $\Sh(\dAfdk, \tauet)$.
	Since $\phi$ commutes with \v{C}ech nerves of \'etale maps and their realizations (in virtue of \cref{lem:affine_site_big_site}), we conclude that $\phi(X')$ is equivalent to $X$ itself.
\end{proof}

The proof of \cref{thm:functor_of_points_vs_dAnk} is now achieved.

\section{Appendices}

\subsection{Complements on overcategories}

The goal of this subsection is to provide a proof of the following basic result, for which we do not know a reference: if $(\cC, \tau)$ is a Grothendieck site and $\cC$ is a $1$-category, then for every $n$-truncated sheaf $X \in \PSh(\cC)$, the overcategory $\cC_{/X}$ is an $(n-1)$-category.
The proof relies on the following lemma:

\begin{lem} \label{lem:fiber_sequence_for_over}
	Let $\cC$ be an $\infty$-category.
	Let $X \in \cC$ be an object and let $f \colon U \to X$, $g \colon V \to X$ be two $1$-morphisms of $\cC$ viewed as objects of $\cC_{/X}$.
	For every morphism $h \colon U \to V$ in $\cC$, choose a $2$-simplex $\sigma \colon \Delta^2 \to \cC$ extending the morphism $\Lambda^2_1 \to \cC$ classified by $h$ and $g$.
	Put $f' \coloneqq d_1(\sigma)$.
	Then we have a fiber sequence
	\[ \mathrm{Path}_{\Map_\cC(U,X)}(f,f') \to \Map_{\cC_{/X}}(f,g) \to \Map_{\cC}(U,V). \]
\end{lem}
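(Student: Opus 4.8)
The plan is to exhibit the claimed fibre sequence as an instance of the standard description of mapping spaces along a Cartesian fibration, applied to the forgetful functor
\[ q\colon \cC_{/X}\to\cC,\qquad (f\colon U\to X)\mapsto U. \]
By \cite[\S 2.1.2]{HTT} the functor $q$ is a right fibration; in particular it is an inner fibration all of whose edges are $q$-Cartesian, and by \cite[1.2.2.3]{HTT} its fibre $q^{-1}(U)$ is a Kan complex canonically identified with the mapping space $\Map_\cC(U,X)$, with the object $f$ corresponding to the point $f\in\Map_\cC(U,X)$. Unwinding the simplicial model of $\cC_{/X}$ from \cite{HTT}, the chosen $2$-simplex $\sigma$ (whose faces are $\sigma|_{\Delta^{\{0,1\}}}=h$, $\sigma|_{\Delta^{\{1,2\}}}=g$ and $d_1(\sigma)=\sigma|_{\Delta^{\{0,2\}}}=f'$) is precisely a $1$-simplex of $\cC_{/X}$ from $f'$ to $g$ with $q(\sigma)=h$; since $q$ is a right fibration, $\sigma$ is $q$-Cartesian.

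Next I would apply \cite[2.4.4.2]{HTT} to the $q$-Cartesian morphism $\sigma\colon f'\to g$ and the object $f$ of $\cC_{/X}$ lying over $U$. This yields a homotopy pullback square of spaces
\[ \begin{tikzcd}
\Map_{\cC_{/X}}(f,f') \arrow{r} \arrow{d} & \Map_{\cC_{/X}}(f,g) \arrow{d}{q} \\
\Map_\cC(U,U) \arrow{r}{h\circ(-)} & \Map_\cC(U,V),
\end{tikzcd} \]
whose bottom horizontal arrow is post-composition with $h=q(\sigma)$. Passing to the fibres over $\mathrm{id}_U\in\Map_\cC(U,U)$ (and correspondingly over $h=h\circ\mathrm{id}_U$ on the right), the square identifies the fibre of $q\colon\Map_{\cC_{/X}}(f,g)\to\Map_\cC(U,V)$ over $h$ with the fibre of $\Map_{\cC_{/X}}(f,f')\to\Map_\cC(U,U)$ over $\mathrm{id}_U$. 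Since $q$ is an inner fibration, the latter fibre is the mapping space from $f$ to $f'$ computed inside the fibre $q^{-1}(U)\simeq\Map_\cC(U,X)$; being a mapping space between two points of a Kan complex, it is the path space $\mathrm{Path}_{\Map_\cC(U,X)}(f,f')$. Assembling these identifications gives the asserted fibre sequence, with fibre taken over $h$.

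The only delicate point is a matter of conventions, and that is the step I would be most careful about: one must check, against the definition of $\cC_{/X}$ used in \cite{HTT}, that $\sigma$ indeed represents a morphism $f'\to g$ \emph{in that direction} and lying over $h$ (rather than in the opposite direction, or over $\mathrm{id}$), and that under the equivalence $q^{-1}(U)\simeq\Map_\cC(U,X)$ the objects $f$ and $f'=d_1(\sigma)$ map to the points whose path space is literally the left-hand term of the stated sequence. Once this bookkeeping is pinned down, the argument involves no computation: it is formal from \cite[2.4.4.2]{HTT} together with the fact that overcategory projections are right fibrations.
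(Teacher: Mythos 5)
Your proposal is correct and takes essentially the same route as the paper: both arguments rest on the facts that $\cC_{/X}\to\cC$ is a right fibration (so every edge, in particular the one represented by $\sigma$, is Cartesian) and that \cite[2.4.4.2]{HTT} then produces the required pullback square, with the fibre over $\mathrm{id}_U$ identified with the path space in the Kan complex $\Map_\cC(U,X)$. You simply spell out in more detail the unwinding of \cite[2.4.4.2]{HTT} and the direction/projection bookkeeping for $\sigma$, which the paper leaves implicit.
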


\begin{proof}
	It follows from \cite[Proposition 2.1.2.1]{HTT} that the canonical map $p \colon \cC_{/X} \to \cC$ is a right fibration. In particular, it is a Cartesian fibration where every edge of $\cC_{/X}$ is $p$-Cartesian.
	The $2$-simplex $\sigma \colon \Delta^2 \to \cC$ can be viewed as an edge of $\cC_{/X}$.
	Reviewing the Kan complex $\Map_{\cC}(U,X)$ as an $\infty$-category, we have a canonical equivalence $\mathrm{Path}_{\Map_\cC(U,X)}(f,f') \simeq \Map_{\Map_{\cC}(U,X)}(f,f')$.
	The conclusion follows at this point from \cite[Proposition 2.4.4.2]{HTT}.
\end{proof}

\begin{prop} \label{prop:over_n_category}
	Let $\cC$ be an $\infty$-category.
	Let $X \in \cC$ be an $n$-truncated object.
	Let $f \colon U \to X$ and $g \colon V \to X$ be two morphisms viewed as objects in $\cC_{/X}$.
	If $V$ is $m$-truncated with $m < n$, then $\Map_{\cC_{/X}}(U, V)$ is $(n-1)$-truncated.
\end{prop}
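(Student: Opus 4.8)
The plan is to reduce everything to \cref{lem:fiber_sequence_for_over} and then to standard stability properties of truncated spaces. Fix morphisms $f \colon U \to X$ and $g \colon V \to X$, regarded as objects of $\cC_{/X}$. For an arbitrary morphism $h \colon U \to V$ and a chosen filler $\sigma$ with $f' = d_1(\sigma)$, \cref{lem:fiber_sequence_for_over} produces a fiber sequence
\[ \mathrm{Path}_{\Map_\cC(U,X)}(f,f') \to \Map_{\cC_{/X}}(f,g) \to \Map_\cC(U,V), \]
in which the left-hand term is the homotopy fiber of the projection over the point $h$. So it suffices to bound the truncation level of the base and of all of these fibers, and then to invoke the fact that a fibration with $(n-1)$-truncated base and $(n-1)$-truncated fibers has $(n-1)$-truncated total space.

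For the base: since $V$ is $m$-truncated, the mapping space $\Map_\cC(U,V)$ is $m$-truncated, hence $(n-1)$-truncated because $m < n$. For the fibers: since $X$ is $n$-truncated, $\Map_\cC(U,X)$ is an $n$-truncated space, so by \cite[5.5.6.15]{HTT} its diagonal is $(n-1)$-truncated; the homotopy fibers of the diagonal are precisely the path spaces $\mathrm{Path}_{\Map_\cC(U,X)}(a,b)$, and therefore $\mathrm{Path}_{\Map_\cC(U,X)}(f,f')$ is $(n-1)$-truncated. As $h$ was arbitrary, this shows that the projection $\Map_{\cC_{/X}}(f,g) \to \Map_\cC(U,V)$ is fiberwise $(n-1)$-truncated, i.e.\ it is an $(n-1)$-truncated morphism of spaces.

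It then remains to combine the two facts. The total space of a fibration whose base and fibers are all $(n-1)$-truncated is itself $(n-1)$-truncated — this follows at once from the long exact sequence of homotopy groups of the fibration, or from the stability properties of truncated morphisms recorded in \cite[\S 5.5.6]{HTT} — so we conclude that $\Map_{\cC_{/X}}(f,g)$ is $(n-1)$-truncated, as desired. I do not expect any genuine difficulty here: the entire content sits in \cref{lem:fiber_sequence_for_over}. The only points needing a little care are, first, applying \cref{lem:fiber_sequence_for_over} so as to identify the homotopy fiber over \emph{every} point $h$ (so that the projection is fiberwise truncated over all of $\Map_\cC(U,V)$, not merely at one basepoint), and second, checking the low-truncation cases $n \le 0$, where ``$(n-1)$-truncated'' means $(-1)$- or $(-2)$-truncated; these go through verbatim, using that $\Map_\cC(U,X)$ is nonempty since it contains $f$.
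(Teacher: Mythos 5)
Your proof is correct and follows essentially the same route as the paper's: the fiber sequence of \cref{lem:fiber_sequence_for_over}, the identification of its fiber as a path space in $\Map_\cC(U,X)$, the bound on that path space via the diagonal of the $n$-truncated space $\Map_\cC(U,X)$ and \cite[5.5.6.15]{HTT}, and the $m$-truncatedness of the base $\Map_\cC(U,V)$. Your extra care about varying the basepoint $h$ and the low-truncation cases is a reasonable sharpening of what the paper leaves implicit, but does not change the argument.
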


\begin{proof}
	Choosing $f'$ as in \cref{lem:fiber_sequence_for_over}, we obtain a fiber sequence
	\[ \mathrm{Path}_{\Map_\cC(U,X)}(f,f') \to \Map_{\cC_{/X}}(f,g) \to \Map_{\cC}(U,V). \]
	Now, $\Map_{\cC}(U,V)$ is $m$-truncated by hypothesis.
	On the other hand, we have a pullback diagram
	\[ \begin{tikzcd}
		\mathrm{Path}_{\Map_{\cC}(U,X)}(f,f') \arrow{r} \arrow{d} & \{*\} \arrow{d}{f'} \\
		\{*\} \arrow{r}{f} & \Map_{\cC}(U,X).
	\end{tikzcd} \]
	Therefore $\mathrm{Path}_{\Map_\cC(U,X)}(f,f')$ fits in the pullback diagram
	\[ \begin{tikzcd}
		\mathrm{Path}_{\Map_\cC(U,X)}(f,f') \arrow{d} \arrow{r} & \Map_{\cC}(U,X) \arrow{d}{\Delta} \\
		\{*\} \arrow{r}{(f,f')} & \Map_{\cC}(U,X) \times \Map_{\cC}(U,X) .
	\end{tikzcd} \]
	Since $X$ is $n$-truncated, it follows that $\Map_{\cC}(U,X)$ is $n$-truncated.
	Therefore, \cite[5.5.6.15]{HTT} shows that $\Delta$ is $(n-1)$-truncated.
	We deduce that $\mathrm{Path}_{\Map_\cC(U,X)}(f,f')$ is $(n-1)$-truncated.
	Thus the fiber sequence of \cref{lem:fiber_sequence_for_over} implies that $\Map_{\cC_{/X}}(f,g)$ is $(n-1)$-truncated as well, completing the proof.
\end{proof}

\subsection{Complements on geometric stacks}

\begin{defin}\label{def:C_closed_under_tau}
	Let $(\cC,\tau)$ be an \infsite.
	The \infcat $\cC$ is said to be \emph{closed under $\tau$-descent} if for any morphism from a sheaf $X$ to a representable sheaf $Y$, any $\tau$-covering $\{Y_i\to Y\}$, the representability of $X\times_Y Y_i$ for every $i$ implies the representability of $X$.
\end{defin}

We need the following converse to \cite[Corollary 2.12]{Porta_Yu_Higher_analytic_stacks_2014}:

\begin{lem} \label{lem:factorizing_property_effective_epimorphisms}
	Let $(\cC, \tau)$ be a subcanonical $\infty$-site.
	Let $F \to G$ be an effective epimorphism in $\Sh(\cC, \tau)$.
	For any object $X \in \cC$ and any morphism $h_X \to G$, there exists a $\tau$-covering $\{U_i \to X\}$ such that the composite morphisms $h_{U_i} \to h_X \to G$ factor as
	\[ \begin{tikzcd}
		{} & {} & F \arrow{d} \\
		h_{U_i} \arrow[dashed]{urr} \arrow{r} & h_X \arrow{r} & G .
	\end{tikzcd} \]
\end{lem}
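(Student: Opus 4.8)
The plan is to prove the ``only if'' direction of the corresponding equivalence (the ``if'' direction being \cite[Corollary 2.12]{Porta_Yu_Higher_analytic_stacks_2014}), by reducing to the classical fact that an epimorphism of sheaves of sets is a local surjection and then lifting the resulting local sections back along $F\to G$ at the cost of refining the covering twice. First I would pass to $0$-truncations. Since the truncation functor preserves effective epimorphisms (\cite[7.2.1.14]{HTT}), the induced map $\overline F\coloneqq\tau_{\le 0}F\to\overline G\coloneqq\tau_{\le 0}G$ is an effective epimorphism between $0$-truncated objects of $\Sh(\cC,\tau)$, hence an epimorphism in the $1$-topos $\Sh_\rSet(\cC,\tau)$. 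Recall that $\overline F$ (resp.\ $\overline G$) is the sheafification of the presheaf of sets $U\mapsto\pi_0 F(U)$ (resp.\ $U\mapsto\pi_0 G(U)$), so that the given morphism $h_X\to G$ determines, through the Yoneda lemma and the canonical map $\pi_0 G(X)\to\overline G(X)$, a section $\overline g\in\overline G(X)$. By the classical characterization of epimorphisms of sheaves of sets, there is a $\tau$-covering $\{V_i\to X\}$ such that each restriction $\overline g|_{V_i}$ lies in the image of $\overline F(V_i)\to\overline G(V_i)$; choose $\overline f_i\in\overline F(V_i)$ mapping to it.

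Next I would turn each $\overline f_i$ into an honest morphism out of a representable. Because the unit of sheafification is a local surjection, after a further $\tau$-covering $\{V_{ij}\to V_i\}$ the section $\overline f_i|_{V_{ij}}$ is the image of some $f_{ij}\in\pi_0 F(V_{ij})=\pi_0\Map(h_{V_{ij}},F)$; fix a representing morphism $a_{ij}\colon h_{V_{ij}}\to F$. Its composite with $F\to G$ is a morphism $h_{V_{ij}}\to G$ whose class in $\overline G(V_{ij})$ agrees, by construction, with that of the restriction $h_{V_{ij}}\to h_X\to G$. The point that is easy to overlook is that agreement after sheafification yields only agreement in $\pi_0 G$, i.e.\ a \emph{homotopy} between the two morphisms $h_{V_{ij}}\to G$, after yet another refinement $\{V_{ijk}\to V_{ij}\}$. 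Over each $V_{ijk}$ we then have the morphism $a_{ij}|_{V_{ijk}}\colon h_{V_{ijk}}\to F$ together with a homotopy identifying $h_{V_{ijk}}\to F\to G$ with $h_{V_{ijk}}\to h_X\to G$ --- that is, a factorization of $h_{V_{ijk}}\to h_X\to G$ through $F$ in the $\infty$-categorical sense. By transitivity of the Grothendieck topology $\{V_{ijk}\to X\}$ is a $\tau$-covering, and relabelling it $\{U_i\to X\}$ finishes the argument.

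The main obstacle is precisely this last refinement: one must resist concluding that the composite $h_{V_{ij}}\to F\to G$ already equals the given section of $G$ on the nose, whereas a priori the two only become equal locally for the topology, and producing a factorization in $\Sh(\cC,\tau)$ --- a morphism together with a compatibility homotopy --- demands one more pass to the covering; everything else is routine bookkeeping with sheafification and effective epimorphisms. An alternative organisation of the first paragraph would be to form the pullback $P\coloneqq F\times_G h_X$, use stability of effective epimorphisms under base change in an $\infty$-topos to reduce to the case $G=h_X$, and then lift the identity section $\mathrm{id}_X$; the same two subsequent refinements reappear.
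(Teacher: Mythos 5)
Your argument is correct and follows essentially the same route as the paper's own proof: both pass to the epimorphism of $0$-truncations $\pi_0(F)\to\pi_0(G)$, lift locally, refine once more to promote a section of the sheafification to an honest map $h_{U}\to F$, and refine a third time to upgrade equality in the sheafified $\pi_0(G)$ to a homotopy of maps into $G$. The point you flag as the main subtlety (the third refinement) is precisely the final step of the paper's proof, so there is nothing to add.
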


\begin{proof}
	Using \cite[Proposition 2.11]{Porta_Yu_Higher_analytic_stacks_2014} we see that the morphism $\pi_0(F) \to \pi_0(G)$ is an effective epimorphism of sheaves of sets.
	In particular, there exists a covering $\{V_j \to X\}$ such that the composite morphisms $\pi_0( h_{V_j}) \to \pi_0(h_X) \to \pi_0(G)$ factor through $\pi_0(F)$.
	Since $\pi_0(F)$ is by definition the sheafification of the presheaf $Y \mapsto \pi_0(F(Y))$ and since
	\[ \Map_{\Sh(\cC, \tau)}( \pi_0(h_{V_j}), \pi_0(F) ) \simeq \Map_{\Sh(\cC, \tau)}(h_{V_j}, \pi_0(F)) \simeq \pi_0(F) (V_j) , \]
	we can find a $\tau$-covering $\{U_{ij} \to V_j\}$ such that every composite morphism $h_{U_{ij}} \to h_{V_j} \to \pi_0(F)$ factors through $F \to \pi_0(F)$.
	Finally, again since $\pi_0(G)$ is the sheafification of the presheaf $Y \mapsto \pi_0(G(Y))$, we can further refine the covering such that the morphisms $h_{U_{ij}} \to F$ are homotopic to the compositions $h_{U_{ij}} \to h_X \to G$. This completes the proof.
\end{proof}

\begin{prop} \label{prop:geometric_stacks_closed_under_tau}
	Let $(\cC, \tau, \bP)$ be a geometric context in the sense of \cite{Porta_Yu_Higher_analytic_stacks_2014}.
	Assume that $\cC$ is closed under $\tau$-descent.
	Then the class of $n$-representable morphisms is closed under $\tau$-descent, in the sense that for any morphism $f \colon X \to Y$ with $Y$ a $n$-geometric stack, if there exists an $n$-atlas $\{U_i\}$ of $Y$ such that $X \times_Y U_i$ is $n$-geometric for every $i$, then $F$ is $n$-geometric as well.
\end{prop}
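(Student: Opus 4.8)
The plan is to argue by induction on the geometric level, proving simultaneously that both geometricity of sheaves and $m$‑representability of morphisms satisfy $\tau$‑descent on the target, for all $m\le n$. Granting the inductive hypothesis, the first move for level $n$ is a reduction: to show $X$ is $n$‑geometric it suffices to exhibit an effective epimorphism of sheaves $\coprod_i V_i\to X$ with each $V_i$ an $n$‑geometric stack and each structure map $V_i\to X$ being $(n-1)$‑representable and in $\bP$. Such a family is obtained by base change from the atlas of $Y$: write the given $n$‑atlas as an effective epimorphism $\coprod_i h_{U_i}\to Y$ whose components are $(n-1)$‑representable and in $\bP$, set $V_i\coloneqq X\times_Y h_{U_i}$, and use that effective epimorphisms and the two classes of morphisms are stable under base change (\cite{Porta_Yu_Higher_analytic_stacks_2014}); each $V_i$ is $n$‑geometric by hypothesis. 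So the whole content is: a sheaf admitting an ``atlas by $n$‑geometric stacks'' of this kind is itself $n$‑geometric.

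To produce an honest $n$‑atlas of $X$ by objects of $\cC$, refine each $V_i$ by its own $n$‑atlas $\{W_{ij}\to V_i\}$ and compose; since $(n-1)$‑representable morphisms and $\bP$‑morphisms are stable under composition, the resulting $\coprod_{i,j}h_{W_{ij}}\to X$ is an effective epimorphism all of whose components are $(n-1)$‑representable and in $\bP$. It then remains to check the diagonal $\Delta_X\colon X\to X\times X$ is $(n-1)$‑representable. Put $U\coloneqq\coprod_{i,j}h_{W_{ij}}$; since $U\times U\to X\times X$ is an effective epimorphism and $(U\times U)\times_{X\times X}X\simeq U\times_X U$, it is enough to see that $U\times_X U\to U\times U$ is $(n-1)$‑representable and then descend along $U\times U\to X\times X$ using that $(n-1)$‑representability is local on the target (this is the inductive hypothesis at level $n-1$). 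But $U\times_X U\to U\times U$ decomposes, over the index set, into the maps $h_{W_{ij}}\times_X h_{W_{i'j'}}\to h_{W_{ij}}\times h_{W_{i'j'}}$; the target is representable (finite products of objects of $\cC$), and the source is $(n-1)$‑geometric because $h_{W_{ij}}\to X$ is $(n-1)$‑representable, so each such map — and hence $U\times_X U\to U\times U$ — is $(n-1)$‑representable by base‑change stability together with the fact that fiber products of $(n-1)$‑geometric stacks are $(n-1)$‑geometric. This gives $\Delta_X$ $(n-1)$‑representable, so $X$ is $n$‑geometric. The base case $m=-1$ of the induction is exactly the hypothesis that $\cC$ is closed under $\tau$‑descent: a morphism with representable target is $(-1)$‑representable precisely when its base changes to representables are representable, which by \cref{def:C_closed_under_tau} may be tested on a $\tau$‑covering (refined, in a geometric context, by $\bP$‑morphisms, and landing in $\cC$ by \cref{lem:factorizing_property_effective_epimorphisms}) of the target.

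The main obstacle is exactly this last ingredient: the statement is a descent assertion for geometricity, so the argument unavoidably looks circular and must be set up as an induction on the geometric level, with the single non‑formal input — descent of representability — isolated as \cref{def:C_closed_under_tau}; everything else is the locality‑on‑the‑target bookkeeping just sketched. The subsidiary routine facts one needs are that effective epimorphisms, $m$‑representable morphisms, $\bP$‑morphisms and $m$‑geometric stacks are stable under base change, composition and finite fiber products, and that $\tau$‑coverings in a geometric context can be refined by $\bP$‑morphisms; these are part of the definition of a geometric context or are established in \cite{Porta_Yu_Higher_analytic_stacks_2014}.
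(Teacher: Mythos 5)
Your proof is correct and follows essentially the same route as the paper's: induction on the geometric level with the base case being the hypothesis that $\cC$ is closed under $\tau$-descent, an $n$-atlas of $X$ obtained by composing the atlases of the $X\times_Y U_i$ with the projections to $X$, and $(n-1)$-representability of $\Delta_X$ checked by pulling back to the atlas of $X\times X$ and invoking the inductive hypothesis together with \cref{lem:factorizing_property_effective_epimorphisms}. The only point to phrase more carefully is ``descend along $U\times U\to X\times X$'': since the inductive hypothesis concerns $(n-1)$-geometric targets, one should unwind this as testing $\Delta_X$ on maps $S\to X\times X$ from representables and descending along a $\tau$-covering of $S$ whose members factor through $U\times U$ --- which is exactly how the paper argues.
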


\begin{proof}
	The proof goes by induction on the geometric level $n$.
	When $n = -1$, this holds because $\cC$ is closed under $\tau$-descent.
	Let now $n \ge 0$. Let $\{U_i\}$ be an $n$-atlas of $Y$ such that $X_i \coloneqq X \times_Y U_i$ is $n$-geometric for every $i$.
	Choose an $n$-atlas $\{V_{ij}\}$ of $X \times_Y U_i$. The compositions $V_{ij} \to X_i \to X$ provide an $n$-atlas of $X$.
	We are therefore left to prove that the diagonal of $X$ is $(n-1)$-representable.
	Let $V \coloneqq \coprod V_{ij}$ be the $n$-atlas of $X$ introduced above. By construction, the map $V \to X$ is $(n-1)$-representable.
	It follows that the induced map $V \times_X V \to V$ is $(n-1)$-representable as well.
	Since $V$ is a disjoint union of $(-1)$-representable stacks, it follows that $V \times_X V$ is $(n-1)$-geometric.
	Observe now that $V \times V \to X \times X$ is an effective epimorphism.
	Therefore for every morphism $S \to X \times X$ from a $(-1)$-representable stack $S$, by \cref{lem:factorizing_property_effective_epimorphisms}, we can choose a $\tau$-covering $S_i \to S$ such that the composite map $S_i \to S \to X \times X$ factors as
	\[ \begin{tikzcd}
		{} & {} & V \times V \arrow{d} \\
		S_i \arrow[dashed]{urr} \arrow{r} & S \arrow{r} & X \times X .
	\end{tikzcd} \]
	In order to prove that the diagonal $\Delta_X \colon X \to X \times X$ is $(n-1)$-representable, we have to show that $S \times_{X \times X} X$ is $(n-1)$-geometric.
	Using the induction hypothesis, it suffices to show that each stack $S_i \times_{X \times X} X$ is $(n-1)$-geometric.
	Note that this stack fits in the following diagram of cartesian squares:
	\[ \begin{tikzcd}
		S_i \times_{X \times X} X \arrow{r} \arrow{d} & V \times_X V \arrow{r} \arrow{d} & X \arrow{d} \\
		S_i \arrow{r} & V \times V \arrow{r} & X \times X .
	\end{tikzcd} \]
	Since $V \times V$, $V \times_X V$ and $S_i$ are $(n-1)$-geometric, it follows that the same goes for $S_i \times_{X \times X} X$, thus completing the proof.
\end{proof}

\begin{cor} \label{cor:representability_diagonal}
	Let $(\cC, \tau, \bP)$ be a geometric context and assume that $\cC$ is closed under $\tau$-descent.
	If $X \in \Sh(\cC, \tau)$ admits an $n$-atlas, then it is $n$-geometric.
\end{cor}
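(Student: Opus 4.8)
The plan is to reduce the statement to the part of the proof of \cref{prop:geometric_stacks_closed_under_tau} that controls the diagonal, the point being that forgetting the base $Y$ there leaves an argument that still runs. Recall that a sheaf $X$ is $n$-geometric precisely when it admits an $n$-atlas and its diagonal $\Delta_X\colon X\to X\times X$ is $(n-1)$-representable. Since an $n$-atlas is given to us, the whole task is to prove that $\Delta_X$ is $(n-1)$-representable. I would argue by induction on $n$: the base case is the hypothesis that $\cC$ is closed under $\tau$-descent (which is exactly \cref{def:C_closed_under_tau}, i.e.\ the case where geometricity is representability), so assume $n\ge 1$ and that the statement holds at level $n-1$.

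Let $\{U_i\to X\}$ be the given $n$-atlas and set $U\coloneqq\coprod U_i$. By definition of an $n$-atlas the induced map $p\colon U\to X$ is an effective epimorphism which is $(n-1)$-representable by $\bP$-morphisms, and $U$, being a disjoint union of representables, is $(n-1)$-geometric; hence $p\times p\colon U\times U\to X\times X$ is again an effective epimorphism and $U\times U$ is $(n-1)$-geometric. The computation I would rely on is the standard identification $(U\times U)\times_{X\times X}X\simeq U\times_X U$, together with the observation that $U\times_X U$ is $(n-1)$-geometric: indeed $U\times_X U=\coprod_{i,j}U_i\times_X U_j$, and each $U_i\times_X U_j$ is the base change of the $(n-1)$-representable morphism $U_j\to X$ along the representable $U_i\to X$, so it is $(n-1)$-geometric, and a disjoint union of $(n-1)$-geometric stacks is $(n-1)$-geometric.

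Now fix a representable $S$ and a morphism $S\to X\times X$; I must show $S\times_{X\times X}X$ is $(n-1)$-geometric. Since $p\times p$ is an effective epimorphism, \cref{lem:factorizing_property_effective_epimorphisms} yields a $\tau$-covering $\{S_i\to S\}$ such that each $S_i\to S\to X\times X$ factors through $U\times U$, whence a diagram of cartesian squares
\[
\begin{tikzcd}
S_i\times_{X\times X}X \arrow{r} \arrow{d} & U\times_X U \arrow{r} \arrow{d} & X \arrow{d}{\Delta_X} \\
S_i \arrow{r} & U\times U \arrow{r} & X\times X.
\end{tikzcd}
\]
Thus $S_i\times_{X\times X}X$ is a fiber product of the $(n-1)$-geometric stacks $S_i$ and $U\times_X U$ over the $(n-1)$-geometric stack $U\times U$, hence $(n-1)$-geometric. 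Since $\{S_i\to S\}$ is a $\tau$-covering of the (trivially $(n-1)$-geometric) representable $S$ and each $(S\times_{X\times X}X)\times_S S_i\simeq S_i\times_{X\times X}X$ is $(n-1)$-geometric, \cref{prop:geometric_stacks_closed_under_tau} applied at level $n-1$ gives that $S\times_{X\times X}X$ is $(n-1)$-geometric. This proves $\Delta_X$ is $(n-1)$-representable, and therefore $X$ is $n$-geometric.

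The main obstacle is not conceptual but bookkeeping: one must be careful that the closure properties invoked — disjoint unions and fiber products of $n$-geometric stacks are $n$-geometric, and base changes of $(n-1)$-representable morphisms along representables are $(n-1)$-geometric — are genuinely available at the relevant geometric levels, and in particular that the degenerate case $n=0$ (where "$(n-1)$-geometric" means "representable" and an "$n$-atlas" collapses to a $\tau$-covering) is correctly folded into the base of the induction via \cref{def:C_closed_under_tau}. Beyond that, no new idea is required: this is exactly the diagonal half of the proof of \cref{prop:geometric_stacks_closed_under_tau}, read without a base.
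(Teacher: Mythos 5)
Your argument is correct and is essentially the paper's own proof: both reduce to the $(n-1)$-representability of the diagonal, use \cref{lem:factorizing_property_effective_epimorphisms} to factor $S_i \to X \times X$ through $V \times V$, apply \cref{prop:geometric_stacks_closed_under_tau} to descend from the $S_i$ to $S$, and read off the geometricity of $S_i \times_{X\times X} X$ from the same diagram of cartesian squares via the $(n-1)$-geometricity of $V \times_X V$. The only cosmetic difference is your outer induction on $n$, whose inductive hypothesis you never actually invoke (the needed induction is already packaged inside \cref{prop:geometric_stacks_closed_under_tau}), so it can be dropped.
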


\begin{proof}
	We have to prove that the diagonal of $X$ is $(n-1)$-representable.
	Let $V \to X$ be an $n$-atlas.
	Then $V \times V \to X \times X$ is an $n$-atlas for $X\times X$.
	By \cref{lem:factorizing_property_effective_epimorphisms}, for any map $S \to X \times X$, with $S$ being representable, we can find a $\tau$-covering $\{ S_i \to S\}$ such that the composite maps $S_i \to S \to X \times X$ factor through $V \times V$.
	Using \cref{prop:geometric_stacks_closed_under_tau}, we are reduced to prove that each $S_i \times_{X \times X} X$ is $(n-1)$-geometric.
	Consider the diagram
	\[ \begin{tikzcd}
		S_i \times_{X \times X} X \arrow{r} \arrow{d} & V \times_X V \arrow{r} \arrow{d} & X \arrow{d} \\
		S_i \arrow{r} & V \times V \arrow{r} & X \times X .
	\end{tikzcd} \]
	The right and the outer squares are pullback diagrams by construction.
	Therefore, so is the left square.
	Now we conclude from the fact that $V \times_X V$ is $(n-1)$-geometric.
\end{proof}

\begin{prop} \label{prop:Afdk_closed_under_tau}
	The category $\Afd_k$ of $k$-affinoid spaces is closed under $\tauet$-descent.
\end{prop}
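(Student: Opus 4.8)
The plan is to reduce the statement to the classical fact of effective descent for $k$-affinoid spaces along étale coverings of an affinoid base.

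Let $Y\in\Afd_k$ and let $X\to h_Y$ be a morphism in $\Sh(\Afd_k,\tauet)$, and suppose $\{Y_i\to Y\}_{i\in I}$ is a $\tauet$-covering with each $X_i\coloneqq X\times_{h_Y}h_{Y_i}$ representable, say $X_i\simeq h_{Z_i}$ for $Z_i\in\Afd_k$. Refining, we may assume each $Y_i\to Y$ is étale, and since $Y$ is quasi-compact we may assume $I$ finite with $\coprod_{i\in I}Y_i\to Y$ surjective. First I would record two things. The family $\{X_i\to X\}$ is an effective epimorphism of sheaves, being the base change of the covering $\{h_{Y_i}\to h_Y\}$. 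And the terms of its Čech nerve are again representable by $k$-affinoid spaces: writing $Y_{ij}\coloneqq Y_i\times_Y Y_j$ (which is $k$-affinoid), one has $Z_i\times_X Z_j\simeq X\times_{h_Y}h_{Y_{ij}}\simeq Z_i\times_{Y_i}Y_{ij}$, a fibre product of affinoids over an affinoid, hence $k$-affinoid; likewise for the triple overlaps. Thus the $Z_i$ carry a descent datum of $k$-affinoid spaces relative to the étale cover $\{Y_i\to Y\}$, satisfying the cocycle condition.

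The key step is to invoke effective descent for $k$-affinoid spaces. Write $Y=\Sp B$, $Y_i=\Sp B_i$; since $I$ is finite, $B'\coloneqq\prod_{i\in I}B_i$ is a $k$-affinoid algebra, and $B\to B'$ is faithfully flat because each $B\to B_i$ is flat (étale) and $\coprod_i Y_i\to Y$ is surjective. One reconstructs the $k$-affinoid algebra $A$ of the desired space as the equalizer of $\prod_i\Gamma(Z_i,\cO)\rightrightarrows\prod_{i,j}\Gamma(Z_i\times_{Y_i}Y_{ij},\cO)$, and checks that $A\cotimes_B B'\simeq\prod_i\Gamma(Z_i,\cO)$, so that $A$ is itself $k$-affinoid by faithfully flat descent (this rests on descent for the structure sheaf and for coherent modules on affinoid spaces, cf.\ \cite[\S 8.2]{Fresnel_Rigid_2004}). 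This produces $Z\coloneqq\Sp A\in\Afd_k$ with compatible isomorphisms $Z\times_Y Y_i\simeq Z_i$, hence $h_Z\times_{h_Y}h_{Y_i}\simeq X\times_{h_Y}h_{Y_i}$ respecting the descent data. Since $\{h_{Y_i}\to h_Y\}$ is an effective epimorphism of sheaves and both $X$ and $h_Z$ are sheaves, descent of sheaves along this effective epimorphism yields $X\simeq h_Z$; in particular $X$ is representable, so $\Afd_k$ is closed under $\tauet$-descent in the sense of \cref{def:C_closed_under_tau}.

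The main obstacle is precisely this appeal to effective descent for $k$-affinoid spaces: the delicate point is that the morphisms $Z_i\to Y_i$ need not be finite, or even quasi-finite, so one cannot simply use faithfully flat descent for coherent $\cO_Y$-algebras, but genuinely needs descent for arbitrary $k$-affinoid algebras along faithfully flat maps of $k$-affinoid algebras — the completeness of the equalizer and the descent of the finiteness of a presentation both require care. I would quote this classical result rather than reprove it; everything else (the computation of the Čech nerve, the quasi-compactness reduction, and the final gluing of sheaves) is routine.
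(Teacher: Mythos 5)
Your overall architecture (pass to the \v{C}ech nerve of the covering, observe that all its terms are representable by affinoids, produce a descended affinoid $Z$, and then conclude by gluing sheaves along the effective epimorphism $\coprod h_{Y_i}\to h_Y$) coincides with the paper's. The difficulty is concentrated exactly where you say it is, but your way of discharging it does not work as stated: ``effective descent for $k$-affinoid spaces along faithfully flat maps of affinoid algebras'' is not a quotable classical theorem in the generality you need, and the reference you give (\cite[\S 8.2]{Fresnel_Rigid_2004}, which concerns the \'etale site) does not contain it. What the literature (Bosch--G\"ortz, and \cite{Conrad_Descent_for_coherent_2003}) provides is faithfully flat descent for \emph{coherent modules} on affinoids, hence effectivity for objects that are \emph{finite} over the base; since, as you note, $Z_i\to Y_i$ need not be finite, the algebra $\prod_i\Gamma(Z_i,\cO)$ is not a coherent $B'$-module and module descent does not apply. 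The claims that the equalizer $A$ satisfies $A\cotimes_B B'\simeq\prod_i\Gamma(Z_i,\cO)$ and that $A$ is topologically of finite type over $k$ are precisely the content of the proposition (in its affinoid-over-affinoid core), so invoking them as a black box begs the question. As written, this is a genuine gap rather than a routine citation.

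The paper closes exactly this gap by a different device. It first builds the candidate $X$ as a \emph{finite colimit in $\Afd_k$} of the simplicial object $X^\bullet$ (using that $\Afd_k$ is a $1$-category, so the colimit only depends on the $2$-truncated semisimplicial diagram), and then verifies $X^0\simeq Y^0\times_Y X$ by factoring $X^0\to Y^0$ as a closed immersion $X^0\hookrightarrow\bD^N_{Y^0}$ followed by the projection. Since the relative polydiscs $\bD^N_{Y^\bullet}$ manifestly descend to $\bD^N_Y$, this reduces the base-change statement to the case of a \emph{closed immersion}, i.e.\ to descent of a coherent ideal sheaf, where the Bosch--G\"ortz/Conrad theorem genuinely applies. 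If you want to salvage your route, you would need to reprove effectivity for non-finite affinoid objects, and the polydisc-embedding reduction is essentially the only known way to do so; so you should incorporate that step rather than cite it away.
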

\begin{proof}
	Let $Y$ be a $k$-affinoid space and let $f \colon F \to h_Y$ be a morphism in $\Sh(\Afd_k, \tauet)$.
	Let $\{Y_i \to Y\}_{i \in I}$ be a finite étale covering in the category $\Afd_k$.
	Assume that for every index $i$, the fiber product $h_{Y_i} \times_{h_Y} F$ is representable by $X_i \in \Afd_k$.
	Put $Y^0 \coloneqq \coprod_{i \in I} Y_i$ and let $Y^\bullet$ be the \v{C}ech nerve of $Y^0 \to Y$.
	By assumption, we see that for every integer $n$, $h_{Y^n} \times_{h_Y} F$ is representable.
	Choose $X^n \in \Afd_k$ such that $h_{Y^n} \times_{h_Y} F \simeq h_{X^n}$.
	Fully faithfulness of the Yoneda embedding implies that we can arrange the objects $X^n$ into a simplicial object $X^\bullet$ in $\Afd_k$.
		Let $\bDelta_s$ be the semisimplicial category.
	It follows from \cite[6.5.3.7]{HTT} that the inclusion $\bDelta_s\op \subset \bDelta\op$ is cofinal.
	Let $\bDelta_{s, \le 2}$ be the full subcategory of $\bDelta_s$ spanned by the objects $[0]$, $[1]$ and $[2]$.
	The inclusion $\bDelta_{s, \le 2}\op \subset \bDelta_s\op$ is $1$-cofinal, in the sense that for every $[n] \in \bDelta_s$, the undercategory $(\bDelta_{s,\le 2}\op)_{[n]/}$ is nonempty and connected.
	Let $j \colon \bDelta_{s, \le 2}\op \hookrightarrow \bDelta$ be the composite functor.
	Since $\Afd_k$ is a $1$-category, we see that $X^\bullet$ admits a colimit if and only if $X^\bullet_{s, \le 2} \coloneqq X^\bullet \circ j$ does.
	The latter statement is true because $\Afd_k$ admits finite colimits.
	
	Let $X$ be the colimit of $X^\bullet$ and let $g \colon X \to Y$ be the canonical map.
	We claim that $X^n \simeq Y^n \times_Y X$.
	To prove this, it is enough to show that $X^0 \simeq Y^0 \times_Y X$.
	We first remark that if the map $X^0 \to Y^0$ is a closed immersion, then the statement follows directly from the fpqc descent of coherent sheaves (cf.\ \cite{Conrad_Descent_for_coherent_2003}).
	In the general case, we factor $X^0 \to Y^0$ as $X^0 \hookrightarrow \bD^N_{Y^0} \to Y^0$, where $\bD^N_{Y^0}$ denotes the $N$-dimensional unit polydisc over $Y^0$ and the first arrow is a closed immersion.
	Observe that the colimit of $\bD^N_{Y^\bullet}$ is $\bD^N_Y$, and that $\bD^N_{Y^0} \simeq Y^0 \times_Y \bD^N_Y$.
	Consider the following diagram:
	\[ \begin{tikzcd}
		X^0 \arrow{r} \arrow{d} & X \arrow{d} \\
		\bD^N_{Y^0} \arrow{r} \arrow{d} & \bD^N_Y \arrow{d} \\
		Y^0 \arrow{r} & Y .
	\end{tikzcd} \]
	Since $X_0 \hookrightarrow \bD^N_{Y^0}$ is a closed immersion, we see that the top square is a pullback.
	Moreover, we remarked that the bottom square is also a pullback.
	Hence so is the outer square, completing the proof of the claim.
	
	As a consequence, we see that $X^\bullet$ is the \v{C}ech nerve of the étale covering $X^0 \to X$.
	In particular, in $\Sh(\Afd_k, \tauet)$ we have
	\[ h_X \simeq | h_{X^\bullet} | ,\]
	where $\abs{\cdot}$ denotes the geometric realization.
	Finally, since $\Sh(\Afd_k, \tauet)$ is an \inftopos, we obtain:
	\[ 	h_X \simeq | h_{X^\bullet} | \simeq |h_{Y^\bullet} \times_{h_Y} F| \simeq |h_{Y^\bullet}| \times_{h_Y} F \simeq F. \]
	This shows that $F$ is representable, thus completing the proof.
	\end{proof}

\begin{cor} \label{cor:dAfdk_closed_under_tau}
	The category $\dAfdk$ of derived $k$-affinoid spaces is closed under $\tauet$-descent.
\end{cor}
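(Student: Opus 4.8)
The plan is to reduce the derived assertion to its classical counterpart \cref{prop:Afdk_closed_under_tau} via the truncation functor, and then reconstruct the derived structure sheaf by étale descent. So let $F \to h_Y$ be a morphism in $\Sh(\dAfdk, \tauet)$ with $Y \in \dAfdk$, and let $\{Y_i \to Y\}$ be an étale covering such that each $X_i \coloneqq F \times_{h_Y} h_{Y_i}$ is representable by an object of $\dAfdk$. Since $Y$ is quasi-compact we may take the covering finite, so $Y^0 \coloneqq \coprod Y_i$ and $X^0 \coloneqq \coprod X_i$ lie in $\dAfdk$. Let $Y^\bullet$ be the \v{C}ech nerve of $Y^0 \to Y$ in $\dAnk$ (which exists by \cref{thm:fiber_products}) and set $X^\bullet \coloneqq F \times_{h_Y} h_{Y^\bullet}$, so that $X^n \simeq X^0 \times_{Y^0} Y^n$ is a pullback along the étale map $Y^n \to Y^0$. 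Using that $\trunc$ commutes with finite limits (\cref{prop:truncation_and_finite_limits}), that $\Phi$ preserves étale morphisms (\cref{thm:Phi_classes_of_morphisms}) and pullbacks along them (a direct consequence of \cite[6.3.5.8]{HTT}), and that the relevant iterated fibre products of $k$-affinoid spaces are again $k$-affinoid, one checks that $\trunc(Y^n)$ and $\trunc(X^n)$ are of the form $\Phi(-)$ of $k$-affinoid spaces; hence $Y^n, X^n \in \dAfdk$. By full faithfulness of $\dAfdk \hookrightarrow \Sh(\dAfdk, \tauet)$ the $X^n$ assemble into a simplicial object $X^\bullet$ of $\dAfdk$; it is the \v{C}ech nerve of $X^0 \to F$, and its face maps $X^1 \rightrightarrows X^0$ are étale.

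For the classical reduction I would restrict along $\Phi \colon \Afd_k \hookrightarrow \dAfdk$. Since $\Phi(Z)$ has $0$-truncated structure sheaf, the adjunction $i \dashv \trunc$ of \cref{prop:truncation_and_finite_limits} together with \cref{thm:fully_faithfulness} identifies $h_Y|_{\Afd_k}$ with $h_{\bar Y}$ and each $h_{X_i}|_{\Afd_k}$ with $h_{\bar X_i}$, where $\trunc(Y) = \Phi(\bar Y)$ and $\trunc(X_i) = \Phi(\bar X_i)$ for $k$-affinoid spaces $\bar Y, \bar X_i$. Thus $\bar F \coloneqq F|_{\Afd_k}$ is a sheaf on $(\Afd_k, \tauet)$ with a map to $h_{\bar Y}$ whose pullbacks along the étale covering $\{\bar Y_i \to \bar Y\}$ are the representables $h_{\bar X_i}$; \cref{prop:Afdk_closed_under_tau} yields $\bar F \simeq h_{\bar X}$ for a $k$-affinoid space $\bar X$, and $\trunc(X^\bullet)$ becomes the \v{C}ech nerve of the étale covering $\trunc(X^0) \to \Phi(\bar X)$.

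It remains to reconstruct the derived structure. Put $\cX \coloneqq \cX_{\bar X}^\wedge$; the étale covering $\bar X^0 \to \bar X$ determines an effective epimorphism $W^0 \to \mathbf 1_\cX$ whose \v{C}ech nerve $W^\bullet$ satisfies $(\cX)_{/W^n} \simeq \cX_{\bar X^n}^\wedge$, the \inftopos underlying $X^n$. The structure sheaves $\cO_{X^n}$ are compatible along the étale transition maps, which induce equivalences of structure sheaves, so by étale descent for $\cTank$-structures (cf.\ the argument ``(3')'' in the proof of \cite[Proposition 2.3.5]{DAG-V}) there is a $\cTank$-structure $\cO_\cX$ on $\cX$ with $\cO_\cX|_{W^n} \simeq \cO_{X^n}$. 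Since $\cTank$ is compatible with $0$-truncations (\cref{thm:compatibility_truncations}), the same descent gives $(\cX, \pi_0 \cO_\cX) \simeq \Phi(\bar X)$, and since coherence is étale-local each $\pi_j(\cO_\cX^{\mathrm{alg}})$ is coherent over $\pi_0(\cO_\cX^{\mathrm{alg}})$; hence $X \coloneqq (\cX, \cO_\cX) \in \dAfdk$. Finally \cref{lem:affine_site_big_site} gives $\phi(X) \simeq \colim_\Delta \phi(X^\bullet)$, while universality of colimits in the \inftopos $\Sh(\dAfdk, \tauet)$ gives $F \simeq F \times_{h_Y} \lvert h_{Y^\bullet} \rvert \simeq \lvert h_{X^\bullet} \rvert \simeq \colim_\Delta \phi(X^\bullet)$, so $F \simeq \phi(X)$ is representable by an object of $\dAfdk$.

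The main obstacle is this last step: one must verify that the \emph{derived} structure sheaves $\cO_{X^n}$ genuinely descend along the étale hypercover $W^\bullet$ of $\cX_{\bar X}^\wedge$ and, more delicately, that the resulting pair $(\cX, \cO_\cX)$ satisfies the full definition of a derived $k$-affinoid space rather than merely a derived \kanal space — this is where the truncation-compatibility of $\cTank$ and the étale-locality of coherence are needed. By contrast, the reduction to \cref{prop:Afdk_closed_under_tau} and the final identification $\phi(X) \simeq F$ via \cref{lem:affine_site_big_site} are essentially formal manipulations with representable sheaves.
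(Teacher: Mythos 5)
Your proof is correct and follows essentially the same route as the paper's: truncate/restrict to reduce to \cref{prop:Afdk_closed_under_tau}, form the \v{C}ech nerve of the covering of $F$ by derived $k$-affinoid spaces, glue via \cite[Proposition 2.3.5]{DAG-V} (including its statement (3')), and identify $\phi(X)\simeq F$ through \cref{lem:affine_site_big_site} and the effectivity of the epimorphism $\coprod h_{Y_i}\times_{h_Y}F\to F$. The only cosmetic difference is one of ordering: you fix the underlying $\infty$-topos as $\cX_{\bar X}^\wedge$ first and then descend the structure sheaves $\cO_{X^n}$ onto it, whereas the paper forms the colimit of the structured topoi $X^\bullet$ in $\RTop(\cTank)$ in one step and only afterwards identifies its truncation with the affinoid $Z$ produced by the classical statement.
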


\begin{proof}
	Let $Y = (\cY, \cO_\cY)$ be a derived $k$-affinoid space.
	Let $F \to h_Y$ be a morphism in $\Sh(\dAfdk, \tauet)$.
	Assume there exists an \'etale covering $Y_i \to Y$ such that each base change $h_{Y_i} \times_{h_Y} F$ is representable by a derived $k$-affinoid space $X_i$.
	In particular, $\trunc(h_{Y_i} \times_{h_Y} F) \simeq \trunc(h_{Y_i}) \times_{\trunc(h_Y)} \trunc(F)$ is representable by an ordinary $k$-affinoid space $\trunc(X_i)$.
	It follows from \cref{prop:Afdk_closed_under_tau} that $\trunc(F)$ is representable by an ordinary $k$-affinoid space $Z$.
	
	Form the \v{C}ech nerve $G^\bullet$ of $\coprod h_{Y_i} \times_{h_Y} F \to F$.
	By hypothesis, each $G^n$ is a disjoint union of derived $k$-affinoid spaces.
	Since $\phi$ is fully faithful, we obtain in this way a simplicial object $X^\bullet$ in $\dAnk$, such that all the face maps are \'etale morphisms.
	It follows from \cite[Proposition 2.3.5]{DAG-V} that this simplicial object admits a colimit $Y$ in $\RTop(\cTank)$ and that the canonical maps $X^n \to X$ are \'etale.
	This shows that we can cover $X$ with derived $k$-affinoid spaces.
	In particular, $X$ is a derived \kanal space.
	
	We are left to prove that $X$ is a derived $k$-affinoid space.
	Observe that the maps $\trunc(X^n) \to \trunc(X)$ are \'etale.
	Since $X$ (resp.\ $X^n$) and $\trunc(X)$ (resp.\ $\trunc(X^n)$) share the same underlying \inftopos, we can use the statement (3') in the proof of \cite[Proposition 2.3.5]{DAG-V} to conclude that the colimit of $\trunc(X^\bullet)$ in $\RTop(\cTank)$ is $\trunc(X)$.
	On the other hand, since $\trunc$ commutes with limits, we can further identify $\phi(\trunc(X^\bullet))$ with the \v{C}ech nerve of the map $\coprod \trunc(h_{Y_i} \times_{h_Y} F) \to \trunc(F) \simeq \phi(Z)$.
	It follows that $\trunc(X) \simeq Z$ in $\dAnk$. This shows that $X$ is a derived $k$-affinoid space, and $\phi(X) \simeq F$.
	The proof is thus complete.
\end{proof}

\bibliographystyle{plain}
\bibliography{dahema}

\def\cprime{$'$}
\begin{thebibliography}{10}

\bibitem{Bambozzi_Dagger_2015}
Federico Bambozzi and Oren Ben-Bassat.
\newblock Dagger geometry as {B}anach algebraic geometry.
\newblock {\em arXiv preprint arXiv:1502.01401}, 2015.

\bibitem{Ben-Bassat_Non-archimedean_2013}
Oren Ben-Bassat and Kobi Kremnizer.
\newblock Non-archimedean analytic geometry as relative algebraic geometry.
\newblock {\em arXiv preprint arXiv:1312.0338}, 2013.

\bibitem{Berkovich_Spectral_1990}
Vladimir~G. Berkovich.
\newblock {\em Spectral theory and analytic geometry over non-{A}rchimedean
  fields}, volume~33 of {\em Mathematical Surveys and Monographs}.
\newblock American Mathematical Society, Providence, RI, 1990.

\bibitem{Berkovich_Etale_1993}
Vladimir~G. Berkovich.
\newblock \'{E}tale cohomology for non-{A}rchimedean analytic spaces.
\newblock {\em Inst. Hautes \'Etudes Sci. Publ. Math.}, (78):5--161 (1994),
  1993.

\bibitem{Bosch_Non-archimedean_1984}
S.~Bosch, U.~G{\"u}ntzer, and R.~Remmert.
\newblock {\em Non-{A}rchimedean analysis}, volume 261 of {\em Grundlehren der
  Mathematischen Wissenschaften [Fundamental Principles of Mathematical
  Sciences]}.
\newblock Springer-Verlag, Berlin, 1984.
\newblock A systematic approach to rigid analytic geometry.

\bibitem{Conrad_Descent_for_coherent_2003}
Brian Conrad.
\newblock Descent for coherent sheaves on rigid-analytic spaces.
\newblock Preprint, 2003.

\bibitem{Conrad_Several_approaches_2008}
Brian Conrad.
\newblock Several approaches to non-{A}rchimedean geometry.
\newblock In {\em {$p$}-adic geometry}, volume~45 of {\em Univ. Lecture Ser.},
  pages 9--63. Amer. Math. Soc., Providence, RI, 2008.

\bibitem{Cox_Mirror_symmetry_1999}
David~A. Cox and Sheldon Katz.
\newblock {\em Mirror symmetry and algebraic geometry}, volume~68 of {\em
  Mathematical Surveys and Monographs}.
\newblock American Mathematical Society, Providence, RI, 1999.

\bibitem{Elkik_Solutions_1973}
Ren{\'e}e Elkik.
\newblock Solutions d'\'equations \`a coefficients dans un anneau hens\'elien.
\newblock {\em Ann. Sci. \'Ecole Norm. Sup. (4)}, 6:553--603 (1974), 1973.

\bibitem{Fresnel_Rigid_2004}
Jean Fresnel and Marius van~der Put.
\newblock {\em Rigid analytic geometry and its applications}, volume 218 of
  {\em Progress in Mathematics}.
\newblock Birkh\"auser Boston, Inc., Boston, MA, 2004.

\bibitem{Gabber_Almost_2003}
Ofer Gabber and Lorenzo Ramero.
\newblock {\em Almost ring theory}, volume 1800 of {\em Lecture Notes in
  Mathematics}.
\newblock Springer-Verlag, Berlin, 2003.

\bibitem{Gross_Tropical_2011}
Mark Gross.
\newblock {\em Tropical geometry and mirror symmetry}, volume 114 of {\em CBMS
  Regional Conference Series in Mathematics}.
\newblock Published for the Conference Board of the Mathematical Sciences,
  Washington, DC; by the American Mathematical Society, Providence, RI, 2011.

\bibitem{Gross_Mirror_Log_published}
Mark Gross, Paul Hacking, and Sean Keel.
\newblock Mirror symmetry for log {C}alabi-{Y}au surfaces {I}.
\newblock {\em Publications mathématiques de l'IHÉS}, pages 1--104, 2015.

\bibitem{Gross_Real_Affine_2011}
Mark Gross and Bernd Siebert.
\newblock From real affine geometry to complex geometry.
\newblock {\em Ann. of Math. (2)}, 174(3):1301--1428, 2011.

\bibitem{Hori_Mirror_symmetry_2003}
Kentaro Hori, Sheldon Katz, Albrecht Klemm, Rahul Pandharipande, Richard
  Thomas, Cumrun Vafa, Ravi Vakil, and Eric Zaslow.
\newblock {\em Mirror symmetry}, volume~1 of {\em Clay Mathematics Monographs}.
\newblock American Mathematical Society, Providence, RI; Clay Mathematics
  Institute, Cambridge, MA, 2003.
\newblock With a preface by Vafa.

\bibitem{Huber_Generalization_1994}
R.~Huber.
\newblock A generalization of formal schemes and rigid analytic varieties.
\newblock {\em Math. Z.}, 217(4):513--551, 1994.

\bibitem{Huber_Etale_1996}
Roland Huber.
\newblock {\em \'{E}tale cohomology of rigid analytic varieties and adic
  spaces}.
\newblock Aspects of Mathematics, E30. Friedr. Vieweg \& Sohn, Braunschweig,
  1996.

\bibitem{Kontsevich_Homological_2001}
Maxim Kontsevich and Yan Soibelman.
\newblock Homological mirror symmetry and torus fibrations.
\newblock In {\em Symplectic geometry and mirror symmetry ({S}eoul, 2000)},
  pages 203--263. World Sci. Publ., River Edge, NJ, 2001.

\bibitem{Kontsevich_Affine_2006}
Maxim Kontsevich and Yan Soibelman.
\newblock Affine structures and non-{A}rchimedean analytic spaces.
\newblock In {\em The unity of mathematics}, volume 244 of {\em Progr. Math.},
  pages 321--385. Birkh\"auser Boston, Boston, MA, 2006.

\bibitem{Lurie_Thesis}
Jacob Lurie.
\newblock {\em Derived algebraic geometry}.
\newblock PhD thesis, Massachusetts Institute of Technology, 2004.

\bibitem{HTT}
Jacob Lurie.
\newblock {\em Higher topos theory}, volume 170 of {\em Annals of Mathematics
  Studies}.
\newblock Princeton University Press, Princeton, NJ, 2009.

\bibitem{DAG-IX}
Jacob Lurie.
\newblock {DAG IX}: Closed immersions.
\newblock Preprint, 2011.

\bibitem{DAG-V}
Jacob Lurie.
\newblock {DAG V}: Structured spaces.
\newblock Preprint, 2011.

\bibitem{DAG-VII}
Jacob Lurie.
\newblock {DAG VII}: Spectral schemes.
\newblock Preprint, 2011.

\bibitem{DAG-VIII}
Jacob Lurie.
\newblock {DAG VIII}: Quasi-coherent sheaves and {T}annaka duality theorems.
\newblock Preprint, 2011.

\bibitem{Lurie_Higher_algebra}
Jacob Lurie.
\newblock Higher algebra.
\newblock Preprint, August 2012.

\bibitem{Paugam_Overconvergent_2014}
Fr{\'e}d{\'e}ric Paugam.
\newblock Overconvergent global analytic geometry.
\newblock {\em arXiv preprint arXiv:1410.7971}, 2014.

\bibitem{Porta_Comparison_2015}
Mauro Porta.
\newblock Comparison results for {Deligne-Mumford} stacks.
\newblock {\em arXiv preprint: 1507.00573}, 2015.

\bibitem{Porta_DCAGI}
Mauro Porta.
\newblock Derived complex analytic geometry {I}: {GAGA} theorems.
\newblock {\em arXiv preprint arXiv:1506.09042}, 2015.

\bibitem{Porta_DCAGII}
Mauro Porta.
\newblock Derived complex analytic geometry {II}: square-zero extensions.
\newblock {\em arXiv preprint arXiv:1507.06602}, 2015.

\bibitem{Porta_Yu_Higher_analytic_stacks_2014}
Mauro Porta and Tony~Yue Yu.
\newblock Higher analytic stacks and {GAGA} theorems.
\newblock {\em Advances in Mathematics}, 302:351--409, 2016.

\bibitem{Porta_Yu_Representability}
Mauro Porta and Tony~Yue Yu.
\newblock Representability theorem in derived analytic geometry.
\newblock 2016.
\newblock In preparation.

\bibitem{Raynaud_Geometrie_analytique_rigide_1974}
Michel Raynaud.
\newblock G\'eom\'etrie analytique rigide d'apr\`es {T}ate, {K}iehl,{$\cdots
  $}.
\newblock In {\em Table {R}onde d'{A}nalyse non archim\'edienne ({P}aris,
  1972)}, pages 319--327. Bull. Soc. Math. France, M\'em. No. 39--40. Soc.
  Math. France, Paris, 1974.

\bibitem{stacks-project}
The {Stacks Project Authors}.
\newblock {Stacks Project}.
\newblock \url{http://stacks.math.columbia.edu}, 2013.

\bibitem{Tate_Rigid_1971}
John Tate.
\newblock Rigid analytic spaces.
\newblock {\em Invent. Math.}, 12:257--289, 1971.

\bibitem{Toen_Derived_2014}
Bertrand To{\"e}n.
\newblock Derived algebraic geometry.
\newblock {\em arXiv preprint arXiv:1401.1044}, 2014.

\bibitem{HAG-I}
Bertrand To{\"e}n and Gabriele Vezzosi.
\newblock Homotopical algebraic geometry. {I}. {T}opos theory.
\newblock {\em Adv. Math.}, 193(2):257--372, 2005.

\bibitem{HAG-II}
Bertrand To{\"e}n and Gabriele Vezzosi.
\newblock Homotopical algebraic geometry. {II}. {G}eometric stacks and
  applications.
\newblock {\em Mem. Amer. Math. Soc.}, 193(902):x+224, 2008.

\bibitem{Voisin_Symetrie_1996}
Claire Voisin.
\newblock {\em Sym\'etrie miroir}, volume~2 of {\em Panoramas et Synth\`eses
  [Panoramas and Syntheses]}.
\newblock Soci\'et\'e Math\'ematique de France, Paris, 1996.

\bibitem{Yau_Essays_1992}
Shing-Tung Yau, editor.
\newblock {\em Essays on mirror manifolds}.
\newblock International Press, Hong Kong, 1992.

\bibitem{Yu_Gromov_2014}
Tony~Yue Yu.
\newblock Gromov compactness in non-archimedean analytic geometry.
\newblock {\em arXiv preprint arXiv:1401.6452}, 2014.
\newblock To appear in \emph{Journal f\"ur die reine und angewandte Mathematik
  (Crelle)}.

\bibitem{Yu_Balancing_2013}
Tony~Yue Yu.
\newblock Balancing conditions in global tropical geometry.
\newblock {\em Annales de l'Institut Fourier}, 65(4):1647--1667, 2015.

\bibitem{Yu_Tropicalization_2014}
Tony~Yue Yu.
\newblock Tropicalization of the moduli space of stable maps.
\newblock {\em Mathematische Zeitschrift}, 281(3):1035--1059, 2015.

\bibitem{Yu_Enumeration_cylinders_2015}
Tony~Yue Yu.
\newblock Enumeration of holomorphic cylinders in log {C}alabi-{Y}au surfaces.
  {I}.
\newblock {\em Mathematische Annalen}, 366(3-4):1649--1675, 2016.

\bibitem{Yu_Enumeration_cylinders_II_2016}
Tony~Yue Yu.
\newblock Enumeration of holomorphic cylinders in log {C}alabi-{Y}au surfaces.
  {II}. {P}ositivity, integrality and the gluing formula.
\newblock {\em arXiv preprint arXiv:1608.07651}, 2016.

\end{thebibliography}

\end{document}